\newtheorem{theorem}{Theorem}[section]
\newtheorem*{theorem*}{Theorem}
\newtheorem{lemma}[theorem]{Lemma}
\newtheorem{definition}[theorem]{Definition}
\newtheorem{proposition}[theorem]{Proposition}
\newcommand{\plim}{\mathop{\varprojlim}\limits}
\newcommand{\la}{\lambda}
\newcommand\gt{\mathbb{GT}}
\numberwithin{equation}{section}
\newcommand{\Co}{{\mathcal C}}
\begin{document}

\title{Markov processes of infinitely many nonintersecting random walks}

\author{Alexei Borodin\thanks{Department of Mathematics, Massachusetts Institute of Technology,
Cambridge, MA, USA;  Department of Mathematics, California Institute of Technology,  Pasadena, CA,
USA; Dobrushin Mathematics Laboratory, Institute for Information Transmission Problems of Russian
Academy of Sciences, Moscow, Russia. e-mail: borodin@math.mit.edu} \and Vadim
Gorin\thanks{Mathematical Sciences Research Institute, Berkeley, CA, USA; Dobrushin Mathematics
Laboratory, Institute for Information Transmission Problems of Russian Academy of Sciences,
Moscow, Russia. e-mail: {vadicgor@gmail.com}}}

\date{January 23, 2012}

\maketitle

\begin{abstract}
Consider an $N$-dimensional Markov chain obtained from $N$ one-dimensional random walks by Doob
$h$-transform with the $q$-Vandermonde determinant. We prove that as $N$ becomes large, these
Markov chains converge to an infinite-dimensional Feller Markov process. The dynamical correlation
functions of the limit process are determinantal with an explicit correlation kernel.

The key idea is to identify random point processes on $\mathbb Z$ with $q$-Gibbs measures on
Gelfand-Tsetlin schemes and construct Markov processes on the latter space.

Independently, we analyze the large time behavior of PushASEP with finitely many particles and
particle-dependent jump rates (it arises as a marginal of our dynamics on Gelfand-Tsetlin
schemes). The asymptotics is given by a product of a marginal of the GUE-minor process and
geometric distributions.
\end{abstract}

\begin{flushleft}
Keywords: Non-intersecting paths, infinite-dimensional Markov process, determinantal point
process, Gelfand-Tsetlin scheme.

\smallskip

 MSC2010: 60J25, 60G55
\end{flushleft}

\tableofcontents

\section{Introduction} Let $X(t)=(X_1(t),\dots,X_N(t))\in \mathbb{Z}^N$
be $N\ge 1$ independent rate one Poisson processes started at $X(0)=(0,1,\dots,N-1)$ and
conditioned to finish at $X(T)=Y$ while taking mutually distinct values for all $0\le t\le T$.
Thus, $\{X_i(t)\}_{i\ge 1}$ form $N$ \emph{nonintersecting paths} in $\mathbb{Z}\times[0,T]$.

A result of \cite{KOR}, based on a classical theorem of Karlin and McGregor \cite{KM}, says that
as $T\to\infty$ with $Y$ being asymptotically linear, $Y\sim T\xi$ with a collection of asymptotic
speeds $\xi=(\xi_1\le\dots\le \xi_N)\in\mathbb{R}_{>0}^N$, the process $(X(t), 0\le t\le T)$ has a
limit $(\mathcal{X}(t),t\ge 0)$, which is a homogeneous Markov process on $\mathbb{Z}^N$ with
initial condition $X(0)$ and transition probabilities over time interval $t$ given by
$$
P_t\bigl((x_1,\dots,x_N)\to (y_1,\dots,y_N)\bigr)=const\cdot
\frac{\Xi_N(y)}{\Xi_N(x)}\,\det_{i,j=1,\dots,N}\left[\frac{t^{y_i-x_j}}{(y_i-x_j)!}\right]
$$
with $\Xi_N(u)=\det\bigl[\xi_i^{u_j}\bigr]_{i,j=1}^N$.
\begin{footnote}{If some of $\xi_j$'s coincide, for the formula to make sense one needs to
perform a limit transition from the case of different speeds.}
\end{footnote}The process $\mathcal{X}(t)$ is the Doob $h$-transform of $N$ independent Poisson processes with respect to the
harmonic function $h=\Xi_N$.

The case of equal speeds $\xi_j\equiv const$ is special. In that case, the distribution of $X(t)$
for a fixed $t>0$ is known as the \emph{Charlier orthogonal polynomial ensemble}, which is the
basic discrete probabilistic model of random matrix type. If one considers its limiting behavior
as $N\to\infty$, in different parts of the state space via suitable scaling limits one uncovers
discrete sine, sine, and Airy determinantal point processes which play a fundamental role in
Random Matrix Theory, cf. \cite{J_annals}, \cite{BO-RSK}.

The procedure of passing to a limit by increasing the number of particles and scaling the space
appropriately in models of random matrix type is very well developed. In many cases such a limit
can also be successfully performed for joint distributions at finitely many time moments, if the
original model undergoes a Markov dynamics. The most common approach is to control the limiting
behavior of local \emph{correlation functions} for the model at hand.

It is much less clear what happens to the Markovian structure of the dynamics under such limit
transitions. While there are no \emph{a priori} reasons for the Markov property to be preserved,
it is a common belief that in many cases it survives the limit. However, providing a proof for
such a statement seems to be difficult.

One goal of the present paper is to prove that if $\xi_j$'s form a geometric progression with
ratio $q^{-1}>1$, the limit $\mathfrak{X}(t)$ of $\mathcal{X}(t)$ (and other similar processes,
see below) as $N\to\infty$ can be viewed as a Feller Markov process  on all point configurations in
$\mathbb{Z}_{\ge 0}$ started from the densely packed initial condition. Note that no scaling is
needed in the limiting procedure. As all Feller processes, our limiting process can be started
from any initial condition, and it has a modification with c\`adl\`ag sample paths.

We also show that the dynamical correlation functions of $\mathfrak{X}(t)$ started from the packed
initial condition are determinantal with an explicit correlation kernel, and they are the limits
of the corresponding correlation functions for $\mathcal{X}(t)$.

The process $\mathfrak{X}(t)$ can be interpreted as a restriction of a `local' Markov process on
the infinite Gelfand-Tsetlin schemes that falls into a class of such processes constructed in
\cite{BF}. This interpretation implies, in particular, that the distribution of the leftmost
particle of $\mathfrak{X}(t)$ coincides with the asymptotic displacement of the $m$th particle, as
$m\to\infty$, in the totally asymmetric simple exclusion process (TASEP) with jump rates depending
on particles, particle $j$ has rate $\xi_j$, and step initial condition.
\begin{footnote}{The step initial condition means that we place the particle $j$ at location $-j$
at $t=0$.}\end{footnote} Inspired by this connection, we derive large time asymptotics for TASEP,
and more general PushASEP of \cite{BF-Push}, with finitely many particles that have arbitrary
jumps rates.

In the situation when the correlation functions have a determinantal structure, the question of
Markovianity for systems with infinite many particles has been previously addressed in \cite{S},
\cite{KT09}, \cite{Os} for the sine process, in \cite{KT10} for the Airy process, in
\cite{Liza_Jones} for nonintersecting Bessel processes, in \cite{Olsh2} for the Whittaker process,
and in \cite{BO} for a process arising in harmonic analysis on the infinite-dimensional unitary
group.

Our work seems to be the first one that deals with the discrete limiting state space.
Structurally, we adopt the approach of \cite{BO}: We use the fact (proved earlier by one of us,
see \cite[Theorem 1.1]{Gor}) that infinite point configurations in $\Bbb Z$ with finitely many
particles to the left of the origin can be identified with ergodic $q$-Gibbs measures on infinite
Gelfand-Tsetlin schemes. We further show that the Markov processes $\mathcal{X}(t)$ for different
$N$'s are consistent with respect to natural projections from the $N$-particle space to the
$(N-1)$-particle one; the projections are uniquely determined by the $q$-Gibbs property. Together
with certain (nontrivial) estimates, this leads to the existence of the limiting Feller Markov
process. One interesting feature of our construction is that we need to add Gelfand-Tsetlin
schemes with infinite entries in order to make the space of the ergodic $q$-Gibbs measures locally
compact.

It is worth noting what happens in the limit $q\to 1$. The space of ergodic 1-Gibbs measures has
countably many continuous parameters (as opposed to discrete ones for $q<1$), and it is naturally
isomorphic to the space of indecomposable characters of the infinite-dimensional unitary group,
and to the space of totally positive doubly infinite Toeplitz matrices {(see e.g.
\cite{Vk_unitary}, \cite{OkOlsh}, \cite{Olsh} and references therein.)} The $q\to 1$ limit of our
Feller Markov process ends up being a \emph{deterministic} (in fact, linear) flow on this space.

It is plausible that our results on the existence of limit Markov processes can be extended to the
case of arbitrary $\{\xi_j\}_{j\ge1}$ which grow sufficiently fast, but at the moment our proofs
of a number of estimates rely on the fact that $\{\xi_j\}_{j\ge 1}$ form
a geometric progression.

We now proceed to a more detailed description of our work.

\subsection{Extended Gelfand-Tsetlin schemes and $q$-Gibbs measures}
Following \cite{Wey}, for $N\ge 1$ we define a {\it signature\/} of length $N$ as an $N$-tuple of
nonincreasing integers $\la=(\la_1\ge\dots\ge \la_N)$, and we denote by $\gt_N$ the set of all
such signatures. For $\la\in\gt_{N}$ and $\nu\in \gt_{N+1}$, we say that $\la\prec\nu$ if
$\nu_{j+1}\le \la_j\le\nu_j$ for all meaningful values of indices. We agree that $\gt_0$ is a
singleton $\varnothing$, and $\varnothing\prec\nu$ for any $\nu\in \gt_1$.

A \emph{Gelfand-Tsetlin scheme} of order $M\in\{0,1,2,\dots\}\cup\{\infty\}$ is a length $M$
sequence
$$
\la^{(0)}\prec\la^{(1)}\prec\la^{(2)}\prec\dots,\qquad \la^{(j)}\in \gt_j.
$$
Equivalently, such a sequence can be viewed as an array of numbers $\bigl\{\la^{(j)}_i\bigr\}$
satisfying the inequalities $\la^{(j+1)}_{i+1}\le \la_i^{(j)}\le \la^{(j+1)}_i$. An interpretation
of Gelfand-Tsetlin schemes in terms of lozenge tilings or stepped surfaces can be found in
\cite[Section 1.1]{BF}.

Define an \emph{extended} signature of length $N$ as an $N$-tuple $\la=(\la_1\ge\dots\ge \la_N)$
with $\la_j\in \overline{\mathbb{Z}}=\mathbb{Z}\cup \{+\infty\}$\footnote{We assume that $+\infty
> k$ for every integer $k$.}, and define an \emph{extended} Gelfand-Tsetlin scheme as a sequence of extended signatures
$\la^{(0)}\prec\la^{(1)}\prec\la^{(2)}\prec\dots$, $\la^{(j)}\in\overline{\gt}_j$, with the
condition that the number of infinite coordinates $m_j$ of $\la^{(j)}$ has the property that if
$m_j>0$ then $m_{j-1}=m_j-1$. In other words, the number of finite coordinates grows from level to
level until the first infinite coordinate appears, on all further levels the number of finite
coordinates  remains the same.

Fix $q\in(0,1)$. We say that a probability measure on (extended) Gelfand-Tsetlin schemes is
\emph{$q$-Gibbs} if for any $N\ge 2$, the conditional distribution of $\la^{(0)}\prec
\la^{(1)}\prec\dots\prec\la^{(N-1)}$ given $\la^{(N)}$ has weights that are proportional to
$q^{|\la^{(1)}|+|\la^{(2)}|+\dots+|\la^{(N-1)}|}$, where $|\la|$ denotes the sum of all finite
coordinates of $\la$.

There are at least two motivations for our interest in $q$-Gibbs measures. On one hand, if one
views Gelfand-Tsetlin schemes as stepped surfaces, the conditional distribution above has weights
proportional to $q^{volume}$, where the exponent is the volume underneath the surface, and measures
of such type have various interesting features, cf.\ {\cite{V_Vienna}, \cite{CK},
\cite{OR}, \cite{BGR}}. On the other hand, the notion of $1$-Gibbsianness naturally arises in the
representation theory of the infinite-dimensional unitary group, and we hope that $q$-Gibbs
measures will arise in suitably defined representations of inductive limits of quantum groups.

In \cite[Theorem 1.1]{Gor}, one of the authors obtained a description of the convex set of
$q$-Gibbs measures on (non-extended infinite) Gelfand-Tsetlin schemes. The result says that the
space of such $q$-Gibbs measures is isomorphic to the space of Borel probability measures on the
space
$$
\mathcal{N}=\{\nu_1\le \nu_2\le \dots,\  \nu_i\in\mathbb{Z}\}
$$
equipped with the topology of coordinate-wise convergence. Moreover, for any \emph{extreme}
$q$-Gibbs measure $\mu$ (that corresponds to the delta measure at a point $\nu\in\mathcal{N}$),
the value of $\nu_k$, $k\ge 1$, is the almost sure limit of $\la^{(N)}_{N-k+1}$ as $N\to\infty$, with
$\{\la^{(j)}\}_{j\ge 0}$ distributed according to $\mu$.

In this paper we generalize this statement and prove that the space of $q$-Gibbs measures on
extended infinite Gelfand-Tsetlin schemes is isomorphic to the space of Borel probability measures
on the space
$$
\overline{\mathcal{N}}=\{\nu_1\le \nu_2\le \dots,\  \nu_i\in\overline{\mathbb{Z}}\}
$$
equipped with the topology of coordinate-wise convergence. Observe that $\overline{\mathcal{N}}$
is a locally compact topological space, and it should be viewed as a completion of $\mathcal{N}$.

Under the one-to-one correspondence
\begin{equation}\label{eq:onetoone}
\{\nu_1\le \nu_2\le \nu_3\le  \dots\} \longleftrightarrow \{\nu_1<\nu_2+1<\nu_3+2<\dots\},
\end{equation}
$\mathcal{N}$ turns into the set of all infinite subsets of $\mathbb{Z}$ that are bounded from
below, and $\overline{\mathcal{N}}$ can be viewed as the set of all (finite and infinite) subsets
of $\mathbb{Z}$ bounded from below.

\subsection{Consistent $N$-dimensional random walks}
\label{Subsection_consistent_in_intro}
 Let $g(x)$ be a finite product of elementary factors
of the form
$$
(1-\alpha x^{-1})^{-1},\ 0<\alpha<1,\qquad (1+\beta x^{\pm 1}),\ \beta>0; \qquad \exp(\gamma
x^{\pm 1}),\ \gamma>0,
$$
and $g(x)=\sum_{k\in\mathbb{Z}} g_k x^k$ be its Laurent series in
$\{x\in\mathbb{C}:1<|x|<\infty\}$. This means that ${\{g_k\}}_{k\in\mathbb{Z}}$ is, up to a
constant, a convolution of geometric, Bernoulli, and Poisson distributions.

For any such $g(x)$, define a transition probability on $\gt_N$ by
\begin{equation}
\label{eq_transition_prob_introduction} P_N(\la\to\mu;g(x))=
const\cdot\det_{i,j=1,\dots,N}[g_{\mu_i-i-\la_j+j}] \,\prod_{1\le i<j\le
N}\frac{q^{i-\la_i}-q^{j-\la_j}}{q^{i-\mu_i}-q^{j-\mu_j}}\,,
\end{equation}
and extend it to a transition probability $\overline P_N(\la\to\mu;g(x))$ on $\overline{\gt}_N$ by
applying $P_k(\,\cdot\,;g(x))$ with appropriate $k<N$ to all $k$ finite coordinates of extended
signatures.

We prove that if one starts with a $q$-Gibbs measure on extended Gelfand-Tsetlin schemes and
applies $\overline P_N(\,\cdot\,,g(x))$ to its projections on $\overline{\gt_N}$, $N\ge 1$, then
the resulting distributions are also projections of a new $q$-Gibbs measure. This gives rise to a
Markov transition kernel $\overline P_\infty(g(x))$ on $\overline{\mathcal{N}}$ or, via
\eqref{eq:onetoone}, on point configurations in $\mathbb{Z}$ that do not have $-\infty$ as an
accumulation point.

\subsection{Correlation functions}
\label{Subsection_corr_func_in_intro}
 Let $g_0(x),g_1(x),g_2(x),\dots$ be a sequence of functions as above, and let
$\mathcal{Z}(t)$, $t=0,1,2,\dots$, be the discrete time Markov process on $\overline{\mathcal{N}}$
with initial condition given by the delta measure at $\mathbf{0}=(0,0,\dots)$ and transition
probabilities $\overline P_\infty(g_0(x)), \overline P_\infty(g_1(x)),\dots$. One easily sees that
the process is in fact supported by $\mathcal{N}$.

Define the $n$th correlation function $\rho_n$ of $\mathcal{Z}(t)$ at $(x_1,t_1),\dots,(x_n,t_n)$
as the probability that $Z({t_j})$, viewed via \eqref{eq:onetoone} as a subset of $\mathbb{Z}$,
contains $x_j$ for every $j=1,\dots,n$. We prove that these correlation functions are
determinantal: For any $n\ge 1$, $t_j\in\mathbb{Z}_{\ge 0}$, $x_j\in\mathbb{Z}$, we have
$$
\rho_n(x_1,t_1;x_2,t_2;\dots;x_n,t_n)=\det_{i,j=1,\dots,n}[K(x_i,t_i;x_j,t_j)],
$$
where
\begin{multline*}
 K(x_1,t_1;x_2,t_2)=-\frac{1}{2\pi i} \oint_{\Co} \frac{dw}{w^{x_1-x_2+1}}
 {\prod_{t=t_2}^{t_1-1} {g_t(w)}} {\mathbf
 1}_{t_1>t_2}\\
 +\frac{1}{(2\pi i)^2} \oint_{\Co} dw \oint_{\Co'} dz \frac{\prod_{t=0}^{t_1-1}
 g_t(w)}{\prod_{t=0}^{t_2-1} {g_t(z)}} \prod_{j\ge 0}\frac{1-wq^j}{1-zq^j} \frac{z^{x_2}}{w^{x_1+1}}
 \frac{1}{w-z},
\end{multline*}
$\Co$ is a positively oriented contour that includes only the pole $0$ of the integrand, and
$\Co'$ goes from $+i\infty$ to $-i\infty$ between $\Co$ and point $1$.

\subsection{The Feller property}
\label{subsection_feller_intro}
 A Markov transition kernel on a locally compact state space is said
to have the \emph{Feller property} if its natural action on functions on the state space preserves the
space $C_0$ of continuous functions that converge to zero at infinity, cf.\ \cite{EK}.

We prove that for any function $g(x)$ as above, the kernel $\overline P_\infty(g(x))$ on
$\overline{\mathcal{N}}$ enjoys the Feller property.

We also prove that for any $\gamma^+,\gamma^-\ge 0$ and any probability measure on
$\overline{\mathcal{N}}$, the continuous time Markov process on $\overline{\mathcal{N}}$ with
initial condition $\mu$ and transition probabilities
$$
\overline P_\infty\bigl(\exp(t(\gamma_+x+\gamma^-/x))\bigr),\qquad t\ge 0,
$$
is Feller. In addition to the Feller property of the transition kernels, this also requires their
strong continuity in $t$ when they are viewed as an operator semigroup in
$C_0(\overline{\mathcal{N}})$.

Note that such a statement would have made no sense for non-extended Gelfand-Tsetlin schemes
because $\mathcal{N}$ is not locally compact. This is the place where the extended theory becomes
a necessity.

\subsection{Asymptotics of PushASEP}
\label{Subsection_pushASEP_intro}

Fix parameters $\zeta_1,\dots,\zeta_N>0$, $a, b\ge 0$, and assume that at least one of the numbers
$a$ and $b$ does not vanish.  Consider $N$ particles in $\mathbb Z$ located at different sites and
enumerated from left to right. The particle number $n$, $1\le n\le N$, has two exponential clocks
- the ``right clock'' of rate $a\zeta_n$ and the ``left clock'' of rate $b/\zeta_n$. When the
right clock of particle number $n$ rings, one checks whether the position to the right of the
particle is empty. If yes, then the particle jumps to the right by 1, otherwise it stays put. When
the left clock of particle number $n$ rings, it jumps to the left by 1 and pushes the (maybe
empty) block of particles sitting next to it. All $2N$ clocks are independent.

This interacting particle system was introduced in \cite{BF-Push} under the name of
\emph{PushASEP}. It interpolates between well-known TASEP and long-range TASEP, cf.
\cite{Spitzer}.

 Results of \cite[Section 2.7]{BF} imply that if
$\zeta_1=q^{1-N},\zeta_2=q^{2-N},\dots,\zeta_N=1$, and the PushASEP is started from the initial
configuration $(1-N,2-N,\dots,-1,0)$, then at time $t$ it describes the behavior of the
coordinates $(\la^{(N)}_N+1-N,\la^{(N-1)}_{N-1}+2-N,\dots,\la^{(1)}_1)$ of the random infinite
Gelfand-Tsetlin scheme distributed according to the $q$-Gibbs measure corresponding to the
distribution at the same time $t$ of the Markov process on $\overline{\mathcal N}$ with
transitional probabilities $\{\overline P_\infty\bigl(\exp(t(ax+b/x))\bigr)\}_{t\ge 0}$ and the
delta-measure at $\mathbf{0}$ as the initial condition.

We prove, for the $N$-particle PushASEP with any jump rates and any deterministic initial
condition, and independently of the rest of the paper, that at large times the PushASEP particles
demonstrate the following asymptotic behavior: In each asymptotic cluster, particles with the
lowest values of $\zeta_k$ fluctuate on $\sqrt{t}$ scale, and the fluctuations are given by the
distribution of the smallest eigenvalues in an appropriate GUE-minor process, while faster
particles remain at $O(1)$ distances from the blocking slower ones, with geometrically distributed
distances between neighbors. The GUE-governed fluctuations and the geometric distributions are
asymptotically independent.

Note that for TASEP, which arises when $b=0$, $\zeta_1=\zeta_2=\dots=\zeta_N$, the relation of
large time asymptotics with marginals of the GUE-minor process is well known, see\ \cite{Bar}.

\subsection{Organization of the paper}
In Section \ref{sc:walks} we discuss $N$-dimensional random walks with transition probabilities
$P_N(\la\to\mu;g(x))$. In Section \ref{Section_Commutation_relations} we prove that these random
walks are consistent for different values of $N$. In Section \ref{Section_Infinite-dimensional} we
explain how the family of consistent transition probabilities define the transition kernel
$\overline {P}_{\infty}(\la\to\mu;g(x))$ on $\overline{\mathcal N}$ and describe basic properties
of this kernel. The properties of the corresponding Markov processes are studied in Sections
\ref{Section_boundary_process_distributions} and \ref{Section_feller}.

The main results of Section
\ref{Section_boundary_process_distributions} are Theorem
\ref{Theorem_convergence_of_joint_distributions}, where we prove that finite-dimensional
distributions of the constructed processes are $N\to\infty$ limits of those for the $N$-dimensinal
processes; and Theorem \ref{th:corrfunc}, where we prove the corresponding
result for the correlation functions. In Section \ref{Subsection_first_coordinate} we describe
the connection between processes with transition probabilities $\overline {P}_{\infty}(\la\to\mu;g(x))$ and
stochastic dynamics of \cite{BF}.

Section \ref{Section_feller} is devoted to the study of extended Gelfand--Tsetlin schemes. In
Theorem \ref{Theorem_extended_boundary_1} we prove that the space of $q$-Gibbs measures on
extended infinite Gelfand-Tsetlin schemes is isomorphic to the space of Borel probability measures
on the space $\overline{\mathcal{N}}$. Theorems \ref{theorem_Feller_kernel} and
\ref{theorem_Feller_process} contain the proofs of the results on Feller properties mentioned above.
This is the most technical part of the paper.

Finally, in Section \ref{Section_pushASEP} we prove Theorem \ref{th:pushasep} describing the
asymptotic behavior of PushASEP with finitely many particles.

\section{$N$--dimensional random walks}
\label{sc:walks}

In this section we introduce a family of Markov chains on the set of $N$-point configurations in
$\mathbb Z$. In a variety of situations they can viewed as collections of $N$ independent
identically distributed random walks conditioned not to intersect and to have prescribed
asymptotic speeds at large times.

The state space of our processes is the set $\mathbb M_N$ of $N$-point configurations in $\mathbb
Z$:
$$
 \mathbb M_N=\{a(1)< a(2)<\dots<a(N)\mid a(i)\in\mathbb Z\}.
$$
We identify elements of $\mathbb M_N$ with weakly decreasing sequences of integers, which we call
\emph{signatures}. The set of all signatures of size $N$ is denoted by $\mathbb{GT}_N$,
$$
 \mathbb{GT}_N=\{\lambda_1\ge\dots\ge\lambda_N\mid \lambda_i\in\mathbb{Z}\}.
$$
We use the following correspondence between the elements of $\mathbb M_N$ and $\mathbb{GT}_N$:
$$
 a(1)<\dots<a(N)\longleftrightarrow a(N)-N+1\ge a(N-1)-N+2\ge\dots\ge a(2)-1\ge a(1).
$$
{We also agree that $\mathbb M_0$ ($\mathbb {GT}_0$) consists of a singleton --- the empty
configuration (the empty signature $\varnothing$).}

`Signature' is the standard term for the label of an irreducible representation of the unitary
group $U(N)$ over $\mathbb{C}$, and the letters $\mathbb{GT}$ stand for `Gelfand-Tsetlin' as in
Gelfand-Tsetlin bases of the same representations. Although the material of this paper is not
directly related to representation theory, we prefer not to change the notation of related
previous works, cf. \cite{OkOlsh, Gor}.

In studying probability measures and Markov processes on $\mathbb{GT}_N$ we extensively use
\emph{rational Schur functions}. These are Laurent polynomials
 $s_\lambda\in\mathbb{Z}[x_1^{\pm1},\dots,x_N^{\pm 1}]$ indexed by
$\lambda\in\mathbb{GT}_N$ and defined by
$$
 s_\lambda(x_1,\dots,x_N)=\frac{\det_{i,j=1,\dots,N}\left[
 x_i^{\lambda_j+N-j}\right]}{\prod_{i<j}(x_i-x_j)}.
$$

Let us introduce certain generating functions of probability measures on $\mathbb{GT}_N$
that may be viewed as analogues of characteristic functions but that are more suitable for
our purposes.

Fix a non-decreasing sequence of positive reals $\{\xi_i\}_{i=1,2,\dots}$. Let $T_N$ be
the $N$-dimensional torus
$$
T_N=\{(x_1,\dots,x_N)\in\mathbb{C}^N\mid |x_i|=\xi_i\}.
$$

Denote by $\mathcal F_N$ a class of functions on $T_N$ which can be decomposed as
\begin{equation}
\label{eq_definition_of_class}
 f(x_1,\dots,x_N)=\sum_{\lambda\in\mathbb{GT}_N} c_\lambda(f)
 \frac{s_\lambda(x_1,\dots,x_N)}{s_\lambda(\xi_1,\dots,\xi_N)}
\end{equation}
with $c_\lambda(f)\ge 0$ and $\sum_{\lambda} c_\lambda(f)=1$. Note that the latter condition is
equivalent to $f(\xi_1,\dots,\xi_N)=1$.

\begin{lemma}
 The series \eqref{eq_definition_of_class} converges uniformly on $T_N$ and its sum
 $f(x_1,\dots,x_N)$ is a real analytic function on $T_N$.
\end{lemma}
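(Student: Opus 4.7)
The plan is to apply the Weierstrass M-test, with the single key input being the uniform bound $|s_\lambda(x_1,\dots,x_N)|\le s_\lambda(\xi_1,\dots,\xi_N)$ valid on the entire torus $T_N$. Given this bound, every term of the series \eqref{eq_definition_of_class} is dominated in absolute value by $c_\lambda(f)$, and the hypothesis $\sum_\lambda c_\lambda(f)=1$ provides a summable majorant, giving uniform absolute convergence on $T_N$ at once.

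The core step is therefore the Schur-ratio estimate. I would first reduce to the case of a genuine partition by a trivial shift: setting $\tilde\lambda=(\lambda_1-\lambda_N,\dots,\lambda_{N-1}-\lambda_N,0)$ one has $\tilde\lambda\in\mathbb{GT}_N$ with non-negative entries, and the bialternant formula immediately yields
$$
s_\lambda(x_1,\dots,x_N)=(x_1\cdots x_N)^{\lambda_N}\,s_{\tilde\lambda}(x_1,\dots,x_N).
$$
For the partition $\tilde\lambda$, the Schur polynomial admits the combinatorial expansion
$$
s_{\tilde\lambda}(y_1,\dots,y_N)=\sum_{T}y_1^{m_1(T)}\cdots y_N^{m_N(T)}
$$
summed over semistandard Young tableaux $T$ of shape $\tilde\lambda$ with entries in $\{1,\dots,N\}$, with non-negative coefficients. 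The triangle inequality applied to this expansion, combined with $|x_i|=\xi_i$ on $T_N$, yields $|s_{\tilde\lambda}(x)|\le s_{\tilde\lambda}(\xi)$, and multiplying by $|(x_1\cdots x_N)^{\lambda_N}|=(\xi_1\cdots\xi_N)^{\lambda_N}$ produces the desired bound.

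For the real analyticity I would use that each summand $s_\lambda(x)/s_\lambda(\xi)$ extends from $T_N$ to a Laurent polynomial on $(\mathbb{C}^\ast)^N$, and then promote the estimate above to a small complex annular neighborhood $\{(x_1,\dots,x_N):|x_j|\in[\xi_j e^{-\rho},\xi_j e^{\rho}]\}$ of the torus. Once this is established, uniform convergence of a sequence of holomorphic functions on such a neighborhood would force the limit $f$ to be holomorphic there, and in particular real analytic on $T_N$. I expect this extension of the estimate off $T_N$ to be the main technical obstacle: the naive modulus bound on a perturbed torus degrades by a factor of order $e^{O(\rho|\lambda|)}$, so one would need either a sharper use of the bialternant cancellations in $s_\lambda$, or structural input specific to the admissible coefficient sequences $\{c_\lambda(f)\}$ that arise in the subsequent Markov-chain analysis.
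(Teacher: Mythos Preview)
Your argument for uniform convergence is exactly the paper's: the paper likewise appeals to the combinatorial (semistandard tableau) formula for Schur functions to obtain
\[
\sup_{(x_1,\dots,x_N)\in T_N}\left|\frac{s_\lambda(x_1,\dots,x_N)}{s_\lambda(\xi_1,\dots,\xi_N)}\right|=1,
\]
and then the Weierstrass $M$-test with majorant $c_\lambda(f)$ finishes. Your shift $\tilde\lambda=\lambda-\lambda_N$ is the standard way to reduce a signature to a partition so that the tableau expansion applies.

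You are right to flag real analyticity as a separate, unresolved issue; the paper's proof does not address it either. In fact the obstacle you isolate is real and not removable in the stated generality: already for $N=1$, $\xi_1=1$, the class $\mathcal F_1$ consists of all $f(x)=\sum_{n\in\mathbb Z}c_n x^n$ on the unit circle with $c_n\ge0$, $\sum_n c_n=1$, and such an $f$ is real analytic in the angle variable only when the $c_n$ decay exponentially. Taking $c_n=6/(\pi^2 n^2)$ for $n\ge1$ and $c_n=0$ otherwise gives $f(e^{i\theta})=\tfrac{6}{\pi^2}\operatorname{Li}_2(e^{i\theta})$, which is continuous but has a corner at $\theta=0$ (its derivative $-\log|2\sin(\theta/2)|$ diverges there), hence is not real analytic. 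So the lemma should be read as asserting uniform convergence and continuity; the real-analyticity clause does not follow without further hypotheses on the coefficients $\{c_\lambda(f)\}$, and your intuition that extra decay or structural input would be needed is exactly correct.
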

\begin{proof}
 This fact immediately follows from a simple observation
 $$
  \sup_{(x_1,\dots,x_N)\in T_N} \left|\frac{s_\lambda(x_1,\dots,x_N)}
{s_\lambda(\xi_1,\dots,\xi_N)}\right| =1,
 $$
 which, in turn, follows from the combinatorial formula for Schur functions (see e.g.\
Section I.5 of \cite{Mac}).
\end{proof}
The authors know no intristic definition of the set $\mathcal F_N$ and it would be interesting to
find one.

We note without proof that $\mathcal F_N$ is a closed subset of the Banach space of continuous
function on $T_N$, see \cite[Proposition 4.11]{Gor} for a similar fact and \cite[Section 6.2]{Gor}
for its proof which translates to our case almost literally.

Let $P$ be a probability measure on $\mathbb{GT}_N$. Its \emph{Schur generating function} is a
function ${\mathcal S}(x_1,\dots,x_N;P)\in\mathcal{F}_N$ with coefficients $c_\lambda(\mathcal S)$
defined through
$$
 c_\lambda(\mathcal S)=P(\lambda),
$$
where $P(\lambda)$ stays for the measure of the singleton $\{\lambda\}$. Let $\mathcal L_N$ be the
map sending probability measures on $\mathbb{GT}_N$ to the corresponding functions in $\mathcal
F_N$. Clearly this is an isomorphism of convex sets. {We agree that $\mathcal F_0$
contains a single function (constant $1$) which corresponds to a unique probability measure on the
singleton $\mathbb{GT}_0$.}

Our next goal is to construct a family of stochastic matrices with rows and columns
enumerated by elements of $\mathbb{GT}_N$. Let $Q$ be one such stochastic matrix. Then $Q$ acts on
probability measures on $\mathbb{GT}_N$
$$
 P\mapsto QP.
$$
We will always identify stochastic matrices with the corresponding operators and use the same
notations for them.

Let $\widetilde Q$ be a bounded linear operator in the Banach space of continuous functions on
$T_N$ such that $\widetilde Q(\mathcal F_N) \subset \mathcal F_N$. Then
 $\mathcal L_N^{-1}\widetilde Q \mathcal L_N$ is a Markovian linear operator or a
 stochastic matrix.

For a function $g(x)$ on $\bigcup_{i=1}^N\{x\in\mathbb{C}:|x|=\xi_i\}$, define an operator
$\widetilde Q^g_N$ via
$$
 \widetilde Q^g_N: f(x_1,\dots,x_N)\mapsto f(x_1,\dots,x_N)\prod_{i=1}^N \frac{g(x_i)}{g(\xi_i)},
$$
{(here we agree that $Q^g_0$ is the identity operator).} Clearly, if the
function $g(x)$ is continuous then $\widetilde Q^g_N$ is a bounded linear operator in the space of
continuous functions on $T_N$.

\begin{lemma}
 \label{Lemma_decomposition_of_multiplied_s} Suppose that $g(x)$ can be decomposed into a converging
 power series in annulus $\mathbf K=\{x\in\mathbb{C}: r<|x|< R\}$:
$$
  g(x)=\sum_{k\in\mathbb{Z}} c_k x^k,\quad x\in\mathbf K.
$$
Then for $(x_1,\dots,x_N)\in \mathbf K^N$ we have
 \begin{equation}
  \label{eq_decomposition_of_multiplied_s}
  s_{\lambda}(x_1,\dots,x_N)g(x_1)\cdots g(x_N)=\sum_{\mu\in\mathbb{GT}_N} \det_{i,j=1,\dots,N}
  \biggl[
  c_{\mu_i-i-\lambda_j+j}\biggr] s_{\mu}(x_1,\dots,x_N).
 \end{equation}
\end{lemma}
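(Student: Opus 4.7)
The plan is to reduce the identity to the Weyl determinantal formula $s_\lambda(x_1,\ldots,x_N)=\det[x_i^{\lambda_j+N-j}]/\prod_{i<j}(x_i-x_j)$ and then expand everything as a multilinear sum in the Laurent coefficients of $g$. First I would absorb the factors $g(x_i)$ into the $i$-th row of the numerator determinant to obtain
$$
s_\lambda(x_1,\ldots,x_N)\prod_{i=1}^N g(x_i)=\frac{1}{\prod_{i<j}(x_i-x_j)}\det_{i,j=1,\dots,N}\bigl[x_i^{\lambda_j+N-j}g(x_i)\bigr].
$$
Substituting the Laurent expansion $g(x_i)=\sum_k c_k x_i^k$ (which converges absolutely on $\mathbf K$) inside each entry, and using multilinearity of the determinant across the $N$ independent summation indices $m_1,\ldots,m_N$, I would rewrite the right-hand side as
$$
\sum_{(m_1,\ldots,m_N)\in\mathbb{Z}^N}\left(\prod_{j=1}^N c_{m_j-\lambda_j-N+j}\right)\det_{i,j}\bigl[x_i^{m_j}\bigr].
$$

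Next I would group tuples by their unordered content. Tuples with a repeated entry contribute zero. Each tuple of $N$ distinct integers is uniquely of the form $m_j=\mu_{\sigma(j)}+N-\sigma(j)$ for some $\mu\in\mathbb{GT}_N$ and some permutation $\sigma\in S_N$, giving $\det[x_i^{m_j}]=\operatorname{sgn}(\sigma)\det[x_i^{\mu_j+N-j}]$. Summing first over $\sigma$ with $\mu$ fixed, reindexing via $\tau=\sigma^{-1}$, the coefficient of $\det[x_i^{\mu_j+N-j}]$ becomes
$$
\sum_{\tau\in S_N}\operatorname{sgn}(\tau)\prod_{i=1}^N c_{\mu_i-i-\lambda_{\tau(i)}+\tau(i)}=\det_{i,j}\bigl[c_{\mu_i-i-\lambda_j+j}\bigr].
$$
Dividing both sides by $\prod_{i<j}(x_i-x_j)$ converts $\det[x_i^{\mu_j+N-j}]$ into $s_\mu(x_1,\ldots,x_N)$, which yields \eqref{eq_decomposition_of_multiplied_s}.

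The only analytic point to verify is the legality of the interchange of summation and determinant expansion, together with the regrouping of the unordered sum. I would pick a compact subannulus $[\rho,R']\subset(r,R)$ containing all the $|x_i|$; then $\sum_k|c_k|\rho^k$ and $\sum_k|c_k|(R')^k$ are both finite, and bounding each $|x_i^{m_j}|$ by $(R')^{m_j}\vee\rho^{m_j}$ shows that the iterated series
$$
\sum_{(m_1,\ldots,m_N)}\prod_{j}\bigl|c_{m_j-\lambda_j-N+j}\,x_{\pi(j)}^{m_j}\bigr|
$$
factorizes into a product of $N$ convergent one-dimensional Laurent-type sums. Absolute convergence then justifies both the expansion and the Fubini-type regrouping by $\mu$, and also shows that the resulting series on the right-hand side of \eqref{eq_decomposition_of_multiplied_s} converges absolutely on $\mathbf K^N$.

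I do not expect a serious obstacle; the only delicate part is the combinatorial bookkeeping of signs when converting $\det[x_i^{m_j}]$ into $\det[x_i^{\mu_j+N-j}]$ and reindexing $\sigma\mapsto\tau=\sigma^{-1}$ so that the surviving sum is exactly the determinant $\det[c_{\mu_i-i-\lambda_j+j}]$ in the correct row/column convention.
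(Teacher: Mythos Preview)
Your proposal is correct and is essentially the same approach as the paper's proof, which simply says ``multiply both sides by $\prod_{i<j}(x_i-x_j)$ and compare coefficients of the monomials''; you have just filled in the details of that comparison (the multilinear expansion, the grouping by $\mu$, and the sign bookkeeping via $\tau=\sigma^{-1}$), together with the absolute-convergence justification.
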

\begin{proof} Straightforward computation. One multiplies both sides of
\eqref{eq_decomposition_of_multiplied_s} by $\prod_{i<j} (x_i-x_j)$ and compares the coefficients
of the monomials, cf. \cite[Lemma 6.5]{Olsh}.
\end{proof}
{  Denote by $\mathfrak g(N,\xi)$ the set of functions
consisting of
$$
\begin{array}{rrrr}
   x;&&
   x^{-1};&\\
   (1+\beta x),&\beta>0; &
   (1+\beta x^{-1}),& \beta>0; \\
   e^{\gamma x},&\gamma>0; &
   e^{\gamma x^{-1}},& \gamma>0;\\
   (1-\alpha x)^{-1}&\quad 0<\alpha<\xi_N^{-1};& (1-\alpha x^{-1})^{-1},&  0<\alpha<\xi_1.
\end{array}
$$
Also denote $\mathfrak g(\infty,\xi)=\bigcap_N \mathfrak g(N,\xi)$. In what follows we call
$g(x)\in\mathfrak g(\infty,\xi)$ an \emph{elementary admissible function}. Let $\mathcal G(N,\xi)$
denote the set of all finite products of the functions of $\mathfrak g(N,\xi)$ and denote
$\mathcal G(\infty,\xi)=\bigcap_N \mathcal G(N,\xi)$. We call $g(x)\in\mathcal G(\infty,\xi)$ an
\emph{admissible function}. }

\begin{proposition}
 \label{Proposition_admissible_functions} If $g\in \mathcal G(N,\xi)$, then $$\widetilde
 Q_N^g(\mathcal F_N) \subset\mathcal F_N.$$
\end{proposition}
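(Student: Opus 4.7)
My plan is to combine three observations: linearity of $\widetilde Q_N^g$ in $f$, its multiplicativity $\widetilde Q_N^{g_1g_2}=\widetilde Q_N^{g_1}\widetilde Q_N^{g_2}$ in $g$, and the explicit expansion of Lemma~\ref{Lemma_decomposition_of_multiplied_s}. Since $\mathcal F_N$ consists of non-negative combinations $\sum_\lambda c_\lambda s_\lambda(x)/s_\lambda(\xi)$ with $\sum c_\lambda=1$, and the preceding lemma guarantees uniform convergence of such series on $T_N$, it is enough to show that for each $\lambda\in\mathbb{GT}_N$,
$$
\widetilde Q_N^g\!\left(\frac{s_\lambda(x)}{s_\lambda(\xi)}\right)=\sum_{\mu\in\mathbb{GT}_N} p_g(\lambda\to\mu)\,\frac{s_\mu(x)}{s_\mu(\xi)},
$$
with $p_g(\lambda\to\mu)\ge 0$ and $\sum_\mu p_g(\lambda\to\mu)=1$; composition of two such expansions being again of the same type, multiplicativity then lets us restrict to the eight elementary factors $g\in\mathfrak g(N,\xi)$.

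For elementary $g$, the parameter bounds in the definition of $\mathfrak g(N,\xi)$ are tailored so that the Laurent expansion $g(x)=\sum_{k\in\mathbb Z} c_k x^k$ converges in an annulus strictly containing all circles $|x|=\xi_i$. Lemma~\ref{Lemma_decomposition_of_multiplied_s} then gives
$$
p_g(\lambda\to\mu)=\frac{\det_{i,j=1,\dots,N}[c_{\mu_i-i-\lambda_j+j}]\,s_\mu(\xi)}{s_\lambda(\xi)\,\prod_{i=1}^N g(\xi_i)}.
$$
The normalization $\sum_\mu p_g(\lambda\to\mu)=1$ follows for free by setting $x_i=\xi_i$ in the identity, and positivity of the denominator together with $s_\mu(\xi)>0$ (each $\xi_i>0$, and $s_\mu$ becomes a product of $(\xi_1\cdots\xi_N)^{\mu_N}$ and a Schur polynomial of a partition) reduces non-negativity of $p_g(\lambda\to\mu)$ to that of the Jacobi--Trudi-type determinant in the numerator. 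The latter is settled by the classical Pieri rules applied to the Schur expansion of $\prod_i g(x_i)$: $\sum_k\beta^k s_{(1^k)}(x)$ (vertical strips) for $g=1+\beta x$, $\sum_k\alpha^k s_{(k)}(x)$ (horizontal strips) for $g=(1-\alpha x)^{-1}$, and $\exp(\gamma p_1(x))=\sum_k(\gamma^k/k!)\sum_\mu f^\mu s_\mu(x)$ for $g=e^{\gamma x}$, where $f^\mu$ counts standard Young tableaux of shape $\mu$. The four $x^{-1}$-type factors reduce to these via $\prod_i g(x_i^{-1})=(x_1\cdots x_N)^{-N_0}\cdot\prod_i\tilde g(x_i)$ for an appropriate power and positive-power factor $\tilde g$, so $s_\lambda$ gets shifted by a constant vector (since $(x_1\cdots x_N)^m s_\lambda=s_{\lambda+(m,\dots,m)}$) before the Pieri expansion is applied; the degenerate factors $g=x^{\pm 1}$ produce the deterministic shifts $\lambda\mapsto\lambda\pm(1,\dots,1)$.

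The main technical obstacle is handling the infinite Schur sum arising from $(1-\alpha x^{\pm 1})^{-1}$ and $e^{\gamma x^{\pm 1}}$: one must verify that $\sum_\mu p_g(\lambda\to\mu)\,s_\mu(x)/s_\mu(\xi)$ not only agrees formally with the pointwise product $\bigl(s_\lambda(x)/s_\lambda(\xi)\bigr)\prod_i g(x_i)/g(\xi_i)$ but converges to it uniformly on $T_N$, so the right-hand side genuinely defines an element of $\mathcal F_N$. Uniform convergence becomes automatic from the lemma preceding the proposition once non-negativity and normalization of $p_g(\lambda\to\mu)$ have been established, while the formal-to-actual identification relies on the absolute convergence of the Laurent series for $g$ throughout an annulus containing $T_N$ --- precisely the content of the parameter bounds defining $\mathfrak g(N,\xi)$.
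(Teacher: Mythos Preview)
Your proof is correct and follows the same overall architecture as the paper's: reduce to elementary factors by multiplicativity, apply Lemma~\ref{Lemma_decomposition_of_multiplied_s}, and verify that the resulting Schur coefficients are non-negative and sum to one. The paper's proof is much terser on the non-negativity step: it simply cites the classical results on totally positive Toeplitz sequences and extreme characters of $U(\infty)$ (Edrei, Voiculescu, Boyer, Vershik--Kerov, Okounkov--Olshanski), which guarantee that for each elementary $g$ the determinants $\det[c_{\mu_i-i-\lambda_j+j}]$ are all non-negative. You instead establish this directly via Pieri rules and the expansion $e^{\gamma p_1}=\sum_\mu (\gamma^{|\mu|}/|\mu|!)f^\mu s_\mu$, which is more self-contained and makes the combinatorics explicit; the paper's approach buys brevity and situates the result within the total-positivity framework that motivates the very definition of $\mathfrak g(N,\xi)$. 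Your handling of the $x^{-1}$-type factors via the shift $(x_1\cdots x_N)^m s_\lambda=s_{\lambda+(m,\dots,m)}$ is fine, though it is perhaps cleaner to invoke the duality $s_\lambda(x_1^{-1},\dots,x_N^{-1})=s_{(-\lambda_N,\dots,-\lambda_1)}(x_1,\dots,x_N)$ directly. You also spell out the convergence issue (interchanging the outer sum over $\lambda$ with the inner Schur expansion) more carefully than the paper, which simply says ``changing the order of summation''; your observation that the preceding lemma's bound $|s_\mu(x)/s_\mu(\xi)|\le 1$ on $T_N$ makes this automatic once non-negativity is known is exactly right.
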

\begin{proof}
 For $g(x)\in \mathcal G(N,\xi)$ we can use Lemma \ref{Lemma_decomposition_of_multiplied_s}.
 Furthermore, it is known that in these cases all determinants in the decomposition
 \eqref{eq_decomposition_of_multiplied_s} are non-negative (see \cite{Ed}, { \cite{Vo}, \cite{Bo}, \cite{Vk_unitary}, \cite{OkOlsh}}).

 Then we proceed as follows: Take a function $f\in\mathcal F_N$, decompose it into a sum of
 Schur polynomials. Then by Lemma \ref{Lemma_decomposition_of_multiplied_s} we obtain a double sum
 for $\widetilde Q_N^g(f)$. Changing the order of summation we see that $\widetilde
 Q_N^g(f)\in\mathcal F_N$.
\end{proof}

Set
$$
P_N(g(x))= \mathcal L_N^{-1}\circ \widetilde Q^g_N\circ \mathcal L_N$$ and let $
 P_N(\lambda\to\mu ;g(x))$ be the matrix element of the corresponding stochastic matrix.

\begin{proposition}
\label{Proposition_transition_prob_explicit}
 Let
 $$
  g(x)=\sum_{k\in\mathbb{Z}} c_k x^k
 $$
 be a decomposition of $g(x)$ into a power series converging for all $x$ such that $\min_{i}
 \xi_i\le |x|\le\ \max_{i} \xi_i$, $i=1,2\dots,N$. Then
 \begin{equation}
 \label{eq_trans_prob_section2}
  P_N(\lambda\to\mu ;g(x))=\left(\prod_{i=1}^N \frac{1}{g(\xi_i)}\right)
  \det_{i,j=1,\dots,N}\biggl[c_{\mu_i-i-\lambda_j+j}\biggr]
  \frac{s_\mu(\xi_1,\dots,\xi_N)}{s_\lambda(\xi_1,\dots,\xi_N)}.
 \end{equation}
\end{proposition}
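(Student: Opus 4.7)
The plan is to directly trace the definitions: apply the composition $\mathcal{L}_N^{-1}\circ\widetilde Q_N^g\circ \mathcal{L}_N$ to the delta measure $\delta_\lambda\in \mathbb{GT}_N$ and read off the coefficient of the Schur generating function attached to $\mu$. By construction of the Schur generating function,
\[
\mathcal{L}_N(\delta_\lambda)(x_1,\dots,x_N)=\frac{s_\lambda(x_1,\dots,x_N)}{s_\lambda(\xi_1,\dots,\xi_N)}\,.
\]
Applying $\widetilde Q^g_N$ multiplies this by $\prod_{i=1}^N g(x_i)/g(\xi_i)$, which is legitimate as a pointwise operation on $T_N$ since the hypothesis on the annulus of convergence of $g$ contains $T_N$.

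Next I would invoke Lemma \ref{Lemma_decomposition_of_multiplied_s} to expand
\[
s_\lambda(x_1,\dots,x_N)\,g(x_1)\cdots g(x_N)=\sum_{\mu\in \mathbb{GT}_N} \det_{i,j=1,\dots,N}\bigl[c_{\mu_i-i-\lambda_j+j}\bigr]\,s_\mu(x_1,\dots,x_N),
\]
which holds on $T_N$ by the annulus assumption on $g$. Dividing by $s_\lambda(\xi_1,\dots,\xi_N)\prod_i g(\xi_i)$ and rewriting each Schur polynomial as $s_\mu(\xi_1,\dots,\xi_N)\cdot \bigl(s_\mu(x_1,\dots,x_N)/s_\mu(\xi_1,\dots,\xi_N)\bigr)$ yields
\[
\widetilde Q^g_N\,\mathcal{L}_N(\delta_\lambda)=\sum_{\mu}\left(\prod_{i=1}^N\frac{1}{g(\xi_i)}\right)\det\bigl[c_{\mu_i-i-\lambda_j+j}\bigr]\frac{s_\mu(\xi_1,\dots,\xi_N)}{s_\lambda(\xi_1,\dots,\xi_N)}\cdot\frac{s_\mu(x_1,\dots,x_N)}{s_\mu(\xi_1,\dots,\xi_N)}.
\]
Since the right-hand side is a member of $\mathcal F_N$ by Proposition \ref{Proposition_admissible_functions}, applying $\mathcal{L}_N^{-1}$ simply identifies its coefficients with the values of the probability measure $P_N(g(x))\,\delta_\lambda$. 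Picking out the coefficient attached to $\mu$ gives exactly the claimed formula \eqref{eq_trans_prob_section2}.

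The only genuine subtlety is the interchange of summation and multiplication that underlies the step above: one must know that the double sum
$\sum_{\mu}\sum_{\lambda}\det[c_{\mu_i-i-\lambda_j+j}]\,P(\lambda)\,s_\mu(x_1,\dots,x_N)/s_\lambda(\xi_1,\dots,\xi_N)$
converges absolutely on $T_N$, but for a delta measure this is trivial, so for the present statement there is nothing further to check. Thus this proposition is essentially a bookkeeping corollary of Lemma \ref{Lemma_decomposition_of_multiplied_s} together with the definition of the Schur generating function, and I would expect the proof to be short.
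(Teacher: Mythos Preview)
Your proof is correct and follows exactly the approach the paper takes: the paper's proof is the single sentence ``This is an immediate consequence of Lemma \ref{Lemma_decomposition_of_multiplied_s} and definitions,'' and you have simply spelled out those definitions and that lemma application explicitly. The only remark is that invoking Proposition \ref{Proposition_admissible_functions} is not strictly needed to read off coefficients (the expansion from Lemma \ref{Lemma_decomposition_of_multiplied_s} already gives the answer formally), but it does no harm.
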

\begin{proof}
 This is an immediate consequence of Lemma \ref{Lemma_decomposition_of_multiplied_s} and
 definitions.
\end{proof}

{\bf Remark 1.}  For $g(x)=(1-\alpha x)^{-1}$ the determinants in the proposition above can be
explicitly evaluated:

\begin{equation}\det_{i,j=1,\dots,N}\biggl[c_{\mu_i-i-\lambda_j+j}\biggr]=
\begin{cases} \alpha^{\sum_{i=1}^N(\mu_i-\lambda_i)},& \text{ if }\
\mu_{i-1}\leq \lambda_i\leq \mu_i,\   1\leq i\leq N,\\
0,& \text{otherwise}.
\end{cases}
\end{equation}
(The condition $\mu_0\leq \lambda_1$ above is empty.)

For $g(x)=1+\beta x$, the evaluation takes the form
$$
 \det_{i,j=1,\dots,N}\biggl[c_{\mu_i-i-\lambda_j+j}\biggr]=\begin{cases}
 \beta^{\sum_{i=1}^N(\mu_i-\lambda_i)}, \text { if } \mu_i-\lambda_i\in \{0,1\},\   1\leq i\leq N,\\
 0,\text{ otherwise.}
 \end{cases}
$$

Similar formulas exist for $g(x)=(1-\alpha^{-1} x)^{-1}$ and $g(x)=1+\beta x^{-1}$ with $\lambda$
and $\mu$ interchanged.

\smallskip

{{\bf Remark 2.} When $\xi$ is a geometric progression, the Schur function
$s_\lambda(\xi_1,\dots,\xi_N)$ can be evaluated as follows (see e.g.~\cite[Example 3.1]{Mac})
\begin{equation}\label{eq:princ_spec}
 s_\lambda(1,\dots,q^{1-N})=q^{-((N-1)\lambda_1+(N-2)\la_2+\dots+\lambda_{N-1})}
\prod_{i<j} \frac{1-q^{\lambda_i-i-\lambda_j+j}}{1-q^{j-i}}\,.
\end{equation}
 Then the formula for the transition probability \eqref{eq_trans_prob_section2} turns into
 \eqref{eq_transition_prob_introduction}.

\smallskip

{\bf Remark 3.} If $\xi_i=q^{1-i}$ and $g(x)=(1+\beta x^{\pm 1})$, then one can formally send
$N\to\infty$ in formulas \eqref{eq_transition_prob_introduction}, \eqref{eq_trans_prob_section2}
and obtain well-defined transition probabilities; while for $g(x)=\exp(\gamma x)$ such formal
limit transition does not lead to anything meaningful.

}
\medskip

Denote by $\mathcal X_{N,g}(t)$ a discrete time homogeneous Markov chain on $\mathbb{GT}_N$ with
transition probabilities $P_N(\lambda\to\mu; g(x)) $, started from the delta-measure at the zero
signature $\mathbf{0}=(0\ge 0\ge\dots\ge 0)$.

Also let $\mathcal Y_{N,\gamma_+,\gamma_-}(t)$ be the continuous time homogeneous  Markov chain on
$\mathbb{GT}_N$ with transition probabilities $P_N\left(\lambda\to\mu; \exp(t(\gamma_+ x+\gamma_-
x^{-1}))\right) $, started from delta-measure on zero signature $\mathbf{0}$. (Clearly the
corresponding stochastic matrices form a semigroup.)

A number of such Markov chains with various $g(x)$, $\gamma_+$, $\gamma_-$, have independent
probabilistic interpretations. Let us list some of them.

\begin{itemize}

\item For $N=1$ and $\xi_1=1$, $\mathcal X_{1,1+\beta x}(t)$ and $\mathcal X_{N,1+\beta x^{-1}}(t)$
are simple Bernoulli random walks with jump probability $\beta(1+\beta)^{-1}$ and particle jumping
to the right and to the left, respectively; $\mathcal X_{1,x^{\pm 1}}(t)$ is the deterministic
shift of the particle to the right (left) by 1; $\mathcal X_{1,(1-\alpha x)^{-1}}(t)$ and
$\mathcal X_{N,(1-\alpha x^{-1})^{-1}}(t)$ are random walks with geometrical jumps; $\mathcal
Y_{1,\gamma_+,0}(t)$ is the Poisson process of intensity $\gamma_+$ and  $\mathcal
Y_{1,\gamma_+,\gamma_-}(t)$ is the homogeneous birth and death process on $\mathbb{Z}$.

\item For any $N\ge 1$ and an arbitrary sequence $\xi$, it is proved in \cite{KOR} that $\mathcal
Y_{N,\gamma_+,0}(t)$ can be viewed as $N$ independent rate $1$ Poisson processes conditioned never
to collide and to have asymptotic speeds of particles given by $\xi_i$. Similar interpretations
could be given for $\mathcal X_{N,g}(t)$.

\item If $\xi_1=\xi_2=\dots=\xi_N=1$, then $\mathcal X_{N,1+\beta x}(t)$ can be obtained as a limit
of uniformly distributed 3d Young diagrams in $a\times b\times c$ box (see
\cite{J_nonintersecting}, \cite{J_Hahn}, \cite{JN}, \cite{Gor_Hahn})  with $a=N$ and $b,c\to
\infty$ in such a way that $c/b\to\beta$.

\item The connection to exclusion processes is explained in Section \ref{Subsection_first_coordinate} below.

\end{itemize}

Proposition \ref{Proposition_transition_prob_explicit} implies that the one-dimensional
distribution of $\mathcal X_{N,g}(t)$ at a given time $t_0$ is a (possibly, two-sided) \emph{Schur
measure}, cf. \cite{B-Schur},
$$
 {\rm Prob}(\mathcal X_{N,g}(t_0)=\lambda) =\left(\prod_{i=1}^N \frac{1}{g(\xi_i)}\right)^{t_0}
  \det_{i,j=1,\dots,N}\biggl[c_{\lambda_i-i+j}^{t_0}\biggr] {s_\lambda(\xi_1,\dots,\xi_N)},
$$
where $c^{t_0}_k$ are the coefficients of the Laurent series for $\bigl(g(x)\bigr)^{t_0}$.

If we view $\mathcal X_{N,g}(t_0)$ as a point configuration in $\mathbb{Z}$, i.e.\ as an element
of $\mathbb M_N$, then we may speak about its \emph{correlation functions}:
$$
 \rho_n(x_1,\dots,x_n)={\rm Prob}(x_1\in\mathcal X_{N,g}(t_0),\dots, x_n\in \mathcal X_{N,g}(t_0)).
$$

As shown in \cite{BK}, the correlation functions of the two-sided Schur measures have a
determinantal form {(for the one-sided Schur measure this was proved earlier, see \cite{Ok_wedge})}
$$
\rho_n(x_1,\dots,x_n)=\det_{i,j=1,\dots n}[K(x_i,x_j)],
$$
with \emph{correlation kernel} $K$ given by a double contour integral
\begin{multline}
\label{eq_x10}
 K(x_1;x_2)=\frac{1}{(2\pi i)^2} \oint_{\Co} dw \oint_{\Co'} dz \left(\frac{
 g(w)}{g(z)}\right)^{t_0} \prod_{\ell=1}^{N}\frac{ (1- w/\xi_\ell)}{(1-
 z/\xi_{\ell})} \frac{z^{x_2}}{w^{x_1+1}} \frac{1}{w-z},
\end{multline}
where the (positively oriented) contour $\Co$ includes only the pole at $0$ and $\Co'$ includes
only the poles at $\xi_i$. A similar formula exists for dynamical correlation functions (see
\cite[Section 2.8]{BF} and Section \ref{Subsection_correlation_functions} below), describing
finite-dimensional distributions of $\mathcal X_{N,g}(t)$ and also for $\mathcal
Y_{1,\gamma_+,\gamma_-}(t)$.

Note that if $\xi_i$'s grow fast enough as $i\to\infty$, then one may formally pass to the limit
$N\to\infty$ in \eqref{eq_x10}. Thus it is natural to expect that there is an \emph{infinite
dimensional} Markov process which is a $N\to\infty$ limit of processes $\mathcal X_{N,g}(t)$ (or
$\mathcal Y_{N,\gamma_+,\gamma_-}(t)$). One goal of this paper is to define this limiting process
rigorously and to show that its finite-dimensional distributions are indeed given by limits of
\eqref{eq_x10}.



\section{Commutation relations}

\label{Section_Commutation_relations}

In this section we show that transition probabilities $P_N(\lambda\to\mu; g(x))$ are in a
certain way consistent for various $N$.

We start by introducing stochastic matrices with rows and columns indexed by elements of
$\mathbb{GT}_N$ and $\mathbb{GT}_{N-1}$, $N\ge 1$. In other words, we want to define transition
probabilities from $\mathbb{GT}_N$ to $\mathbb{GT}_{N-1}$. As above, it is convenient to pass from
stochastic matrices to maps between spaces of Schur generating functions of probability measures.
Thus, we want to introduce a map
$$
 \tilde P_N^{\downarrow}: \mathcal F_N\to \mathcal F_{N-1}.
$$

\begin{proposition} The specialization map
$$
 \widetilde P_N^{\downarrow}: f(x_1,\dots,x_N)\to f(x_1,\dots,x_{N-1},\xi_N)
$$
 is a bounded linear operator between appropriate spaces of continuous functions, and
$\widetilde P_N^{\downarrow}(\mathcal F_N)\subset \mathcal
 F_{N-1}$.
\end{proposition}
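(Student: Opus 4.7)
The plan is to use the classical branching rule for Schur polynomials, which says that for $\lambda\in\mathbb{GT}_N$,
$$
  s_\lambda(x_1,\dots,x_{N-1},\xi_N)=\sum_{\nu\in\mathbb{GT}_{N-1},\ \nu\prec\lambda}
  \xi_N^{|\lambda|-|\nu|}\,s_\nu(x_1,\dots,x_{N-1}),
$$
where $\nu\prec\lambda$ means $\lambda_{i+1}\le\nu_i\le\lambda_i$. Boundedness of $\widetilde P_N^{\downarrow}$ is immediate, since restricting a continuous function on $T_N$ to the slice $\{x_N=\xi_N\}$ only decreases the sup norm.

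For the inclusion $\widetilde P_N^{\downarrow}(\mathcal F_N)\subset \mathcal F_{N-1}$, take $f\in\mathcal F_N$ with decomposition \eqref{eq_definition_of_class}. Applying the branching rule term by term and interchanging summations (which is justified by the absolute, uniform convergence guaranteed by the lemma just proved), I would write
$$
  f(x_1,\dots,x_{N-1},\xi_N)=\sum_{\nu\in\mathbb{GT}_{N-1}} c_\nu'\,
  \frac{s_\nu(x_1,\dots,x_{N-1})}{s_\nu(\xi_1,\dots,\xi_{N-1})},
$$
where
$$
  c_\nu'=\sum_{\lambda\in\mathbb{GT}_N:\,\nu\prec\lambda} c_\lambda(f)\,
  \xi_N^{|\lambda|-|\nu|}\,\frac{s_\nu(\xi_1,\dots,\xi_{N-1})}{s_\lambda(\xi_1,\dots,\xi_N)}.
$$
Non-negativity of $c_\nu'$ is obvious, since $c_\lambda(f)\ge 0$, $\xi_N>0$, and the Schur polynomials take positive values at positive arguments (visible from the combinatorial formula in terms of semistandard Young tableaux).

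To show $\sum_\nu c_\nu'=1$, I apply the branching rule once more, this time specialized at $x_i=\xi_i$:
$$
  s_\lambda(\xi_1,\dots,\xi_N)=\sum_{\nu\prec\lambda}\xi_N^{|\lambda|-|\nu|}\,s_\nu(\xi_1,\dots,\xi_{N-1}).
$$
Swapping the order of summation (legal by non-negativity) gives
$$
  \sum_{\nu} c_\nu'=\sum_{\lambda} \frac{c_\lambda(f)}{s_\lambda(\xi_1,\dots,\xi_N)}
  \sum_{\nu\prec\lambda}\xi_N^{|\lambda|-|\nu|}\,s_\nu(\xi_1,\dots,\xi_{N-1})
  =\sum_{\lambda} c_\lambda(f)=1.
$$
Thus the image lies in $\mathcal F_{N-1}$.

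There is no real obstacle here: the proof is entirely driven by the branching rule, and both non-negativity and the normalization follow from it by inspection. The only point requiring a word of care is the justification for swapping the two infinite summations, which is provided by the uniform convergence of \eqref{eq_definition_of_class} on $T_N$ established in the preceding lemma (together with the finiteness of the set $\{\nu:\nu\prec\lambda\}$ for each $\lambda$).
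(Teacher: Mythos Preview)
Your proof is correct and follows essentially the same approach as the paper: both use the branching rule for Schur functions to expand the specialization and then note the positivity of the resulting coefficients. You are a bit more explicit than the paper in verifying the normalization $\sum_\nu c_\nu'=1$ (the paper simply relies on the fact that specializing $x_N=\xi_N$ preserves the value at $(\xi_1,\dots,\xi_{N-1})$), but the substance is identical.
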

\begin{proof}
 The fact that this is a bounded linear operator is straightforward. Using well-known branching
rules for Schur functions (see e.g. \cite{Mac}) we see that:
$$
\widetilde
P_N^{\downarrow}\left(\frac{s_{\lambda}(x_1,\dots,x_N)}{s_\lambda(\xi_1,\dots,\xi_N)}\right)=
\sum_{\mu\prec\lambda}
\frac{s_{\mu}(x_1,\dots,x_{N-1})}{s_\mu(\xi_1,\dots,\xi_{N-1})}
 \xi_N^{|\lambda|-|\mu|} \frac{s_\mu(\xi_1,\dots,\xi_{N-1})}{s_{\lambda}({\xi_1,\dots,\xi_N})},
$$
where $\mu\prec\lambda$ means the following interlacing condition:
\begin{equation}
 \label{eq_interlacing}
 \lambda_1\ge\mu_1\ge\lambda_2\ge\dots\ge\mu_{N-1}\ge\lambda_N,
\end{equation}
and $|\lambda|=\lambda_1+\dots+\lambda_N$, $|\mu|=\mu_1+\dots+\mu_{N-1}$. Since all the
coefficients
$$
 \xi_N^{|\lambda|-|\mu|} \frac{s_\mu(\xi_1,\dots,\xi_{N-1})}{s_{\lambda}({\xi_1,\dots,\xi_N})}
$$
are positive, we immediately conclude that $P_N^{\downarrow}(\mathcal F_N)\subset \mathcal
F_{N-1}$.
\end{proof}

Let us denote by $P_N^{\downarrow}$ a stochastic matrix of transition probabilities
corresponding to $\widetilde P_N^{\downarrow}$, i.e. $P_N^{\downarrow}=\mathcal L_{N-1}^{-1}\circ
\widetilde P_N^{\downarrow}\circ L_N$. We call this matrix a \emph{stochastic link} between levels $N$ and $N-1$.

{Using the definition of Schur functions we conclude that the matrix elements
$P_N^{\downarrow}(\lambda\to\mu)$ are given by the following formula: {
\begin{equation}
\label{eq_explicit_link}
 P_N^{\downarrow}(\lambda\to\mu) =\begin{cases} \xi_N^{|\lambda|-|\mu|} \dfrac{\det_{i,j=1,\dots,N-1}\bigl[{\xi_i^{\mu_j+N-1-j}}\bigr]}
{\det_{i,j=1,\dots,N}\bigl[{\xi_i^{\lambda_j+N-j}}\bigr]} \prod_{i=1}^{N-1}(\xi_i-\xi_N), &
\mu\prec\lambda,\\ 0, &\text{otherwise}.\end{cases}
 \end{equation}
 }
Note that if $\xi_i$'s form a geometric progression then the determinants in
\eqref{eq_explicit_link} turn into $q$-Vandermonde determinants, cf.\ \eqref{eq:princ_spec}.}

\begin{proposition}
 \label{proposition_commutation_1} Matrices of transition probabilities $P_N(\,\cdot\,; g(x))$
 commute with links $P_N^{\downarrow}$.
\end{proposition}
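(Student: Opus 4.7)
The plan is to lift the commutation relation from the level of stochastic matrices to the level of Schur generating functions, where both operations have transparent descriptions that manifestly commute.

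More precisely, recall that $\mathcal{L}_N:\text{Meas}(\mathbb{GT}_N)\to\mathcal{F}_N$ is an isomorphism of convex sets, and by construction $P_N(g(x))=\mathcal{L}_N^{-1}\widetilde Q_N^g\mathcal{L}_N$ and $P_N^{\downarrow}=\mathcal{L}_{N-1}^{-1}\widetilde P_N^{\downarrow}\mathcal{L}_N$. Therefore the commutation identity
$$
P_N(g(x))\,P_N^{\downarrow}=P_N^{\downarrow}\,P_{N-1}(g(x))
$$
(read, say, as an identity of stochastic maps from measures on $\mathbb{GT}_N$ to measures on $\mathbb{GT}_{N-1}$) is equivalent, via conjugation by $\mathcal{L}$, to the dual identity
$$
\widetilde P_N^{\downarrow}\,\widetilde Q_N^g=\widetilde Q_{N-1}^g\,\widetilde P_N^{\downarrow}
\qquad\text{as operators }\mathcal F_N\to\mathcal F_{N-1}.
$$
So the first step is to write down this dual identity; the second is to verify it directly.

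The verification is almost a tautology. For any $f\in\mathcal F_N$,
$$
\widetilde P_N^{\downarrow}\bigl(\widetilde Q_N^g f\bigr)(x_1,\dots,x_{N-1})
=f(x_1,\dots,x_{N-1},\xi_N)\,\frac{g(\xi_N)}{g(\xi_N)}\prod_{i=1}^{N-1}\frac{g(x_i)}{g(\xi_i)}
=\widetilde Q_{N-1}^g\bigl(\widetilde P_N^{\downarrow}f\bigr)(x_1,\dots,x_{N-1}),
$$
since multiplying by $\prod_{i=1}^N g(x_i)/g(\xi_i)$ and then substituting $x_N=\xi_N$ cancels the $N$-th factor. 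This is really where the somewhat unnatural-looking definition of $\widetilde P_N^{\downarrow}$ pays off: specializing the $N$-th variable precisely to $\xi_N$ (rather than to some other value) is what forces the extra multiplicative factor at level $N$ to disappear.

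The only non-trivial step in the plan is to check that the two sides of the dual identity really do land in $\mathcal F_{N-1}$ (so that the conjugation back by $\mathcal L$ is legal). But this has essentially been done: Proposition~\ref{Proposition_admissible_functions} gives $\widetilde Q_N^g(\mathcal F_N)\subset\mathcal F_N$, the preceding proposition gives $\widetilde P_N^{\downarrow}(\mathcal F_N)\subset\mathcal F_{N-1}$, and $\widetilde Q_{N-1}^g(\mathcal F_{N-1})\subset\mathcal F_{N-1}$ again by Proposition~\ref{Proposition_admissible_functions}. Thus no further analytic work is needed, and I would expect no real obstacle — the entire content of the proposition is the cancellation above together with the bookkeeping that the two specializations we use (the normalization in $\widetilde Q^g$ and the evaluation in $\widetilde P^{\downarrow}$) are compatible at the point $\xi_N$.
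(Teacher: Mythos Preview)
Your proof is correct and follows exactly the approach of the paper, which simply notes that the commutation of $P_N(\,\cdot\,;g(x))$ with $P_N^{\downarrow}$ is equivalent to the (straightforward) commutation of $\widetilde Q_N^g$ with $\widetilde P_N^{\downarrow}$. You have just spelled out the cancellation $g(\xi_N)/g(\xi_N)=1$ that the paper leaves implicit.
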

\begin{proof}
 This is equivalent to commutativity relations between maps $\widetilde Q^g_N$ and  $\widetilde {
P}_N^{\downarrow}$, which is straightforward.
\end{proof}

For our further constructions it is necessary to extend the space $\mathbb{GT}_N$ and the definition
of links $P_N^{\downarrow}$.

The extended level $N$, $\overline{\mathbb{GT}}_N$ consists of all sequences
$\lambda_1\ge\lambda_2\ge\dots\ge\lambda_N$, where $\lambda_i\in \mathbb{Z}\cup \{+\infty\}$. We
identify every such sequence with a shorter sequence $\mu_1\ge\mu_2\ge\dots\ge\mu_k$ with
$\mu_i=\lambda_{i+N-k}$ and $k$ being the smallest integer such that $\lambda_{N-k}=+\infty$, and
with the corresponding $k$--point configuration in $\mathbb{Z}$.

We have {
\begin{equation}\label{eq:split}
\overline{\mathbb{GT}}_N=\mathbb{GT}_N^{(0)}\cup\mathbb{GT}_N^{(1)}\cup
\mathbb{GT}_N^{(2)}\cup\dots\cup \mathbb{GT}_N^{(N)},
\end{equation}
} where
$$\mathbb{GT}_N^{(k)}=\{\lambda:\lambda_1=\dots=\lambda_{N-k}=+\infty,\lambda_{N-k+1},\dots,\lambda_N\in\mathbb{Z}\},$$
{in particular, $\mathbb{GT}_N^{(0)}$ consists of a single signature with all
infinite coordinates.} It is convenient to use the obvious identification
$$
 \mathbb{GT}_N^{(k)}\simeq \mathbb{GT}_{N-1}^{(k)} \simeq \dots \simeq \mathbb{GT}_k^{(k)} =
 \mathbb{GT}_k.
$$

In order to define the extended matrix of transition probabilities $\overline
P_N^{\downarrow}(\lambda\to\mu)$, $\lambda\in{\overline {\mathbb{GT}}_N},\, \mu\in{\overline{
\mathbb{GT}}_{N-1}}$ we first introduce for any $k<N$ an auxiliary stochastic matrices $Q_N^k$
with rows and columns indexed by elements of $\mathbb{GT}_k$ by $Q_N^k=Q^g_k$ with
$g=1/(1-\xi_N^{-1}x)$. {And $Q_N^0$ is the unique $1\times 1$ stochastic matrix.}

Now we are ready to define $\overline P_N^{\downarrow}(\lambda\to\mu)$. This matrix has a block
structure with respect to splittings (\ref{eq:split}) on levels $N$ and $N-1$. For
$\lambda\in\mathbb{GT}_N^{(N)}$
$$
 \overline P_N^{\downarrow}(\lambda\to\mu)=\begin{cases}
  P_N^{\downarrow}(\lambda\to\mu),\text{ if } \mu\in\mathbb{GT}_{N-1}^{(N-1)},\\
  0,\text{ otherwise.}
 \end{cases}
$$

For $\lambda\in\mathbb{GT}_N^{(k)}$ with $k<N$, we define
$$
 \overline P_N^{\downarrow}(\lambda\to\mu)=\begin{cases}
  Q_N^{k}(\lambda\to\mu),\text{ if } \mu\in\mathbb{GT}_{N-1}^{(k)},\\
  0,\text{ otherwise.}
 \end{cases}
$$

Let us also extend the definition of stochastic matrices $P_N(\,\cdot\,; g(x))$  to larger
matrices $\overline P_N(\,\cdot\,; g(x))$ with rows and columns indexed by elements of $\overline
{\mathbb{GT}}_N$. These matrices also have a block structure with respect to (\ref{eq:split}). We
define
$$
\overline P_N(\lambda\to\mu ; g(x))=\begin{cases}
 P_k(\,\cdot\,; g(x)),\text{ if }
 \lambda\in\mathbb{GT}_N^{(k)}\text{ and }\mu\in\mathbb{GT}_N^{(k)},\\
 0,\text{ otherwise,}
\end{cases}
$$
where the arguments of $P_k$ on the right are suitable truncations of $\lambda$ and $\mu$.

\begin{proposition}
  Matrices of transition probabilities $\overline P_N(\,\cdot\,; g(x))$ commute with links
$\overline P_N^{\downarrow}$.
\end{proposition}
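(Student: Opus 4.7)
The plan is to exploit the block-diagonal structure of both $\overline P_N(\,\cdot\,;g(x))$ and $\overline P_N^{\downarrow}$ with respect to the splitting $\overline{\mathbb{GT}}_N = \bigsqcup_{k=0}^N \mathbb{GT}_N^{(k)}$, and reduce the desired commutation $\overline P_N(g(x))\cdot\overline P_N^{\downarrow}=\overline P_N^{\downarrow}\cdot\overline P_{N-1}(g(x))$ to two cases, distinguished by how many infinite coordinates the starting signature $\lambda\in\overline{\mathbb{GT}}_N$ has.

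First I would verify the block compatibility: $\overline P_N(g(x))$ preserves $\mathbb{GT}_N^{(k)}$, while $\overline P_N^{\downarrow}$ sends $\mathbb{GT}_N^{(N)}$ into $\mathbb{GT}_{N-1}^{(N-1)}$ and $\mathbb{GT}_N^{(k)}$ into $\mathbb{GT}_{N-1}^{(k)}$ for $k<N$. Consequently both compositions vanish unless the source and target strata match in the only possible way, and it suffices to compare the two sides on each nonempty block separately.

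In the top block $\lambda\in\mathbb{GT}_N^{(N)}=\mathbb{GT}_N$, both kernels act as their non-extended counterparts, so the required identity reduces verbatim to Proposition \ref{proposition_commutation_1}. For $\lambda\in\mathbb{GT}_N^{(k)}$ with $k<N$, both sides are transition matrices on $\mathbb{GT}_k$: on the left, $\overline P_N(g(x))$ acts as $P_k(\,\cdot\,;g(x))$ on the finite coordinates and is then post-composed with $Q_N^k=P_k(\,\cdot\,;(1-\xi_N^{-1}x)^{-1})$; on the right, the order is reversed. Passing through the isomorphism $\mathcal L_k$ to Schur generating functions in $\mathcal F_k$, both compositions become scalar multiplication operators on $\mathcal F_k$, namely multiplication by the product $\prod_{i=1}^k \bigl(g(x_i)/g(\xi_i)\bigr)\cdot\bigl((1-\xi_N^{-1}\xi_i)/(1-\xi_N^{-1}x_i)\bigr)$, and multiplication operators manifestly commute. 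Thus the two sides agree on this block as well.

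I do not anticipate a serious obstacle: the content is bookkeeping with the block decomposition together with the trivial commutativity of the multiplication operators $\widetilde Q^g_k$ for varying $g$ on the fixed space $\mathcal F_k$. The only mildly delicate point is to write down carefully, in each case, which $k$-level operators arise after the identification $\mathbb{GT}_N^{(k)}\simeq\mathbb{GT}_{N-1}^{(k)}\simeq\mathbb{GT}_k$, and to check that the two summations match term by term, so that the combinatorial reduction to Proposition \ref{proposition_commutation_1} in the top block and to commutativity of scalar multiplication in the lower blocks is rigorous.
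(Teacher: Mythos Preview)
Your proposal is correct and follows the same approach as the paper, which simply records the result as ``immediate from the definitions.'' You have spelled out the block decomposition and the two cases (top stratum via Proposition~\ref{proposition_commutation_1}, lower strata via commutativity of the multiplication operators $\widetilde Q^g_k$) in more detail than the paper does, but the underlying argument is identical.
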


\begin{proof}
This is immediate from the definitions.
\end{proof}

\section{Infinite--dimensional dynamics.}

\label{Section_Infinite-dimensional}

In this section we introduce infinite--dimensional dynamics --- the main object of study of the
present paper. We start from general theorems and then we specialize them to the probability
measures and Markov chains on the sets $\mathbb {GT}_N$.

\subsection{General constructions}

\label{subsection_commuting_op_constructions} For any topological space $W$ we denote by $\mathcal
M (W)$ the Banach space of signed measures on $W$ with total variation norm, and by $\mathcal M_p
(W)$ the closed convex subset of probability measures.

Suppose that we have a sequence of countable sets {$\Gamma_0,\Gamma_1,\dots$, and
for any $N\ge 0$} we have a stochastic matrix $\Lambda^{N+1}_N$ with rows enumerated by elements
of $\Gamma_{N+1}$ and columns enumerated by elements of $\Gamma_N$. We call these matrices
\emph{links}. $\Lambda^{N+1}_N$ induces a linear operator mapping $\mathcal M (\Gamma_{N+1})$ to
$\mathcal M(\Gamma_N)$ and we keep the same notation for this operator:
$$
 (\Lambda^{N+1}_N M)(y)=\sum_{x\in\Gamma_{N+1}} M(x) \Lambda^{N+1}_N(x,y),\quad M\in\mathcal
 M(\Gamma_N).
$$

The projective limit $\plim \mathcal
 M(\Gamma_N)$ with respect to the maps $\Lambda^{N+1}_N$ is a Banach space with norm {
$$
 \Vert (M_0,M_1,\dots)\Vert =\sup_N\Vert M_N\Vert .
$$}


Suppose that the sets $\Gamma_N$ are equipped with some topology (which can be, in principle, the
discrete topology). To avoid pathologies we assume that these topologies are metrizable.  Then the
set $\plim \mathcal
 M(\Gamma_N)$ has another natural topology which we call the \emph{weak} topology. The weak topology on $\plim \mathcal
M(\Gamma_N)$ is the minimal topology such that for every $N\ge 0$ and every bounded continuous
function $f(w)$ on $\Gamma_N$ the map
$$
 \{M_N\}\mapsto \sum_{w\in \Gamma_N} f(w) M_N(w)
$$
is continuous.


We equip $\plim \mathcal
 M(\Gamma_N)$ with Borel $\sigma$--algebra spanned by open sets in the norm-topology. One proves
 that this is the same algebra as Borel $\sigma$--algebra spanned by open sets in the weak
 topology. (This is the only place where we need metrizability. It may be unnecessary here as well
 but we do not have a proof of that.)

A projective limit $\plim \mathcal
 M_p(\Gamma_N)$ is a closed (in both topologies) convex subset of $\plim \mathcal
 M(\Gamma_N)$. Elements of $\plim \mathcal
 M_p(\Gamma_N)$ are called \emph{coherent systems}. Note that if $M$ is a coherent system, then
{
 $$
  \Vert M_0\Vert =\Vert M_1\Vert =\dots=\Vert M\Vert =1.
 $$}

\begin{definition}
\label{definition_boundary}
 A topological space $\mathcal Q$ is a \emph{boundary} of a sequence
 $\{\Gamma_N,\Lambda^{N+1}_N\}$ if
 \begin{enumerate}
 \item There exists a bijective map
   $$
    E: \mathcal M(\mathcal Q)\to \plim \mathcal
 M(\Gamma_N);
   $$
  \item $E$ and $E^{-1}$ are bounded linear operators in the corresponding norms;
  \item $E$ maps $\mathcal M_p(\mathcal Q)$ bijectively onto $\plim \mathcal
 M_p(\Gamma_N)$;
   \item $x\to E(\delta_x)$ is a bijection between $\mathcal Q$ and extreme points of the convex set of coherent systems, and
    this bijection
   is a homeomorphism on its image, where we use the restriction of the the weak topology of
 $\plim \mathcal M(\Gamma_N)$.
 \end{enumerate}
\end{definition}

{\bf Remark 1. } If $\mathcal Q_1$ and $\mathcal Q_2$ are two boundaries, then they are
homeomorphic.

{\bf Remark 2. } As follows from $4.$ of the above definition, the boundary can be always
identified with the set of all extreme coherent systems. However, finding a more explicit
description of the boundary can be complicated.

{ {\bf Remark 3. } Some authors \emph{define} the boundary to be the set of all
extreme coherent systems (see e.g. \cite{Kerov_book}, \cite{DF}, \cite{Olsh}), then 1.--4. become
the \emph{properties} of the boundary. Also note that a slightly different definition of the
boundary was used in \cite{BO}. }
\begin{theorem}
\label{theorem_boundary}
 For any sequence $\{\Gamma_N,\Lambda^{N+1}_N\}$ there exists a boundary $\mathcal Q$.
\end{theorem}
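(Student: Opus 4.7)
The plan is to take $\mathcal{Q}$ to be the set of extreme coherent systems itself, equipped with the restriction of the weak topology of $\varprojlim \mathcal{M}(\Gamma_N)$, and then construct $E$ through an integral (Choquet-type) representation of coherent systems. Part 4 of the definition is then tautological, so the work is in building $E$.

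First I would verify structural facts about the target space: $\varprojlim \mathcal{M}_p(\Gamma_N)$ is a nonempty convex subset of the Banach space $\varprojlim \mathcal{M}(\Gamma_N)$, closed in both the norm and weak topologies (so in particular Borel), and every coherent system $M=(M_N)$ has norm one. Metrizability of each $\Gamma_N$ transfers to metrizability of $\mathcal{M}_p(\Gamma_N)$ in the weak topology, and hence the countable projective limit is itself metrizable — this will be needed when we invoke measurable-selection results on $\mathcal{Q}$.

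The core of the argument is the integral representation of an arbitrary $M\in\varprojlim \mathcal{M}_p(\Gamma_N)$. Given $M$, form the Kolmogorov-consistent family of probability measures on the path space $\Omega=\prod_{N\ge 0}\Gamma_N$ in which the marginal law of the $N$th coordinate is $M_N$ and the conditional law of the $N$th coordinate given the $(N+1)$st equals $\Lambda^{N+1}_N$; this exists because the $M_N$'s are linked by $\Lambda^{N+1}_N$. Let $\mathcal T=\bigcap_N \sigma(X_N,X_{N+1},\dots)$ be the tail $\sigma$-algebra. For every $N$ and every $y\in\Gamma_N$ apply the backward martingale convergence theorem to $\mathbb P(X_N=y\mid X_N,\dots,X_{N+k})$ as $k\to\infty$; a diagonal extraction over the countable pair $(N,y)$ produces, on a set of full $\mathbb P$-measure, a map $\omega\mapsto M^\omega=(M^\omega_N)_{N\ge 0}$ into $\varprojlim \mathcal M_p(\Gamma_N)$. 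One checks that $M^\omega_N(y)=\mathbb P(X_N=y\mid \mathcal T)(\omega)$, that the family $(M^\omega_N)$ satisfies the link relations almost surely, and that $\mathcal T$-measurability forces $M^\omega$ to be extreme (a nontrivial convex decomposition of $M^\omega$ would contradict $\mathcal T$-triviality of the conditional law). Pushing $\mathbb P$ forward under $\omega\mapsto M^\omega$ produces a Borel probability measure $\nu$ on $\mathcal Q$ satisfying $M=\int_{\mathcal Q} x\,d\nu(x)$; uniqueness of $\nu$ follows because any other representation would yield an alternative regular conditional distribution with respect to $\mathcal T$, which is a.s.\ unique.

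Now define $E\colon \mathcal M_p(\mathcal Q)\to \varprojlim \mathcal M_p(\Gamma_N)$ by $E(\nu)_N(y)=\int_{\mathcal Q} x_N(y)\,d\nu(x)$ and extend to $\mathcal M(\mathcal Q)$ by linearity and Jordan decomposition; the previous paragraph shows $E$ is a bijection on probability measures. Boundedness is immediate since $\Vert E(\nu)\Vert=\sup_N \Vert E(\nu)_N\Vert\le\Vert\nu\Vert$, and the reverse inequality $\Vert\nu\Vert\le\Vert E(\nu)\Vert$ follows from the uniqueness of the Jordan decomposition together with the fact that $\nu^\pm$ are pushed forward to $E(\nu)^\pm$. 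By construction, $E(\delta_x)=x$ for every extreme $x\in\mathcal Q$, which yields part 4.

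The main obstacle I anticipate is the measurability and Borel-structure bookkeeping for $\omega\mapsto M^\omega$: one has to realize $\mathcal Q$ as a Borel subset of the metrizable space $\varprojlim\mathcal M_p(\Gamma_N)$ (extremality is $G_\delta$ in such a setting) and ensure that the pushforward $\nu$ lands in $\mathcal M_p(\mathcal Q)$ rather than its closure. Once this is in place, the remaining verifications — uniqueness of the Choquet decomposition, and the equivalence between the norm-Borel and weak-Borel $\sigma$-algebras alluded to in the passage before Definition~\ref{definition_boundary} — are comparatively routine.
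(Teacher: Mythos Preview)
Your construction of the integral representation on $\varprojlim\mathcal M_p(\Gamma_N)$ via the tail $\sigma$-algebra of the associated Markov chain is essentially the content of the reference the paper cites (Olshanski, Theorem~9.2, itself resting on Diaconis--Freedman); so on the level of probability measures your route and the paper's coincide, you simply unpack the citation.

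There is, however, a genuine gap in your passage from probability measures to signed measures. You define $E$ on $\mathcal M(\mathcal Q)$ by Jordan decomposition and note that it is a bijection onto $\varprojlim\mathcal M_p(\Gamma_N)$ on the level of probability measures, but you never show that $E$ is \emph{surjective} onto all of $\varprojlim\mathcal M(\Gamma_N)$. This requires proving that an arbitrary signed element $M=(M_N)$ of the projective limit can be written as $kK-lL$ with $K,L$ coherent systems and $k,l\ge 0$. This is not automatic: the levelwise Jordan decompositions $M_N=M_N^+-M_N^-$ need not be compatible under the links, since $\Lambda^{N+1}_N M_{N+1}^+$ may differ from $M_N^+$ by cancellation. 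The paper handles this with a short but essential argument: push each $M_N^+$ (and $M_N^-$) down by the links, observe that the resulting sequences $K_N^i$ increase monotonically in $i$ and are bounded by $\Vert M\Vert$, and take the limits $K_N^\infty$, $L_N^\infty$, which do form coherent systems. Without this step, condition~1 of Definition~\ref{definition_boundary} is unverified.

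A smaller point: your claimed inequality $\Vert\nu\Vert\le\Vert E(\nu)\Vert$ via ``$\nu^\pm$ are pushed forward to $E(\nu)^\pm$'' is unjustified---there is no reason $E(\nu^+)$ and $E(\nu^-)$ should give the Jordan decomposition of $E(\nu)$ in the projective limit (mutual singularity is not preserved). Once surjectivity is in hand, the paper simply invokes the Bounded Inverse Theorem for $E^{-1}$, which sidesteps this issue entirely.
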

\begin{proof}
 Statements similar to Theorem \ref{theorem_boundary} were proved in \cite{DF} and \cite{Olsh}.
We use \cite{Olsh} as our main reference.

 Let $\mathcal Q$ be the set of extreme points of the convex set $\plim \mathcal M_p(\Gamma_N)$. We
 equip $\mathcal Q$ with weak topology inherited from $\plim \mathcal M(\Gamma_N)$. Note
 that condition $4.$ of Definition \ref{definition_boundary} is satisfied automatically.

 Theorem 9.2 in \cite{Olsh} says that $\mathcal Q$ is a Borel subset of $\plim \mathcal
 M_p(\Gamma_N)$, and for any $M\in \plim \mathcal
 M_p(\Gamma_N)$ there exist a unique probability measure $\pi_M$ on $\mathcal Q$ such that
$$
 M=\int_{\mathcal Q} M^{\mathsf q} \pi_M(d\mathsf q),
$$
meaning that for any {$N\ge 0$} and any subset $A$ of $\Gamma_N$ we have
$$
 M_N(A)=\int_{\mathcal Q} M^{\mathsf q}_N(A) \pi_M(d\mathsf q).
$$

For any $\pi\in\mathcal M_p(\mathcal Q)$, set
$$
E(\pi)=\int_{\mathcal Q} M^{\mathsf q} \pi(d\mathsf q).
$$
If $\pi$ is a signed measure, i.e.\ $\pi\in\mathcal M(\mathcal Q)$, then there exist
$\pi_1,\pi_2\in \mathcal M_p(\mathcal Q)$ and two non-negative numbers $c_1$, $c_2$ such that
$$
 \pi=c_1\pi_1-c_2\pi_2
$$
and $c_1+c_2=\Vert \pi\Vert $. We define
$$
 E(\pi)=c_1E(\pi_1)-c_2 E(\pi_2).
$$

Clearly, $E$ is a linear operator from $\mathcal M(\mathcal Q)$ to $\plim \mathcal
 M(\Gamma_N)$ of norm 1. Using Theorem 9.2 from \cite{Olsh} we conclude that condition $3.$ of Definition \ref{definition_boundary}
  is satisfied and, moreover, $E$ is an injection.

 In order to prove that $E$ is a surjection it is enough to show that for any $M\in\plim \mathcal
 M(\Gamma_N)$ there exist $K,L\in \plim \mathcal M_p(\Gamma_N)$ and $k,l\ge 0$ such that
 $M=kK-lL$. Without loss of generality we may assume that neither $M$, nor $-M$ are positive measures.
Let $M_N= K^N_N-L^N_N$ be a decomposition of $M_N$ into a difference of two
 positive measures such that $\Vert M_N\Vert =\Vert K^N_N\Vert +\Vert L^N_N\Vert $. Set $K^N_{N-1}=\Lambda^{N}_{N-1}
 K^N_N$, $K^{N}_{N-2}=\Lambda^{N-1}_{N-2} K^{N}_{N-1}$ and so on, and similarly for $L^N_N$. Note
 that for any {$N\ge 0$}, $K^i_N$ monotonically increases as $i\to\infty$. Also $\Vert K^i_N\Vert \le \Vert M\Vert $.
 Hence, there exists a limit
 $$
  K^{\infty}_N=\lim_{i\to\infty} K^i_N.
 $$
 In the same way there is a limit
 $$
  L^{\infty}_N=\lim_{i\to\infty} L^i_N.
 $$
 Note that for any $i$, $K^i_N$ and $L^i_N$ are positive measures and $M_N=K^i_N-L^i_N$.
 Therefore, similar statements hold for $M$, $K^{\infty}_N$ and $L^{\infty}_N$.
 Setting $k=\Vert K^{\infty}\Vert $, $l=\Vert L^{\infty}\Vert $ (neither $k$ nor $l$ can vanish
because we assumed $M$ is not positive), $K=K^{\infty}/k$, $L=L^{\infty}/l$ we get the
 required decomposition of $M$.

 We have proved that $E$ is a bounded linear operator in Banach spaces which is a bijection. Then
it follows from Banach Bounded Inverse Theorem that $E^{-1}$ is also a bounded linear operator.
\end{proof}

Next, we want to introduce a way to define a Markov chain on the boundary $\mathcal Q$ of
$\{\Gamma_N,\Lambda^{N+1}_N\}$ using Markov chains on the sets $\{\Gamma_N\}$.


Let $\mathcal W_1,\mathcal W_2$ be topological spaces. A function $P(u,A)$ of a point
$u\in\mathcal W_1$ and Borel subset $A$ of $\mathcal W_2$ is \emph {a Markov transition kernel} if
\begin{enumerate}
\item For any $u\in\mathcal W_1$, $P(u,\,\cdot\,)$ is a probability measure on $\mathcal W_2$,
\item For any Borel subset $A\subset\mathcal W_2$, $P(\,\cdot\,,A)$ is a measurable function on $W_1$.
\end{enumerate}
If $\mathcal W_1=\mathcal W_2=\mathcal W$, then we say that $P(u,A)$ is a Markov transition kernel
on $\mathcal W$.

Suppose that for every $N\ge 0$ we have a Markov transition kernel on $\Gamma_N$ given by a
stochastic matrix $\mathcal P_N(x\to y)$, $x,y\in\Gamma_N$. Assume that these matrices commute
 with  links $\Lambda^{N+1}_N$, i.e. for $N=0,1,2,\dots$ we have
\begin{equation}
\label{eq_commutation_in_general_form}
 \mathcal P_N\Lambda^{N+1}_N=\Lambda^{N+1}_N \mathcal P_{N+1}.
\end{equation}
Now we define a kernel $\mathcal P_\infty$ on $\mathcal Q$ in the following way.
 Take a point $u\in\mathcal Q$; by Theorem \ref{theorem_boundary} Dirac $\delta$--measure
$\delta_u$ corresponds to a certain coherent system $\{M^u_N\}$. Then relations
\eqref{eq_commutation_in_general_form} yield that $\{\mathcal P_N M_N\}$ is also a coherent system
and, consequently, it corresponds to a probability measure on $\mathcal Q$. We \emph{define}
$\mathcal P_{\infty}(u, \,\cdot\,)$ to be equal to this measure.

\begin{proposition}
\label{Proposition_is_a_Markov_kernel} $\mathcal P_{\infty}$ is a Markov transition kernel on
$\mathcal Q$.
\end{proposition}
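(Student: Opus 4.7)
The plan is to verify the two defining conditions of a Markov transition kernel. Fix $u\in\mathcal{Q}$. The system $\{M^u_N\}:=E(\delta_u)$ consists of probability measures (Theorem \ref{theorem_boundary}(3)), so applying the stochastic matrices $\mathcal{P}_N$ yields a system $\{\mathcal{P}_N M^u_N\}$ of probability measures, and the commutation relation \eqref{eq_commutation_in_general_form} is precisely the statement that this new system remains coherent. Hence $\mathcal{P}_\infty(u,\cdot):=E^{-1}(\{\mathcal{P}_N M^u_N\})\in\mathcal{M}_p(\mathcal{Q})$, which verifies condition (1) in the definition of a Markov transition kernel.

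For condition (2) I would exhibit $u\mapsto\mathcal{P}_\infty(u,A)$ as a composition of Borel measurable maps. First, the map $\Psi_1\colon u\mapsto\{\mathcal{P}_N M^u_N\}$ from $\mathcal{Q}$ to $\plim\mathcal{M}(\Gamma_N)$ is weak-continuous: by Theorem \ref{theorem_boundary}(4) the assignment $u\mapsto\{M^u_N\}$ is a homeomorphism onto its image, and for every bounded function $f$ on $\Gamma_N$ the functional
$$
\sum_y f(y)(\mathcal{P}_N M_N)(y)=\sum_x\Bigl(\sum_y \mathcal{P}_N(x,y)f(y)\Bigr)M_N(x)
$$
is weak-continuous in $M_N$, since the inner sum is bounded in $x$. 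Second, $\Psi_2:=E^{-1}$ is a bounded linear operator in the norm topologies (Theorem \ref{theorem_boundary}), hence norm-continuous. Third, for any Borel $A\subset\mathcal{Q}$ the evaluation $\Psi_3^A\colon\nu\mapsto\nu(A)$ satisfies $|\mu(A)-\nu(A)|\le\|\mu-\nu\|$, so it is norm-continuous on $\mathcal{M}(\mathcal{Q})$.

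To combine these three steps I would invoke the fact, explicitly recorded in the excerpt, that the weak and norm Borel $\sigma$-algebras on $\plim\mathcal{M}(\Gamma_N)$ coincide. Weak continuity of $\Psi_1$ then upgrades automatically to Borel measurability in the norm sense, and composing with the norm-continuous maps $\Psi_2$ and $\Psi_3^A$ shows that $u\mapsto\mathcal{P}_\infty(u,A)=\Psi_3^A\circ\Psi_2\circ\Psi_1(u)$ is Borel measurable for every Borel $A\subset\mathcal{Q}$, which is condition (2).

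Condition (1) is essentially bookkeeping, its sole substantive input being the commutation relation \eqref{eq_commutation_in_general_form} that propagates coherence. The main potential obstacle lies in the measurability step: neither the weak nor the norm topology on $\plim\mathcal{M}(\Gamma_N)$ alone supplies every property we need, since $\Psi_1$ is naturally weak-continuous while $\Psi_2$ is naturally norm-continuous. It is the coincidence of the two Borel $\sigma$-algebras that legalizes the composition in the measurable category and closes the argument.
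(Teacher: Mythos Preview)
Your proof is correct and follows essentially the same three-step decomposition as the paper: first show $u\mapsto\{\mathcal{P}_N M^u_N\}$ is continuous (the paper phrases this pointwise as continuity of $u\mapsto M^u_N(x)$ for each $x\in\Gamma_N$), then invoke the bi-continuity of $E$, then use norm-continuity of the evaluation $\nu\mapsto\nu(A)$. The only difference is that you make the weak/norm topology switch explicit via the coincidence of Borel $\sigma$-algebras, whereas the paper handles this implicitly by working with the countable family of point evaluations on $\Gamma_N$.
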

\begin{proof}
 $\mathcal P_{\infty}(u, \,\cdot\,)$ is a probability measure on $\mathcal Q$ by the very
definition. Thus, it remains to check that $\mathcal P_{\infty}(\,\cdot\,, A)$ is a measurable
function. Theorem \ref{theorem_boundary} yields that the map $u\mapsto M_N^u(x)$ (where
$M_N^u=E(\delta_u)_N$) is continuous and, thus, measurable for any {$N\ge 0$} and any
$x\in\Gamma_N$. Then the map $u\mapsto (\mathcal P_N M_N^u) (x)$ is also measurable. Since, the
last property holds for any $x$, we conclude that $u\mapsto \{\mathcal P_N M_N^u\} $ is a
measurable map from $\mathcal Q$ to coherent systems. The definition of the boundary implies that
the correspondence between probability measures on $\mathcal Q$ and coherent systems is
bi-continuous and, therefore, it is bi-measurable. We conclude that the correspondence $u\mapsto
\mathcal P_{\infty}(u, \,\cdot)$ is a measurable map between $\mathcal Q$ and $\mathcal
M_p(\mathcal Q)$ (the Borel structure in the latter space corresponds to the total variation
norm).
 It remains to note that for any Borel $A\subset \mathcal Q$ the map
 $$
  {\rm Ev}_A: \mathcal M(\mathcal Q) \to \mathbb R,\quad {\rm Ev}_A( P) =P(A)
 $$
 is continuous (in the total variation norm topology), thus measurable. Therefore, for any
 Borel $A$, the map $u\mapsto
 \mathcal P_{\infty}(u, A)$ is measurable.
\end{proof}

We see that transition kernels $\mathcal P_{N}$ naturally define Markov chains on $\mathcal Q$.
However, Theorem \ref{theorem_boundary} tells us nothing about the actual state space of these
Markov chains, and their properties can be very different. Next we state sufficient conditions for
these Markov chains on $\mathcal Q$ to enjoy the \emph{Feller property} that we now recall.

\smallskip

Let $\mathcal W$ be a locally compact topological space with a countable base. Let $\mathcal
B(\mathcal W)$ be the Banach space of real valued bounded measurable functions on $\mathcal W$,
and let $C_0(\mathcal W)$ be the closed subspace of all continuous function tending to zero at
infinity. In other words, these are continuous function $f(w)$ such that for any $\varepsilon>0$
there exist a compact set $V\subset \mathcal W$ such that $|f(w)|<\varepsilon$ for all
$w\in\mathcal W\setminus V$. Note that $C_0(\mathcal W)$ is separable and its Banach dual is
$\mathcal M(\mathcal W)$.

 Let $P(u,A)$, $u\in W_1$, $A\subset W_2$ be a Markov transition kernel. It induces a linear
contracting operator $P^*: \mathcal B(\mathcal W_2) \to  \mathcal B(\mathcal W_1)$ via
$$
 (P^*f)(x)=\int_{\mathcal W_2} f(w) P(x,dw).
$$
We say that $P(u,A)$ is a \emph{Feller kernel} if $P^*$ maps $C_0(\mathcal W_2)$ to $C_0(\mathcal
W_1)$.

Now let $X(t)$ be a homogeneous continuous time Markov process on $\mathcal W$ with transition
probabilities given by a semigroup of kernels $P_t(u,A)$. $X(t)$ is a \emph{Feller process} if the
following conditions are satisfied:
\begin{enumerate}
\item $P_t(u,A)$ is a Feller kernel, i.e.\ $P_t^*$ preserves $C_0(\mathcal W)$.
\item For every $f\in C_0(\mathcal W)$ the map $f\to P_t^*f$ is continuous at $t=0$.
\end{enumerate}

Feller processes have certain good properties. For instance, they have a modification with
c\`{a}dl\`{a}g sample paths,  they are strongly Markovian, they have an infinitesimal generator,
{see e.g.\, \cite[Section 4.2]{EK}}.

\medskip

\begin{proposition} \label{proposition_feller}
Let, as above,  $\mathcal Q$ be the boundary of $\{\Gamma_N,\Lambda^{N+1}_N\}$, $\mathcal P_N$ be
 stochastic matrices on $\Gamma_N$ commuting with $\Lambda^{N+1}_N$, and let $\mathcal P_\infty$
 be the corresponding Markov transition kernel on $\mathcal Q$. Suppose that
 \begin{enumerate}
  \item $\Gamma_N$ and $\mathcal Q$ are locally compact topological spaces with countable bases;
  \item For every $N\ge 0$ the function $\Lambda^{\infty}_N(u,A):=E(\delta_u)_N(A)$, $u\in \mathcal Q$, $A\subset \Gamma_N$
  is a Feller kernel;
  \item Matrices $\mathcal P_N$ define Feller kernels on $\Gamma_N$.
 \end{enumerate}
 Then $\mathcal P_\infty$ is a Feller kernel on $\mathcal Q$.
\end{proposition}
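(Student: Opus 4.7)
The plan is to reduce the Feller property of $\mathcal{P}_\infty$ on $\mathcal{Q}$ to the given Feller properties of $\mathcal{P}_N$ and $\Lambda^\infty_N$ by combining an intertwining identity on a distinguished subspace of cylinder functions with a duality-based density argument in $C_0(\mathcal{Q})$. First I would derive the intertwining. By definition of $\mathcal{P}_\infty$, the coherent system associated to $\mathcal{P}_\infty(u,\cdot)$ has $N$-th marginal $\mathcal{P}_N M^u_N = \mathcal{P}_N \Lambda^\infty_N(u,\cdot)$, while by the formula $E(\pi)_N(A)=\int \Lambda^\infty_N(\mathsf{q},A)\,\pi(d\mathsf{q})$ established in the proof of Theorem \ref{theorem_boundary} the same marginal equals $\Lambda^\infty_N \mathcal{P}_\infty(u,\cdot)$. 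This gives the kernel identity $\Lambda^\infty_N \mathcal{P}_\infty = \mathcal{P}_N \Lambda^\infty_N$ as Markov kernels $\mathcal{Q}\to\Gamma_N$, and dually $\mathcal{P}_\infty^*(\Lambda^\infty_N)^* = (\Lambda^\infty_N)^*\mathcal{P}_N^*$ on bounded measurable functions. For any $g\in C_0(\Gamma_N)$, hypothesis~3 yields $\mathcal{P}_N^* g\in C_0(\Gamma_N)$, and hypothesis~2 then gives $(\Lambda^\infty_N)^*(\mathcal{P}_N^* g)\in C_0(\mathcal{Q})$. Hence $\mathcal{P}_\infty^*$ sends the linear span $L:=\mathrm{span}\bigcup_{N\ge 0}(\Lambda^\infty_N)^* C_0(\Gamma_N)$ into $C_0(\mathcal{Q})$.

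The crux is to show $L$ is sup-norm dense in $C_0(\mathcal{Q})$. I would handle this by duality: by Hahn--Banach and the Riesz representation identifying $C_0(\mathcal{Q})^*\cong\mathcal{M}(\mathcal{Q})$, it suffices to prove that any finite signed Borel measure $\mu$ on $\mathcal{Q}$ annihilating $L$ is zero. For each $N$ and each $g\in C_0(\Gamma_N)$, Fubini gives
$$
0 \;=\; \int_{\mathcal{Q}} (\Lambda^\infty_N)^* g(u)\, d\mu(u) \;=\; \int_{\Gamma_N} g(x)\, d(\Lambda^\infty_N \mu)(x),
$$
so Riesz applied on the locally compact space $\Gamma_N$ forces $\Lambda^\infty_N \mu = 0$ as a signed measure. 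But $\Lambda^\infty_N \mu = E(\mu)_N$ by the explicit formula for $E$ in the proof of Theorem \ref{theorem_boundary} (extended to signed measures by linearity), so the whole coherent system $E(\mu)$ vanishes; bijectivity of $E$ built into Definition \ref{definition_boundary} then forces $\mu=0$.

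Finally, $\mathcal{P}_\infty^*$ is a norm-one contraction on $\mathcal{B}(\mathcal{Q})$ and $C_0(\mathcal{Q})$ is closed in sup norm, so from $\mathcal{P}_\infty^*(L)\subset C_0(\mathcal{Q})$ together with density of $L$ we conclude $\mathcal{P}_\infty^*(C_0(\mathcal{Q}))\subset C_0(\mathcal{Q})$, which is the Feller property. The main obstacle I anticipate is precisely the density step: a naive Stone--Weierstrass argument would give only density of the \emph{algebra} generated by cylinder functions, while $\mathcal{P}_\infty^*$ acts transparently only on the linear span (it does not commute with pointwise products of cylinder functions). The duality trick above circumvents this cleanly by leveraging the bijectivity of $E$ on signed measures, which is exactly the defining property of the boundary. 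Beyond that, the proof is essentially bookkeeping of the intertwining identity and standard Fubini/Riesz manipulations.
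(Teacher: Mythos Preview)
Your proof is correct and follows essentially the same route as the paper's: both establish the intertwining $\mathcal{P}_\infty^*(\Lambda^\infty_N)^* = (\Lambda^\infty_N)^*\mathcal{P}_N^*$, prove density of $\bigcup_N (\Lambda^\infty_N)^* C_0(\Gamma_N)$ in $C_0(\mathcal{Q})$ via the duality/Hahn--Banach argument using injectivity of $E$, and conclude by contraction and closedness of $C_0$. Your explicit discussion of why Stone--Weierstrass would not suffice is a nice addition but the core argument is the same.
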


\begin{proof}
Let $h\in C_0(\mathcal Q)$. We need to check that $\mathcal P^*_{\infty}(h)\in C_0(\mathcal Q)$.
Since $C_0(\mathcal Q)$ is closed and $\mathcal P^*_{\infty}(g)$ is a contraction, it is enough to
check this property on a dense set of functions $h$. Let us find a suitable dense set of
functions.

We claim that the union of the sets $(\Lambda^{\infty}_N)^*(C_0(\Gamma_N))$ over {$N\ge 0$} is
dense in $C_0(\mathcal Q)$. Indeed, $\mathcal M(\mathcal Q)$ is a Banach dual to $C_0(\mathcal
Q)$, thus, it is enough to check that if $\pi\in\mathcal M(\mathcal Q)$ is such that $\int f
d\pi=0$ for all $N$ and all $f\in (\Lambda^\infty_N)^* (C_0(\Gamma_N))$, then $\pi\equiv 0$.
 The latter property is equivalent (by Fubini's theorem) to the following one: For any $N$ and any
 $f\in C_0(\Gamma_N)$, $\int f d \Lambda^\infty_N(\pi)=0$.  But then $\Lambda^\infty_N(\pi)\equiv
 0$, therefore, $E(\pi)\equiv 0$ and $\pi\equiv 0$. The claim is proved.

Now let $h=(\Lambda^\infty_N)^*(f)$. Then by the definitions
$$
\mathcal P^*_{\infty}(h)= \mathcal P^*_{\infty}((\Lambda^\infty_N)^*(f)) =
\Lambda^\infty_N(\mathcal P^*_N(f)).
$$

But $\mathcal P^*_N(f)\in C_0(\Gamma_N)$ by the condition $3.$ of Proposition
\ref{proposition_feller}. Thus, $\mathcal P^*_{\infty}(h)\in C_0(\mathcal Q)$ by the condition
$2.$
\end{proof}

\begin{proposition}
\label{proposition_Feller_process} Let $X_N(t)$ be a homogeneous continuous time Markov process on
$\Gamma_N$ with transition probabilities given by a semigroup of stochastic matrices $\mathcal
P_{t,N}$. Suppose as above,  that $\mathcal Q$ is the boundary of $\{\Gamma_N,\Lambda^{N+1}_N\}$,
the matrices $\mathcal P_{t,N}$ commute with links $\Lambda^{N+1}_N$ and let $\mathcal
P_{t,\infty}$ be the corresponding semigroup of Markov transition kernels on $\mathcal Q$.

If $X_N(t)$ is a Feller process for every $N\ge 0$ and, furthermore, assumptions $1.$ and $2.$ of
Proposition \ref{proposition_feller} are also satisfied, then a Markov process on $\mathcal Q$
with semigroup of transition probabilities $\mathcal P_{t,\infty}$ and arbitrary initial
distribution is Feller.
\end{proposition}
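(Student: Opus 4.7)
The plan is to verify the two defining properties of a Feller semigroup for $\{\mathcal P_{t,\infty}\}_{t\ge 0}$ on $C_0(\mathcal Q)$: (i) each $\mathcal P_{t,\infty}$ is a Feller kernel, and (ii) $\mathcal P^*_{t,\infty} h \to h$ uniformly as $t\to 0^+$ for every $h\in C_0(\mathcal Q)$. Property (i) follows immediately from Proposition \ref{proposition_feller} applied separately at each fixed $t\ge 0$: hypotheses $1.$ and $2.$ of that proposition are in force by assumption, and the Feller hypothesis on $X_N(t)$ supplies its remaining hypothesis that each $\mathcal P_{t,N}$ is a Feller kernel on $\Gamma_N$.

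For property (ii), I would first record the intertwining relation
$$\Lambda^\infty_N\circ\mathcal P_{t,\infty} = \mathcal P_{t,N}\circ\Lambda^\infty_N,\qquad N\ge 0,\ t\ge 0,$$
which is a direct consequence of the construction of $\mathcal P_{t,\infty}$ in Subsection \ref{subsection_commuting_op_constructions}: for $u\in\mathcal Q$, the measure $\mathcal P_{t,\infty}(u,\cdot)$ is by definition the element of $\mathcal M_p(\mathcal Q)$ whose $N$-th coherent component equals $\mathcal P_{t,N} M^u_N$, where $M^u_N=\Lambda^\infty_N(u,\cdot)=E(\delta_u)_N$; the relation on Dirac masses extends to all of $\mathcal M(\mathcal Q)$ by linearity and continuity. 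Dualizing yields
$$\mathcal P^*_{t,\infty}\circ(\Lambda^\infty_N)^* = (\Lambda^\infty_N)^*\circ\mathcal P^*_{t,N}.$$

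Next I would invoke the density statement already established in the proof of Proposition \ref{proposition_feller}: $\bigcup_{N\ge 0}(\Lambda^\infty_N)^*(C_0(\Gamma_N))$ is dense in $C_0(\mathcal Q)$. For $h=(\Lambda^\infty_N)^* f$ with $f\in C_0(\Gamma_N)$, the intertwining together with the contractivity of $(\Lambda^\infty_N)^*$ gives
$$\|\mathcal P^*_{t,\infty} h - h\|_\infty = \|(\Lambda^\infty_N)^*(\mathcal P^*_{t,N} f - f)\|_\infty \le \|\mathcal P^*_{t,N} f - f\|_\infty,$$
and the right-hand side tends to $0$ as $t\to 0^+$ because $X_N(t)$ is Feller. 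A standard $\varepsilon/3$ argument, relying on the uniform bound $\|\mathcal P^*_{t,\infty}\|\le 1$, then propagates strong continuity at $t=0$ from this dense subspace to all of $C_0(\mathcal Q)$.

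I do not expect a serious obstacle, since both ingredients reduce cleanly to the finite-$N$ situation via the intertwining with $\Lambda^\infty_N$ and the density machinery is already in place. The one point requiring a little care is to make the dual intertwining $\mathcal P^*_{t,\infty}(\Lambda^\infty_N)^* = (\Lambda^\infty_N)^*\mathcal P^*_{t,N}$ precise on bounded measurable (rather than merely continuous) functions, together with the attendant measurability checks; but these are parallel to the considerations already handled in Proposition \ref{Proposition_is_a_Markov_kernel}.
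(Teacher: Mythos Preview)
Your proposal is correct and follows essentially the same route as the paper: property (i) via Proposition \ref{proposition_feller}, and property (ii) by the intertwining $\mathcal P^*_{t,\infty}(\Lambda^\infty_N)^* = (\Lambda^\infty_N)^*\mathcal P^*_{t,N}$ together with density of $\bigcup_N (\Lambda^\infty_N)^*(C_0(\Gamma_N))$ and contractivity. The paper's proof is simply a more terse version of exactly this argument.
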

\begin{proof}
 The first defining property of the Feller process is contained in Proposition
 \ref{proposition_feller}. As for the second one, since $\mathcal P_{t,\infty}$ is a contraction,
 we may check this property on a dense subset.  If $h=(\Lambda^\infty_N)^*(f)$ and $f\in
 C_0(\Gamma_N)$, then
$$\mathcal P_{t,\infty}^*
(h)=(\Lambda^\infty_N)^*( \mathcal P_{t,N}^*(f)).
$$
And this is a continuous map as a composition of a continuous map and a contraction.
\end{proof}

\subsection{Specialization}

Now let us specialize the general situation by setting $\Gamma_N=\mathbb{GT}_N$ (with discrete
topology) and $\Lambda^{N+1}_N=P_{N+1}^{\downarrow}$, where these matrices implicitly depend on
the sequence $\{\xi_N\}$. Denote by $\mathcal Q(\xi_1,\xi_2,\dots)$ the boundary of the sequence
$\{ \mathbb{GT}_N, P_{N+1}^{\downarrow}\}$.

Given an admissible function $g(x)\in\mathcal G(\infty,\xi)$, we set $\mathcal P_N:= P_{N}(u, A;
g(x))$. Proposition \ref{proposition_commutation_1} yields that these stochastic matrices commute
with links $P_{N+1}^{\downarrow}$, thus, the constructions of Section
\ref{subsection_commuting_op_constructions} give as  a Markov transition kernel $\mathcal
P_\infty$ on $\mathcal Q(\xi_1,\xi_2,\dots)$ which we denote $P_{\infty}(u, A; g(x))$.

As far as the authors know, an explicit description of the set $\mathcal Q(\xi_1,\xi_2,\dots)$ is currently known
in two special cases. Namely,
\begin{enumerate}
\item $\xi_1=\xi_2=\dots=1$. This case was studied by Voiculescu \cite{Vo}, Boyer \cite{Bo}, Vershik--Kerov \cite{Vk_unitary},
Okounkov--Olshanski \cite{OkOlsh}. It is related to representation theory of the
infinite--dimensional unitary group and to the classification of totally--positive Toeplitz
matrices. However, it turns out that in this case the Markov operators $P_{\infty}(g(x))$
correspond to \emph{deterministic} dynamics on $\Omega$, cf.\ \cite{B-Schur}.

\item $\xi_N=q^{1-N}, \, 0<q<1$. This case was studied in \cite{Gor}. As we  show below,
$P_{\infty}(g(x))$ leads to a non-trivial stochastic dynamics. (Note that the case $q>1$
is essentially the same as $0<q<1$ up to a certain simple transformation of spaces $\mathbb{GT}_N$.)
\end{enumerate}

From now on we restrict ourselves to the case $\xi_N=q^{1-N}$, $N\ge 1$. The following results
were proven in \cite[Theorem 1.1, Proposition 1.2]{Gor}.

\begin{theorem}
\label{theorem_qGT_boundary}
 For $\xi_N=q^{1-N}$, $0<q<1$, the boundary $\mathcal Q(\xi_1,\xi_2,\dots)$ is homeomorphic to the set
 $\mathcal N$ of infinite increasing sequences of integers
 $$
  \mathcal{N}=\{\nu_1\le\nu_2\le\dots,\, \nu_i\in\mathbb{Z}\}
 $$
 with the topology of coordinate--wise convergence.

 Denote by $E_q$ the bijective map from (signed) measures on $\mathcal N$ to $\plim {\mathcal M}(
\mathbb{GT}_N)$, and let $ \mathcal E^\nu=E_q(\delta^\nu)$. Then the coherent system $\mathcal
E^\nu_N$, $N=0,1,2,\dots$, has the following property: If we view $\lambda_N$, $\lambda_{N-1}$,
\dots as random variables on the probability space $(\mathbb{GT}_N, \mathcal E^{\nu}_N)$, then for
every $k\ge 1$, $\lambda_{N-k+1}$ converges (in probability) to $\nu_{k}$ as $N\to\infty$.
\end{theorem}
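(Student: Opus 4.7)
I would follow a Vershik--Kerov type ergodic approach adapted to the $q$-setting, splitting the argument into three parts: (a) for every coherent system $\{M_N\}\in\plim\mathcal M_p(\gt_N)$, the random variables $\la^{(N)}_{N-k+1}$ (with $\la^{(N)}$ distributed according to $M_N$) converge in probability, as $N\to\infty$, to some random sequence $(\nu_1,\nu_2,\dots)\in\mathcal N$; (b) the distribution of this limit determines the coherent system, and extreme coherent systems are exactly those for which the limit is deterministic; (c) every $\nu\in\mathcal N$ actually arises from an explicit coherent system $\mathcal E^\nu$.

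For part (a), the driving point is that the link $P_N^{\downarrow}$ of \eqref{eq_explicit_link} is sharply concentrating: specializing the last variable at $\xi_N=q^{1-N}$, which dominates the remaining $\xi_i$, freezes the bottom coordinates of the interlacing signature. I would make this precise using the $q$-Vandermonde simplification of \eqref{eq_explicit_link} coming from \eqref{eq:princ_spec}: for a fixed $\la^{(N+1)}$ the kernel $P_{N+1}^{\downarrow}(\la^{(N+1)}\to\,\cdot\,)$ puts mass tending to $1$, as $N\to\infty$, on signatures $\mu\in\gt_N$ satisfying $\mu_{N-k+1}=\la^{(N+1)}_{N-k+2}$ for every fixed $k$. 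Hence $\{\la^{(N)}_{N-k+1}\}_{N\ge k}$ behaves essentially as a stationary sequence along the projective family and in-probability convergence follows from a martingale/tightness argument. Finiteness of the limit $\nu_k$ (i.e.\ that it lies in $\mathbb Z$ rather than $\{+\infty\}$) uses that each $M_N$ is supported on $\gt_N$, not on $\overline{\gt}_N$.

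For part (c), I would construct $\mathcal E^\nu$ via its Schur generating function. A natural candidate: pick any sequence of approximants $\nu^{(M)}\in\gt_M$ whose bottom $k$ entries are $\nu_1,\dots,\nu_k$ for all $M\ge$ some $M(k)$, and set
$$f_N^\nu(x_1,\dots,x_N)=\lim_{M\to\infty}\frac{s_{\nu^{(M)}}(x_1,\dots,x_N,q^{-N},\dots,q^{1-M})}{s_{\nu^{(M)}}(1,q^{-1},\dots,q^{1-M})}.$$
Existence of the limit and its membership in $\mathcal F_N$ follows because each prelimit is the Schur generating function of the valid coherent system $\{P_M^{\downarrow}\cdots P_{N+1}^{\downarrow}\delta_{\nu^{(M)}}\}$, and $\mathcal F_N$ is closed in the uniform topology on $T_N$; compatibility $f_{N+1}^\nu(x_1,\dots,x_N,q^{-N})=f_N^\nu(x_1,\dots,x_N)$ is built in. The resulting coherent system $\mathcal E^\nu$ is independent of the choice of approximants (the limit only depends on $\nu$) and, by part (a), concentrates $\la^{(N)}_{N-k+1}$ at $\nu_k$.

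Part (b) and the homeomorphism are then quick: extremality of $\mathcal E^\nu$ follows because any convex decomposition would have components with the same deterministic tail, hence the same Schur generating function by the construction in (c); conversely, any extreme coherent system must have deterministic tail, otherwise conditioning on $(\nu_1,\dots,\nu_k)$ gives a nontrivial convex decomposition. Coordinate-wise convergence $\nu^{(m)}\to\nu$ in $\mathcal N$ matches, via the explicit formula for $f_N^\nu$, the weak convergence of $\mathcal E^{\nu^{(m)}}_N$ for every $N$, which is the definition of weak topology on $\mathcal Q$. The hardest step is part (a): the sharp concentration along the link crucially uses the geometric spacing $\xi_N=q^{1-N}$ via the $q$-hook evaluation \eqref{eq:princ_spec}, and indeed fails for $\xi_i\equiv 1$, where the boundary is vastly larger (the Edrei--Voiculescu--Boyer classification) --- so any argument must really exploit this spacing and cannot be purely soft.
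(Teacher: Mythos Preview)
The paper does not actually prove Theorem~\ref{theorem_qGT_boundary}: it is quoted as a result already established in \cite[Theorem~1.1, Proposition~1.2]{Gor}, with no argument given here. So there is nothing in the present paper to compare your proposal against line by line.

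That said, your outline is essentially the strategy carried out in \cite{Gor}. The ergodic method (every extreme coherent system is a weak limit of primitive systems $\{P_M^\downarrow\cdots P_{N+1}^\downarrow\delta_{\la}\}$) is exactly how \cite{Gor} begins, and the convergence of Schur generating functions
\[
\frac{s_{\nu^{(M)}}(x_1,\dots,x_N,q^{-N},\dots,q^{1-M})}{s_{\nu^{(M)}}(1,q^{-1},\dots,q^{1-M})}
\]
as $M\to\infty$ is what \cite{Gor} calls its Theorem~1.3 (cited several times later in this paper, e.g.\ in the proofs of Lemmas~\ref{lemma_convergence_on_the_boundary_case1} and~\ref{lemma_convergence_on_the_boundary_case3}). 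Your part~(c) is therefore on the right track; the actual work in \cite{Gor} lies in proving that this limit exists and identifying when two approximating sequences give the same limit, which requires a careful analysis of the $q$-specialized Schur functions via \eqref{eq:princ_spec}. Your part~(a) is the weakest link: the claim that $P_{N+1}^\downarrow$ concentrates the bottom coordinates is correct in spirit, but turning ``behaves essentially as a stationary sequence'' into an honest in-probability convergence statement for a \emph{general} coherent system (not just a primitive one) is where the real estimates live; in \cite{Gor} this is handled by first classifying the extremes via the ergodic method and then reading off the law-of-large-numbers statement from the explicit form of $\mathcal E^\nu$, rather than the other way around. Your remark that the argument must exploit the geometric spacing and fails for $\xi_i\equiv 1$ is exactly right.
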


Similarly to $\mathbb{GT}_N$, we identify elements of $\mathcal N$ with point configurations in
$\mathbb{Z}$:
$$
 \nu_1\le\nu_2\le\dots \longleftrightarrow \{\nu_i+i-1,\, i=1,2,\dots\}.
$$
Note that in this way we get all semiinfinite point configurations in $\mathbb Z$; we denote the
set of such configurations by $\mathbb M_{\infty}$.

\smallskip

The space $\mathcal N$ is \emph{not} locally compact. This introduces certain technical
difficulties in studying continuous time Markov chains on this space. To avoid these difficulties
we seek a natural local compactification of $\mathcal N$.

Let
$$
 \overline {\mathcal N}=\bigsqcup_{N=0}^{\infty} \mathbb{GT}_N\ \sqcup \mathcal N.
$$
We identify elements of $\mathbb{GT}_N$ with infinite sequences $\nu_1\le\nu_2\dots$ such that
$\nu_1,\dots,\nu_N$ are integers and $\nu_{N+1}=\nu_{N+2}=\dots=+\infty$. Thus,
$$
 \overline {\mathcal N}=\{\nu_1\le\nu_2\le\dots,\, \nu_i\in\mathbb{Z}\cup\{+\infty\}\}.
$$

In the same way we set
$$
 \overline {\mathbb M_\infty} =\mathbb M_{\infty}\cup \mathbb M_0\cup \mathbb M_1 \cup \dots
$$
Clearly, $\overline {\mathbb M_\infty}$ is a set of all point configurations in $\mathbb{Z}$ which
have finitely many points to the left from zero. There is a natural bijection between $
 \overline {\mathcal N}$ and $\overline {\mathbb M_\infty}$.

We equip $\overline {\mathcal N}$ with the following topology. The base consists of
the neighborhoods
$$
 A_{\eta,k}=\{\nu\in \overline{\mathcal N}: \nu_1=\eta_1,\dots,\nu_k=\eta_k\},\quad
 \eta_i\in\mathbb{Z},
$$
and
$$
 B_{\eta,k,\ell}=\{\nu\in \overline{\mathcal N}: \nu_1=\eta_1,\dots,\nu_k=\eta_k, \nu_{k+1}\ge\ell \},\quad
 \eta_i\in\mathbb{Z},\quad l\in \mathbb{Z}.
$$

The following proposition is straightforward.
\begin{proposition}
 Topological space $\overline {\mathcal N}$ is locally compact, the natural inclusion $\mathcal
 N\hookrightarrow \overline {\mathcal N}$ is continuous, and its image is dense in $\overline {\mathcal
 N}$.
\end{proposition}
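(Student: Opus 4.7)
The plan is to verify the three claims separately, reducing the main point --- local compactness --- to an application of Tychonoff's theorem on a suitable ambient product space.

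For local compactness, given any $\nu \in \overline{\mathcal{N}}$ I exhibit a compact neighborhood. If $\nu_1 = n \in \mathbb{Z}$, take the basic open set $U = A_{(n),1} = \{\nu' : \nu'_1 = n\}$. Because sequences in $\overline{\mathcal{N}}$ are weakly increasing, every coordinate of every element of $U$ lies in $\{n, n+1, \dots, +\infty\}$; equipped with its relative topology (integers isolated, neighborhoods of $+\infty$ of the form $\{m, m+1, \dots, +\infty\}$), this is precisely the one-point compactification of the discrete half-line $\{n, n+1, \dots\}$, hence is compact. Thus $U$ sits inside an infinite product of copies of $\{n, \dots, +\infty\}$, which is compact by Tychonoff's theorem, and $U$ is cut out of this product by the closed conditions $\nu'_1 = n$ and $\nu'_i \le \nu'_{i+1}$ for all $i$. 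In the remaining case $\nu_1 = +\infty$ (so $\nu$ is the all-infinite point), the same argument applied to $B_{\varnothing,0,0} = \{\nu' : \nu'_1 \ge 0\}$ produces a compact neighborhood. The only subtlety here is checking that on these subsets the prescribed base $\{A_{\eta,k}, B_{\eta,k,\ell}\}$ actually coincides with the subspace topology inherited from the product; this follows from monotonicity, since fixing $\nu_1, \dots, \nu_k$ and bounding $\nu_{k+1}$ from below already refines the usual cylinder open sets of the product arbitrarily.

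Continuity of the inclusion $\mathcal{N} \hookrightarrow \overline{\mathcal{N}}$ amounts to checking that the preimage of each element of the base is open in $\mathcal{N}$. The preimage of $A_{\eta,k}$ is $\{\nu \in \mathcal{N} : \nu_i = \eta_i,\, 1 \le i \le k\}$, a finite intersection of elementary open sets in the coordinate-wise topology on $\mathcal{N}$ (integer values being isolated); the case of $B_{\eta,k,\ell}$ is identical. For density, given $\nu \in \overline{\mathcal{N}}$ with $\nu_1, \dots, \nu_k \in \mathbb{Z}$ and $\nu_{k+1} = \nu_{k+2} = \dots = +\infty$ (allowing $k=0$), define $\nu^{(m)} \in \mathcal{N}$ by $\nu^{(m)}_i = \nu_i$ for $i \le k$ and $\nu^{(m)}_{k+j} = \max(\nu_k, m) + j - 1$ for $j \ge 1$ (taking the $\max$ to be $m$ when $k=0$). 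The first $k$ coordinates of $\nu^{(m)}$ are eventually correct, while the remaining ones tend to $+\infty$, so every basic neighborhood of $\nu$ of the form $A_{\eta,j}$ or $B_{\eta,j,\ell}$ contains $\nu^{(m)}$ for all $m$ sufficiently large.

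Neither Step 2 nor Step 3 presents any real difficulty; the only substantive content is the compactness argument in Step 1, whose key move is exploiting the ordering constraint to trap the tail of a sequence inside the one-point-compactified half-line $\{n, n+1, \dots, +\infty\}$ so that Tychonoff's theorem applies.
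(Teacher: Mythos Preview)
Your proof is correct. The paper itself does not supply an argument --- it records the proposition as ``straightforward'' --- so your write-up fills in what the authors omit, and the Tychonoff route you chose (embedding $\{\nu:\nu_1\ge n\}$ as a closed subset of the compact product $\prod_i\{n,n+1,\dots,+\infty\}$) is the natural way to make local compactness precise. The one place you are slightly informal is the claim that the base $\{A_{\eta,k},B_{\eta,k,\ell}\}$ generates the subspace topology from the product; you assert this ``follows from monotonicity'', and it does, but it is worth spelling out: any product-cylinder condition on a single coordinate $\nu_i$ (either $\nu_i=m$ or $\nu_i\ge m$) forces, by $\nu_1\le\cdots\le\nu_i$, that $\nu_1,\dots,\nu_{i-1}$ are integers in a bounded range whenever $\nu_i$ is, so the corresponding set decomposes as a finite or countable union of sets of the form $A_{\eta,i}$ and $B_{\eta,i-1,m}$. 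With that check made explicit, the argument is complete.
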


Now we are ready to define a kernel $\overline{P_{\infty}} (u, A; g(x))$ on $\overline {\mathcal
N}$: If $u\in\mathbb{GT}_N\subset \overline {\mathcal N}$, then $\overline{P_{\infty}} (u, A;
g(x))$ is a discrete probability measure concentrated on $\mathbb{GT}_N$ with weight of a
signature $\lambda\in\mathbb{GT}_N$ given by $P_N(u\to\lambda; g(x))$; if $u\in\mathcal N\subset
\overline{\mathcal N}$, then measure $\overline{P_{\infty}} (u, A; g(x))$ is concentrated on
$\mathcal N$ and coincides with ${P_{\infty}} (u, A; g(x))$ on it.

\begin{proposition}
 $\overline{P_{\infty}} (u, A; g(x))$ is a Markov transition kernel on $\overline {\mathcal
N}$.
\end{proposition}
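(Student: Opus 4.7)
The proof is essentially a verification that the two defining properties of a Markov transition kernel hold, and I would split the argument along the natural stratification
\[
\overline{\mathcal N} = \mathcal N \sqcup \bigsqcup_{N\ge 0}\mathbb{GT}_N,
\]
which is a partition into Borel subsets (each stratum is carved out by a countable combination of the sets $A_{\eta,k}$, $B_{\eta,k,\ell}$).

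First I would check that for each $u\in\overline{\mathcal N}$ the assignment $A\mapsto \overline{P_\infty}(u,A;g(x))$ is a probability measure on $\overline{\mathcal N}$. If $u\in\mathbb{GT}_N$, then by construction the measure is supported on the countable set $\mathbb{GT}_N$ with weights $P_N(u\to\lambda;g(x))$; this is a probability measure because $P_N(\,\cdot\,;g(x))$ is a stochastic matrix (established in Section~\ref{sc:walks}). If $u\in\mathcal N$, then $\overline{P_\infty}(u,\,\cdot\,;g(x))$ coincides with $P_\infty(u,\,\cdot\,;g(x))$, and this is already known to be a probability measure on $\mathcal N$ by Proposition~\ref{Proposition_is_a_Markov_kernel} applied with $\Gamma_N=\mathbb{GT}_N$, $\Lambda^{N+1}_N=P^{\downarrow}_{N+1}$, $\mathcal P_N=P_N(\,\cdot\,;g(x))$. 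In both cases the measure is carried by $\overline{\mathcal N}$ via the canonical inclusions.

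Second I would verify measurability of $u\mapsto \overline{P_\infty}(u,A;g(x))$ for a fixed Borel set $A\subset\overline{\mathcal N}$. Since measurability is a local condition on the disjoint Borel decomposition above, it suffices to check it separately on each piece. The restriction to $\mathbb{GT}_N$ has countable domain, so measurability is automatic. The restriction to $\mathcal N$ equals $u\mapsto P_\infty(u,A\cap \mathcal N;g(x))+P_\infty(u,A\cap(\overline{\mathcal N}\setminus\mathcal N);g(x))$; the first summand is measurable by Proposition~\ref{Proposition_is_a_Markov_kernel}, while the second summand vanishes identically because $P_\infty(u,\,\cdot\,;g(x))$ is supported on $\mathcal N$ when $u\in\mathcal N$.

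The only subtle point, and the one I would flag as the main (though mild) obstacle, is confirming that the strata $\mathbb{GT}_N$ and $\mathcal N$ really are Borel in the topology generated by the neighborhoods $A_{\eta,k}$ and $B_{\eta,k,\ell}$; this follows from the identities $\mathbb{GT}_N=\bigcup_{\eta\in\mathbb{Z}^N,\,\eta_1\le\cdots\le\eta_N}\{\nu:\nu_1=\eta_1,\dots,\nu_N=\eta_N,\ \nu_{N+1}=+\infty\}$ and $\mathcal N=\bigcap_{k\ge 1}\bigcup_{\ell\in\mathbb Z}\{\nu:\nu_k\le\ell\}$, each expressed in terms of base sets. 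Once the Borel structure of the strata is in hand, the two verifications above combine to give the proposition.
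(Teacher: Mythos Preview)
Your proposal is correct and follows essentially the same approach as the paper's own proof, which is extremely terse: the paper simply notes that $\overline{P_\infty}(u,\cdot\,;g(x))$ is a probability measure by definition and that measurability of $\overline{P_\infty}(\cdot\,,A;g(x))$ follows from the already-established measurability of $P_\infty(\cdot\,,A;g(x))$ and of $P_N(\cdot\to A;g(x))$. You have simply unpacked this into the stratum-by-stratum argument and added the (routine but genuinely needed) verification that the strata $\mathcal N$ and $\mathbb{GT}_N$ are Borel in $\overline{\mathcal N}$, which the paper leaves implicit.
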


\begin{proof}
 For every $u$, $\overline{P_{\infty}} (u, \,\cdot\, ; g(x))$ is a probability measure by the
 definition. The measurability of $\overline{P_{\infty}} (\,\cdot\,, A ; g(x))$ follows from the
 measurability of ${P_{\infty}} (\,\cdot\,, A; g(x))$ and $P_N(\cdot\to A; g(x))$.
\end{proof}

In Section \ref{Section_feller} we prove that $\overline{P_{\infty}} (u, \,\cdot \,; g(x))$ is a
Feller kernel by identifying $\overline{\mathcal N}$ with a boundary of $\{
\overline{\mathbb{GT}_N},\overline P_N^{\downarrow}\}$ and then using Proposition
\ref{proposition_feller}.

\section{Description of the limiting processes}
\label{Section_boundary_process_distributions}

In this section we study finite--dimensional distributions of Markov processes that correspond to
the Markov kernels ${P_{\infty}} (u, A; g(x))$.

\subsection{A general convergence theorem}

Let $g_k(x)$, $k=0,1,2,\dots$, be a sequence of admissible functions, and let $\mathcal Z_0$ be an
arbitrary probability distribution on $\mathcal N$.

Denote by $\mathcal Z_N(t)$, $t=0,1,2,\dots$, a discrete time Markov chain on $\mathbb{GT}_N$ with
initial distribution $\mathcal Z_N(0)\stackrel{D}{=} E_q(\mathcal Z_0)_N$ and transition
probabilities
$$
 {\rm Prob}\{\mathcal Z_N(t+1)\in A\mid\mathcal  Z_N(t)\}= {P_{N}} (\mathcal Z_N(t), A; g_t(x)).
$$

Also let $\mathcal Z(t)$, $t=0,1,2,\dots$, be a discrete time Markov process on $\mathcal N$ with
initial distribution $\mathcal Z(0)\stackrel{D}{=} Z_0$ and transition measures
$$
 {\rm Prob}\{\mathcal Z(t+1)\in A\mid \mathcal Z(t)\}= {P_{\infty}} (\mathcal Z(t), A; g_t(x)).
$$

Note that the processes $\mathcal Z(t)$ and $\mathcal Z_N(t)$ will always depend on $q$ and on the
$\{g_k\}$, although we omit these dependencies in the notations.


We want to prove that finite-dimensional distributions of  processes $\mathcal Z(t)$  are limits
of the distributions of processes $\mathcal Z_N(t)$.

More formally, introduce embeddings:
$$
 \iota_N:\mathbb{GT}_N\hookrightarrow\mathcal N, \quad (\lambda_1\ge\dots\ge\lambda_N) \mapsto
(\lambda_N\le\dots\le\lambda_1\le\lambda_1\le\lambda_1\dots).
$$
We also use the same notations for the induced maps $\mathcal M(\mathbb{GT}_N)\to\mathcal
M(\mathcal N)$. Note that these maps are isometric in total variation norm.

Cylindrical subsets of $\mathcal N$ have the form
$$
 U=\{(\nu_1\le\nu_2\le\dots)\in \mathcal N\mid \nu_1\in H_1,\dots,\nu_k\in H_k\}
$$
for aribitrary subsets $H_1,\dots ,H_k$ of $\mathbb Z$.

The following statement is the main result of this section.

\begin{theorem}
\label{Theorem_convergence_of_joint_distributions} For every $k\ge 1$, the joint distribution of
random variables $\bigl(\iota_N(\mathcal Z_N(0)),\dots, \iota_N(\mathcal Z_N(k))\bigr)$ weakly
converges to the joint distribution of $(\mathcal Z(0),\dots,\mathcal Z(k))$ as $N\to\infty$.

Equivalently, if $A_0,\dots, A_k$ are arbitrary cylindrical subsets of $\mathcal N$, then
\begin{multline*}
\lim_{N\to\infty} {\rm Prob}\{\iota_N(\mathcal Z_N(0))\in A_0,\dots,\iota_N(\mathcal Z_N(k))\in
A_k\}
\\= {\rm Prob}\{\mathcal Z(0)\in A_0,\dots,\mathcal Z(k)\in A_k\}.
\end{multline*}
\end{theorem}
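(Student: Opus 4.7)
The plan is to prove the theorem by induction on $k$, using Theorem~\ref{theorem_qGT_boundary} and the commutation $\mathcal{P}_N\Lambda^{\infty}_N = \Lambda^{\infty}_N\mathcal{P}_\infty$ (the defining property of $\mathcal{P}_\infty$ in Section~\ref{subsection_commuting_op_constructions}). Since $\iota_N$ sends $\lambda\in\mathbb{GT}_N$ to the sequence starting with its bottom coordinates $\lambda_N\le\lambda_{N-1}\le\cdots$, convergence on a cylindrical set $A\subset\mathcal{N}$ depending on the first $m$ coordinates is equivalent to joint convergence of the $m$ smallest coordinates of $\mathcal{Z}_N$ at each of the times $0,1,\ldots,k$. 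For the base case (a single time), I decompose $E_q(\mathcal{Z}_0)_N = \int \mathcal{E}^\nu_N\,\mathcal{Z}_0(d\nu)$ as an integral over extreme coherent systems and invoke Theorem~\ref{theorem_qGT_boundary}, which gives $\lambda_{N-j+1}\to\nu_j$ in probability under $\mathcal{E}^\nu_N$ for each $j\ge 1$, together with bounded convergence.

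The engine of the induction is a two-time convergence statement: for any probability measure $\mu$ on $\mathcal{N}$, any admissible $g$, and any cylindrical bounded $F,G$ on $\mathcal{N}$,
\[
\sum_{\lambda,\lambda'\in\mathbb{GT}_N} E_q(\mu)_N(\lambda)\,F(\iota_N\lambda)\,P_N(\lambda,\lambda';g)\,G(\iota_N\lambda') \longrightarrow \int \mu(d\nu)\,F(\nu)\int P_\infty(\nu,d\nu';g)\,G(\nu').
\]
Decomposing $E_q(\mu)_N = \int\mathcal{E}^\nu_N\,\mu(d\nu)$ reduces this to $\mu=\delta_\nu$. Setting $\Phi_N(\lambda) := \sum_{\lambda'}P_N(\lambda,\lambda';g)G(\iota_N\lambda')$ and $\Phi_\infty(\nu) := \int P_\infty(\nu,d\nu';g)G(\nu')$, the commutation relation gives $\mathcal{P}_N\mathcal{E}^\nu_N = E_q(P_\infty(\nu,\cdot;g))_N$, so the base case applied to the evolved measure $P_\infty(\nu,\cdot;g)$ yields $E_{\mathcal{E}^\nu_N}[\Phi_N]\to\Phi_\infty(\nu)$; the base case also gives $F(\iota_N\lambda)\to F(\nu)$ in probability under $\mathcal{E}^\nu_N$. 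Since $F,G$ are bounded, convergence of $F(\iota_N\lambda)$ in probability to the deterministic limit $F(\nu)$ combines with convergence of $E[\Phi_N]$ to $\Phi_\infty(\nu)$ to give $E_{\mathcal{E}^\nu_N}[F(\iota_N\lambda)\Phi_N(\lambda)]\to F(\nu)\Phi_\infty(\nu)$, and integration over $\mu$ completes the two-time case.

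Assuming convergence of the $(k+1)$-time joint distribution, I extend to $(k+2)$ times by applying the Markov property:
\[
E\Bigl[\prod_{i=0}^{k+1}F_i(\iota_N\mathcal{Z}_N(i))\Bigr] = E\Bigl[\prod_{i=0}^{k}F_i(\iota_N\mathcal{Z}_N(i))\cdot H_N(\mathcal{Z}_N(k))\Bigr],
\]
with $H_N(\lambda) := \sum_{\lambda'}P_N(\lambda,\lambda';g_k)F_{k+1}(\iota_N\lambda')$. To invoke the inductive hypothesis, I approximate $H_N$ by a function of the form $\tilde{H}^{(m)}\circ\iota_N$ with $\tilde{H}^{(m)}$ cylindrical of depth $m$, with error vanishing as $m\to\infty$ uniformly in $N$; then the inductive hypothesis applied to the $k+1$ cylindrical functions $F_0,\ldots,F_{k-1},F_k\cdot\tilde{H}^{(m)}$ at times $0,\ldots,k$, followed by $m\to\infty$, yields the claim. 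The main obstacle I anticipate is precisely this uniform-in-$N$ cylindrical approximation of $H_N$: one must control the dependence of $H_N(\lambda)$ on the ``high'' coordinates of $\lambda$ (those going to $+\infty$ under any sequence $\iota_N(\lambda^{(N)})\to\nu$). This control comes from the two-time convergence above applied with $G=F_{k+1}$, together with the $q$-decay of the Schur-function ratios $s_{\lambda'}(\xi)/s_\lambda(\xi)$ at the principal specialization $\xi_N=q^{1-N}$ --- the essential use of the geometric-progression assumption.
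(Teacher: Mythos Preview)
Your base case and two-time argument are correct, and the decoupling idea---under each extreme $\mathcal{E}^\nu_N$ the cylindrical $F(\iota_N\lambda)$ converges in probability to the constant $F(\nu)$, so it separates from any bounded $\Phi_N$---is exactly right. The gap is in your inductive step. You want to approximate $H_N(\lambda)=\sum_{\lambda'}P_N(\lambda,\lambda';g_k)F_{k+1}(\iota_N\lambda')$ by a cylindrical function uniformly in $N$, and you yourself flag this as ``the main obstacle''; the remark about $q$-decay of Schur ratios is not a proof, and actually establishing such a uniform-in-$N$ sup-norm approximation would require nontrivial estimates that you do not supply (the Feller-type bounds of Section~\ref{Section_feller} are only for fixed $N$).

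The paper avoids this entirely by working on the measure side. Its key lemma (Lemma~\ref{lemma_total_variation_dist}) shows that for any probability $\mu$ on $\mathcal N$ and any cylindrical $A$, $\|\iota_N(E_q(I_A\mu)_N)-I_A\iota_N(E_q(\mu)_N)\|\to 0$ in total variation; this is the dual of your approximation claim but follows directly from Theorem~\ref{theorem_qGT_boundary} and dominated convergence, with no uniformity in $\lambda$ needed. Since $P_N^g$ and multiplication by $I_A$ are total-variation contractions, one then peels off the indicators $I_{A_0},\ldots,I_{A_k}$ in a single chain of equalities. Alternatively, your own two-time engine already iterates \emph{forward} without any approximation: decompose at time $0$ (not time $k$), decouple $F_0$ from the bounded remainder $\Phi_N^{(1,\dots,k+1)}$ exactly as you did for two times, and note that $E_{\mathcal{E}^\nu_N}[\Phi_N^{(1,\dots,k+1)}]$ is, by commutation, precisely the $(k{+}1)$-time statement with initial law $P_\infty(\nu,\cdot;g_0)$---which is the inductive hypothesis.
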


We start the proof with two lemmas.

\begin{lemma}
\label{Lemma_approximation_of_boundary_measures} Let $\mu$ be a finite measure on $\mathcal N$, and
let $A$ be any cylindrical subset of $\mathcal N$. We have
$$
\mu(A)=\lim_{N\to\infty} \iota_N(E_q(\mu)_N)(A).
$$
\end{lemma}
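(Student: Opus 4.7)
The strategy is to reduce the claim to delta measures $\mu = \delta^\nu$ on $\mathcal{N}$ and invoke the coordinate-wise convergence property from Theorem \ref{theorem_qGT_boundary}.

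First I would reduce to the case when $\mu$ is a probability measure: writing $\mu = c_1 \mu_1 - c_2 \mu_2$ with $\mu_1,\mu_2 \in \mathcal{M}_p(\mathcal{N})$ and using that $E_q$ and $\iota_N$ are both linear and continuous, it suffices to prove the lemma for $\mu \in \mathcal{M}_p(\mathcal{N})$. For such $\mu$, the integral representation furnished by Theorem \ref{theorem_boundary} reads
$$
E_q(\mu) \;=\; \int_{\mathcal{N}} \mathcal{E}^{\nu}\, \mu(d\nu), \qquad
\text{so}\qquad
\iota_N(E_q(\mu)_N)(A) \;=\; \int_{\mathcal{N}} \iota_N(\mathcal{E}^{\nu}_N)(A)\, \mu(d\nu).
$$
The integrand is uniformly bounded by $1$, and $\mu$ is finite, so once I establish that $\iota_N(\mathcal{E}^{\nu}_N)(A)\to \mathbf{1}_A(\nu)$ pointwise in $\nu$, the dominated convergence theorem immediately gives
$\lim_N \iota_N(E_q(\mu)_N)(A) = \int \mathbf{1}_A\, d\mu = \mu(A)$, as required.

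Second, the pointwise claim. A cylindrical set has the form $A = \{\nu_1 \in H_1, \dots, \nu_k \in H_k\}$ for subsets $H_j \subset \mathbb{Z}$. By the definition of $\iota_N$, for $\lambda \in \mathbb{GT}_N$ with $N \ge k$, $\iota_N(\lambda) \in A$ iff $\lambda_{N-j+1} \in H_j$ for all $j = 1,\dots, k$. Theorem \ref{theorem_qGT_boundary} says that under $\mathcal{E}^{\nu}_N$ each random variable $\lambda_{N-j+1}$ converges in probability, as $N\to\infty$, to the integer $\nu_j$; since convergence in probability to a fixed integer in the discrete topology is equivalent to $\mathrm{Prob}(\lambda_{N-j+1}=\nu_j)\to 1$, a union bound yields
$$
\mathrm{Prob}\bigl(\lambda_{N-j+1}=\nu_j \text{ for all } j=1,\dots,k\bigr) \;\longrightarrow\; 1.
$$
On this event, $\lambda_{N-j+1} \in H_j$ iff $\nu_j\in H_j$, so $\iota_N(\mathcal{E}^{\nu}_N)(A) \to \mathbf{1}_A(\nu)$ for every $\nu \in \mathcal{N}$.

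The main (minor) obstacle is bridging from the coordinate-wise convergence in Theorem \ref{theorem_qGT_boundary} to the joint event defining the cylindrical set $A$; this is handled by the union bound above, which is clean precisely because $\mathcal{N}$ sits inside $\overline{\mathbb{Z}}^{\infty}$ with the discrete topology on each coordinate. All remaining steps (linearity, dominated convergence) are routine, and the proof does not require any further estimates specific to the $q$-Gibbs setting beyond the boundary description already established.
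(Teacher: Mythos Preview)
Your proposal is correct and follows essentially the same approach as the paper: reduce to delta measures via the integral representation $E_q(\mu)_N=\int \mathcal E^{\nu}_N\,\mu(d\nu)$ and dominated convergence, and handle the delta measure case using the coordinate-wise convergence in probability from Theorem~\ref{theorem_qGT_boundary}. The only cosmetic difference is that the paper first reduces to basic cylinders $\{\nu_1=b(1),\dots,\nu_\ell=b(\ell)\}$, whereas you treat a general cylinder $\{\nu_j\in H_j\}$ directly via a union bound; both arguments are equivalent.
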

\begin{proof}
 It suffices to prove the lemma for cylindrical sets of the form
 $$
  A=A_{b(1),\dots,b(\ell)}=\{\nu_1\le\nu_2\le\dots\mid  \nu_1=b(1),\dots,\nu_\ell=b(\ell);\ b(j)\in \mathbb{Z}\}.
 $$
 First, suppose that $\mu=\delta^\nu$ for a certain $\nu\in\mathcal N$, then $\mu(A)=1$ if
$\nu_1=b(1),\dots,\nu_\ell=b(\ell)$, and $\mu(A)=0$ otherwise. The statement of Lemma
\ref{Lemma_approximation_of_boundary_measures} follows from Theorem \ref{theorem_qGT_boundary}.

 For an arbitrary measure we have
\begin{multline*}
 \mu(A)=\int_{\mathcal N} \delta^{\nu}(A) \mu(d\nu)=\int_{\mathcal N} \lim_{N\to\infty}
 \iota_N(E_q(\delta^{\nu})_N)(A) \mu(d\nu)\\ \stackrel{(*)}{=}\lim_{N\to\infty}
 \int_{\mathcal N} \iota_N(E_q(\delta^{\nu})_N)(A) \mu(d\nu) =
 \lim_{N\to\infty}\iota_N(E_q(\mu)_N)(A),
\end{multline*}
where the equality $(*)$ follows from the dominated convergence theorem.
\end{proof}

Let us denote by $I_A$ the indicator function of set $A$:
$$
 I_A(x)= \begin{cases} 1,& x\in A,\\ 0,& \text{otherwise.} \end{cases}
$$
If $\mu$ is a measure, then $I_A\mu$ stands for the measure given by
$$
 (I_A\mu) (B)=\mu(A\cap B).
$$
\begin{lemma}
\label{lemma_total_variation_dist}
{ Let $\mu$ be a probability measure on $\mathcal N$} and let $A$
 be any cylindrical set. Then the total variation distance between measures $\iota_N (E_q(\mu
 I_A)_N)$ and $I_A \iota_N (E_q(\mu)_N)$ tends to zero as $N\to\infty$.
\end{lemma}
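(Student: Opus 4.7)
The plan is to reduce the total-variation estimate to an application of Lemma~\ref{Lemma_approximation_of_boundary_measures} via linearity of $E_q$ and a disjoint-support argument. Since $E_q$ is a linear map on (signed) measures (see the construction of $E$ in Theorem~\ref{theorem_boundary} as an integral against extreme coherent systems), the decomposition $\mu = \mu I_A + \mu I_{A^c}$ gives $\iota_N(E_q(\mu)_N) = \iota_N(E_q(\mu I_A)_N) + \iota_N(E_q(\mu I_{A^c})_N)$. Multiplying by $I_A$ and rearranging,
\begin{equation*}
  \iota_N(E_q(\mu I_A)_N) - I_A\iota_N(E_q(\mu)_N) = I_{A^c}\,\iota_N(E_q(\mu I_A)_N) - I_A\,\iota_N(E_q(\mu I_{A^c})_N).
\end{equation*}

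The two terms on the right-hand side are non-negative measures supported on the disjoint sets $A^c$ and $A$, hence mutually singular, so the total-variation norm of the difference equals the sum of their masses, namely $\iota_N(E_q(\mu I_A)_N)(A^c) + \iota_N(E_q(\mu I_{A^c})_N)(A)$. I would then apply Lemma~\ref{Lemma_approximation_of_boundary_measures} to the finite measures $\mu I_A$ and $\mu I_{A^c}$ with the cylindrical test sets $A^c$ and $A$ respectively, obtaining
\begin{equation*}
  \iota_N(E_q(\mu I_A)_N)(A^c) \to (\mu I_A)(A^c) = 0, \qquad \iota_N(E_q(\mu I_{A^c})_N)(A) \to (\mu I_{A^c})(A) = 0,
\end{equation*}
as $N\to\infty$, which yields the claim.

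The argument presents essentially no obstacle, since the previous lemma does the heavy lifting. The only minor point is that $A^c$ is not quite cylindrical in the narrow sense of the paper, but it is a finite disjoint union of cylindrical sets (replacing $H_j$ by $H_j^c$ one coordinate at a time), so by finite additivity Lemma~\ref{Lemma_approximation_of_boundary_measures} extends to it. Intuitively, the estimate is natural because the embedding $\iota_N$ can leak mass across the boundary of $A$ only through the ``flat tail'' $\lambda_1,\lambda_1,\dots$ of the embedded signature, and by Theorem~\ref{theorem_qGT_boundary} the coordinate $\lambda_{N-k+1}$ concentrates in probability on $\nu_k$, so this leaked mass vanishes in the limit.
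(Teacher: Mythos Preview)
Your proof is correct. Both yours and the paper's rely on Lemma~\ref{Lemma_approximation_of_boundary_measures}, but the reductions differ. The paper first treats the extreme case $\mu=\delta^\nu$ (splitting according to whether $\nu\in A$ or not) and then passes to general $\mu$ by a dominated-convergence style bound for vector-valued integrals, using that $\Vert \iota_N(E_q(I_A\delta^\nu)_N)-I_A\iota_N(E_q(\delta^\nu)_N)\Vert\le 1$ pointwise in $\nu$. You instead decompose $\mu=\mu I_A+\mu I_{A^c}$ up front, use linearity of $E_q$ and $\iota_N$, and observe that the two resulting error terms are mutually singular non-negative measures, so the total variation equals the sum of their masses; each mass is then killed directly by Lemma~\ref{Lemma_approximation_of_boundary_measures} applied to the finite measures $\mu I_A$ and $\mu I_{A^c}$.

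Your route is a bit more economical: it avoids the case split on $\nu$ and the dominated-convergence step. The minor technicality you flag about $A^c$ is easily circumvented: since the total mass of $\iota_N(E_q(\mu I_A)_N)$ equals $(\mu I_A)(\mathcal N)=\mu(A)$ for every $N$, one can simply write $\iota_N(E_q(\mu I_A)_N)(A^c)=\mu(A)-\iota_N(E_q(\mu I_A)_N)(A)\to \mu(A)-\mu(A)=0$, so no decomposition of $A^c$ into cylinders is needed.
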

\begin{proof}
First, suppose that $\mu=\delta^\nu$ for a certain sequence $\nu\in\mathcal N$. If $\nu\in A$,
then $E_q(I_A\mu)_N=E_q(\mu)_N$, consequently,
\begin{multline*}
\bigl\Vert \iota_N (E_q(I_A\mu)_N) -I_A \iota_N (E_q(\mu)_N)\bigr\Vert =\bigl\Vert
(1-I_A)\iota_N(E_q(\mu)_N)\bigr\Vert = \iota_N(E_q(\mu)_N)({\bar A}),
\end{multline*}
where $\bar A=\mathcal N\setminus A$. The right-hand side tends to zero by Lemma
\ref{Lemma_approximation_of_boundary_measures}.

If $\nu$ does not belong to $A$, then $( I_A\mu)_N =0$ and
$$
\bigl\Vert \iota_N (E_q(I_A \mu)_N) -I_A \iota_N (E_q(\mu)_N)\bigr\Vert =\bigl\Vert I_A \iota_N(
E_q(\mu)_N)\bigr\Vert =\iota_N (E_q(\mu)_N)(A)\to 0
$$
by Lemma \ref{Lemma_approximation_of_boundary_measures}.

To prove the claim for a general measure $\mu$ we observe the following property of the total
variation norm. { Suppose that $\mathcal W$ and $\mathcal V$ are measurable spaces,
and $f_N$ is a sequence of measurable maps from $\mathcal W$ to $\mathcal M(\mathcal V)$, such that $\Vert
f_N(w)\Vert \le 1$ and $\Vert f_N(w)\Vert \to 0$ for any $w\in\mathcal W$.} Then for any
probability measure $\pi$ on $\mathcal W$, we have
$$
 \left\Vert \int_{\mathcal W} f_N(w) \pi(dw)\right\Vert \to 0.
$$

We obtain
\begin{multline*}\bigl\Vert \iota_N( E_q(
 I_A\mu )_N)-I_A \iota_N (E_q(\mu)_N)\bigr\Vert \\=\left\Vert \int_{\mathcal N} \biggl(\iota_N( E_q(I_A \delta^{\nu})_N)-I_A
 \iota_N (E_q(\delta^{\nu})_N)\biggr) \mu(d\nu)\right\Vert \to 0.
\end{multline*}



\end{proof}



\begin{proof}[Proof of Theorem \ref{Theorem_convergence_of_joint_distributions}]

Let us write ${P_{\infty}^g}$ for the linear operator on $\mathcal M(\mathcal N)$ corresponding
to the kernel $P_{\infty} (u, A; g(x))$:
$$
 (P_\infty^g\pi)(A)=\int_{\mathcal N} P_{\infty} (u, A; g(x)) \pi(du).
$$

Let ${P_{N}^g}$ be the operator acting on measures of the form $\iota_N (\mathcal M(\mathbb{GT}_N))$
via the kernel ${P_N}(g(x))$:
$$
 (P_N^g\pi)(\iota_N(\eta))=\sum_{\lambda\in\mathbb{GT}_N} P_N (\lambda\to\eta; g(x)) \pi(\lambda),
\qquad
\eta\in\mathbb{GT}_N.
$$
Note that these operators are contractions in total variation norm.

We have
$$
 {\rm Prob}\{\mathcal Z(0)\in A_0,\dots,\mathcal Z(k)\in A_k\}= (I_{A_k} P_{\infty}^{g_{k-1}}\dots
(I_{A_1}P_\infty^{g_1}(I_{A_0} \mathcal Z_0))\dots)(\mathcal N)
$$
and
\begin{multline*}
 {\rm Prob}\{\iota_N(\mathcal Z_N(0))\in A_0,\dots,\iota_N(\mathcal Z_N(k))\in A_k\} \\ =
(I_{A_k}P_{N}^{g_{k-1}}\dots (I_{A_1}P_N^{g_1}(I_{A_0} \mathcal Z_0))\dots)(\mathcal N).
\end{multline*}

Applying Lemmas \ref{Lemma_approximation_of_boundary_measures}, \ref{lemma_total_variation_dist}
and definitions we obtain

\begin{align*}
 (I_{A_k} P_{\infty}^{g_{k-1}}&\dots (I_{A_1}P_\infty^{g_1}(I_{A_0} \mathcal Z_0))\dots)(\mathcal
N)\\&=\lim_{N\to\infty} \iota_N (E_q(I_{A_k} P_{\infty}^{g_{k-1}}\dots
(I_{A_1}P_\infty^{g_1}(I_{A_0} \mathcal Z_0))\dots)_N) (\mathcal N)
\\&=\lim_{N\to\infty} (I_{A_k}\iota_N (E_q( P_{\infty}^{g_{k-1}}\dots
(I_{A_1}P_\infty^{g_1}(I_{A_0} \mathcal Z_0))\dots)_N)) (\mathcal N) \\&=\lim_{N\to\infty}
(I_{A_k}P_{N}^{g_{k-1}} I_{A_{k-1}} \iota_N (E_q( P_{\infty}^{g_{k-2}}(\dots
(I_{A_1}P_\infty^{g_1}(I_{A_0} \mathcal Z_0))\dots)_N)) (\mathcal N)\\&=\dots =
\lim_{N\to\infty}(I_{A_k}P_{N}^{g_{k-1}}\dots (I_{A_1}P_N^{g_1}(I_{A_0} \mathcal
Z_0))\dots)(\mathcal N)
\end{align*}
as desired. \end{proof}

\subsection{Correlation functions}

\label{Subsection_correlation_functions}

Let $Q(t)$ be a stochastic process taking values in subsets of $\mathbb Z$ ($=$\ point
configurations in $\mathbb Z$). For $n\ge 1$, the $n$th correlation function $\rho_n$ of $Q(t)$ is the following
function of $n$ distinct pairs $(x_i,t_i)$:
$$
 \rho_n(x_1,t_1;x_2,t_2;\dots;x_n,t_n)={\rm Prob}\{x_1\in Q(t_1),\dots,x_n\in Q(t_n)\}.
$$

Recall that we identify $\mathbb{GT}_N$ with $N$--point configurations in $\mathbb Z$, and
$\mathcal N$ is identified with infinite subsets of $\mathbb Z$ that do not have $-\infty$ as
their limit point. The following statement is a corollary of Theorem
\ref{Theorem_convergence_of_joint_distributions}.

\begin{theorem}
 The correlation functions of processes $\mathcal Z_N(t)$ pointwise converge as $N\to\infty$ to the correlation
functions of processes $\mathcal Z(t)$.
\end{theorem}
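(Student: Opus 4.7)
The plan is to reduce the statement to Theorem \ref{Theorem_convergence_of_joint_distributions} by decomposing each event $\{x\in Q\}$ as a countable disjoint union of cylindrical subsets of $\mathcal N$, truncating with uniformly small tails, and then applying the joint convergence of finite-dimensional distributions.

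First I would relate the $N$-point configuration $\mathcal Z_N(t)$ in $\mathbb Z$ to its embedding $\iota_N(\mathcal Z_N(t))\in\mathcal N$. By definition of $\iota_N$, the two differ only in that $\iota_N$ adds the points $\{\lambda_1^{(N)}(t)+N, \lambda_1^{(N)}(t)+N+1,\dots\}$ above the top of the original configuration. Hence for fixed $x_j$'s,
\[
\Bigl|{\rm Prob}\{x_j\in\mathcal Z_N(t_j)\ \forall j\} - {\rm Prob}\{x_j\in\iota_N(\mathcal Z_N(t_j))\ \forall j\}\Bigr|\ \le\ \sum_{j=1}^n {\rm Prob}\{\lambda_1^{(N)}(t_j)\le x_j-N\}.
\]
Since $\lambda_1\ge\nu_1^{(N)}$, each term on the right is bounded by ${\rm Prob}\{\nu_1^{(N)}(t_j)\le x_j-N\}$; the convergence in distribution of $\nu_1^{(N)}(t_j)$ to $\nu_1(t_j)\in\mathbb Z$ furnished by Theorem \ref{Theorem_convergence_of_joint_distributions} (applied to the cylindrical sets $\{\nu_1=b\}$) implies tightness of $\{\nu_1^{(N)}(t_j)\}_N$, so each such term tends to $0$ as $N\to\infty$.

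Next, because the points $\nu_i+i-1$ of a configuration are strictly increasing in $i$, one has the disjoint decomposition
\[
\{x\in\nu\}\ =\ \bigsqcup_{k\ge1}\{\nu_k=x-k+1\}.
\]
For a cutoff $K$, set $B^K(x)=\bigsqcup_{k=1}^K\{\nu_k=x-k+1\}$, a finite disjoint union of cylindrical sets, and observe that the weak monotonicity of $\nu$ yields
\[
\{x\in\nu\}\setminus B^K(x)\ \subset\ \{\nu_K\le x-K\},
\]
since $\nu_k=x-k+1$ for some $k>K$ forces $\nu_K\le\nu_k\le x-K$. The right-hand side is itself a cylindrical subset of $\mathcal N$, so Theorem \ref{Theorem_convergence_of_joint_distributions} gives
\[
{\rm Prob}\{\iota_N(\mathcal Z_N(t_j))\in\{\nu_K\le x_j-K\}\}\ \xrightarrow[N\to\infty]{}\ {\rm Prob}\{\nu_K(t_j)\le x_j-K\},
\]
and the right-hand side vanishes as $K\to\infty$ because $\mathcal Z(t_j)\in\mathcal N$ almost surely, so $\nu_K(t_j)+K\to+\infty$ a.s.

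To conclude: given $\varepsilon>0$, fix $K$ so large that each of the $n$ limiting tail probabilities above is at most $\varepsilon/(3n)$, which forces the $N$-level tails to be at most $\varepsilon/(2n)$ for all $N\ge N_0(K,\varepsilon)$. The truncated event $\bigcap_j B^K(x_j)$ is the finite disjoint union, over $(k_1,\dots,k_n)\in\{1,\dots,K\}^n$, of the events $\bigcap_j\{\nu_{k_j}(t_j)=x_j-k_j+1\}$; each of these is a product across times of cylindrical subsets of $\mathcal N$, so Theorem \ref{Theorem_convergence_of_joint_distributions} applied termwise yields
\[
{\rm Prob}\{\iota_N(\mathcal Z_N(t_j))\in B^K(x_j)\ \forall j\}\ \xrightarrow[N\to\infty]{}\ {\rm Prob}\{\mathcal Z(t_j)\in B^K(x_j)\ \forall j\}.
\]
Combining this convergence with the uniform tail control and the $\iota_N$-discrepancy estimate of the first paragraph proves the claim. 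The main obstacle is the uniform-in-$N$ tail control for the number of particles of $\iota_N(\mathcal Z_N(t))$ below a given level, but as laid out this reduces to tightness of $\nu_1^{(N)}(t_j)$ and $\nu_K^{(N)}(t_j)$ for each fixed $K$, which is a direct consequence of the distributional convergence already at hand.
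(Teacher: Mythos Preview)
Your proof is correct and follows essentially the same approach as the paper's: truncate the event $\{x\in\nu\}$ to the first $K$ coordinates, control the tail uniformly in $N$ using tightness of the coordinate distributions (which itself follows from Theorem~\ref{Theorem_convergence_of_joint_distributions}), and apply Theorem~\ref{Theorem_convergence_of_joint_distributions} to the truncated cylindrical events. The paper carries this out only for $\rho_1$ and declares the higher $\rho_n$ analogous, while you treat general $n$ directly and also separate out the $\iota_N$-discrepancy explicitly; the paper absorbs that discrepancy by simply taking $k\le N$, so your first paragraph is correct but not strictly needed.
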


{
\begin{proof}
 Let us proof this statement for the first correlation function, for all other
 correlation functions the proof is analogous. Let $\rho_1^N(x,t)$ denote the first correlation
 function of the point configuration corresponding to $\mathcal Z_N(t)$ and let $\rho_1(x,t)$ denote the first correlation
 function of the point configuration corresponding to $\mathcal Z(t)$.

 Choose $\varepsilon>0$. Let $m$ be a number such that ${\rm Prob}\{\mathcal Z(t)_1>m\}
 >1-\varepsilon$. Since the distribution of $\iota _N(\mathcal Z_N(t))_1$ converges to that of
 $\mathcal Z(t)_1$, for sufficiently large $N$ we have ${\rm Prob}\{(\iota _N \mathcal
 Z_N(t))_1>m\}
 >1-2\varepsilon$. Now let $k>x-m$. Note that $(\iota _N \mathcal Z_N(t))_1>m$ implies $(\iota _N\mathcal
 Z_N(t))_{k+1}+(k+1)-1>x$. Thus,
 $$
 \biggl|{\rm Prob}\Bigl\{ (\iota _N \mathcal Z_N(t))_1 = x \text{ or } \dots \text{
  or } (\iota _N \mathcal Z_N(t))_k+k-1=x\Bigl\}-\rho_1^N(x,t)\biggr|<2\varepsilon
 $$
 and similarly for $\mathcal Z(t)$.

 It follows from Theorem \ref{Theorem_convergence_of_joint_distributions} that for sufficiently
 large $N$
 \begin{multline*}
  \biggl|{\rm Prob}\Bigl\{ (\iota _N \mathcal Z_N(t))_1 = x \text{ or } \dots \text{
  or } (\iota _N \mathcal Z_N(t))_k+k-1=x \Bigr\} \\- {\rm Prob}\Bigl\{ \mathcal Z(t)_1 = x \text{ or } \dots \text{
  or }  \mathcal Z(t)_k+k-1=x\Bigr\}\biggr|<\varepsilon.
 \end{multline*}

 Therefore, for sufficiently large $N$ we have
 \begin{align*}
 \bigl|\rho_1^N(x,t) &- \rho_1(x,t)\bigr|\\&\le  \biggl|\rho_1^N(x,t)- {\rm Prob}\Bigl\{ (\iota _N
\mathcal Z_N(t))_1 = x \text{ or }
 \dots \text{ or } (\iota _N \mathcal Z_N(t))_k+k-1\Bigr\}\biggr| \\
&+ \biggl|{\rm Prob}\Bigl\{ (\iota _N \mathcal Z_N(t))_1 = x
  \text{ or }\dots \text{ or } (\iota _N \mathcal Z_N(t))_k+k-1=x\Bigr\}\\&- {\rm Prob}\Bigl\{
\mathcal Z(t)_1 = x \text{ or }
  \dots \text{ or }  \mathcal Z(t)_k+k-1=x\Bigr\}\biggr| \\&+ \biggl|{\rm Prob}\Bigl\{ \mathcal
Z(t)_1 = x  \text{ or }\dots \text{
  or }  \mathcal Z(t)_k+k-1=x\Bigr\}-\rho_1(x,t)\biggr|\\&< 2\varepsilon +\varepsilon
+\varepsilon= 4\varepsilon.
 \end{align*}
  Since $\varepsilon$ is arbitrary, the proof is complete.
\end{proof}
}

Now we are in position to actually compute the correlation functions of $\mathcal Z(t)$. From now
on we confine ourselves to the case $\mathcal Z_0=\delta^{\mathbf{0}}$. In other words, $\mathcal
Z(0)= \mathbf{0}=(0\le 0\le 0\le \dots)$. As shown in \cite[Theorem 1.1]{Gor}, this implies that
for every $N\ge 1$, $\mathcal Z_N(0)=\mathbf{0}_N=(0\le 0\le\dots\le 0)$.

\begin{proposition}
\label{Proposition_correlation_kernel_prelimit} For any $n,N\ge 1$, the $n$th correlation function
$\rho_n^N$ of the process $\mathcal Z_N(t)$ started from $\mathcal Z_N(0)=\mathbf{0}_N$ admits the
following determinantal formula:

$$
\rho_n^N(x_1,t_1;x_2,t_2;\dots;x_n,t_n)=\det_{i,j=1,\dots,n}[K^N(x_i,t_i;x_j,t_j)],
$$

where
\begin{multline}
\label{eq_prelimit_kernel}
 K^N(x_1,t_1;x_2,t_2)=-\frac{1}{2\pi i} \oint_{\Co} \frac{dw}{w^{x_1-x_2+1}}
 \prod_{t=t_2}^{t_1-1} {g_t(w)} {\mathbf
 1}_{t_1>t_2}\\
 +\frac{1}{(2\pi i)^2} \oint_{\Co} dw \oint_{\Co'_N} dz \frac{\prod_{t=0}^{t_1-1}
 g_t(w)}{\prod_{t=0}^{t_2-1} {g_t(z)}} \prod_{\ell=0}^{N-1}\frac{ (1-q^{\ell} w)}{(1-q^{\ell} z)}
 \frac{z^{x_2}}{w^{x_1+1}} \frac{1}{w-z},
\end{multline}
the contours $\Co$ and $\Co'_N$ are closed and positively oriented; $\Co$ includes only the pole
$0$ and $\Co'_N$ includes only the poles $q^{-i}$, $i=0,\dots,N-1$ of the integrand.
\end{proposition}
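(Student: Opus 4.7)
The plan is to recognize the joint distribution of $(\mathcal Z_N(t_1),\dots,\mathcal Z_N(t_k))$ as a measure of Eynard--Mehta type, apply the Eynard--Mehta theorem to obtain determinantal correlation functions, and then compute the resulting kernel by contour integrals with a residue calculation tailored to $\xi_i=q^{1-i}$.

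For the first step I would use \eqref{eq_trans_prob_section2} together with the semigroup identity $\widetilde Q_N^{g_1}\widetilde Q_N^{g_2}=\widetilde Q_N^{g_1g_2}$ to aggregate the one-step transition probabilities between $t_{i-1}$ and $t_i$ into a single ``$G_i$-step'' kernel $P_N(\,\cdot\,;G_i)$, where $G_i(x):=\prod_{t=t_{i-1}}^{t_i-1}g_t(x)$. The resulting $k$-point joint law, starting from $\mathcal Z_N(0)=\mathbf{0}_N$, equals
$$
\mathrm{const}\cdot\prod_{i=1}^{k}\det_{a,b=1,\dots,N}\Bigl[c^{G_i}_{\mu_a^{(i)}-a-\mu_b^{(i-1)}+b}\Bigr]\cdot s_{\mu^{(k)}}(\xi_1,\dots,\xi_N),
$$
because the intermediate Schur ratios $s_{\mu^{(i)}}(\xi)/s_{\mu^{(i-1)}}(\xi)$ telescope and $s_{\mathbf{0}_N}(\xi)=1$. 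Passing to positions $y_a^{(i)}:=\mu_a^{(i)}+N-a$ turns the determinants into Karlin--McGregor weights for $k$ layers of nonintersecting walks with step distributions $\phi_i(x,y)=c^{G_i}_{y-x}$, and expanding $s_{\mu^{(k)}}$ by the Weyl determinantal formula writes the last factor as $\det[\xi_a^{y_b^{(k)}}]/\prod_{a<b}(\xi_a-\xi_b)$. This is precisely the canonical input required by Eynard--Mehta.

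Next I would invoke the Eynard--Mehta theorem in the form used in \cite[Sec.~4]{BF}: the correlation functions are determinantal with kernel
$$
K^N(x_1,t_1;x_2,t_2)=-\phi^{(t_2,t_1)}(x_1,x_2)\mathbf 1_{t_1>t_2}+\sum_{a,b=1}^{N}\Psi^{t_1}_a(x_1)\,[M^{-1}]_{ab}\,\Phi^{t_2}_b(x_2),
$$
where $\phi^{(t_2,t_1)}(x,y)=[w^{y-x}]\prod_{t=t_2}^{t_1-1}g_t(w)$ is exactly the single-contour integral in \eqref{eq_prelimit_kernel}; $\Psi_a^{t_1}$ is the function $y\mapsto\xi_a^{y}$ transported backward by the convolution kernel between times $t_1$ and $t_k$; $\Phi_b^{t_2}$ is the delta-function at the packed initial condition $\mathbf{0}_N$ transported forward by $G_1,\dots,G_j$ up to time $t_2$; and $M$ is the Gram matrix of these two families evaluated at the initial condition.

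The main obstacle is the explicit evaluation of the biorthogonalization sum $\sum_{a,b}\Psi^{t_1}_a(x_1)[M^{-1}]_{ab}\Phi^{t_2}_b(x_2)$ as the double contour integral in \eqref{eq_prelimit_kernel}. I would realize each $\Psi_a^{t_1}$ and $\Phi_b^{t_2}$ as one-variable contour integrals via $c_k^g=\frac{1}{2\pi i}\oint g(w)w^{-k-1}dw$, collapse the sum over $a=1,\dots,N$ using the geometric identity $(w-z)^{-1}=\sum_{m\ge 0}w^{-m-1}z^{m}$ (cast as the $N$-term Lagrange-interpolation analogue), and compute the inverse Gram matrix by residues at the $N$ poles $z=q^{-\ell}$, $\ell=0,\dots,N-1$. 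The key identity to verify is the $q$-Cauchy-type collapse that produces the factor $\prod_{\ell=0}^{N-1}(1-q^\ell w)/(1-q^\ell z)$ from the product structure of the Weyl denominator at $\xi_i=q^{1-i}$; this is the only step where the specific geometric progression is used, and for general $\xi$ it would produce the factor $\prod_\ell(1-w/\xi_\ell)/(1-z/\xi_\ell)$ of \eqref{eq_x10}, consistent with the single-time Schur measure. In effect, the calculation is the discrete-time analogue of the one carried out in \cite[Sec.~2.8]{BF}, and I would follow that argument, replacing the continuous-time convolutions by the discrete products $\prod_{t=t_2}^{t_1-1}g_t(w)$.
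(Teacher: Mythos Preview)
Your approach is correct and is essentially the same as the paper's. The paper does not give a self-contained argument at all: its entire proof is a reference to Theorem~2.25, Corollary~2.26 and Remark~2.27 of \cite{BF} (and Proposition~3.4 of \cite{BF-Push}), together with the dictionary $\alpha_\ell=q^{1-\ell}$, $F_t(z)=g_t(z^{-1})$, the change of variables $\zeta\mapsto\zeta^{-1}$, and a shift by $N$. What you have written is an outline of exactly the Eynard--Mehta computation carried out in those references, so there is no methodological difference---you are simply unpacking what the paper chooses to cite.

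Two small remarks. First, the relevant material in \cite{BF} is in Section~2 (specifically \S2.8 and the results numbered 2.25--2.27), not Section~4. Second, your description of the biorthogonalization step (``collapse the sum over $a=1,\dots,N$ using the geometric identity \ldots'' and ``compute the inverse Gram matrix by residues'') is somewhat schematic; in \cite{BF} this step is handled by explicitly identifying the biorthogonal functions via contour integrals and checking the orthogonality relation directly, rather than by first forming $M$ and then inverting it. If you intend to write out the argument rather than cite it, you will want to follow that route: exhibit candidate functions $\Psi_a^{t},\Phi_b^{t}$ as single contour integrals, verify $\langle\Psi_a,\Phi_b\rangle=\delta_{ab}$ by a residue computation at $z=\xi_a$, and then sum the rank-one pieces into the double integral using $\sum_{a=1}^N\frac{1}{w-\xi_a}\prod_{\ell\ne a}\frac{z-\xi_\ell}{\xi_a-\xi_\ell}$ collapsed into $\prod_\ell\frac{w-\xi_\ell}{z-\xi_\ell}\cdot\frac{1}{w-z}$ minus its holomorphic part.
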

\begin{proof}
See Theorem 2.25, Corollary 2.26, Remark 2.27 in \cite{BF}, see also Proposition 3.4 in
\cite{BF-Push}. To match the notations, one needs to set $\alpha_\ell$ of \cite{BF} to be
$q^{1-\ell}$, $\ell\ge 1$, {set the symbol of the Toeplitz matrix $F_t(z)$ of  \cite{BF} to
$g_t(z^{-1})$}, change the integration variables via $\zeta\mapsto \zeta^{-1}$, and shift the
particles of \cite{BF} to the right by $N$.
\end{proof}

In what follows we use the standard notation
$$
(w;q)_\infty=\prod_{i=0}^{\infty}(1-wq^i).
$$

\begin{theorem}\label{th:corrfunc} For any $n\ge 1$, the $n$th correlation function $\rho_n$ of process $\mathcal Z(t)$
started from $\mathcal Z(0)=\mathbf{0}$ has the form
$$
\rho_n(x_1,t_1;x_2,t_2;\dots;x_n,t_n)=\det_{i,j=1,\dots,n}[K(x_i,t_i;x_j,t_j)],
$$
where
\begin{multline*}
 K(x_1,t_1;x_2t_2)=-\frac{1}{2\pi i} \oint_{\Co} \frac{dw}{w^{x_1-x_2+1}}
 {\prod_{t=t_2}^{t_1-1} {g_t(w)}} {\mathbf
 1}_{t_1>t_2}\\
 +\frac{1}{(2\pi i)^2} \oint_{\Co} dw \oint_{\Co'} dz \frac{\prod_{t=0}^{t_1-1}
 g_t(w)}{\prod_{t=0}^{t_2-1} {g_t(z)}} \frac{(w;q)_\infty}{(z;q)_\infty} \frac{z^{x_2}}{w^{x_1+1}}
 \frac{1}{w-z},
\end{multline*}
$\Co$ is positively oriented and includes only the pole $0$ of the integrand; $\Co'$ goes from
$+i\infty$ to $-i\infty$ between $\Co$ and point $1$.
\end{theorem}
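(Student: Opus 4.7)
The plan is to combine the pointwise convergence of correlation functions $\rho_n^N\to\rho_n$ (the preceding theorem in this subsection) with Proposition~\ref{Proposition_correlation_kernel_prelimit}, which expresses $\rho_n^N$ as $\det_{i,j=1,\dots,n}[K^N(x_i,t_i;x_j,t_j)]$. Since a fixed-size determinant depends continuously (in fact polynomially) on its entries, the theorem reduces to proving the pointwise convergence
\begin{equation*}
K^N(x_1,t_1;x_2,t_2)\longrightarrow K(x_1,t_1;x_2,t_2)\quad\text{as } N\to\infty
\end{equation*}
for each fixed quadruple $(x_1,t_1,x_2,t_2)$; one then has $\rho_n=\lim_N\det[K^N]=\det[K]$.

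The single contour integral term in $K^N$ is manifestly $N$-independent and identical to the single contour integral term in $K$, so the whole analysis reduces to the double integral. The two double integrals differ only in the factor $\prod_{\ell=0}^{N-1}(1-q^\ell w)/(1-q^\ell z)$ vs.\ $(w;q)_\infty/(z;q)_\infty$, and in the $z$-contour $\Co'_N$ vs.\ $\Co'$. On the $w$-contour $\Co$ (a small loop around $0$) and on $\Co'$ (a vertical line in the strip $\{0<\mathrm{Re}\,z<1\}$), the finite products converge locally uniformly to their infinite counterparts, so the integrand converges pointwise to the target. The substantive step is transferring the $z$-integration from $\Co'_N$ to $\Co'$.

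I would write $\Co'_N$ as the concatenation of $\Co'$ (traversed from $+i\infty$ to $-i\infty$) with a large closing arc far to the right, the enclosed region containing precisely the poles $\{q^{-i}\}_{i=0}^{N-1}$ of the prelimit integrand. The remaining singularities of that integrand --- zeros of $g_t(z)$, which can only arise when $g_t$ is of Bernoulli type $1+\beta z^{\pm1}$ with $\beta>0$ --- lie on the negative real axis, hence strictly to the left of $\Co'$, so the deformation does not sweep through any extra pole. On the closing arc, taken for instance as a circle of radius $q^{-N+1/2}$, the factor $\prod_{\ell=0}^{N-1}(1-q^\ell z)^{-1}$ behaves like $q^{-N(N-1)/2}(-z)^{-N}$, while the other factors are $O(|z|^{c})$ with $c$ depending only on $x_2$ and the fixed list $g_0,\dots,g_{t_2-1}$; a direct estimate shows the arc contribution is $O(q^{N^2/2})$ and hence vanishes as $N\to\infty$. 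Dominated convergence on the remaining $\int_{\Co'}$, justified by a uniform lower bound $|1-q^\ell z|\ge 1-q^\ell$ for $z\in\Co'$ and the super-polynomial decay of $1/(z;q)_\infty$ as $\mathrm{Im}\,z\to\pm\infty$, then delivers the kernel $K$.

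The main obstacle is the arc-at-infinity estimate: the prefactor $q^{-N(N-1)/2}$ grows extremely fast and the $|z|^{-N}$ decay must be sharp enough to dominate it, so the radius of the arc has to be chosen with some care. A related subtlety is controlling the exponential factors $\exp(\pm\gamma z^{\pm 1})$ coming from admissible $g_t$'s, which grow in one half-plane and decay in the other; this may require splitting the closing arc into pieces so that each exponential is handled in its region of decay. Once this decay estimate and the pole-tracking are secured, the rest of the argument is routine dominated convergence.
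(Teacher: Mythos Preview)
Your proposal is correct and follows the same route as the paper: reduce to $K^N\to K$ via the preceding convergence theorem, note that the single-integral term is $N$-independent, and handle the double integral by deforming $\Co'_N$ to $\Co'$ and then passing to the limit in the integrand.

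The one difference is in the deformation step, and here you make life harder than necessary. You place the closing arc at the finite, $N$-dependent radius $q^{-N+1/2}$ and then have to balance the huge prefactor $q^{-N(N-1)/2}$ against $|z|^{-N}$; this is what you flag as ``the main obstacle,'' and it also generates your worry about the exponential factors $e^{\pm\gamma z^{\pm1}}$. The paper sidesteps all of this with a one-line observation: for each \emph{fixed} sufficiently large $N$, the integrand $f_N(z,w)$ satisfies $|z^2 f_N(z,w)|\to 0$ as $|z|\to\infty$ (in the region to the right of $\Co'$), because the product $\prod_{\ell=0}^{N-1}(1-q^\ell z)^{-1}$ contributes $|z|^{-N}$ decay, which for $N$ large dominates the fixed polynomial growth coming from $z^{x_2}$ and the finitely many $1/g_t(z)$ factors, while the exponential factors $e^{-\gamma z}$ and $e^{-\gamma/z}$ are bounded there. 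Hence the closing arc can be pushed to infinity and vanishes, so $\oint_{\Co'_N}f_N=\int_{\Co'}f_N$ \emph{exactly} for each such $N$. After that, uniform convergence of $f_N$ to the limiting integrand on $\Co\times\Co'$ (plus the rapid decay along $\Co'$ that you also note) finishes the proof. Your finite-radius estimate works too, but the ``obstacle'' is self-inflicted.
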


\begin{figure}[h]
\begin{center}
\noindent{\scalebox{0.7}{\includegraphics{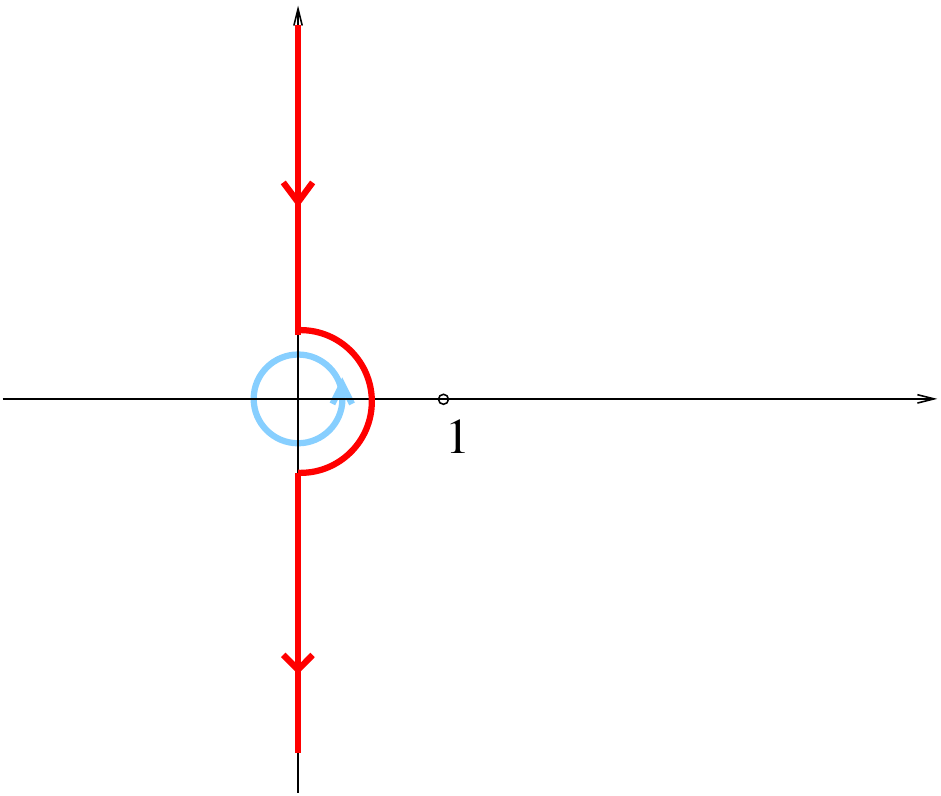}}} \caption{Contours of integration for
Theorem \ref{th:corrfunc}: $\Co$ in blue and $\Co'$ in red. \label{Figure_Contours} }
\end{center}
\end{figure}

\begin{proof}
 It is sufficient to prove that for every $(x_1,t_1;x_2,t_2)$, $K^N(x_1,t_1;x_2,t_2)\to
K(x_1,t_1;x_2,t_2)$ as $N\to\infty$.

Denote by $f_N(z,w)$ the integrand in the second (double) integral in \eqref{eq_prelimit_kernel}.
Note that if $N$ is large enough, then for every $w$, $|z^2 f_N(z,w)|$ goes to zero as
$|z|\to\infty$. Consequently, the replacement of the contour of integration $\Co'_N$ by $\Co'$
does not change the integral.

Observe that the integral
$$
  \oint_{\Co'} dz f_N(z,w)
$$
converges uniformly in $w\in\Co$ because of the rapid decay of $f_N(z,w)$ when $z\to\pm i\infty$.
Moreover, the functions $f_N(z,w)$ uniformly converge on $\Co \times \Co'$ as $N\to\infty$.
Therefore,
$$
 \oint_{\Co} dw \oint_{\Co'} dz f_N(z,w)\to \oint_{\Co} dw \oint_{\Co'}
 \frac{\prod_{t=0}^{t_1-1} g_t(w)}{\prod_{t=0}^{t_2-1} {g_t(z)}} \frac{\phi(w)}{\phi(z)}
 \frac{z^{x_2}}{w^{x_1+1}} \frac{1}{w-z}.
$$
\end{proof}

\subsection{First coordinate of the process}

\label{Subsection_first_coordinate}

In this section we give an independent interpretation for the evolution of the first coordinate of
$\mathcal Z(t)$. Similar interpretations also exist for the evolutions of first $k$ coordinates
for every $k$. Theorems of this section are based on the results of \cite[Sections 2.6-2.7]{BF}
 and we are not giving their proofs.

Although all constructions make sense for general processes introduced in the previous sections,
for simplicity of the exposition we restrict ourselves to homogeneous Markov processes
started from the delta measure at $\mathbf{0}$.

Denote by $\mathcal X_{g}(t)$ a discrete time homogeneous Markov process on $\mathcal N$ with
transition probabilities $P_\infty(u,A; g(x)) $ started from the delta measure at $\mathbf{0}$.


Also let $\mathcal Y_{\gamma_+,\gamma_-}(t)$ be continuous time homogeneous  Markov process on
$\mathcal N$ with transition probabilities $P_\infty\left(u, A; \exp(t(\gamma_+
x+\gamma_-/x))\right) $ started from the delta measure at $\mathbf{0}$.

Denote by $\mathcal X_{g}(t)_1$ and $\mathcal Y_{\gamma_+,\gamma_-}(t)_1$ projections of these
processes to the first coordinate (i.e.\ we consider the position of the leftmost particle). Note
that $\mathcal X_{g}(t)_1$ and $\mathcal Y_{\gamma_+,\gamma_-}(t)_1$ do not have to be Markov processes.

Similarly, denote by $\mathcal X_{N,g}(t)_N$ and $\mathcal Y_{N,\gamma_+,\gamma_-}(t)_N$ the
projections of processes $\mathcal X_{N,g}(t)$ and $\mathcal Y_{N,\gamma_+,\gamma_-}(t)$ to the
$N$th coordinate (which again corresponds to the position of the leftmost particle).

As in \cite{BF}, we introduce a state space $\mathcal S$ of interlacing variables
$$
 \mathcal S=\left\{\{x_k^m\}_{\stackrel{k=1,\dots,m}{m=1,2,\dots}}\subset \mathbb Z^{\infty}\mid
x^m_{k-1}<x_{k-1}^{m-1}\le x_k^m \right\}.
$$
We interpret $x_1^m<x_2^m<\dots<x_m^m$ as positions of $m$ particles at horizontal line $y=m$. An
example is shown in Figure \ref{Figure_Interlacing}.

\begin{figure}[h]
\begin{center}
\noindent{\scalebox{1.0}{\includegraphics{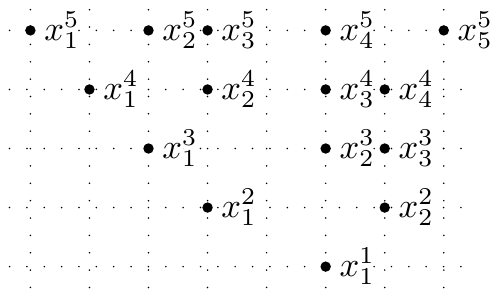}}} \caption{Interlacing particles
\label{Figure_Interlacing} }
\end{center}
\end{figure}

Introduce projection maps
$$
 \pi_N: \mathcal S \to \mathbb{M}_N \cong \mathbb{GT}_N,
$$
$$
 \{x_k^m\}_{m\ge 1,\,  1\le k\le m}\mapsto x_1^N+N-1< x_2^N+N-1<\dots<x_N^N+N-1.
$$

Let $\mathcal H$ be a set of decreasing sequences of integers:
$$
 \mathcal H=\{y_1>y_2>y_3>\dots \mid  y_i\in\mathbb Z\}.
$$

Denote by $\Pi$ the projection from $\mathcal S$ to $\mathcal H$:

$$
 \Pi(\{x_k^m\})= x_1^1> x_1^2 > x_1^3 >\dots.
$$

Finally, let $\Pi^{\infty}$ denote the map from $\mathcal S$ to $\mathbb Z \cup \{-\infty\}$ given
by
$$
 \Pi^{\infty}(\{x_k^m\})= \lim_{N\to\infty} x_1^N + N -1 = \lim_{N\to\infty} \pi_N(\{x_k^m\})[1],
$$
where by $(\,\cdot\,)[1]$ we mean a coordinate of the leftmost particle (it corresponds to $N$th
coordinate in $\mathbb{GT}_N$).

An algebraic formalism which leads to a family of Markov processes on $\mathcal S$ was introduced
in \cite[Chapter 2]{BF}. Among the processes in \cite{BF} there were processes $\mathcal
X^{\mathcal S}_{g}(t)$ and $\mathcal Y^S_{\gamma_+,\gamma_-}(t)$ such that the projections
$\pi_N(\mathcal X^{\mathcal S}_{g}(t))$ and $\pi_N(\mathcal Y^S_{\gamma_+,\gamma_-}(t))$ coincide
with $\mathcal X_{N,g}(t)$ and $\mathcal Y_{N,\gamma_+,\gamma_-}(t)$, respectively. Moreover, the
projections $\Pi(\mathcal X^{\mathcal S}_{g}(t))$ and $\Pi(\mathcal Y^S_{\gamma_+,\gamma_-}(t))$
are also Markov chains that we explicitly describe below.

The following theorem explains the connection to our processes.

\begin{theorem}
\label{theorem_identification_of_first_coordinate}
 Finite dimensional distributions of the stochastic processes $\Pi^{\infty}(\mathcal X^{\mathcal
S}_{g}(t))$ and  $\Pi^{\infty}(\mathcal Y^{\mathcal S}_{\gamma_+,\gamma_-}(t))$  coincide with
those of $\mathcal X_{g}(t)_1$ and $\mathcal Y_{\gamma_+,\gamma_-}(t)_1$.
\end{theorem}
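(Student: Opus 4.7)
The plan is to obtain the identification by bridging through the $N$-particle processes via two independent facts: an almost sure monotone convergence coming from the interlacing structure of $\mathcal S$, and the weak convergence of finite-dimensional distributions provided by Theorem \ref{Theorem_convergence_of_joint_distributions}.

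First I would record the consistency ingredient supplied by \cite[Sections 2.6--2.7]{BF}: by construction of $\mathcal X^{\mathcal S}_g(t)$ (respectively $\mathcal Y^{\mathcal S}_{\gamma_+,\gamma_-}(t)$), for every fixed $N$ the level-$N$ projection $\pi_N(\mathcal X^{\mathcal S}_g(t))$ (resp.\ $\pi_N(\mathcal Y^{\mathcal S}_{\gamma_+,\gamma_-}(t)))$ is equal in joint distribution in $t$ to $\mathcal X_{N,g}(t)$ (resp.\ $\mathcal Y_{N,\gamma_+,\gamma_-}(t)$). Moreover, the leftmost particle of $\pi_N(\{x_k^m\})$ is exactly $x_1^N+N-1$, which, under the identification of $\mathbb{GT}_N$ with a point configuration and the embedding $\iota_N$, equals $(\iota_N(\pi_N(\{x_k^m\})))_1$. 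Thus the random variable $\pi_N(\mathcal X^{\mathcal S}_g(t))[1]$ has the same finite-dimensional distribution in $t$ as $(\iota_N(\mathcal X_{N,g}(t)))_1$.

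Next I would prove the pointwise monotonicity $\pi_N(\{x_k^m\})[1]\ge \pi_{N+1}(\{x_k^m\})[1]$. From the interlacing condition $x_{k-1}^m<x_{k-1}^{m-1}\le x_k^m$, applied with $k=2$ on consecutive levels, one has $x_1^{N+1}<x_1^N$, so $x_1^{N+1}+N\le x_1^N+N-1$, i.e.\ the sequence $\{x_1^N+N-1\}_{N\ge 1}$ is non-increasing in $N$. Hence for every realization of $\mathcal X^{\mathcal S}_g$ (and similarly for $\mathcal Y^{\mathcal S}_{\gamma_+,\gamma_-}$) the limit $\Pi^\infty(\mathcal X^{\mathcal S}_g(t))=\lim_{N\to\infty}\pi_N(\mathcal X^{\mathcal S}_g(t))[1]$ exists in $\mathbb{Z}\cup\{-\infty\}$ as an almost sure pointwise monotone limit, jointly in any finite collection of times $t_1,\dots,t_\ell$.

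Now I would combine this with Theorem \ref{Theorem_convergence_of_joint_distributions}. Evaluating the first coordinate is a continuous map $\mathcal N\to\mathbb{Z}$ with respect to the coordinate-wise topology, so by the continuous mapping theorem the joint distribution of $\bigl((\iota_N(\mathcal X_{N,g}(t_j)))_1\bigr)_{j=1}^\ell$ converges weakly to that of $(\mathcal X_g(t_j)_1)_{j=1}^\ell$. Combined with the previous paragraph, $\bigl(\pi_N(\mathcal X^{\mathcal S}_g(t_j))[1]\bigr)_{j=1}^\ell$ converges almost surely to $\bigl(\Pi^\infty(\mathcal X^{\mathcal S}_g(t_j))\bigr)_{j=1}^\ell$ and in distribution to $(\mathcal X_g(t_j)_1)_{j=1}^\ell$; uniqueness of distributional limits gives the required identification of finite-dimensional distributions. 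In particular the a.s.\ limit is finite, so the value $-\infty$ is attained with probability zero. The argument for $\mathcal Y^{\mathcal S}_{\gamma_+,\gamma_-}(t)$ is verbatim the same, using the continuous-time analogue of Theorem \ref{Theorem_convergence_of_joint_distributions}, whose proof is structurally identical.

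The main potential obstacle is not the monotonicity or the passage to the limit (which are clean once set up), but the bookkeeping needed to verify that the processes $\mathcal X^{\mathcal S}_g(t)$ and $\mathcal Y^{\mathcal S}_{\gamma_+,\gamma_-}(t)$ of \cite{BF} indeed couple the family $\{\mathcal X_{N,g}(t)\}_{N\ge 1}$ (resp.\ $\{\mathcal Y_{N,\gamma_+,\gamma_-}(t)\}_{N\ge 1}$) consistently in $N$; this is precisely what is asserted in \cite[Sections 2.6--2.7]{BF} and is cited rather than reproduced here.
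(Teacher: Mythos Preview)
Your proposal is correct and follows essentially the same route as the paper: you invoke the \cite{BF} consistency of the level-$N$ projections, apply Theorem~\ref{Theorem_convergence_of_joint_distributions} to get weak convergence of $(\iota_N(\mathcal X_{N,g}(t_j)))_1$ to $\mathcal X_g(t_j)_1$, and then match this with the almost sure limit defining $\Pi^\infty$. The paper's proof is a terse chain of equalities encoding exactly this; your version is more explicit in justifying the interchange of limit and probability via the monotonicity $x_1^{N+1}+N \le x_1^N+N-1$ and uniqueness of distributional limits, which the paper leaves implicit.
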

In other words, Theorem \ref{theorem_identification_of_first_coordinate} states
that the first coordinate in the stochastic process $\mathcal
X_{g}(t)$ (or $\mathcal Y_{\gamma_+,\gamma_-}(t)$) evolves as the limiting value of coordinates of
the Markov
process $\Pi(\mathcal X^{\mathcal S}_{g}(t))$ (or $\Pi(\mathcal Y_{\gamma_+,\gamma_-}(t))$) .

Note that, in particular, Theorem \ref{theorem_identification_of_first_coordinate} guarantees that
if $\{x^k_m\}$ is distributed as $\mathcal X_{N,g}(t)_N$ or $\mathcal Y_{N,\gamma_+,\gamma_-}(t)_N$,
then the limit $\lim_{N\to\infty} (x_1^N-N+1)$ is almost surely finite.

\begin{proof}[Proof of Theorem \ref{theorem_identification_of_first_coordinate}] The proofs for
$\mathcal X_{g}(t)_1$ and $\mathcal Y_{\gamma_+,\gamma_-}(t)_1$ are the same, and we will work with
$\mathcal X_{g}(t)_1$.

 Theorem \ref{Theorem_convergence_of_joint_distributions} implies that finite dimensional
distributions of the processes $\mathcal X_{N,g}(t)_N$ converge as $N\to\infty$ to finite
dimensional distributions of $\mathcal X_{g}(t)_1$.

Therefore,
\begin{align*}
 {\rm Prob}\{&\Pi^{\infty}(\mathcal X^{\mathcal S}_{g}(t_1))\in A_1,\dots, \Pi^{\infty}(\mathcal
X^{\mathcal S}_{g}(t_k))\in A_k\}
\\&={\rm Prob}\{ \lim_{N\to\infty} \pi_N(\{x_k^m(t_1)\})[1]  \in A_1,\dots, \pi_N(\{x_k^m(t_k)\})[1]  \in A_k
\}
\\&=\lim_{N\to\infty}{\rm Prob}\{\mathcal X_{N,g}(t_1)_N\in A_1,\dots,\mathcal X_{N,g}(t_k)_N\in
A_k\}
\\&={\rm Prob}\{\mathcal X_{g}(t_1)_1\in A_1,\dots,\mathcal X_{g}(t_k)_1\in A_k\}.
\end{align*}
\end{proof}

It remains to describe the processes $\Pi(\mathcal X^{\mathcal S}_{g}(t))$ and $\Pi(\mathcal
Y^S_{\gamma_+,\gamma_-}(t))$. We have $\mathcal X^{\mathcal S}_{g}(0)=\mathcal
Y^S_{\gamma_+,\gamma_-}(0)=(0>-1>-2>-3>\dots)$. We view coordinates of the process as positions of
particles in $\mathbb Z$. Thus, $\mathcal X^{\mathcal S}_{g}(0)=\mathcal
Y^S_{\gamma_+,\gamma_-}(0)$ is the densely packed configuration of particles in $\mathbb{Z}_{\le
0}$.

Let us start from $\Pi(\mathcal X^{\mathcal S}_{g}(t))$. The description depends on $g(x)$, and we
discuss only the cases $g(x)=1+\beta x$ and $g(x)=1+\beta x^{-1}$; for other possibilities we
refer to \cite[Section 2.6]{BF}.

 Given a configuration $\{y_i\}$ at time
moment $t$, to construct a configuration at moment $t+1$ that we denote by $\{z_i\}$, we perform a
sequential update from right to left.

For $g(x)=1+\beta x$, the first particle jumps to the right by one with probability
$p_1:=\beta/(1+\beta)$ and stays put with probability $1-p_1$. In other words, $z_1=y_1$ with probability
$1-p_1$ and $z_1=y_1+1$ with probability $p_1$. For particle number $k$ we do the following: If
$y_k=z_{k-1}-1$, then this particle is \emph{blocked} and we set $z_k=y_k$. Otherwise, the
particle jumps to the right with probability $p_k:=q^{1-k}\beta/(1+q^{1-k}\beta)$ and stays put with
probability $1-p_k$.

For $g(x)=1+\beta/x$, the first particle jumps to the left by one with probability $p_1:=\beta/(1+\beta)$ and
stays put with probability $1-p_1$. For particle number $k$ we do the following: If $y_k=z_{k-1}$, then
this particle is forced to jump to the left and we set $z_k=y_{k}-1$ (one might say that
particle number $k$ was pushed by particle number $k-1$). Otherwise, the particle jumps to the
left by one with probability $p_k=q^{k-1}\beta/(1+q^{k-1}\beta)$ and stays put with probability $1-p_k$.

Observe that the above two update rules are closely related. Indeed, $1+\beta x=
(\beta x) (1+\beta^{-1}/x)$. From the probabilistic viewpoint, this equality means that one
stochastic step for $g(x)=1+\beta x$ is equivalent to the composition of the stochastic step with
$g(x)=1+\beta^{-1}/x$ and deterministic shift $y_i\to y_i+1$ for all $i\ge 1$.

For continuous time Markov processes $\Pi(\mathcal Y^S_{\gamma_+,\gamma_-}(t))$ the interpretation
is quite similar. Particle number $k$ has two exponential  clocks,  ``right clock'' and ``left
clock'' with parameters $q^{1-k}\gamma_+$ and $q^{k-1}\gamma_-$, respectively. These two numbers
are \emph{intensities} of the right and left jumps. When the right clock of a particle number $k$
rings, it checks whether the position to its right is occupied (i.e.\ if $y_k=y_{k-1}-1$). If the
answer is ``Yes'' then nothing happens, otherwise the particle jumps to the right by one. When the left
clock of the particle number $k$ rings, then this particle jumps to the left and pushes the (maybe
empty) block of particles sitting next to it.

The processes above with one-sided jumps are versions of the totally asymmetric simple exclusion
process (known as TASEP) and long range TASEP, cf. \cite{Spitzer}, \cite{L1} \cite{L2}. The
processes with two-sided jumps was defined and studied in \cite{BF-Push} under the name of
PushASEP.

\section{The Feller property}
\label{Section_feller}

In this section we show that transition probabilities
$$P_{\infty}(u;A;\exp(t(\gamma_+x+\gamma_-/x)))$$ of the Markov process $\mathcal
Y_{\gamma_+,\gamma_-}(t)$ satisfy the Feller property. The notion of a Feller process makes sense
only for the processes in a locally compact space, and this is the reason why we embed $\mathcal
N$ into the bigger locally compact space $\overline{\mathcal N}$ as in Section
\ref{Section_Infinite-dimensional} above.

\subsection{Extended boundary}
The aim of this section is to identify the local compactification $\overline{\mathcal N}$ of $\mathcal N$
with a boundary of the sequence of sets $\overline{\mathbb{GT}_N}$ and
links $\overline{P}^\downarrow_N(\lambda\to\mu)$ with the sequence of parameters
$\xi_i=q^{1-i}$, $i\ge 1$. These sets and links were introduced in the second part of Section
\ref{Section_Commutation_relations}.

Let {$M=(M_0,M_1,\dots)$} be an element of $\plim \mathcal
M(\overline{\mathbb{GT}_N})$. We say that $M$ is of class $k$ if for $N>k$ the support of $M_N$ is
a subset of $\mathbb{GT}_N^{(k)}$. We say that $M$ is of class $\infty$ if for any $N$ the support
of $M_N$ is a subset of $\mathbb{GT}_N^{(N)}$.

\begin{proposition}
\label{proposition_decomposition_of_ext_coherent_system} For any $M\in \plim \mathcal
M(\overline{\mathbb{GT}_N})$ there exist unique $M^{\infty}, M^{0},M^1,M^2,\dots\in \plim \mathcal
M(\overline{\mathbb{GT}_N})$ such that $M^i$ is of class $i$ and $M=M^{\infty}+M^0+M^1+M^2+\dots$.

If $M$ is a nonnegative then $M^{\infty}, M^{0},M^1,M^2,\dots$ are also
nonnegative.
\end{proposition}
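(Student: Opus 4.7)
The plan is to decompose $M$ along the stratification $\overline{\mathbb{GT}}_N = \bigsqcup_{j=0}^N \mathbb{GT}_N^{(j)}$, exploiting the block structure of $\overline{P}_N^{\downarrow}$: a measure supported on $\mathbb{GT}_N^{(j)}$ with $j < N$ is pushed to $\mathbb{GT}_{N-1}^{(j)}$ via the stochastic matrix $Q_N^j$, while the fully finite stratum $\mathbb{GT}_N^{(N)}$ is pushed to $\mathbb{GT}_{N-1}^{(N-1)}$ via $P_N^\downarrow$. Writing $M_N^{(j)}$ for the restriction of $M_N$ to $\mathbb{GT}_N^{(j)}$, the coherence condition $M_{N-1} = \overline{P}_N^\downarrow M_N$ decouples into
\begin{equation*}
M_{N-1}^{(j)} = Q_N^j M_N^{(j)} \quad (0 \leq j \leq N-2), \qquad M_{N-1}^{(N-1)} = Q_N^{N-1} M_N^{(N-1)} + P_N^\downarrow M_N^{(N)}.
\end{equation*}

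For each finite $k$ I define $M^{(k)}$ by $M^{(k)}_N := M_N^{(k)}$ for $N > k$ (coherent under the $Q$-links by the first relation), $M^{(k)}_k := Q_{k+1}^k M_{k+1}^{(k)}$, and $M^{(k)}_N := P_{N+1}^\downarrow M^{(k)}_{N+1}$ for $N < k$; this is visibly a class-$k$ coherent system. For the class-$\infty$ piece I iterate the second relation to obtain, with $\Pi_{N,L} := P_{N+1}^\downarrow \cdots P_L^\downarrow$,
\begin{equation*}
M_N^{(N)} = \sum_{k=N}^{L-1} \Pi_{N,k}\, Q_{k+1}^k\, M_{k+1}^{(k)} + \Pi_{N,L} M_L^{(L)} \qquad (L \geq N).
\end{equation*}
When $M$ is nonnegative, both summands are nonnegative and the first increases with $L$ while remaining bounded by $M_N^{(N)}$; I set $M^\infty_N := \lim_L \Pi_{N,L} M_L^{(L)}$, a nonnegative measure on $\mathbb{GT}_N^{(N)}$. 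Coherence of $M^\infty$ follows by interchanging the bounded operator $P_{N+1}^\downarrow$ with the monotone limit, and $M^\infty$ is of class $\infty$ by construction.

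Assembling, $M_N = M^\infty_N + \sum_{k \geq 0} M^{(k)}_N$ at every level. Norm convergence in $\plim \mathcal{M}(\overline{\mathbb{GT}}_N)$ is secured by a mass-conservation argument: the stochastic matrices $Q_N^k$ and $P^\downarrow$ preserve total mass of nonnegative measures, so $\|M^{(k)}_N\|$ is independent of $N$, equal to $c_k := M_{k+1}(\mathbb{GT}_{k+1}^{(k)})$, and similarly $\|M^\infty_N\| \equiv c_\infty$; additivity of norms for nonnegative measures on the stratification forces $c_\infty + \sum_k c_k = \|M\|$, so $\sup_N \bigl\| \sum_{k > L} M^{(k)}_N \bigr\| = \sum_{k > L} c_k \to 0$. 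Uniqueness is immediate from supports: at level $N > k$ only the class-$k$ summand touches $\mathbb{GT}_N^{(k)}$, which pins down every $M^{(k)}$ and hence $M^\infty$. For a signed $M$, write $M = \kappa K - \ell L$ with probability coherent systems $K, L$ (as constructed in the proof of Theorem \ref{theorem_boundary}), decompose each separately, and take the linear combination; the preceding norm bound carries over. The main subtlety is simultaneously ensuring that $M^\infty$ is an honest coherent system and that the series converges in the projective-limit norm, both of which reduce to the mass identity $c_\infty + \sum_k c_k = \|M\|$.
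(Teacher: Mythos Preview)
Your proof is correct and follows essentially the same strategy as the paper: restrict $M_N$ to the strata $\mathbb{GT}_N^{(k)}$, use the block structure of $\overline{P}_N^\downarrow$ to see that these restrictions form class-$k$ coherent systems (after extending down to levels $\le k$ via the links), bound the partial sums $\sum_{k\le r}\|M^k\|$ by $\|M\|$ via disjointness of supports, and define $M^\infty$ as what is left over. The one cosmetic difference is that the paper simply sets $M^\infty := M - \sum_{k\ge 0} M^k$ and observes it is of class $\infty$, whereas you construct $M^\infty_N$ explicitly as the monotone limit $\lim_L \Pi_{N,L} M_L^{(L)}$ (for nonnegative $M$) and then reduce the signed case to the nonnegative one via the Jordan-type decomposition established in the proof of Theorem~\ref{theorem_boundary}. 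Both routes are valid and amount to the same thing; your version is a bit more explicit about why the tail converges and why $M^\infty$ is coherent.
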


\begin{proof}

Decompose $M_N$ into the sum
$$
 M_N=M_N^{(0)}+M_N^{(1)}+\dots+M_N^{(N)}
$$
with
$$
 \mathrm{supp}(M_N^{(k)})\subset \mathbb{GT}_N^{(k)}.
$$
Clearly, such decomposition is unique and \begin{equation} \label{eq_x1}
 \Vert M_N\Vert =\sum_k \Vert M_N^{(k)}\Vert .
\end{equation}

Note that
$$
 \overline{P}^{\downarrow}_N M_N^{(k)} =M_{N-1}^{(k)}
$$
for $k\le N-2$. Set $M^k_N=M^{(k)}_N$ for $N>k$, $M^k_k=\overline{P}^{\downarrow}_{k+1}
M^{k}_{k+1}$, $M^{k-1}_k=\overline{P}^{\downarrow}_{k} M_k^k$, and so on. One proves that for every
$k$, $M^k=(M^k_n)_{n\ge 1}$ is of class $k$.

Furthermore, \eqref{eq_x1} yields that $\sum_{k=0}^{r}\Vert M_N^{k}\Vert \le \Vert M_{r+1}\Vert
\le \Vert M\Vert $. Consequently, the sum $\sum_{k=0}^{\infty} M^k$ is well defined. Set
$M^{\infty}=M-\sum_{k=0}^{\infty} M^k$. It follows that $M^{\infty}$ is of class $\infty$ and,
thus,
$$
 M=M^{\infty}+\sum_{k=0}^{\infty} M^k.
$$
The uniqueness of the decomposition and nonnegativity are immediate.
\end{proof}

Now we fix $k$ and aim to describe the set of all class $k$ elements of $\plim \mathcal
M(\overline{\mathbb{GT}_N})$. Observe that these are just elements of $\plim \mathcal M(\Gamma_N)$
with $\Gamma_N=\mathbb{GT}_N^{(\min(k,N))}$ with links given by the restrictions of matrices
$\overline{P}_N^\downarrow(\lambda\to\mu)$. Thus, we may use Theorem \ref{theorem_boundary}.
Therefore, it remains to identify the set of all extreme coherent systems, i.e.\ the extreme points of
$\plim \mathcal M_p(\mathbb{GT}_N^{(\min(k,N))})$.

\begin{theorem}
\label{theorem_finite_dim_boundary} The extreme points of $\plim \mathcal
M_p(\mathbb{GT}_N^{(\min(k,N))})$ are enumerated by signatures $\lambda\in\mathbb{GT}_k$. Let
$\mathcal E^{\lambda}$ be an element of $\plim \mathcal M_p(\mathbb{GT}_N^{(\min(k,N))})$
corresponding to $\lambda$. Then for $N\ge k$, the Schur generating function of the measure
$\mathcal E^{\lambda}_N$ is
$$
 \mathcal S(x_1,\dots,x_k; \mathcal E^\lambda_N)= \frac{s_\nu(x_1,\dots,
x_k)}{s_\nu(1,q^{-1},\dots, q^{1-k})}\prod_{n=N+1}^{\infty} G_n^k,
$$
and for $N\in \{1,\dots,k-1\}$ we have
\begin{multline*}
 \mathcal S(x_1,\dots,x_N; \mathcal E^\lambda_N)\\= \frac{s_\nu(x_1,\dots,
x_N,q^{1-N},\dots,q^{1-k})}{s_\nu(1,q^{-1},\dots, q^{1-k})}\prod_{n=k+1}^{\infty} G_n^k
(x_1,\dots, x_N,q^{1-N},\dots,q^{1-k}),
\end{multline*}
where
$$
 G_N^{\ell}(x_1,\dots,x_\ell)=\prod_{i=1}^\ell \frac{1-q^{N-1}q^{1-i}}{1-q^{N-1}x_i}.
$$
{For $k=0$, the unique element of $\plim \mathcal M_p(\mathbb{GT}_N^{(\min(k,N))})$
is $\mathcal E^{\varnothing}$, such that for every $N$, $\mathcal E^{\varnothing}_N$ is the unique
probability measure on the singleton $\mathbb{GT}_N^{(0)}$.}
\end{theorem}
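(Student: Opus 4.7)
My plan is to reduce the theorem to the study of coherent sequences supported on a fixed space, and then apply the $\mathcal{F}_k$ framework from Section~\ref{sc:walks}. Fix $k$. For any class-$k$ coherent system $M = (M_N)$, the identifications $\mathbb{GT}_N^{(k)} \cong \mathbb{GT}_k$ (for $N \geq k$) turn $(M_N)_{N \geq k}$ into a sequence $(\mathcal{E}_N)_{N \geq k}$ of probability measures on $\mathbb{GT}_k$, with the consistency $\mathcal{E}_{N-1} = Q_N^k \mathcal{E}_N$, where $Q_N^k = \mathcal{L}_k^{-1} \widetilde{Q}^{g_N}_k \mathcal{L}_k$ with $g_N(x) = (1-q^{N-1}x)^{-1}$. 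The measures $M_N$ for $N < k$ are then determined from $\mathcal{E}_k$ via the ordinary specialization links $P_N^\downarrow$, which act on Schur generating functions by substituting $x_j \to \xi_j = q^{1-j}$ for $j > N$.

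Passing to Schur generating functions, I set $H_N(x_1,\dots,x_k) := \mathcal{S}(x_1,\dots,x_k;\mathcal{E}_N)$ for $N \geq k$. Since multiplication by $\prod_i g_N(x_i)/g_N(\xi_i)$ is precisely $G_N^k(x_1,\dots,x_k)$, the consistency reduces to the purely multiplicative
\[
H_{N-1}(x) = H_N(x)\, G_N^k(x),
\]
and iteration gives $H_k = H_N \prod_{n=k+1}^N G_n^k$. On the torus $T_k = \{|x_i| = q^{1-i}\}$ one checks $G_n^k = 1 + O(q^{n-k})$ uniformly in $x$, so $\prod_{n=k+1}^\infty G_n^k$ converges uniformly on $T_k$ to a continuous, nowhere-vanishing function. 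Hence $H_\infty := \lim_{N\to\infty} H_N$ exists uniformly, and equivalently
\[
H_N(x) = H_\infty(x) \prod_{n=N+1}^\infty G_n^k(x), \qquad N \geq k.
\]

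Next I would identify the assignment $M \leftrightarrow H_\infty$ with the isomorphism $\mathcal{L}_k : \mathcal{M}_p(\mathbb{GT}_k) \to \mathcal{F}_k$. In one direction, $H_\infty$ is a uniform limit of the $H_N \in \mathcal{F}_k$, so the closedness of $\mathcal{F}_k$ in $C(T_k)$ (as cited from \cite[Prop.\ 4.11]{Gor}) forces $H_\infty \in \mathcal{F}_k$. Conversely, given $H_\infty \in \mathcal{F}_k$, each partial product $H_\infty \prod_{n=N+1}^M G_n^k$ belongs to $\mathcal{F}_k$ by Proposition~\ref{Proposition_admissible_functions} applied to the admissible factors $(1-q^{n-1}x)^{-1}$ (admissibility $q^{n-1} < \xi_k^{-1} = q^{k-1}$ holds precisely for $n > k$), and the uniform limit $H_N$ stays in $\mathcal{F}_k$ by closedness. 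This yields an affine bijection between class-$k$ coherent systems and $\mathcal{F}_k \cong \mathcal{M}_p(\mathbb{GT}_k)$. The extreme points of $\mathcal{M}_p(\mathbb{GT}_k)$ are the delta measures $\delta_\lambda$, $\lambda \in \mathbb{GT}_k$, with $\mathcal{L}_k(\delta_\lambda) = s_\lambda(x)/s_\lambda(\xi_1,\dots,\xi_k)$, which gives the first displayed formula. The formula for $N<k$ follows by substituting $x_j \to \xi_j = q^{1-j}$ for $j>N$ into this expression, which is exactly the effect of the iterated specialization links; the case $k=0$ is trivial since $\mathbb{GT}_0$ is a singleton.

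The main obstacle I anticipate is the closedness of $\mathcal{F}_k$ in $C(T_k)$: this is what both guarantees $H_\infty \in \mathcal{F}_k$ and allows us to promote the finite-product statement in Proposition~\ref{Proposition_admissible_functions} to the infinite product needed to invert the map. Once this Banach-space input is accepted, the remainder is routine bookkeeping with explicit Schur functions and the geometric tail estimate $|G_n^k - 1| = O(q^{n-k})$ used to control the infinite product.
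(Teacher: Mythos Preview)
Your argument is correct and takes a genuinely different route from the paper. The paper invokes the general \emph{ergodic method} (every extreme coherent system is a weak limit of primitive systems $M^{\lambda^i}$), computes the Schur generating functions of those primitive systems explicitly, shows their convergence forces the signatures $\lambda^i$ to stabilize, and finally checks that the resulting $\mathcal{E}^\lambda$ are all extreme via linear independence of Schur polynomials. You instead exploit directly that for $N>k$ the links $Q_N^k$ act on Schur generating functions as multiplication by $G_N^k$; the geometric tail bound $G_n^k=1+O(q^{n-k})$ on $T_k$ lets you pass to the limit $H_\infty$, and closedness of $\mathcal{F}_k$ in $C(T_k)$ (together with Proposition~\ref{Proposition_admissible_functions} for the finite partial products) gives an affine bijection between class-$k$ coherent systems and $\mathcal{M}_p(\mathbb{GT}_k)$, from which the extreme points are read off immediately as $\delta_\lambda$.

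Your approach is more self-contained in that it avoids the abstract ergodic machinery and gets extremality for free (affine bijections preserve extreme points), whereas the paper must separately argue that each $\mathcal{E}^\lambda$ is genuinely extreme. On the other hand, you lean on the closedness of $\mathcal{F}_k$ from \cite{Gor} as a black box, while the paper's ergodic-method route uses the (also cited) equivalence of weak convergence of measures and uniform convergence of Schur generating functions; so both arguments ultimately import comparable analytic input from \cite{Gor}.
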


\begin{proof}
{The case $k=0$ is obvious. We use the general \emph{ergodic method} to prove this
theorem for $k\ge 1$ (see \cite{V_ergodic}, \cite[Section 6]{OkOlsh} and also \cite[Theorem
1.1]{DF}).}

Choose $\lambda\in\mathbb{GT}^{(k)}_N$. There exists a unique system of measures
$M^{\lambda}_1,\dots,M^{\lambda}_N$ such that for every $i\le N$, $M^{\lambda}_i$ is a measure on
$\mathbb{GT}^{(\min(k,N))}_i$, $M^{\lambda}_i=\overline{P}_{i+1}^\downarrow M^{\lambda}_{i+1}$, and
$M^{\lambda}_N$ is the delta-measure at $\lambda$. We call such a system of measures a
\emph{primitive} system of $\lambda$. The ergodic method states that every extreme coherent system
$M\in \plim \mathcal M_p(\mathbb{GT}_N^{(\min(k,N))})$ is a weak limit of primitive systems. In
other words, there exist sequences $N_i\to\infty$ and
$\lambda^i\in\mathbb{GT}^{(\min(N_i,k))}_{N_i}$ such that for every $N$ and every
$\mu\in\mathbb{GT}^{(\min(k,N))}_N$, we have
$$
 M_N(\mu)=\lim_{i\to\infty} M_N^{\lambda^i}(\mu).
$$

According to the definition of links $\overline{P}_{i+1}^\downarrow$, the Schur generating
function of the measure $M_N^{\lambda^i}$ has the following form for $N_i\ge N \ge k$:

$$
 \mathcal{S}(x_1,\dots,x_k;
M_N^{\lambda^i})=\frac{s_{\lambda^i}(x_1,\dots,x_k)}{s_{\lambda^i}(1,q^{-1},\dots,q^{1-k})}\prod_{n=N+1}^{N_i}
G_n^k.
$$

Proposition 4.1 of \cite{Gor} implies that the weak convergence of $M_N^{\lambda^i}(\mu)$ as $i\to\infty$ is
equivalent to the uniform convergence of the Schur generating functions of measures
$M_N^{\lambda^i}$ to those of $M_N$. Observe that
$$
 \prod_{n=N+1}^{N_i} G_n^k \to \prod_{n=N+1}^{\infty} G_n^k
$$
uniformly on $T_k$ as $i\to\infty$. Therefore, functions
$$
 \frac{s_{\lambda^i}(x_1,\dots,x_k)}{s_{\lambda^i}(1,q^{-1},\dots,q^{1-k})}
$$
should also uniformly converge. But this happens if and only if the sequence of signatures
$\lambda^i$ stabilize to a certain $\lambda\in\mathbb{GT}_k$. Then measures $M_N^{\lambda^i}$
converge precisely to $\mathcal E_N^{\lambda}$.

Thus, coherent systems $\mathcal E^{\lambda}$ contain all extreme points of $\plim \mathcal
M_p(\mathbb{GT}_N^{(\min(k,N))})$. It remains to prove that all of them are indeed extreme. But
this follows from linear independence of the measures $\mathcal E_N^{\lambda}$ which, in turn,
follows from linear independence of Schur polynomials $s_{\lambda}(x_1,\dots,x_k)$.
\end{proof}

Now we are ready to describe the map $E$ from $\mathcal M(\overline{\mathcal N})$ to $\plim
M(\overline{\mathbb{GT}_N})$, cf. Definition \ref{definition_boundary}. Let $\pi$ be a finite signed measure
on $\overline{\mathcal N}$.
There is a unique decomposition
$$
 \pi=\pi^{\infty}+\pi^{0}+\pi^1+\pi^2+\dots
$$
such that $\textrm{supp}(\pi^k)\subset \mathbb{GT}_N\subset\overline{\mathcal N}$ and
$\textrm{supp}(\pi^{\infty})\subset \mathcal N\subset \overline{\mathcal N}$.

By Theorems \ref{theorem_boundary} and \ref{theorem_qGT_boundary}, $\pi^{\infty}$ corresponds to a
unique element of $\plim \mathcal M(\mathbb{GT}_N)$ which can be viewed as an element $M^{\infty}$
of $\plim \mathcal M(\overline{\mathbb{GT}_N})$. Similarly, by Theorems \ref{theorem_boundary} and
\ref{theorem_finite_dim_boundary}, $\pi^k$ corresponds to a unique element of $\plim \mathcal
M(\mathbb{GT}_N^{(\min(k,N)})$ which can be viewed as an element $M^k$ of $\plim \mathcal
M(\overline{\mathbb{GT}_N})$. Note that {
$$
 \Vert M^\infty\Vert +\Vert M^0\Vert +\Vert M^1\Vert +\dots=\Vert \pi^\infty\Vert +\Vert \pi^0\Vert +\Vert \pi^1\Vert +\dots<\infty.
$$}
Therefore, we may define {
$$
 M:=M^{\infty}+M^0+M^1+\dots.
$$}
Set $E(\pi)=M$.

\begin{theorem}
\label{Theorem_extended_boundary_1}
 The set $\overline{\mathcal N}$ and the map $E$ satisfy the first 3 conditions of Definition
\ref{definition_boundary}.
\end{theorem}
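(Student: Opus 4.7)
The plan is to assemble the map $E$ from three pieces that the paper has already established. First, Theorems \ref{theorem_boundary} and \ref{theorem_qGT_boundary} give a norm-bounded bijection $E_q: \mathcal M(\mathcal N)\to \plim\mathcal M(\mathbb{GT}_N)$ carrying probability measures to coherent systems on the non-extended tower; I will identify its image with the class $\infty$ part of $\plim\mathcal M(\overline{\mathbb{GT}_N})$. Second, for each finite $k\ge 0$, Theorem \ref{theorem_finite_dim_boundary} combined with Theorem \ref{theorem_boundary} gives a norm-bounded bijection $E_k:\mathcal M(\mathbb{GT}_k)\to\plim\mathcal M(\mathbb{GT}_N^{(\min(k,N))})$, whose image is exactly the set of class $k$ elements of $\plim\mathcal M(\overline{\mathbb{GT}_N})$. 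Third, Proposition \ref{proposition_decomposition_of_ext_coherent_system} provides, on the target side, the unique decomposition $M=M^\infty+\sum_{k\ge 0}M^k$ into classes; on the source side the disjoint-set decomposition $\overline{\mathcal N}=\mathcal N\sqcup\bigsqcup_{k\ge 0}\mathbb{GT}_k$ gives the corresponding unique orthogonal decomposition $\pi=\pi^\infty+\sum_{k\ge 0}\pi^k$ of any $\pi\in\mathcal M(\overline{\mathcal N})$, with $\Vert\pi\Vert=\Vert\pi^\infty\Vert+\sum_k\Vert\pi^k\Vert$ because the supports are pairwise disjoint.

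Given these ingredients, the map $E$ of the excerpt is nothing but $E(\pi):=E_q(\pi^\infty)+\sum_{k\ge 0}E_k(\pi^k)$, and I verify the three conditions as follows. Linearity is immediate from linearity of each $E_q,E_k$ and of the decomposition $\pi\mapsto(\pi^\infty,\pi^0,\pi^1,\dots)$. For boundedness, since $E_q$ and each $E_k$ have operator norm $1$ (Theorem \ref{theorem_boundary}) and $\Vert M^\infty+\sum M^k\Vert\le\Vert M^\infty\Vert+\sum\Vert M^k\Vert$, I get $\Vert E(\pi)\Vert\le\Vert\pi^\infty\Vert+\sum_k\Vert\pi^k\Vert=\Vert\pi\Vert$, so in particular the series defining $E(\pi)$ is absolutely convergent in $\plim\mathcal M(\overline{\mathbb{GT}_N})$. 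For injectivity, if $E(\pi)=0$ the uniqueness part of Proposition \ref{proposition_decomposition_of_ext_coherent_system} forces $E_q(\pi^\infty)=0$ and $E_k(\pi^k)=0$ for all $k$, and the injectivity of $E_q,E_k$ forces each component of $\pi$ to vanish. For surjectivity, given any $M\in\plim\mathcal M(\overline{\mathbb{GT}_N})$, Proposition \ref{proposition_decomposition_of_ext_coherent_system} produces its class decomposition with $\Vert M^\infty\Vert+\sum_k\Vert M^k\Vert<\infty$; applying $E_q^{-1}$ and $E_k^{-1}$ componentwise yields pieces $\pi^\infty,\pi^k$ whose formal sum converges in $\mathcal M(\overline{\mathcal N})$ precisely because the summands live on pairwise disjoint parts of $\overline{\mathcal N}$. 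This establishes condition $1$ of Definition \ref{definition_boundary}; condition $2$ (boundedness of $E^{-1}$) then follows from Banach's bounded inverse theorem, exactly as in the last line of the proof of Theorem \ref{theorem_boundary}.

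For condition $3$, the positivity assertions in Proposition \ref{proposition_decomposition_of_ext_coherent_system} and in Theorems \ref{theorem_boundary}, \ref{theorem_qGT_boundary}, \ref{theorem_finite_dim_boundary} combine to show that $\pi\ge 0$ iff every $\pi^\infty,\pi^k\ge 0$ iff every $M^\infty,M^k\ge 0$ iff $E(\pi)\ge 0$; combined with the norm identity $\Vert E(\pi)\Vert=\Vert\pi\Vert$ for nonnegative $\pi$ (which follows from preservation of mass by $E_q,E_k$ and from the orthogonal decomposition above), this gives the bijection $\mathcal M_p(\overline{\mathcal N})\leftrightarrow\plim\mathcal M_p(\overline{\mathbb{GT}_N})$.

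The only step that requires real care, rather than bookkeeping, is showing that the countable sum $E(\pi)=E_q(\pi^\infty)+\sum_k E_k(\pi^k)$ actually lies in the projective limit of \emph{finite} signed measures, i.e.\ that the norms $\Vert M_N\Vert=\sup_N\Vert E(\pi)_N\Vert$ remain finite and that the resulting sequence is a genuine coherent system with respect to $\overline P_N^{\downarrow}$. This reduces to the uniform bound $\sum_k\Vert M^k_N\Vert\le\Vert M\Vert$ inside the proof of Proposition \ref{proposition_decomposition_of_ext_coherent_system}, applied in reverse: because the images of different components are supported on the disjoint strata $\mathbb{GT}_N^{(k)}$, no cancellation or accumulation can occur at any fixed level $N$, and the coherence with respect to $\overline P_N^{\downarrow}$ is inherited from the block structure of the extended links. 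With this in place, the verification of $1.$--$3.$ is essentially a transcription of the proof of Theorem \ref{theorem_boundary} applied summand by summand.
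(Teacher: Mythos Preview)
Your proof is correct and follows essentially the same approach as the paper: decompose $\pi$ and $M$ into their class components via the disjoint-set decomposition of $\overline{\mathcal N}$ and Proposition \ref{proposition_decomposition_of_ext_coherent_system}, apply the component bijections $E_q$ and $E_k$ coming from Theorems \ref{theorem_boundary}, \ref{theorem_qGT_boundary}, \ref{theorem_finite_dim_boundary}, and reassemble. The paper's own proof is a three-line sketch pointing to exactly these ingredients; you have simply written out the bookkeeping (injectivity via uniqueness of the class decomposition, surjectivity via existence, norm~$1$ from the direct-sum structure, bounded inverse from Banach's theorem) that the paper leaves implicit.
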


\begin{proof}

\begin{enumerate}

\item Proposition \ref{proposition_decomposition_of_ext_coherent_system} implies that $E$ is surjective.
$E$ is injective by the construction.
  \item  $E$ is a direct sum of norm $1$ operators, thus it is a norm
$1$ operator. Furthermore, $E$ is a bijection, thus $E^{-1}$ is also bounded.
  \item Second part of Proposition \ref{proposition_decomposition_of_ext_coherent_system} guarantees
   that $E$ maps $\mathcal M_p(\overline{\mathcal N})$ bijectively onto $\plim \mathcal
M_p(\overline{\mathbb{GT}_N})$,

\end{enumerate}
\end{proof}

As for the fourth condition of Definition \ref{definition_boundary}, its proof is
nontrivial and we present it as a separate theorem in the next section.

\subsection{The topology of the extended boundary}

In this section we prove the following theorem.
\begin{theorem}
\label{theorem_equivalence_of_topologies}
 The map
 $$
  \nu\to\mathcal E^\nu=E(\delta^\nu)
 $$
 from $\overline{\mathcal N}$ to $\plim \mathcal M(\overline{\mathbb{GT}_N})$ (equipped with weak
topology) is a homeomorphism on its image.\footnote{Note that the image of this map consists of the extreme points of $\plim
\mathcal M_p(\overline{\mathbb{GT}_N})$. }
\end{theorem}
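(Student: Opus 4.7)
The claim is that the bijection $\nu\mapsto\mathcal{E}^\nu=E(\delta^\nu)$, already established in Theorem \ref{Theorem_extended_boundary_1} (combined with the identification of extreme coherent systems in Theorems \ref{theorem_qGT_boundary} and \ref{theorem_finite_dim_boundary}), is in fact a homeomorphism onto its image. The plan is to prove continuity in both directions separately.

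\textbf{Continuity of $\nu\mapsto\mathcal{E}^\nu$.} By the definition of the weak topology on $\plim \mathcal{M}(\overline{\mathbb{GT}_N})$, it suffices to fix $N$ and show that if $\nu^{(i)}\to\nu$ in $\overline{\mathcal{N}}$, then $\mathcal{E}^{\nu^{(i)}}_N\to\mathcal{E}^\nu_N$ weakly as measures on $\overline{\mathbb{GT}_N}$. I would translate this into a statement about Schur generating functions, using an analog of \cite[Proposition 4.1]{Gor} (already invoked in the proof of Theorem \ref{theorem_finite_dim_boundary}): uniform convergence of Schur generating functions on the appropriate torus implies weak convergence of the measures. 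The Schur generating function of $\mathcal{E}^\nu_N$ is then given by the explicit formulas of Theorem \ref{theorem_finite_dim_boundary} when $\nu$ is of finite class, and by the corresponding infinite-variable formula from \cite{Gor} when $\nu\in\mathcal{N}$. I would split into cases according to the class of $\nu$ and whether the classes of $\nu^{(i)}$ eventually stabilize. In the matched-class cases the convergence is an immediate continuity property of the explicit formulas, since the Schur ratios $s_\lambda(x_1,\dots,x_k)/s_\lambda(1,q^{-1},\dots,q^{1-k})$ depend only on finitely many coordinates of $\lambda$ (which stabilize), and the product $\prod_{n>N}G_n^k$ is independent of $\nu$.

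\textbf{Continuity of the inverse.} Because all spaces in sight are metrizable, it suffices to argue sequentially: I must show that $\mathcal{E}^{\nu^{(i)}}\to\mathcal{E}^\nu$ weakly implies $\nu^{(i)}\to\nu$ in $\overline{\mathcal{N}}$. Arguing by contradiction, assume a subsequence of $\nu^{(i)}$ stays bounded away from $\nu$. Two alternatives arise after passing to a further subsequence. First, $\nu^{(i)}$ may converge in $\overline{\mathcal{N}}$ to some $\nu'\neq\nu$; then the forward continuity already proved gives $\mathcal{E}^{\nu^{(i)}}\to\mathcal{E}^{\nu'}$, so $\mathcal{E}^{\nu'}=\mathcal{E}^\nu$, contradicting the injectivity of $E$ from Theorem \ref{Theorem_extended_boundary_1}. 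Second, the subsequence may have no limit point in $\overline{\mathcal{N}}$; inspecting the topology of $\overline{\mathcal{N}}$, this can only happen when some finite coordinate $\nu^{(i)}_m$ escapes to $-\infty$. One then uses the explicit Schur generating function (or the probabilistic description in Theorem \ref{theorem_qGT_boundary} that $\lambda_{N-k+1}$ converges to $\nu_k$) to show that, at every sufficiently large level $N$, most of the mass of $\mathcal{E}^{\nu^{(i)}}_N$ sits on configurations whose leftmost coordinates tend to $-\infty$, which is incompatible with weak convergence to the probability measure $\mathcal{E}^\nu_N$ (whose support has bounded leftmost coordinates).

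\textbf{Main obstacle.} The technical heart lies in the cross-class subcase of the forward continuity: sequences $\nu^{(i)}$ of class higher than $k$ (in particular, of class $\infty$) with $\nu^{(i)}\to\nu\in\mathbb{GT}_k$. Here $\nu^{(i)}_{k+1}\to+\infty$, and one needs to show that the Schur generating function of $\mathcal{E}^{\nu^{(i)}}_N$, written via class-$\infty$ (or class $k_i>k$) formulas involving signatures with many finite coordinates, degenerates uniformly on $T_k$ to the class-$k$ expression $\frac{s_\nu(x_1,\dots,x_k)}{s_\nu(1,q^{-1},\dots,q^{1-k})}\prod_{n>N}G_n^k$. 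This amounts to a uniform asymptotic analysis of $s_\lambda(x_1,\dots,x_N)/s_\lambda(1,q^{-1},\dots,q^{1-N})$ as the coordinates $\lambda_{k+1},\lambda_{k+2},\dots$ tend to $+\infty$ while $\lambda_1,\dots,\lambda_k$ remain fixed, carried out using the principal specialization formula \eqref{eq:princ_spec} together with the branching of Schur polynomials. Making this limit uniform on the torus is the main estimate to be established.
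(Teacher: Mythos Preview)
Your proposal is correct and follows essentially the same route as the paper: forward continuity is reduced to convergence of Schur generating functions via \cite[Proposition~4.1]{Gor} with a case split on the classes of $\nu^{(i)}$ and $\nu$ (the paper's Lemmas \ref{lemma_convergence_on_the_boundary_case1}--\ref{lemma_convergence_on_the_boundary_case4}), and inverse continuity is obtained from forward continuity plus compactness once one rules out $\nu^{(i)}_1\to-\infty$ (the paper's Lemma \ref{lemma_bounded_first_coordinate}). You have also correctly singled out the hard cross-class case; the paper handles it through a quantitative factorization estimate for $s_\lambda/s_\lambda(1,\dots,q^{1-N})$ (Lemma \ref{lemma_factorization}, applied twice) together with an $Est$-norm bound (Lemma \ref{lemma_total_variation_estimate}), and for the escape-to-$-\infty$ step it proves the uniform lower bound $\mathcal{E}^{\nu}_1(\{\nu_1\})\ge (q;q)_\infty^2$---these are exactly the two ``main estimates to be established'' that your sketch leaves open.
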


We present a proof in a series of lemmas.

Let $a[1..N]$ be an element of $\mathbb M_N$, i.e.\ $a[1..N]$ is the $N$-point subset of $\mathbb Z$
consisting of points $a[1]<a[2]<\dots<a[N]$. Set
$$
 A_N^a(x_1,\dots,x_N)=Alt( x_1^{a[1]}\cdots x_N^{a[N]})=\sum_{\sigma\in S(N)} (-1)^{\sigma}
x_1^{a[\sigma(1)]}\cdots x_N^{a[\sigma(N)]}.
$$

\begin{lemma}
\label{lemma_factorization} The following factorization property holds:
\begin{multline}
 \frac{s_\lambda(x_1,\dots,x_N)}{s_\lambda(1,\dots,q^{1-N})}=
 \frac{s_{\lambda_{N-k+1},\dots,\lambda_N}(x_1,\dots,x_k)
 s_{(\lambda_1+k,\dots,\lambda_{N-k}+k)}(x_{k+1},\dots,x_N)}{\prod_{i=1,\dots,k}\prod_{j=k+1,\dots,N}
 (x_i-x_j)} \\ \times \frac{\prod_{i=1,\dots,k}\prod_{j=k+1,\dots,N}
 (q^{1-i}-q^{1-j})}{s_{\lambda_{N-k+1},\dots,\lambda_N}(1,\dots,q^{1-k})
 s_{(\lambda_1+k,\dots,\lambda_{N-k}+k)}(q^{-k},\dots,q^{1-N})}+Q,
\end{multline}
where if we expand $Q$ as a sum of monomials
$$
 Q=\sum_{m_1,\dots,m_N} c^{m_1,\dots,m_N} x_1^{m_1}\cdots x_N^{m_N},
$$
then
$$
 \sum_{m_1,\dots,m_N} |c^{m_1,\dots,m_N}| 1^{m_1}\cdots q^{(1-N)m_N}
<R(k,\lambda_{N-k},\lambda_{N-k+1}),
$$
and for any fixed $k$ and bounded $\lambda_{N-k+1}$, $R\to 0$ as $\lambda_{N-k}\to\infty$.

\end{lemma}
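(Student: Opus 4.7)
The plan is to prove the factorization by directly Laplace-expanding the bialternant $A^a(x)=\det[x_i^{\lambda_j+N-j}]_{i,j=1}^N$ underlying $s_\lambda$ along the first $k$ rows, identifying one dominant term in the resulting sum over $k$-element column subsets $S\subset[N]$, and controlling all remaining terms in the weighted $\ell^1$ norm on Laurent coefficients that appears in the statement.

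First I would write $s_\lambda(x)=A^a(x)/\prod_{i<j}(x_i-x_j)$ and apply the Laplace expansion to get
\begin{equation*}
A^a(x)=\sum_{|S|=k}\epsilon_S\,D_1^S(x_1,\dots,x_k)\,D_2^S(x_{k+1},\dots,x_N),
\end{equation*}
where $D_1^S,D_2^S$ are the indicated minors. The main term corresponds to $S^\star=\{N-k+1,\dots,N\}$, which assigns to the rows $1,\dots,k$ the smallest column exponents $\lambda_{N-k+1}+k-1,\dots,\lambda_N$. A direct computation shows that $D_1^{S^\star}$ equals $\prod_{i<j\le k}(x_i-x_j)\cdot s_{(\lambda_{N-k+1},\dots,\lambda_N)}(x_1,\dots,x_k)$, and $D_2^{S^\star}$ equals $\prod_{k<i<j}(x_i-x_j)\cdot s_{(\lambda_1+k,\dots,\lambda_{N-k}+k)}(x_{k+1},\dots,x_N)$ after factoring $(x_{k+1}\cdots x_N)^k$ out of the lower block. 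Dividing by the full Vandermonde reproduces the explicit main factor of the lemma up to the harmless sign $\epsilon_{S^\star}$.

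Next I would divide through by $s_\lambda(1,q^{-1},\dots,q^{1-N})$ and compare with $F(x)$. The normalization factor in $F$ was chosen precisely so that $F(1,\dots,q^{1-N})=1$; specializing the Laplace identity to $x_j=q^{1-j}$ then expresses $1/s_\lambda(1,\dots,q^{1-N})$ in terms of the main and non-main terms, and rearrangement writes $Q$ explicitly as a signed linear combination, indexed by $S\ne S^\star$, of rational functions of the form $s_{\mu_S}(x_1,\dots,x_k)\,s_{\nu_S}(x_{k+1},\dots,x_N)/\prod_{i\le k<j}(x_i-x_j)$, plus a single corrective multiple of the main factor itself. To bound the weighted $\ell^1$ norm I would use sub-multiplicativity together with two ingredients: Schur polynomials have non-negative monomial coefficients, so the weighted $\ell^1$ norm of $s_\mu(x_{j_1},\dots,x_{j_r})$ equals the principal specialization $s_\mu(q^{1-j_1},\dots,q^{1-j_r})$; and expanding $(x_i-x_j)^{-1}=-x_j^{-1}\sum_{\ell\ge 0}(x_i/x_j)^\ell$, which converges on $T_N$ because $|x_i/x_j|=q^{j-i}<1$ for $i<j$, gives weighted $\ell^1$ norm $(q^{1-j}-q^{1-i})^{-1}$.

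Combining these estimates, each non-main contribution is bounded by a product of principal specializations of Schur polynomials of partitions $\mu_S,\nu_S$ divided by $s_\lambda(1,\dots,q^{1-N})\prod_{i\le k<j}(q^{1-j}-q^{1-i})$. The hard part, I expect, is to show via the product formula $s_\mu(1,q^{-1},\dots,q^{1-r})=q^{-n(\mu')}\prod_{i<j}(1-q^{\mu_i-i-\mu_j+j})/(1-q^{j-i})$ that the ratio of any non-main bound to the main-term bound is $O(q^{\lambda_{N-k}-\lambda_{N-k+1}+1})$ uniformly in the top parts $\lambda_1,\dots,\lambda_{N-k-1}$; the delicate bookkeeping lies in tracking how the $q^{-n(\mu')}$ factors and the $q$-products cancel between numerator and denominator so that only the gap $\lambda_{N-k}-\lambda_{N-k+1}$ survives in the exponent. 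Summing the finitely many (at most $\binom{N}{k}$) such contributions then yields a bound $R(k,\lambda_{N-k},\lambda_{N-k+1})$ that tends to zero as $\lambda_{N-k}\to\infty$ with $k$ and $\lambda_{N-k+1}$ fixed.
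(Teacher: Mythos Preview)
Your approach is essentially the paper's: Laplace expansion of the bialternant along the first $k$ rows, identification of the dominant block $S^\star=\{N-k+1,\dots,N\}$, and control of the remainder via the weighted $\ell^1$ functional (the paper calls it $Est$) using sub-multiplicativity, non-negativity of Schur coefficients, and the geometric expansion of $(x_i-x_j)^{-1}$.

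There is, however, a genuine gap in your final step. The bound $R$ must be \emph{uniform in $N$}: the lemma is later applied with $N\to\infty$, and the notation $R(k,\lambda_{N-k},\lambda_{N-k+1})$ deliberately excludes $N$ as an argument. Your argument bounds every non-main contribution by the same quantity $O(q^{\lambda_{N-k}-\lambda_{N-k+1}+1})$ and then sums ``finitely many (at most $\binom{N}{k}$)'' of them; this produces a bound of order $\binom{N}{k}\,q^{\lambda_{N-k}-\lambda_{N-k+1}}$, which blows up with $N$. What is actually needed---and what the paper does---is a per-term bound that decays with how far $S$ is from $S^\star$. Concretely, writing $a[i]=\lambda_{N-i+1}+i-1$, the paper shows that the contribution from a choice $\{j(1),\dots,j(k)\}$ satisfies
\[
\frac{C(j(1),\dots,j(k))}{C(1,\dots,k)}\ \le\ const(k,a[k])\cdot q^{\sum_{m:\,j(m)>k} a[j(m)]},
\]
so that the sum over all $S\ne S^\star$ is dominated by a geometric series in the $a[j]$'s and converges to something bounded by $const(k,a[k])\,q^{a[k+1]}$, independently of $N$. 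The ``delicate bookkeeping'' you anticipate is precisely this: tracking the $q$-exponents so that each non-main term carries a factor $q^{a[j]}$ for each column $j>k$ it uses, not merely the worst-case factor $q^{a[k+1]-a[k]}$.
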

{\bf Remark 1.} The statement should hold for more general sequences of $\xi$'s, but our proof
works only for geometric progressions.

\begin{proof} We have (the first equality is the definition of the Schur polynomial)
\begin{multline*}
 (-1)^{N(N-1)/2} s_\lambda(x_1,\dots,x_N)= \frac{Alt(x_1^{\lambda_N}x_2^{\lambda_{N-1}+1}\cdots
 x_N^{\lambda_1+N-1})}{\prod_{i<j}(x_i-x_j)}\\=\frac{Alt(x_1^{\lambda_N}\cdots
 x_k^{\lambda_{N-k+1}+k-1})Alt(x_{k+1}^{\lambda_{N-k}+k}\cdots
 x_N^{\lambda_1+N-1})}{\prod_{i<j}(x_i-x_j)}+B.
\end{multline*}
Let us estimate the remainder $B$. By the definitions
\begin{align}
\label{eq_x11} \nonumber B=&\sum_{\{j(1),\dots, j(k)\}\ne \{N-k+1,\dots,N\}} \pm
\dfrac{x_1^{\lambda_{j(1)}+N-j(1)}\cdots
 x_k^{\lambda_{j(k)}+N-j(k)}}{\prod_{i<j\le k}(x_i-x_j)}\\ \nonumber &\times
 \dfrac{A_{N-k}^{(\lambda+\delta)\setminus
 \{\lambda_{j(1)}+N-j(1),\dots,\lambda_{j(k)}+N-j(k)\}}(x_{k+1},\dots,x_N)}{\prod_{k<i<j}(x_i-x_j)}
 \\& \times \frac{1}{\prod_{i=1,\dots,k}\prod_{j=k+1,\dots,N} (x_i-x_j)},
\end{align}
where  $(\lambda+\delta)\setminus
 \{\lambda_{j(1)}+N-j(1),\dots,\lambda_{j(k)}+N-j(k)\}$ stands for the $(N-k)$-element subset of $\mathbb
Z$ which is the set-theoretical difference of
$\lambda+\delta=\{\lambda_1+N-1,\lambda_2+N-2+\dots,\lambda_{N-1}+1,\lambda_N\}$ and
$\{\lambda_{j(1)}+N-j(1),\dots,\lambda_{j(k)}+N-j(k)\}.$

Denote the three factors in \eqref{eq_x11} by $B_1$, $B_2$ and $B_3$, respectively. For any
function $L$ on $T_N$ that has a Laurent expansion, let $Est(L)$ be the following number: Decompose $L$ into the sum of
monomials
$$
 L=\sum_{m_1,\dots,m_N} \ell^{m_1,\dots,m_N} x_1^{m_1}\cdots x_N^{m_N}
$$
and set
$$
 Est(L)=\sum_{m_1,\dots,m_N} |\ell^{m_1,\dots,m_N}| 1^{m_1}\cdots q^{(1-N)m_N}.
$$
Clearly, $Est(L_1L_2)\le Est(L_1)Est(L_2)$. Therefore,
$$
 Est(B)\le Est(B_1)Est(B_2)Est(B_3).
$$
In what follows $const_k$ denotes various constants depending solely on $k$. We have

$$
Est(B_1)\le const_k \prod_{m=1}^k q^{(1-m)(\lambda_{j(m)}+N-j(m))}.$$

  Observe that $B_2$ is a Schur polynomial. As follows from the combinatorial formula(see e.g.\
Section I.5 of \cite{Mac}), these functions are sums of monomials with non-negative
 coefficients, thus $Est(B_2)=B_2(1,q^{-1},\dots,q^{1-N})$.

 Finally, decomposing the denominators in $B_3$ into geometric series and then converting them
back, we get
$$
 Est(B_3)\le \left(\prod_{i=1}^{k} \prod_{j=k+1,\dots,N} (q^{1-j}-q^{1-i})^{-1}\right)\le const_k
 \prod_{j=k+1}^N q^{(j-1)k}.
$$

Now set $a[N-j+1]=\lambda_j+N-j$. We have

\begin{multline*}
 Est(B)\le \sum_{\{j(1),\dots j(k)\}\ne \{1,\dots,k\}} const_k \left(\prod_{j=k+1}^N q^{(j-1)k}
 \prod_{i=1}^k q^{(1-i)a[j(i)]}\right)
 \\ \times
 A_{N-k}^{a[1..N] \setminus \{a[j(1)],\dots, a[j(k)]\}}(q^{-k},\dots,q^{1-N})
 \frac{1}{\prod_{i<j\le N-k} (q^{-k-i+1}-q^{-k-j+1})}
\end{multline*}

Let $ C(j(1),\dots,j(k))$ denote the right-hand side of the above inequality.
Then
$$
\frac{
 C(j(1),\dots,j(k))}{C(1,\dots,k)}= \prod_{i=1}^k q^{(1-i)(a[j(i)]-a[i])} \dfrac{A_{N-k}^{a[1..N]
 \setminus \{a[j(1)],\dots,
 a[j(k)]\}}(q^{-k},\dots,q^{1-N})}{A_{N-k}^{a[k+1..N]}(q^{-k},\dots,q^{1-N})}\,.
$$

For any increasing sequence $b[1..N-k]$ we have
\begin{align*}
A_{N-k}^{b[1..N-k]}&(q^{-k},\dots,q^{1-N})=q^{(1-N)(b[1]+\dots+b[N-k])}A_{N-k}^{b[1..N-k]}(1,\dots,q^{N-k-1})
 \\&\stackrel{(*)}{=}q^{(1-N)(b[1]+\dots+b[N-k])} \prod_{i<j\le (N-k)} (q^{b[i]}-q^{b[j]})
 \\&=q^{(1-N)(b[1]+\dots+b[N-k])+(N-k-1)b[1]+(N-k-2)b[2]+\dots+0b[N-k]} \\ &\quad\times \prod_{i<j\le (N-k)} \left(1-q^{b[j]-b[i]}\right)
 \\&=q^{(-k)b[1]+(-k-1)b[2]+\dots+(1-N)b[N-k]} \prod_{i<j\le (N-k)} (1-q^{b[j]-b[i]}),
\end{align*}
where the equality $(*)$ is the Vandermonde determinant evaluation.

To analyze $A_{N-k}^{a[1..N] \setminus \{a[j(1)],\dots, a[j(k)]\}}$ we think of the set
$$
b[1..N-k]:=a[1..N] \setminus
 \{a[j(1)],\dots, a[j(k)]\}
$$ as of a small modification of the set $a[k+1..N]$. Note that
under this modification only finite number of members  (up to $k$) of the sequence $b[i]$ change.
Using the finiteness of $\prod_{n\ge 1} (1-q^n)$, one easily sees that the `modified' product
$\prod_{1\le i<j\le (N-k)} (1-q^{b[j]-b[i]})$ differs from the `unmodified' one $\prod_{k+1\le
i<j\le N} (1-q^{a[j]-a[i]})$ by a constant that is bounded away from 0 and $\infty$ (note that $k$
is fixed, while $N$ can be arbitrarily large).

Hence,
\begin{multline}
\label{eq_x12}
 \dfrac{A_{N-k}^{a[1..N] \setminus \{a[j(1)],\dots,
 a[j(k)]\}}(q^{-k},\dots,q^{1-N})}{A_{N-k}^{a[k+1..N]}(q^{-k},\dots,q^{1-N})}
 \\ \le const_k\cdot q^{(-k)b[1]+(-k-1)b[2]+\dots+(1-N)b[N-k]} q^{ka[k+1]+\dots+(N-1)a[N]}
\end{multline}

 The next step is to estimate the exponent of $q$ above by
\begin{multline*}
 k(a[k+1]-b[1])+\dots+(N-1)(a[N]-b[N-k]) \\\ge
k\sum_{m=1}^{N-k} (a[m+k]-b[m])\ge
k\sum_{m:j(m)>k}(a[j(m)]-a[k]).
\end{multline*}

Hence,
\begin{multline*}
\frac{C(j(1),\dots,j(k))}{C(1,\dots,k)}\le const_k \prod_{i=1}^k q^{(1-i)(a[j(i)]-a[i])}
q^{k\sum_{m:j(m)>k}(a[j(m)]-a[k])}
\\ \le const(k,a[k]) \cdot q^{\sum_{m:j(m)>k}a[j(m)]}.
\end{multline*}
Here and below we use $const(k,a[k])$ to denote any constant depending only on $k$ and $a[k]$.


Thus, the sum over all $\{j(1),\dots,j(k)\}\ne\{1,\dots,k\}$ can be bounded by geometric series
and
$$
 Est(B)\le C(1,\dots,k)\cdot const(k,a[k])\, q^{a[k+1]}.
$$

Hence, substituting the definition of $C(1,\dots,k)$ and using \eqref{eq:princ_spec},
\begin{align*}
 Est&\left(\frac{B}{s_\lambda(1,\dots,q^{1-N})}\right)\\&\le const(k,a[k]) q^{a[k+1]}
 \prod_{j=k+1}^N q^{(j-1)k} \prod_{i=1}^k q^{(1-i)(\lambda_{N+1-i}+i-1)}
 \\ &\quad\times
 q^{(-k)(\lambda_{N-k}+k)+\dots+(1-N)(\lambda_1+N-1)} \prod_{1\le i<j\le N-k}
 \frac{1-q^{\lambda_i-i-\lambda_j+j}} {q^{-k-i+1}-q^{-k-j+1}}
 \\ &\quad\times
 q^{(N-1)(\lambda_1+\dots+\lambda_N)} q^{-0\lambda_1-\dots-(N-1)\lambda_N} \prod_{1\le i<j\le N}
 \frac{1-q^{j-i}}{1-q^{\lambda_i-i-\lambda_j+j}}
 \\
 &=const(k,a[k]) q^{a[k+1]} \prod_{j=k+1}^N q^{(j-1)k} \\ &\quad\times \prod_{i=1}^N q^{-(i-1)^2}
 \prod_{1\le i<j\le(N-k)} \frac{1-q^{j-i}}{q^{-k-i+1}-q^{-k-j+1}}\\ &\quad \times
 \prod_{i=1}^{N-k}\prod_{j=N-k+1}^{N} \frac {1-q^{j-i}}{1-q^{\lambda_i-i-\lambda_j+j}}
 \prod_{(N-k+1)\le i<j\le N} \frac{1-q^{j-i}}{1-q^{\lambda_i-i-\lambda_j+j}}
 \\
 &\le const(k,a[k]) q^{a[k+1]} \prod_{j=k+1}^N q^{(j-1)k} \prod_{i=1}^N q^{-(i-1)^2} \prod_{1\le
 i<j\le(N-k)} q^{k+j-1}
 \\&=
const(k,a[k]) q^{a[k+1]} \prod_{j=k+1}^{N} q^{(j-1)k} \prod_{i=1}^N q^{-(i-1)^2} \prod_{j=k+1}^{N}
q^{(j-1)(j-1-k)}
\\&=const(k,a[k]) q^{a[k+1]}
\end{align*}
The statement of the lemma immediately follows.
\end{proof}

{
\begin{lemma}
\label{lemma_total_variation_estimate}
 Let $P$ be a signed finite measure on $\mathbb{GT}_N$ with Schur generating function $\mathcal
 S$:
 $$
  \mathcal S (x_1,\dots,x_N; P)=\sum_{\lambda\in\mathbb{GT}_N}
  P(\lambda)\frac{s_\lambda(x_1,\dots,x_N)}{s_\lambda(1,\dots,q^{1-N})}.
 $$
 The total variation norm of $P$ can be estimated as
 $$
  \Vert P \Vert \le const_N Est(\mathcal S).
 $$
\end{lemma}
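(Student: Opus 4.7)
The plan is to invert the map $P \mapsto \mathcal S(\,\cdot\,;P)$ by multiplying $\mathcal S$ by the Vandermonde $D(x)=\prod_{i<j}(x_i-x_j)$. Since $D \cdot s_\lambda = a_{\lambda+\delta}$ where $\delta=(N-1,N-2,\dots,0)$ and $a_\mu(x)=\sum_\sigma \mathrm{sgn}(\sigma) x^{\sigma(\mu)}$, we get the Laurent polynomial identity
$$
 D(x)\,\mathcal S(x;P)=\sum_{\lambda\in\mathbb{GT}_N}\frac{P(\lambda)}{s_\lambda(1,\dots,q^{1-N})}\,a_{\lambda+\delta}(x).
$$
Because each $\lambda+\delta$ is a strictly decreasing tuple and distinct $\lambda$ yield distinct $S_N$-orbits of such tuples, the $N!\cdot \#\{\lambda:P(\lambda)\neq 0\}$ monomials on the right are pairwise distinct, and no cancellation can occur. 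Hence for every $\lambda$ and every permutation $\sigma$, the coefficient of $x^{\sigma(\lambda+\delta)}$ in $D\mathcal S$ equals $\mathrm{sgn}(\sigma)\,P(\lambda)/s_\lambda(1,\dots,q^{1-N})$.

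Writing $\xi_i=q^{1-i}$ and reading off $\mathrm{Est}(D\mathcal S)$ directly from these coefficients,
$$
 \mathrm{Est}(D\mathcal S) \;=\; \sum_\lambda \frac{|P(\lambda)|}{s_\lambda(\xi)}\sum_{\sigma\in S_N}\xi^{\sigma(\lambda+\delta)}.
$$
The inner sum is the permanent of $[\xi_i^{(\lambda+\delta)_j}]$ and, since all $\xi_i>0$, it dominates the absolute value of the corresponding determinant:
$$
 \sum_{\sigma}\xi^{\sigma(\lambda+\delta)}\;\ge\;|a_{\lambda+\delta}(\xi)|\;=\;s_\lambda(\xi)\,|a_\delta(\xi)|,
$$
where $|a_\delta(\xi)|=\prod_{i<j}|q^{1-i}-q^{1-j}|$ is a positive constant depending only on $N$ and $q$. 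Plugging this in yields $\mathrm{Est}(D\mathcal S)\ge|a_\delta(\xi)|\cdot\|P\|$.

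To finish, observe that for any two Laurent series $f,g$ the triangle inequality applied to convolution gives $\mathrm{Est}(fg)\le \mathrm{Est}(f)\,\mathrm{Est}(g)$. Taking $f=D$ (a polynomial, so $\mathrm{Est}(D)$ is a finite constant $c_N$) and $g=\mathcal S$, we obtain
$$
 \|P\|\;\le\;\frac{\mathrm{Est}(D\mathcal S)}{|a_\delta(\xi)|}\;\le\;\frac{c_N}{|a_\delta(\xi)|}\,\mathrm{Est}(\mathcal S),
$$
which is the desired bound with $\mathrm{const}_N = c_N/|a_\delta(\xi)|$. I do not foresee a serious obstacle here: the only delicate point is checking that multiplication by $D$ and rearrangement of the Laurent expansion are legitimate, but since $P$ is a finite signed measure one may either assume $\mathrm{Est}(\mathcal S)<\infty$ (otherwise the statement is vacuous) and use absolute convergence, or first verify the identity formally on finitely supported $P$ and pass to the limit by Fatou's lemma.
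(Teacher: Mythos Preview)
Your proof is correct and follows essentially the same route as the paper: multiply $\mathcal S$ by the Vandermonde $D$, show $\mathrm{Est}(D\mathcal S)\ge \mathrm{const}_N\,\|P\|$, and then use submultiplicativity of $\mathrm{Est}$ to bound $\mathrm{Est}(D\mathcal S)\le \mathrm{Est}(D)\,\mathrm{Est}(\mathcal S)$. Your lower bound via the permanent-dominates-determinant inequality is a little slicker than the paper's version, which instead keeps only the single monomial $q^{0\lambda_N}q^{-1(\lambda_{N-1}+1)}\cdots q^{(1-N)(\lambda_1+N-1)}$ from $\mathrm{Est}(a_{\lambda+\delta})$ and compares it directly with the principal-specialization formula \eqref{eq:princ_spec} for $s_\lambda(1,\dots,q^{1-N})$.
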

\begin{proof}
 As before, we use the notation $const_N$ below for various positive constants depending on $N$.
 We have
 $$
  Est\left(\mathcal S \prod_{i<j} (x_i-x_j)\right) = \sum_{\lambda\in\mathbb{GT}_N}
  \frac{|P(\lambda)|}{s_\lambda(1,\dots,q^{1-N})} Est\left(Alt(x_1^{\lambda_1+N-1}\cdots
  x_N^{\lambda_N})\right).
 $$
 Note that
  $$Est\left(Alt(x_1^{\lambda_1+N-1}\cdots
  x_N^{\lambda_N})\right)\ge q^{0\lambda_N} q^{(-1)(\lambda_{N-1}+1)}\cdots
  q^{(1-N)(\lambda_1+N-1)}.
 $$
 Also, using \eqref{eq:princ_spec}
 $$
  {s_\lambda(1,\dots,q^{-N})} \le {\frac{1}{const_N}} \,{q^{0\lambda_N} q^{(-1)(\lambda_{N-1}+1)}\cdots
  q^{(1-N)(\lambda_1+N-1)}}
 $$

 Therefore,
 $$
   Est\left(\mathcal S \prod_{i<j} (x_i-x_j)\right) \ge {const_N}\sum_{\lambda\in\mathbb{GT}_N}
  |P(\lambda)| = {const_N} \Vert P \Vert.
 $$
 It remains to observe that
 $$
  Est\left(\mathcal S \prod_{i<j} (x_i-x_j)\right)\le Est(\mathcal S) Est\left( \prod_{i<j} (x_i-x_j)\right) \le const_N
  Est(\mathcal S).
 $$
\end{proof}
}

\begin{lemma}
\label{lemma_convergence_on_the_boundary_case1} Let $\nu^n$ be a sequence of points of ${\mathcal
N}\subset \overline{\mathcal N}$ converging to $\nu\in \overline{\mathcal N}\setminus\mathcal N$.
Then $\mathcal E^{\nu^n}$ weakly converges to $\mathcal E^\nu$.
\end{lemma}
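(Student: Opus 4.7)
The plan is to establish weak convergence by verifying convergence of finite-dimensional projections, i.e., for each fixed $N$, showing that $\mathcal E^{\nu^n}_N \to \mathcal E^\nu_N$ weakly as measures on $\overline{\mathbb{GT}_N}$. The natural tool is the Schur generating function correspondence: as used in the proof of Theorem \ref{theorem_finite_dim_boundary}, uniform convergence of Schur generating functions on $T_N$ is equivalent to weak convergence of the corresponding measures (Proposition 4.1 of \cite{Gor}), and Lemma \ref{lemma_total_variation_estimate} even gives a total-variation bound in terms of the $Est$-norm. So I would work entirely at the level of Schur generating functions.

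First I would identify $\mathcal E^{\nu^n}_N$ explicitly. Since $\nu^n\in\mathcal N$, by the ergodic description in Theorem \ref{theorem_qGT_boundary}, $\mathcal E^{\nu^n}_N$ is the weak limit as $M\to\infty$ of primitive coherent systems based on signatures $\lambda^{M,n}\in\mathbb{GT}_M$ with $\lambda^{M,n}_{M-j+1}=\nu^n_j$ for each $j$; correspondingly, its Schur generating function is a limit of ratios $s_{\lambda^{M,n}}(x_1,\dots,x_N)/s_{\lambda^{M,n}}(1,\dots,q^{1-N})$ times the appropriate $G$-factors, exactly as in Theorem \ref{theorem_finite_dim_boundary}. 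Because $\nu\in\mathbb{GT}_k\subset\overline{\mathcal N}\setminus\mathcal N$, the hypothesis $\nu^n\to\nu$ coordinate-wise says that for each $j\le k$ we eventually have $\nu^n_j=\nu_j\in\mathbb Z$, while $\nu^n_{k+1}\to+\infty$.

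Now I would apply Lemma \ref{lemma_factorization} with this particular $k$ to the signatures $\lambda^{M,n}$. By construction, the last $k$ coordinates $\lambda^{M,n}_{M-k+1},\dots,\lambda^{M,n}_M$ equal $\nu_k,\dots,\nu_1$ (for $n$ large), while $\lambda^{M,n}_{M-k}=\nu^n_{k+1}\to+\infty$. The lemma gives a factorization of $s_{\lambda^{M,n}}/s_{\lambda^{M,n}}(1,\dots,q^{1-M})$ into a product of a piece depending only on $(x_1,\dots,x_k)$ and the fixed coordinates $\nu_1,\dots,\nu_k$, times a piece depending on the remaining variables and the growing coordinates, plus a remainder $Q$ with $Est(Q)\to 0$ as $\nu^n_{k+1}\to\infty$. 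Taking $M\to\infty$ and combining with the $\prod G_n^k$ factors, the main term matches precisely the Schur generating function of $\mathcal E^\nu_N$ given in Theorem \ref{theorem_finite_dim_boundary} (viewed as a measure supported on the stratum $\mathbb{GT}_N^{(k)}\subset\overline{\mathbb{GT}_N}$); the remaining term gives an asymptotically negligible contribution.

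Finally, I would invoke Lemma \ref{lemma_total_variation_estimate} applied to the difference of Schur generating functions to conclude that on each stratum of $\overline{\mathbb{GT}_N}$ the measures converge in total variation to $\mathcal E^\nu_N$, which (since $\overline{\mathbb{GT}_N}$ is a countable disjoint union of such strata in a natural topology) implies weak convergence in $\plim\mathcal M(\overline{\mathbb{GT}_N})$. The main technical obstacle I anticipate is the bookkeeping around the ``escape of mass to infinity'': each $\mathcal E^{\nu^n}_N$ is supported on $\mathbb{GT}_N^{(N)}=\mathbb{GT}_N$, whereas $\mathcal E^\nu_N$ is supported on the lower stratum $\mathbb{GT}_N^{(k)}$, and one has to reconcile the fact that the Schur generating function machinery is set up for probability measures on a single $\mathbb{GT}_N$ with the convergence statement on the extended space where different strata coexist. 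Concretely, the factorization in Lemma \ref{lemma_factorization} must be matched against the branching formulas embedded in the definition of the links $\overline P_N^{\downarrow}$ and the auxiliary stochastic matrices $Q_N^k$, so that the decomposition of Proposition \ref{proposition_decomposition_of_ext_coherent_system} is realized on the level of Schur generating functions.
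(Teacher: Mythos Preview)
Your approach is essentially the same as the paper's: both work level-by-level with Schur generating functions, use Lemma~\ref{lemma_factorization} to split off the $k$ bottom coordinates from the coordinates escaping to $+\infty$, and control the remainders via $Est$ and Lemma~\ref{lemma_total_variation_estimate}.

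The one place where your sketch is under-specified is exactly the point you flag as a difficulty. After one application of Lemma~\ref{lemma_factorization}, the main term is not yet ``the Schur generating function of $\mathcal E^\nu_m$'': for $m>k$ it is a \emph{product} of a $k$-variable piece (which does converge to what you want) and an $(m-k)$-variable piece in $x_{k+1},\dots,x_m$ carrying the mass that is escaping to $+\infty$, together with cross-factors $\prod(x_i-x_j)^{-1}$. The paper resolves this by (i) expanding the $k$-variable piece into a \emph{finite} sum of normalized Schur polynomials up to a small $Est$-error, (ii) expanding the $(m-k)$-variable piece fully, and then (iii) applying Lemma~\ref{lemma_factorization} \emph{in reverse} to each term of the resulting double sum, recombining into a single Schur expansion on $\mathbb{GT}_m$. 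This lets one read off the projection to the last $k$ coordinates directly as the coefficients $c_\lambda$, while the escaping coordinates are absorbed by $\sum_\mu u_\mu=1$. The paper also treats the easier case $m\le k$ separately by a single application of Lemma~\ref{lemma_factorization}. Once you add this ``factorize, expand, refactorize'' step for $m>k$ and the $m\le k$ case, your outline becomes the paper's proof.
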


\begin{proof} Since $\nu\in \overline{\mathcal N}\setminus\mathcal N$, there exists $k\ge 1$ such that
 $\nu\in\mathbb{GT}_k\subset \overline{\mathcal N}$. First, take $m>k$. Recall that the
measure ${\mathcal E}^{\nu}_{m}$ is supported by $\mathbb{GT}_m$ and ${\mathcal S}(x_1,\dots,x_m;
{\mathcal E}^{\nu}_{m})$ is its Schur generating function.

Theorem 1.3 of \cite{Gor} yields
$$
 {\mathcal S}(x_1,\dots,x_m; {\mathcal E}^{\nu^n}_{m})=\lim_{N\to\infty}
  S_N^{\nu^n}(x_1,\dots,x_m),
$$
where
$$
  S_N^{\nu^n}(x_1,\dots,x_m)
=\dfrac{s_{(\nu_{N}^n,\dots,\nu_1^n)}(x_1,\dots,x_m,q^{-m},\dots,q^{1-N})}{s_{(\nu_{N}^n,\dots,\nu_1^n)}(1,\dots,q^{1-N})}.
$$


Since $\nu^n\to\nu$, we have $\nu^n_{k+1}\to\infty$, while $\nu^n_1,\dots,\nu^n_{k}$ stabilize. Thus, we
may use Lemma \ref{lemma_factorization}
\begin{align}
\nonumber&\dfrac{s_{(\nu_{N}^n,\dots,\nu_1^n)}(x_1,\dots,x_m,q^{-m},\dots,q^{1-N})}{s_{(\nu_{N}^n,\dots,\nu_1^n)}(1,\dots,q^{1-N})}
 \\&\quad\nonumber=\dfrac{s_{(\nu_{k}^n,\dots,\nu_1^n)}(x_1,\dots,x_k)}{s_{(\nu_{k}^n,\dots,\nu_1^n)}(1,\dots,q^{1-k})}
  \frac{\prod_{i=1,\dots,k}\prod_{j=m+1,\dots,N}
  (q^{1-i}-q^{1-j})}{\prod_{i=1,\dots,k}\prod_{j=m+1,\dots,N} (x_i-q^{1-j})}
 \\&\quad\nonumber \times
  \dfrac{s_{(\nu^n_n+k,\dots,\nu^n_{k+1}+k)}(x_{k+1},\dots,x_m,q^{-m},\dots,q^{1-N})}
  {s_{(\nu^n_n+k,\dots,\nu^n_{k+1}+k)}(q^{-k},\dots,q^{1-N})}
 \\&\quad \times
  \frac{\prod_{i=1,\dots,k}\prod_{j=k+1,\dots,m}
  (q^{1-i}-q^{1-j})}{\prod_{i=1,\dots,k}\prod_{j=k+1,\dots,m} (x_i-x_j)} +Q. \label{eq_x13}
\end{align}

Note that the measure with Schur generating function  $S_N^{\nu^n}(x_1,\dots,x_m)$ is supported on
signatures $\lambda$ such that $\lambda_{m-k}\ge\nu^n_{k+1}$. Indeed, this readily follows from
formulas \eqref{eq_interlacing} and \eqref{eq_explicit_link}, see also Proposition 5.5 in
\cite{Gor}.

 Let us find the projection of the measure with Schur generating function
$S_N^{\nu^n}(x_1,\dots,x_m)$ to the last $k$ coordinates
 of the signature (all other coordinates tend to infinity as $n\to\infty$). We claim that the
Schur generating function of this projection uniformly (in $x$'s and in $N$) tends (as
$n\to\infty$) to
\begin{equation}
\label{eq_x2}
\dfrac{s_{(\nu_{k}^n,\dots,\nu_1^n)}(x_1,\dots,x_k)}{s_{(\nu_{k}^n,\dots,\nu_1^n)}(1,\dots,q^{1-k})}
\frac{\prod_{i=1,\dots,k}\prod_{j=m+1,\dots,N}
(q^{1-i}-q^{1-j})}{\prod_{i=1,\dots,k}\prod_{j=m+1,\dots,N} (x_i-q^{1-j})}\,.
\end{equation}

 To prove this claim, we first expand \eqref{eq_x2} into a \emph{finite} sum of normalized Schur
 polynomials
$$
 \sum_{\lambda\in \Lambda} c_\lambda^N \frac{s_\lambda(x_1,\dots,x_k)}{s_\lambda(1,\dots,q^{1-k})}
+ Q_1
$$
It is clear, that for any $\varepsilon$ we may choose such a finite set $\Lambda$ not depending on
$N$, that $Est(Q_1)<\varepsilon$.


We also expand
$$
  \dfrac{s_{(\nu^n_n+k,\dots,\nu^n_{k+1}+k)}(x_{k+1},\dots,x_m,q^{-k},\dots,q^{1-N})}
  {s_{(\nu^n_n+k,\dots,\nu^n_{k+1}+k)}(q^{-k},\dots,q^{1-N})}
$$
into the \emph{full} sum of normalized  Schur polynomials in variables $x_{k+1},\dots,x_m$ (with
normalization in the point $(q^{-k},\dots,q^{1-m})$ as
$$
 \sum_{\mu\in\mathbb{GT}_{m-k}} u_\mu
 \frac{s_{\mu}(x_{k+1},\dots,x_m)}{s_{\mu}(q^{-k},\dots,q^{1-m})}.
$$
Observe that $\sum_\mu u_\mu=1$. Note that, if $n$ is large enough, then coordinates of all
signatures in the support of $u_\mu$ has larger coordinates than those of $\Lambda$. Moreover,
as $n\to\infty$ these coordinates tend to infinity.

We substitute these two expansions into {\eqref{eq_x13}}
 and use Lemma \ref{lemma_factorization} yet
again (in the reverse direction, for polynomials in {$m$} variables). This gives
\begin{equation}
\label{eq_x14}
 \sum_{\lambda\in \Lambda} c_\lambda^N \sum_\mu u_{\mu}
 \frac{s_{\lambda\cup\mu}(x_1,\dots,x_m)}{s_{\lambda\cup\mu}(1,\dots,q^{1-m})} + Q_2
\end{equation}

If we forget about $Q_2$, then we arrive at a measure on $\mathbb{GT}_m$ which assigns to signature
$\lambda\cup\mu$ the weight $c_\lambda u_{\mu}$. Clearly, its projection to the lowest
{$k$}
 coordinates assigns to signature $\lambda$ the weight $c_\lambda$, as needed.

\smallskip

{ It remains to work out the impact of $Q_2$. Let us list all the terms contributing
to $Q_2$.
\begin{enumerate}
 \item $Q$ from \eqref{eq_x13}. By Lemma \ref{lemma_factorization}, $Est(Q)\to 0$ as $n\to\infty$.
 \item $Q_1$ gives the term
  \begin{multline*}
   Q_1'=Q_1  \dfrac{s_{(\nu_N^n+k,\dots,\nu_N^{k+1}+k)}(x_{k+1},\dots,x_m,q^{-m},\dots,q^{1-N})}
  {s_{(\nu_N^n+k,\dots,\nu_N^{k+1}+k)}(q^{-k},\dots,q^{1-N})}
 \\ \times
  \frac{\prod_{i=1,\dots,k}\prod_{j=k+1,\dots,m}
  (q^{1-i}-q^{1-j})}{\prod_{i=1,\dots,k}\prod_{j=k+1,\dots,m} (x_i-x_j)}.
  \end{multline*}
  We have $Est(Q_1') \le Est(Q_1) \le \varepsilon$.
 \item When we use Lemma \ref{lemma_factorization} the second time, we get a term $Q^{\lambda,\mu}$
 for each pair $(\lambda,\mu)$. Thus, the total impact is
 $$\sum_{\lambda,\mu} Q^{\lambda,\mu} u_\mu
 c_{\lambda}^N.$$
  Since by Lemma \ref{lemma_factorization} we have uniform bounds for $Est(Q^{\lambda,\mu})$
  (here we use the fact that $\Lambda$ is finite) and
  $\sum_{\lambda,\mu} |u_\mu c_{\lambda}^N|\le 1$, thus, the contribution of these terms also
  tends to zero as $n\to\infty$.
\end{enumerate}
 Summing up, we have
$$
 Est(Q_2)\le \varepsilon +R,
$$

where $R\to 0$ as $\nu^n\to\nu$ (uniformly in $N$). Consequently,
$$
 Est\Bigl(Q_2\prod_{1\le i<j\le m} (x_i-x_j)\Bigr)\le const_m(\varepsilon +R).
$$

Now let $H$ be a (signed) measure on $\mathbb{GT}_m$ with Schur--generating function $Q_2$. Using
Lemma \ref{lemma_total_variation_estimate} we conclude that total variation norm of $H$ can be
bounded by
$$
 \Vert H \Vert \le const_m (\varepsilon +R).
$$

Since $\varepsilon$ is arbitrary and $R\to 0$ as $n\to\infty$, the influence of $H$ and, thus, the
influence of $Q_2$ in \eqref{eq_x14} is negligible.

We have proved that the Schur generating function of the projection to the last $k$ coordinates of
measure on $\mathbb{GT}_m$ with Schur generating function $S_N^{\nu^n}(x_1,\dots,x_m)$, tends as
$n\to\infty$ to the function given by \eqref{eq_x2}. Now sending $N$ to infinity we see that the
Schur generating function of the projection to the last $k$ coordinates of measure $\mathcal
E^{\nu_n}_m$ uniformly tends to the Schur generating function of $\mathcal E^{\nu}_m$. Next, we
use  Proposition 4.1 of \cite{Gor} which yields that weak convergence of measures is equivalent to
uniform convergence of their Schur generating functions. We conclude that  the projection to the
last $k$ coordinates of measure $\mathcal E^{\nu^n}_m$ weakly tends to $\mathcal E^{\nu}_m$. On
the other hand, all other coordinates of the signature distributed according to $\mathcal
E^{\nu^n}_m$ tend to infinity as $n\to\infty$. It follows that $\mathcal E^{\nu^n}_m \to \mathcal
E^{\nu}_m$.

}

\medskip

For $m\le k$, Lemma \ref{lemma_factorization} yields

\begin{multline*}
  S_N^{\nu^n}(x_1,\dots,x_m)=
\dfrac{s_{(\nu_{N}^n,\dots,\nu_1^n)}(x_1,\dots,x_m,q^{-m},\dots,q^{1-N})}{s_{(\nu_{N}^n,\dots,\nu_1^n)}(1,\dots,q^{1-N})}
 \\=\dfrac{s_{(\nu_{k}^n,\dots,\nu_1^n)}(x_1,\dots,x_m,q^{-m},\dots,q^{1-k})}{s_{(\nu_{k}^n,\dots,\nu_1^n)}(1,\dots,q^{1-k})}
  \frac{\prod_{i=1,\dots,m}\prod_{j=k+1,\dots,N}
  (q^{1-i}-q^{1-j})}{\prod_{i=1,\dots,m}\prod_{j=k+1,\dots,N} (x_i-q^{1-j})}  +Q
\end{multline*}

and
\begin{multline*}
 \lim_{n\to\infty} {\mathcal S}(x_1,\dots,x_m; {\mathcal E}^{\nu^n}_{m})= \lim_{n\to\infty}
\lim_{N\to\infty}S_N^{\nu^n}(x_1,\dots,x_m)=
\\
=\dfrac{s_{(\nu_{k}^n,\dots,\nu_1^n)}(x_1,\dots,x_m,q^{-m},\dots,q^{1-k})}{s_{(\nu_{k}^n,\dots,\nu_1^n)}(1,\dots,q^{1-k})}
  \frac{\prod_{i=1}^{m}\prod_{j=k+1}^{\infty}
  (q^{1-i}-q^{1-j})}{\prod_{i=1}^{m}\prod_{j=k+1}^{\infty}(x_i-q^{1-j})}\\= {\mathcal
  S}(x_1,\dots,x_m; {\mathcal E}^{\nu}_{m}).
\end{multline*}
By Proposition 4.1 of \cite{Gor} we conclude that $\mathcal E^{\nu^n}_m \to \mathcal
E^{\nu}_m$.
\end{proof}

\begin{lemma}
\label{lemma_convergence_on_the_boundary_case2} Let $\nu^n$ be a sequence of points of
$\overline{\mathcal N}\setminus {\mathcal N}$ converging to $\nu\in \overline{\mathcal N}\setminus {\mathcal N}$.
 Then $\mathcal E^{\nu^n}$ weakly converges to $\mathcal E^\nu$.
\end{lemma}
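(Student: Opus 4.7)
The plan is to deduce this lemma from Lemma \ref{lemma_convergence_on_the_boundary_case1} via a diagonal approximation, replacing each boundary point $\nu^n \in \overline{\mathcal N}\setminus \mathcal N$ by points of $\mathcal N$. The key preliminary observation is that the weak topology on $\plim \mathcal M_p(\overline{\mathbb{GT}_N})$ is metrizable: each $\overline{\mathbb{GT}_N}$ is a countable locally compact Polish space under coordinate-wise convergence (its defining topology for the weak convergence discussed in Section \ref{subsection_commuting_op_constructions}), so $\mathcal M_p(\overline{\mathbb{GT}_N})$ with the weak topology is metrizable by standard results, and a countable projective limit of metrizable spaces is metrizable too. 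I would fix a metric $\rho$ inducing this topology, together with a metric $d$ on $\overline{\mathcal N}$ compatible with the topology defined in Section \ref{Section_Infinite-dimensional}.

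Next, for each $n$ I would explicitly approximate $\nu^n$ from $\mathcal N$. Since $\nu^n \in \mathbb{GT}_{k_n}$ for some $k_n \ge 0$, i.e.\ $\nu^n = (\nu^n_1,\dots,\nu^n_{k_n},+\infty,+\infty,\dots)$, set
$$
\tilde\nu^{n,j} := (\nu^n_1,\dots,\nu^n_{k_n},\,j,\,j+1,\,j+2,\,\dots) \in \mathcal N.
$$
Checking directly against the defining neighborhood base $\{B_{\eta,k,\ell}\}$ on $\overline{\mathcal N}$ gives $\tilde\nu^{n,j} \to \nu^n$ as $j\to\infty$. Lemma \ref{lemma_convergence_on_the_boundary_case1} then yields $\mathcal E^{\tilde\nu^{n,j}} \to \mathcal E^{\nu^n}$ weakly as $j\to\infty$ for each fixed $n$.

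Finally I would perform a standard diagonal selection: for each $n$, pick $j(n)$ large enough that both
$$
\rho\bigl(\mathcal E^{\tilde\nu^{n,j(n)}},\mathcal E^{\nu^n}\bigr) < \tfrac{1}{n}, \qquad d\bigl(\tilde\nu^{n,j(n)},\nu^n\bigr) < \tfrac{1}{n}.
$$
Since by hypothesis $\nu^n \to \nu$, the triangle inequality for $d$ gives $\tilde\nu^{n,j(n)} \to \nu$ in $\overline{\mathcal N}$. As the entire sequence $\{\tilde\nu^{n,j(n)}\}_{n\ge 1}$ lies in $\mathcal N$, Lemma \ref{lemma_convergence_on_the_boundary_case1} applies a second time and yields $\mathcal E^{\tilde\nu^{n,j(n)}} \to \mathcal E^\nu$ weakly; combined with the first $\rho$-estimate, the triangle inequality for $\rho$ then forces $\mathcal E^{\nu^n} \to \mathcal E^\nu$, as required. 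The only delicate step is the metrizability assertion in the first paragraph (which requires being careful about the topology one is placing on $\overline{\mathbb{GT}_N}$); apart from that, the argument is essentially formal, with all genuine analytic content already encapsulated in Lemma \ref{lemma_convergence_on_the_boundary_case1}.
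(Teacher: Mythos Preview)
Your argument is correct, and it takes a genuinely different route from the paper. The paper simply says the proof is similar to that of Lemma~\ref{lemma_convergence_on_the_boundary_case1} and omits it; the intended argument is to rerun the analytic machinery (Schur generating functions, the factorization Lemma~\ref{lemma_factorization}, and Lemma~\ref{lemma_total_variation_estimate}) directly for sequences in $\overline{\mathcal N}\setminus\mathcal N$, with only minor notational changes. Your approach instead bootstraps from Lemma~\ref{lemma_convergence_on_the_boundary_case1} via a diagonal approximation from $\mathcal N$, using only the soft fact that the weak topology on $\plim \mathcal M_p(\overline{\mathbb{GT}_N})$ is metrizable.

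What your approach buys is economy: none of the analytic estimates need to be repeated, and the argument would apply verbatim in any setting where one already knows the analogue of Lemma~\ref{lemma_convergence_on_the_boundary_case1} and where $\mathcal N$ is dense in $\overline{\mathcal N}$. The paper's (implicit) direct approach is more self-contained and avoids any discussion of metrizability of the projective limit, at the cost of redoing essentially the same Schur-function computation. Your caution about the metrizability step is well placed: it is indeed the only point requiring care, and your justification (each $\overline{\mathbb{GT}_N}$ is a countable Polish space under coordinate-wise convergence, hence $\mathcal M_p(\overline{\mathbb{GT}_N})$ is weakly metrizable, and a countable projective limit of metrizable spaces is metrizable as a subspace of the product) is sound.
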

The proof is similar to that of Lemma \ref{lemma_convergence_on_the_boundary_case1} and we
omit it.

\begin{lemma}
\label{lemma_convergence_on_the_boundary_case3} Let $\nu^n$ be a sequence of points of
$\overline{\mathcal N}\setminus\mathcal{N}$ converging to $\nu \in \mathcal{N}$. Then $\mathcal E^{\nu^n}$ weakly converges to $\mathcal E^\nu$.
\end{lemma}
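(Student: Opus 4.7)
The plan is to show that for every $m \ge 1$, $\mathcal{E}^{\nu^n}_m \to \mathcal{E}^{\nu}_m$ weakly on $\overline{\mathbb{GT}_m}$; by Proposition 4.1 of \cite{Gor}, this reduces to uniform convergence on the torus $T_m$ of the corresponding Schur generating functions. Since $\nu \in \mathcal{N}$ has all finite coordinates and $\nu^n \in \overline{\mathcal{N}}\setminus\mathcal{N}$ converges coordinate-wise, writing $\nu^n \in \mathbb{GT}_{k_n}$ gives $k_n \to \infty$, and the largest index $L_n$ with $\nu^n_i = \nu_i$ for all $i \le L_n$ satisfies $L_n \to \infty$. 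For $n$ with $k_n > m$, both $\mathcal{E}^{\nu^n}_m$ and $\mathcal{E}^{\nu}_m$ are supported on $\mathbb{GT}_m$.

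For such $n$, Theorem \ref{theorem_finite_dim_boundary} yields
$$\mathcal{S}(x; \mathcal{E}^{\nu^n}_m) = \frac{s_{\nu^n}(x_1,\dots,x_m, q^{-m},\dots,q^{1-k_n})}{s_{\nu^n}(1,q^{-1},\dots,q^{1-k_n})} \prod_{j=k_n+1}^{\infty} G_j^{k_n}(x_1,\dots,x_m, q^{-m},\dots,q^{1-k_n}),$$
and an elementary simplification (the factors for indices $>m$ cancel) rewrites the $G$-product as $\prod_{j>k_n}\prod_{i=1}^m \frac{1-q^{j-i}}{1-q^{j-1}x_i}$. I would then apply Lemma \ref{lemma_factorization} to the Schur ratio with splitting parameter $k = L_n$ (for $n$ large enough that $L_n > m$), which isolates the first $L_n$ coordinates of $\nu^n$; these agree with those of $\nu$ by construction. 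A direct check shows that the ``upper'' Schur factor in the factorization reduces to $1$ (its numerator and denominator coincide), and the Vandermonde cross-factor merges with the $G$-product into a single product starting at $j = L_n + 1$. This produces the exact decomposition
$$\mathcal{S}(x; \mathcal{E}^{\nu^n}_m) = S_{L_n}^{\nu}(x) \cdot \prod_{j=L_n+1}^{\infty}\prod_{i=1}^m \frac{1-q^{j-i}}{1-q^{j-1}x_i} + \tilde{E}_n,$$
where $S_{L_n}^{\nu}(x)$ denotes the $M = L_n$ pre-limit in the expansion $\mathcal{S}(x; \mathcal{E}^{\nu}_m) = \lim_{M\to\infty} S_M^{\nu}(x)$ from Theorem 1.3 of \cite{Gor}, and $\tilde{E}_n$ is the transform of the error term $Q$ from Lemma \ref{lemma_factorization}.

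As $n \to \infty$, the main term converges uniformly on $T_m$ to $\mathcal{S}(x; \mathcal{E}^{\nu}_m)$: the first factor by Theorem 1.3 of \cite{Gor} and the second by absolute convergence of the infinite product. It remains to bound $\tilde{E}_n$ uniformly, after which Lemma \ref{lemma_total_variation_estimate} yields total-variation (and hence weak) convergence. I expect this uniform error bound to be the main obstacle: Lemma \ref{lemma_factorization} as stated requires $k$ fixed and $\lambda_{N-k+1}$ bounded as $\lambda_{N-k} \to \infty$, whereas here $k = L_n$ and $\lambda_{N-k+1} = \nu_{L_n}$ both grow, and the only control on $\lambda_{N-k} = \nu^n_{L_n+1}$ is the monotonicity bound $\nu^n_{L_n+1} \ge \nu_{L_n}$. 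The plan is to unpack the proof of Lemma \ref{lemma_factorization}, which actually gives a quantitative bound of the form $\mathrm{const}(L_n, \nu_{L_n}) \cdot q^{\nu^n_{L_n+1} + L_n}$, and to verify that the exponential factor dominates the (at most polynomial) prefactor—an analysis with no direct counterpart in Lemmas \ref{lemma_convergence_on_the_boundary_case1}--\ref{lemma_convergence_on_the_boundary_case2}, where $\nu$ having finitely many finite coordinates made the splitting parameter $k$ naturally fixed and forced $\lambda_{N-k} \to \infty$.
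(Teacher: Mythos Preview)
Your approach is substantially more complicated than the paper's, and the detour through Lemma~\ref{lemma_factorization} creates precisely the gap you identify. The paper's proof avoids factorization entirely: once you have written
\[
\mathcal{S}(x_1,\dots,x_m;\mathcal{E}^{\nu^n}_m)=\frac{s_{\nu^n}(x_1,\dots,x_m,q^{-m},\dots,q^{1-k_n})}{s_{\nu^n}(1,\dots,q^{1-k_n})}\prod_{j=k_n+1}^{\infty}G_j^m,
\]
you observe that the $G$-product tends uniformly to $1$ as $k_n\to\infty$, and that the Schur ratio is \emph{already} of the pre-limit form $S_N^{\lambda}$ appearing in Theorem~1.3 of \cite{Gor}, with $N=k_n$ and $\lambda=\nu^n$ (viewed as a signature in $\mathbb{GT}_{k_n}$). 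Since $\nu^n\to\nu$ coordinate-wise and $k_n\to\infty$, Theorem~1.3 of \cite{Gor} applies directly to this sequence of Schur ratios and gives uniform convergence to $\mathcal{S}(x;\mathcal{E}^{\nu}_m)$. No splitting, no error term, no $L_n$.

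Your plan to split at $k=L_n$ via Lemma~\ref{lemma_factorization} is not wrong in spirit, but it forces you to control the remainder $Q$ in a regime the lemma was not designed for (growing $k$, growing $\lambda_{N-k+1}$), and you correctly flag this as an obstacle. Even if the quantitative bound you sketch can be extracted from the proof of Lemma~\ref{lemma_factorization}, the prefactor $\mathrm{const}(k,a[k])$ there is not obviously polynomial in $k$ and $a[k]$, so the claim that ``the exponential factor dominates'' would itself require a careful re-reading of that argument. All of this is bypassed by recognizing that the Schur ratio you start with is already a valid input to Theorem~1.3 of \cite{Gor}; you are effectively re-proving a special case of that theorem by hand.
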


\begin{proof}
Fix $m\ge 1$. Since $\nu^n\to\nu\in\mathcal{N}$, for large enough $n$ we have
$\nu^n\in\mathbb{GT}_{k_n}$ with $k_n\ge m$. Moreover, $k_n\to\infty$ as $n\to\infty$. Therefore,
$\mathcal E^{\nu^n}_m$ is supported on $\mathbb{GT}_m$, and its Schur generating function is
$$
 {\mathcal S}(x_1,\dots,x_m; {\mathcal
E}^{\nu^n}_{m})=\frac{s_{\nu^n}(x_1,\dots,x_m,q^{-m},\dots,q^{1-k_n})}{s_{\nu^n}(1,\dots,q^{1-k_n})}\prod_{q=k_n+1}^{\infty}
G_q^m.
$$
Therefore,
\begin{multline*}
\lim_{n\to\infty} {\mathcal S}(x_1,\dots,x_m; {\mathcal
E}^{\nu^n}_{m})=\lim_{n\to\infty}\frac{s_{\nu^n}(x_1,\dots,x_m,q^{-m},\dots,q^{1-k_n})}{s_{\nu^n}(1,\dots,q^{1-k_n})}\\=
{\mathcal S}(x_1,\dots,x_m; {\mathcal E}^{\nu}_{m}),
\end{multline*}
where the last equality follows from Theorem 1.3 of \cite{Gor}. Using Proposition 4.1 of
\cite{Gor} we conclude that $\mathcal E^{\nu^n}_m \to \mathcal E^{\nu}_m$.
\end{proof}

\begin{lemma}
\label{lemma_convergence_on_the_boundary_case4} Let $\nu^n$ be a sequence of points of
$\mathcal N\subset\overline{\mathcal N}$ converging to $\nu\in \mathcal N$.  Then
$\mathcal E^{\nu^n}$ weakly converges to $\mathcal E^\nu$.
\end{lemma}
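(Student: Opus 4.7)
Fix $m\ge 1$. The plan mirrors the proofs of Lemmas~\ref{lemma_convergence_on_the_boundary_case1}--\ref{lemma_convergence_on_the_boundary_case3}: establish uniform convergence of the Schur generating functions $\mathcal{S}(x;\mathcal{E}^{\nu^n}_m)\to\mathcal{S}(x;\mathcal{E}^\nu_m)$ on the torus $T_m$, conclude weak convergence $\mathcal{E}^{\nu^n}_m\to\mathcal{E}^\nu_m$ on $\mathbb{GT}_m$ via Proposition~4.1 of \cite{Gor}, and then lift to $\plim\mathcal{M}(\overline{\mathbb{GT}_N})$.

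By Theorem~1.3 of \cite{Gor}, for any $\mu\in\mathcal{N}$ one has $\mathcal{S}(x;\mathcal{E}^\mu_m)=\lim_{N\to\infty}S_N^\mu(x)$, where $S_N^\mu$ is defined as in the proof of Lemma~\ref{lemma_convergence_on_the_boundary_case1}. Given $\varepsilon>0$, choose $k\ge m$ large enough that $R(k,\nu_{k+1},\nu_k)<\varepsilon$ in the notation of Lemma~\ref{lemma_factorization}; this is possible since $\nu\in\mathcal{N}$ satisfies $\nu_j\to+\infty$ as $j\to\infty$. Because $\nu^n\to\nu$ coordinate-wise and the coordinates are integers, for $n$ sufficiently large $\nu^n_j=\nu_j$ for $1\le j\le k+1$, so the same bound $R(k,\nu^n_{k+1},\nu^n_k)<\varepsilon$ holds for $\nu^n$. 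Applying Lemma~\ref{lemma_factorization} to $S_N^{\nu^n}$ and $S_N^{\nu}$ and specializing $x_{m+1}=q^{-m},\ldots,x_k=q^{1-k}$, the leading factorized piece--which depends only on $\nu_1,\ldots,\nu_k$ and $x_1,\ldots,x_m$--coincides for $\nu^n$ and $\nu$; the remaining factor involving $\nu^n_{k+1},\ldots,\nu^n_N$ versus $\nu_{k+1},\ldots,\nu_N$ is compared by expanding each into a sum of normalized Schur polynomials with non-negative coefficients summing to $1$, exactly as in Step~3 of Lemma~\ref{lemma_convergence_on_the_boundary_case1}. Sending $N\to\infty$ followed by $n\to\infty$ gives uniform convergence of the Schur generating functions; Proposition~4.1 of \cite{Gor} then yields weak convergence of $\mathcal{E}^{\nu^n}_m$ to $\mathcal{E}^\nu_m$ on $\mathbb{GT}_m$.

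Finally, to promote this to weak convergence in $\plim\mathcal{M}(\overline{\mathbb{GT}_N})$, observe via Proposition~\ref{proposition_decomposition_of_ext_coherent_system} that both $\mathcal{E}^{\nu^n}$ and $\mathcal{E}^\nu$ are of class $\infty$: each component $\mathcal{E}^\nu_N$ is supported on $\mathbb{GT}_N=\mathbb{GT}_N^{(N)}\subset\overline{\mathbb{GT}_N}$, and likewise for $\mathcal{E}^{\nu^n}_N$. Since $\mathbb{GT}_N$ inherits the discrete subspace topology, every bounded continuous function on $\overline{\mathbb{GT}_N}$ restricts to a continuous function on $\mathbb{GT}_N$ whose integral against a $\mathbb{GT}_N$-supported measure is unchanged, so the desired convergence follows. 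The only genuinely technical step is the uniform (in $N$) comparison of the ``middle'' Schur factor when the coordinates $\nu^n_j$ for $j>k$ may differ from $\nu_j$ for arbitrarily large $n$; this is handled verbatim by the Schur-expansion argument from Lemma~\ref{lemma_convergence_on_the_boundary_case1}, and in fact Case~4 is the easiest of the four since both $\nu$ and all $\nu^n$ live in the same stratum $\mathcal{N}$, so no new conceptual ingredient is required.
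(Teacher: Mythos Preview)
Your argument contains a genuine error. You claim that ``$\nu\in\mathcal{N}$ satisfies $\nu_j\to+\infty$ as $j\to\infty$,'' but this is false: $\mathcal{N}$ consists of \emph{all} nondecreasing integer sequences, so for instance $\nu=(0,0,0,\dots)\in\mathcal{N}$ (the packed initial condition $\mathbf{0}$, which plays a central role in the paper) has all coordinates equal to $0$. Consequently you cannot in general choose $k$ so that $R(k,\nu_{k+1},\nu_k)<\varepsilon$: Lemma~\ref{lemma_factorization} makes the remainder small only when $\lambda_{N-k}$ (here $\nu_{k+1}$) tends to $+\infty$ with $\lambda_{N-k+1}$ (here $\nu_k$) bounded, and for bounded $\nu$ this never happens. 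The factorization machinery of Lemmas~\ref{lemma_convergence_on_the_boundary_case1}--\ref{lemma_convergence_on_the_boundary_case3} is tailored to the situation where at least one of the two sequences has coordinates escaping to $+\infty$; in Case~4 neither $\nu$ nor $\nu^n$ is required to have this property, so the scheme you propose does not apply.

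The paper does not attempt an independent proof here: it simply invokes Proposition~5.16 of \cite{Gor}, where the continuity of $\nu\mapsto\mathcal{E}^\nu$ on $\mathcal{N}$ is established by different means (working directly with the explicit description of the extreme measures rather than via the factorization lemma). If you want a self-contained argument, you would need a tool that controls the dependence of $\mathcal{S}(x_1,\dots,x_m;\mathcal{E}^\nu_m)$ on the tail $(\nu_{k+1},\nu_{k+2},\dots)$ without assuming those coordinates are large---something Lemma~\ref{lemma_factorization} does not provide.
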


\begin{proof}
See Proposition 5.16 of \cite{Gor}.
\end{proof}

\begin{lemma}
\label{lemma_bounded_first_coordinate}
 Let $\nu^n$ be a sequence of points of $\overline{\mathcal N}$ and  $\nu\in\overline{\mathcal
N}$. If $\mathcal E^{\nu^n}$ weakly converges to $\mathcal E^\nu$, then there exists $m\in \mathbb{Z}$ such that
$\nu^n_1\ge m$ for every $n\ge 1$.
\end{lemma}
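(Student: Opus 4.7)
The plan is to argue by contradiction: assume $\{\nu^n_1\}$ is not bounded below and pass to a subsequence with $\nu^n_1\to-\infty$, so each $\nu^n_1\in\mathbb Z$ and $\nu^n\ne\varnothing$. The hypothesised weak convergence $\mathcal E^{\nu^n}\to\mathcal E^\nu$ in $\plim\mathcal M(\overline{\mathbb{GT}_N})$ projects down to weak convergence of level-$1$ marginals $\mathcal E^{\nu^n}_1\to\mathcal E^\nu_1$ as probability measures on the locally compact Polish space $\overline{\mathbb{GT}_1}=\mathbb Z\cup\{+\infty\}$; I will derive a contradiction by showing $\mathcal E^\nu_1$ must vanish.

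The crux is the tail estimate
\begin{equation*}
\mathcal E^{\nu^n}_1([m,+\infty])\longrightarrow 0\quad (n\to\infty)\qquad\text{for every fixed }m\in\mathbb Z.
\end{equation*}
To establish it, introduce the reference point $\tilde\nu^n=(\nu^n_1,+\infty,+\infty,\dots)\in\mathbb{GT}_1\subset\overline{\mathcal N}$, for which $\nu^n\le\tilde\nu^n$ coordinate-wise. Theorem~\ref{theorem_finite_dim_boundary} with $k=1$ gives
\[
\mathcal S(x_1;\mathcal E^{\tilde\nu^n}_1)=x_1^{\nu^n_1}\prod_{\ell\ge 1}\frac{1-q^\ell}{1-q^\ell x_1},
\]
so $\mathcal E^{\tilde\nu^n}_1$ is the translate by $\nu^n_1$ of a fixed probability measure $\pi$ on $\mathbb Z_{\ge 0}$ whose tail $\pi([k,+\infty))$ decays to $0$ as $k\to+\infty$. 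Thus $\mathcal E^{\tilde\nu^n}_1([m,+\infty])=\pi([m-\nu^n_1,+\infty))\to 0$ as $\nu^n_1\to-\infty$, and it remains to prove the stochastic dominance $\mathcal E^{\nu^n}_1([m,+\infty])\le\mathcal E^{\tilde\nu^n}_1([m,+\infty])$. I would obtain this by constructing a coordinate-wise monotone coupling of the two random extended Gelfand-Tsetlin schemes, propagated downward through the interlacing-preserving links $\overline P_N^\downarrow$ and the auxiliary matrices $Q_N^k$ of Section~\ref{Section_Commutation_relations}; coordinate-wise monotonicity is preserved level by level by the explicit block structure of these links, in particular for the bottommost level-$1$ coordinate.

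Granted the tail estimate, the conclusion is immediate: the set $[m,+\infty]\subset\overline{\mathbb{GT}_1}$ is clopen (its complement is a discrete union of open singletons, and the point $+\infty$ has neighborhood basis $\{[\ell,+\infty]:\ell\ge m\}$ inherited from $\overline{\mathcal N}$), so $\mathbf 1_{[m,+\infty]}\in C_b(\overline{\mathbb{GT}_1})$, and weak convergence yields $\mathcal E^\nu_1([m,+\infty])=\lim_n\mathcal E^{\nu^n}_1([m,+\infty])=0$ for every $m\in\mathbb Z$. Letting $m\to-\infty$ and using continuity of measure from below gives $\mathcal E^\nu_1(\overline{\mathbb{GT}_1})=0$, contradicting that $\mathcal E^\nu_1$ is a probability measure.

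The main obstacle is the stochastic monotonicity step. A direct coupling on infinite schemes requires some care in matching boundary data at ``level infinity''; I would handle this by coupling finite-level approximations and passing to the limit using the monotonicity of the links with respect to the partial order on extended signatures. As an alternative, one may try to verify the inequality directly at the level of Schur generating functions: since the stochastic dominance is equivalent to $\mathcal S(x_1;\mathcal E^{\nu^n}_1)\le\mathcal S(x_1;\mathcal E^{\tilde\nu^n}_1)$ for $x_1\ge 1$ in the common domain of convergence (by taking increasing bounded test functions $x_1^\lambda$), one may check this via the branching rule for Schur polynomials and positivity of the expansion coefficients.
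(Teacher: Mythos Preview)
Your strategy differs from the paper's, and the crux---the stochastic dominance $\mathcal E^{\nu^n}_1([m,+\infty])\le\mathcal E^{\tilde\nu^n}_1([m,+\infty])$---is not established. Of your two suggested routes, the second is based on a false equivalence: for $x_1>1$ the inequality $\mathcal S(x_1;\mathcal E^{\nu^n}_1)\le\mathcal S(x_1;\mathcal E^{\tilde\nu^n}_1)$ is an inequality of moment generating functions, which \emph{follows from} stochastic dominance but does not imply it (you would need the inequality for all bounded nondecreasing test functions, not just $\lambda\mapsto x_1^\lambda$). The coupling route is plausible but you have only asserted that monotonicity propagates through $\overline P_N^\downarrow$ and $Q_N^k$; given the block structure of the extended links (mixing signatures with different numbers of infinite coordinates), this requires a genuine argument you have not supplied.

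The paper avoids the comparison altogether by proving a uniform lower bound on the point mass at the leftmost support point: $\mathcal E^{\nu^n}_1(\{\nu^n_1\})\ge (q;q)_\infty^2$ for all $n$. For $\nu^n\in\mathcal N$ this is a lemma from \cite{Gor}; for $\nu^n\in\mathbb{GT}_k$ it follows by evaluating the constant term of $x_1^{-\nu^n_1}\mathcal S(x_1;\mathcal E^{\nu^n}_1)$ via the product formula \eqref{eq:princ_spec}. With $c=(q;q)_\infty^2$ in hand, one picks an integer $m_0$ with $\mathcal E^\nu_1((-\infty,m_0])<c/2$; weak convergence against the bounded continuous indicator $\mathbf 1_{(-\infty,m_0]}$ forces $\mathcal E^{\nu^n}_1((-\infty,m_0])<c$ for large $n$, while $\nu^n_1\le m_0$ would give $\mathcal E^{\nu^n}_1((-\infty,m_0])\ge\mathcal E^{\nu^n}_1(\{\nu^n_1\})\ge c$. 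This is shorter and sidesteps the monotone coupling entirely.
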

\begin{proof}
We start by proving that that there exists a constant $c>0$ such that $\mathcal
E^{\nu^n}_1(\{\nu^n_1\})>c$ for every $n\ge 1$. More exactly, one can take $c=(q;q)_\infty^2$.

For $\nu\in\mathcal N$ this was proved in Lemma 5.15 of
\cite{Gor}. If $\nu^n\in\mathbb{GT}_k$, then the support of $\mathcal E^{\nu_n}_1$
consists of numbers greater or equal then $\nu^n_1$. Therefore, $x^{-\nu^n_1}{\mathcal
S}(x;\mathcal E^{\nu^n}_1)$ is a power series (without negative powers of $x$) and $
 \mathcal E^{\nu^n}_1(\{\nu^n_1\}) $ equal the value of this series at $x=0$.

We have
$$
 {\mathcal S}(x;\mathcal
E^{\nu^n}_1)=\frac{s_{\nu^n}(x,q^{-1},\dots,q^{1-k})}{s_{\nu^n}(1,q^{-1},\dots,q^{1-k})}
\prod_{\ell=k+1}^{\infty} G^1_\ell.
$$
Observe that $s_{\mu+q}(x_1,\dots,x_k)=(x_1,\dots,x_k)^{q}s_\mu(x_1,\dots,x_k)$ and use this
equality for $\mu=\nu^n$, $q=\nu^n_1$. We obtain
$$
 {\mathcal S}(x;\mathcal
E^{\nu^n}_1)=x^{\nu^n_1}\frac{s_{\lambda}(x,q^{-1},\dots,q^{1-k})}{s_{\lambda}(1,q^{-1},\dots,q^{1-k})}
\prod_{\ell=k+1}^{\infty} G^1_\ell.
$$
where $\lambda=\nu^n-\nu^n_1$ is a signature with $\lambda_k=0$. Therefore
$$
 \mathcal E^{\nu^n}_1(\{\nu^n_1\})=
\frac{s_{\lambda}(0,q^{-1},\dots,q^{1-k})}{s_{\lambda}(1,q^{-1},\dots,q^{1-k})}
\prod_{\ell=k+1}^{\infty} G^1_\ell(0).
$$

Using \eqref{eq:princ_spec} we obtain
\begin{multline*}
 \frac{s_\lambda(q^{-1},\dots,q^{1-k})}{s_\lambda(1,q^{-1},\dots,q^{1-k})}=\dfrac{q^{-|\lambda|}\prod\limits_{1\le
 i<j \le k-1} \dfrac{1-q^{-\lambda_i+\lambda_j+i-j}}{1-q^{i-j}}}{\prod\limits_{1\le i<j \le k}
 \dfrac{1-q^{-\lambda_i+\lambda_j+i-j}}{1-q^{i-j}}}
 =\dfrac{q^{-|\lambda|}}{\prod\limits_{i=1}^{k-1}\dfrac{1-q^{-\lambda_i+i-k}}{1-q^{i-k}}}\\
 =\prod_{i=1}^{k-1}\dfrac{1-q^{k-i}}{1-q^{\lambda_i-i+k}}\ge\prod_{i=1}^{k-1}(1-q^{k-i})\ge\prod_{i=1}^{\infty}{(1-q^i)}.
\end{multline*}

Also
$$
 \prod_{\ell=k+1}^{\infty} G^1_\ell(0)=(1-q^k)(1-q^{k+1})\cdots.
$$
Hence,
$$
 \mathcal E^{\nu^n}_1(\{\nu^n_1\})\ge\prod_{i=1}^{\infty}{(1-q^i)} \prod_{i=k}^{\infty}(1-q^i).
$$

Now let $C_q(u)$ be a function on $\overline{\mathbb{GT}_1}=\mathbb Z\cup \{\infty\}$ that vanishes
at all points $> q$ and equals $1$ at all points $\le q$.
Choose $q$ so that
$$
 \sum_{u\in \overline{\mathbb{GT}_1}} C_q(u) \mathcal E^{\nu}_1(u) <c/2.
$$

Note that $C_q$ is a continuous function on $\overline{\mathbb{GT}_1}$. Thus,
$$
 \sum_{u\in \overline{\mathbb{GT}_1}} C_q(u) \mathcal E^{\nu}_1(u)=\lim_{n\to\infty} \sum_{u\in
\overline{\mathbb{GT}_1}} C_q(u) \mathcal E^{\nu^n}_1(u).
$$
Hence, for large enough $n$ we have
$$
 \sum_{u\in \overline{\mathbb{GT}_1}} C_q(u) \mathcal E^{\nu^n}_1(u)<c.
$$
But if $\nu^n_1<q$ then
$$
  \sum_{u\in \overline{\mathbb{GT}_1}} C_q(u) \mathcal E^{\nu^n}_1(u)\ge  \mathcal
E^{\nu^n}_1(\{\nu^n_1\})>c.
$$
This contradiction proves that $\nu^n_1 \ge q$.
\end{proof}

Now we are ready to give a proof of Theorem \ref{theorem_equivalence_of_topologies}.

\begin{proof}[Proof of Theorem \ref{theorem_equivalence_of_topologies}]

Let $\{\nu^n\}_{n\ge 1}\subset\overline{\mathcal N}$ and $\nu\in\overline{\mathcal N}$. Our goal
is to prove that $\lim_{n\to\infty}\nu^n=\nu$ if and only if $\mathcal E^{\nu^n}$ weakly converges
to $\mathcal E^\nu$ as $n\to\infty$.

First, suppose that $\nu^n\to\nu$. Without loss of generality we may assume that either for every
$n$ we have $\nu^n\in\mathcal N$, or for every $n$, $\nu^n\in\overline{\mathcal N}\setminus
\mathcal N$. Also either $\nu\in{\mathcal N}$ or $\nu\in\overline{\mathcal N}\setminus\mathcal N$.
Thus, we have four cases and they are covered by Lemmas
\ref{lemma_convergence_on_the_boundary_case1}-\ref{lemma_convergence_on_the_boundary_case4}.

Now suppose that $\mathcal E^{\nu^n}$ weakly converges to $\mathcal E^\nu$. By Lemma
\ref{lemma_bounded_first_coordinate}, there exists $m\in \mathbb{Z}$ such that $\nu^n\in D_m$ for
every $n\ge 1$, where
$$
 D_m=\{\nu\in \overline{\mathcal N}: \nu_1\ge m\}.
$$
Observe that the set $D_m$ is compact. Therefore, the sequence $\nu^n$ has a converging
subsequence $\nu^{n_h}\to\nu'$. But then $\mathcal E^{\nu^n}\to\mathcal E^{\nu'}$ and, thus,
$\nu=\nu'$. We see that $\{\nu^n\}$ is a sequence in a compact set such that all its converging
subsequences converge to $\nu$. This implies $\nu^n\to\nu$ as $n\to\infty$.
\end{proof}
\subsection{Feller Markov processes on the boundary}

 In this section we prove that for any admissible function $g(x)$,
$\overline{P_\infty}(u,A; g)$ is a Feller kernel on $\overline{\mathcal N}$. Moreover, we show
that a Markov process on $\overline{\mathcal N}$ with semigroup of transition probabilities
$\overline{P_\infty}(u,A; \exp(t(\gamma_+x+\gamma_-/x)))$ and arbitrary initial distribution is
Feller.

The section is organized as follows: First we prove in Proposition
\ref{proposition_Feller_kernel_N} that for any $N\ge 0$ and any admissible $g(x)$,
$\overline{P_N}(\lambda\to\mu;g)$ is a Feller kernel on $\overline{\mathbb{GT}_N}$. As a
corollary, we show in Theorem \ref{theorem_Feller_kernel} that $\overline{P_\infty}(u,A;g)$ is a
Feller kernel on $\overline{\mathcal N}$. Finally, we prove (Proposition
\ref{proposition_Feller_process_N} and Theorem \ref{theorem_Feller_process}) that Markov processes
with semigroups of transition probabilities $\overline{P_N}(\mu\to\lambda;
\exp(t(\gamma_+x+\gamma_-/x)))$ and $\overline{P_\infty}(u,A; \exp(t(\gamma_+x+\gamma_-/x)))$ are
Feller.

\begin{proposition}
 \label{proposition_Feller_kernel_N} For any admissible function $g(x)$,
$\overline{P_N}(\mu\to\lambda;g)$ is a Feller kernel on $\overline{\mathbb{GT}_N}$.
\end{proposition}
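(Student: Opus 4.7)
The plan is to show both parts of the Feller property separately: (i) $P^{*}f$ is continuous on $\overline{\mathbb{GT}_N}$ for every $f\in C_0(\overline{\mathbb{GT}_N})$, and (ii) $P^{*}f$ vanishes at infinity. The key structural fact to exploit is that $\overline{P_N}(\,\cdot\,;g)$ is block-diagonal with respect to the stratification $\overline{\mathbb{GT}_N}=\bigcup_{k=0}^N \mathbb{GT}_N^{(k)}$, acting on each block as the stochastic matrix $P_k(\,\cdot\,;g)$ of length-$k$ signatures. Thus the only subtle issue is what happens at ``boundary'' points between strata.

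For continuity, note that each point of the top stratum $\mathbb{GT}_N^{(N)}$ is isolated, so $P^{*}f$ is automatically continuous there. The delicate case is a point $\lambda\in\mathbb{GT}_N^{(k)}$ with $k<N$, approached by a sequence $\lambda^n\in\mathbb{GT}_N^{(k')}$ with $k'>k$, in which the largest $k'-k$ finite coordinates of $\lambda^n$ tend to $+\infty$ while the remaining $k$ finite coordinates stabilize to the coordinates of $\lambda$. I would prove the convergence $(P^{*}f)(\lambda^n)\to(P^{*}f)(\lambda)$ by a decoupling argument: substitute the explicit formula \eqref{eq_trans_prob_section2} for $P_{k'}(\lambda^n\to\mu;g)$ and apply Lemma \ref{lemma_factorization} to the Schur ratio $s_\mu(\xi)/s_{\lambda^n}(\xi)$. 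The main term factorizes into a product involving the top $k'-k$ coordinates of $\mu$ and the bottom $k$ coordinates, showing that the measure $P_{k'}(\lambda^n,\,\cdot\,)$ weakly pushes forward, under the embedding into $\overline{\mathbb{GT}_N}$, to a measure supported on $\mathbb{GT}_N^{(k)}$ which equals $P_k(\lambda,\,\cdot\,)$. The remainder $Q$ in Lemma \ref{lemma_factorization} is bounded in a way that shrinks as $\lambda^n_{N-k}\to\infty$, and combining this with Lemma \ref{lemma_total_variation_estimate} one controls its contribution in total variation, hence against the bounded continuous test function $f$.

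For vanishing at infinity, I would identify the compact sets of $\overline{\mathbb{GT}_N}$ as those contained in $K_a:=\{\lambda\in\overline{\mathbb{GT}_N}:\text{every finite coordinate of }\lambda\text{ is }\ge a\}$, so that $f\in C_0$ means that for any $\varepsilon>0$ one has $|f|<\varepsilon$ outside some $K_a$. Then $|(P^{*}f)(\lambda)|\le\varepsilon+\|f\|_\infty\cdot \overline{P_N}(\lambda,K_a)$, and it suffices to bound $\overline{P_N}(\lambda,K_a)$ as the smallest finite coordinate $\lambda_N$ (or $\lambda_{N-k+1}$ within the stratum $\mathbb{GT}_N^{(k)}$) tends to $-\infty$. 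For each of the elementary admissible factors listed in $\mathfrak g(N,\xi)$ the explicit jump distributions from Remark~1 after Proposition~\ref{Proposition_transition_prob_explicit} show that the increment $\mu_i-\lambda_i$ has exponential or faster tails; for a general product $g$ the increments are sums of such independent contributions, which inherit good tails. This gives a uniform bound on the probability of jumping from $\lambda$ (whose smallest coordinate is very negative) into $K_a$, and since $\varepsilon$ was arbitrary we conclude $(P^{*}f)(\lambda)\to 0$.

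I expect the main obstacle to be the decoupling argument in the continuity step: the heuristic that ``escaping coordinates stay escaped'' has to be turned into a total-variation statement about the pushforward of $P_{k'}(\lambda^n,\,\cdot\,)$ onto $\overline{\mathbb{GT}_N}$, and a priori one must rule out mass that falls back into the stratum $\mathbb{GT}_N^{(k)}$ from above. The estimate on $Q$ in Lemma~\ref{lemma_factorization} is precisely tailored to this, but carrying the estimate through the determinantal formula for $P_{k'}$ and passing from a bound on the Schur generating function to a bound on the measure (via Lemma~\ref{lemma_total_variation_estimate}) is where the technical work concentrates; the tail-estimate part of the argument for vanishing at infinity is by comparison routine once the elementary cases are written out.
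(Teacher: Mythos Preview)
Your plan is essentially the paper's own approach: exploit the block-diagonal structure across strata, use exponential tail bounds on the jump distributions for the ``vanishing at infinity'' part, and prove continuity at stratum boundaries via a decoupling/factorization argument showing that the escaping coordinates decouple from the remaining $k$ coordinates, with the latter converging to $P_k(\lambda,\cdot)$. The paper proves the tail bound as a separate lemma (Lemma~\ref{lemma_exponential_estimate}) and, for the continuity step, combines the approximate factorization of the determinant $\det[c_{\mu_i-i-\lambda^n_j+j}]$ (off-diagonal blocks die because of the decay of $c_k$) with Lemma~\ref{lemma_factorization} for the Schur ratio to obtain a product form analogous to what you describe.

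Two differences worth noting. First, the paper simplifies by reducing to \emph{elementary} admissible $g$ (since $P_N(g_1g_2)=P_N(g_1)P_N(g_2)$ and Feller kernels compose) and to a specific dense family of test functions $a_\lambda$, $b_{\lambda,k}$ whose supports are tailored to the stratification; this lets one read off the desired limits by direct summation instead of proving weak convergence of the full measure. Second, the paper does not invoke Lemma~\ref{lemma_total_variation_estimate} here; after the factorization it sums the ``escaping'' factor to $1$ explicitly and absorbs the error into an $o(1)$. Your route through Lemma~\ref{lemma_total_variation_estimate} would also work (since $Est\bigl(\prod_i g(x_i)/g(\xi_i)\bigr)=1$ for admissible $g$), but be aware that Lemma~\ref{lemma_factorization} alone only handles the Schur ratio; the factorization of the determinantal part is a separate step driven by the decay of the $c_k$'s, which you acknowledge but should make explicit.
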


First, we prove a technical lemma.

\begin{lemma}
\label{lemma_exponential_estimate} For any elementary admissible function $g(x)$, the transition
probabilities $P_N(\mu\to\lambda ;g(x))$ admit an exponential tail estimate: There exist positive
constants $a_1$ and $a_2$ such that
$$
 P_N(\mu\to\lambda ;g(x)) < a_1 \exp\bigl( - a_2 \max_{1\le i\le N} |\lambda_i-\mu_i|).
$$
\end{lemma}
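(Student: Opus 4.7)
The plan is to argue by case analysis through the (very short) list of elementary admissible functions. In the setting of this section, namely $\xi_i=q^{1-i}$, the set $\mathfrak{g}(\infty,\xi)$ consists only of $x^{\pm 1}$, $1+\beta x^{\pm 1}$, $e^{\gamma x^{\pm 1}}$, and $(1-\alpha x^{-1})^{-1}$ with $0<\alpha<1$ (the factor $(1-\alpha x)^{-1}$ requires $\alpha<q^{N-1}$ for all $N$ and so does not survive in $\mathfrak{g}(\infty,\xi)$). For $g=x^{\pm 1}$ and $g=1+\beta x^{\pm 1}$, Remark~1 (and its obvious analogue for $g=1+\beta x^{-1}$) shows that $P_N(\lambda\to\mu;g)$ is supported on pairs with $|\mu_i-\lambda_i|\le 1$, so the asserted bound holds with any $a_2>0$. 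Thus only $g(x)=(1-\alpha x^{-1})^{-1}$ and $g(x)=e^{\gamma x^{\pm 1}}$ require real work, and by the evident symmetry $x\leftrightarrow x^{-1}$ it is enough to treat $g=(1-\alpha x^{-1})^{-1}$ and $g=e^{\gamma x}$.

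For $g=(1-\alpha x^{-1})^{-1}$ the variant of Remark~1 with $\lambda,\mu$ interchanged gives $\det[c_{\mu_i-i-\lambda_j+j}]=\alpha^{|\lambda|-|\mu|}$ on the interlacing locus $\lambda_i\ge\mu_i\ge\lambda_{i+1}$, and $0$ elsewhere. Using the principal specialization \eqref{eq:princ_spec} I rewrite
$$
\frac{s_\mu(\xi_1,\dots,\xi_N)}{s_\lambda(\xi_1,\dots,\xi_N)}=q^{-\sum_{i=1}^{N-1}(N-i)(\mu_i-\lambda_i)}\prod_{i<j}\frac{1-q^{\mu_i-i-\mu_j+j}}{1-q^{\lambda_i-i-\lambda_j+j}}.
$$
On the support $\mu_i\le\lambda_i$, so the $q$-power is $\le 1$; and the Vandermonde-like product is uniformly bounded by a constant depending only on $N$ and $q$ because each numerator is in $(0,1)$ and each denominator is bounded away from $0$ by $1-q$ raised to a power depending on $N$. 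Combining these bounds with Proposition~\ref{Proposition_transition_prob_explicit} and using $|\lambda|-|\mu|=\sum_i(\lambda_i-\mu_i)\ge \max_i|\lambda_i-\mu_i|$ yields the estimate with $a_2=\log(1/\alpha)>0$.

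For $g=e^{\gamma x}$ the coefficients are $c_k=\gamma^k/k!\mathbf{1}_{k\ge 0}$, and the Karlin--McGregor interpretation (or a direct look at the determinant) shows the support lies in $\{\mu:\mu_i\ge\lambda_i\ \forall i\}$. The permanent bound $|\det[A_{ij}]|\le\operatorname{per}[|A_{ij}|]$, combined with the identity $\sum_i(\mu_i-i-\lambda_{\sigma(i)}+\sigma(i))=|\mu|-|\lambda|=:S$ (independent of $\sigma$) and the multinomial estimate $\prod_i 1/k_i!\le N^S/S!$ whenever $\sum k_i=S$, gives
$$
\bigl|\det[c_{\mu_i-i-\lambda_j+j}]\bigr|\le N!\,\frac{(N\gamma)^S}{S!}.
$$
As in the geometric case, \eqref{eq:princ_spec} bounds $s_\mu(\xi)/s_\lambda(\xi)$ above by $q^{-(N-1)S}\cdot C_{N,q}$. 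Thus $P_N(\lambda\to\mu;g)\le C\,\tilde\gamma^{S}/S!$ for $\tilde\gamma=N\gamma q^{1-N}$, and by Stirling this is dominated by $e^{-S}$ as soon as $S$ exceeds a constant depending on $N,\gamma,q$; for smaller $S$ the trivial estimate $P_N\le 1$ suffices. Since $S\ge M:=\max_i|\mu_i-\lambda_i|$, this gives the required super-exponential (in particular exponential) bound.

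The only delicate point is to make sure that the Schur normalization ratio $s_\mu(\xi)/s_\lambda(\xi)$ does not neutralize the decay of the determinant. The geometric progression assumption $\xi_i=q^{1-i}$ is exactly what is needed: it makes \eqref{eq:princ_spec} available and shows the ratio grows at worst like $q^{-(N-1)S}$ (polynomial in the allowed range), which is beaten by the geometric factor $\alpha^S$ in the $(1-\alpha x^{-1})^{-1}$ case and by the super-exponential factorial decay $1/S!$ in the exponential cases. All other elements of the proof (the combinatorial identities about the determinant, the permanent bound, and the interlacing support description) are standard and do not rely on any property beyond what is already stated in Sections \ref{sc:walks}--\ref{Section_Infinite-dimensional}.
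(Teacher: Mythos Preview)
Your argument is correct and follows the same case-by-case skeleton as the paper's proof: dispose of $x^{\pm1}$ and $1+\beta x^{\pm1}$ by the bounded-step observation, and for the remaining cases control the Schur ratio via the principal specialization \eqref{eq:princ_spec} and then bound the determinant. (There is a harmless $\lambda\leftrightarrow\mu$ swap in your write-up relative to the lemma's $P_N(\mu\to\lambda)$, but the estimate is symmetric in $\lambda,\mu$.)

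The one genuine technical difference is how the determinant is handled. In the geometric case you invoke the closed product formula $\alpha^{|\lambda|-|\mu|}$ directly; the paper instead expands over permutations and notes that each term contains a factor $c_k$ with $|k|\ge M$. In the exponential case you use the permanent bound together with the multinomial estimate $\prod_i 1/k_i!\le N^S/S!$ on the total displacement $S=\sum_i|\lambda_i-\mu_i|$, obtaining $P_N\le C\,\tilde\gamma^S/S!$ and then invoking $S\ge M$; the paper instead bounds $s_\lambda/s_\mu$ by $q^{-N^2M}$ and argues that in every permutation term one factor $c_{\lambda_i-i-\mu_{\sigma(i)}+\sigma(i)}$ has index of size $\ge M$, whose factorial decay beats any fixed exponential in $M$. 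Your route gives slightly sharper explicit constants (e.g.\ $a_2=\log(1/\alpha)$ in the geometric case) and avoids having to argue that the remaining $N-1$ factors in each permutation term stay uniformly bounded; the paper's route is a bit quicker once one accepts that last point.
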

\begin{proof}
 Proposition \ref{Proposition_transition_prob_explicit} gives
$$
P_N(\mu\to\lambda ;g(x))=\left(\prod_{i=1}^N \frac{1}{g(q^{1-i})}\right)
  \det_{i,j=1,\dots,N}\biggl[c_{\lambda_i-i-\mu_j+j}\biggr]
  \frac{s_\lambda(1,\dots,q^{1-N})}{s_\mu(1,\dots,q^{1-N})},
$$
where
$$
 g(x)=\sum_{k\in\mathbb{Z}} c_k x^k.
$$

 If $g(x)=(1+\beta x^{\pm 1})$, then $P_N(\mu\to\lambda ;g(x))=0$ as soon as
$|\lambda_i-\mu_i|>1$ for any $i$, and we are done.

For the remaining two cases note that \eqref{eq:princ_spec} implies
$$
\frac{s_\lambda(1,\dots,q^{1-N})}{s_\mu(1,\dots,q^{1-N})}< const_N\cdot
q^{\sum_i(\lambda_i-\mu_i)(i-N)}< const_N\cdot q^{-N^2 \max_i|\lambda_i-\mu_i|}.
$$

It follows that

\begin{multline*}
 \left| \det_{i,j=1,\dots,N}\biggl[c_{\lambda_i-i-\mu_j+j}\biggr] \frac{s_\lambda(1,\dots,q^{1-N})}{s_\mu(1,\dots,q^{1-N})} \right|
\\ < const_N  \sum_{\sigma\in S(N)} q^{-N^2 \max_i|\lambda_i-\mu_i|} \prod_{i=1}^N c_{\lambda_i-i-\mu_{\sigma(i)} +
\sigma(i)}.
\end{multline*}

If $g(x)=\exp(\gamma x^{\pm 1})$  then $c_k r^k \to 0$ for any $r>0$ as $k\to\infty$.
Note that $\max_i |\lambda_i-\mu_i|=m$ implies that for any permutation $\sigma$ there exists $i$
such that $|\lambda_i-\mu_{\sigma(i)}| \ge m$. For this $i$ the product $$q^{-N^2
\max_i|\lambda_i-\mu_i|} c_{\lambda_i-i-\mu_{\sigma(i)} + \sigma(i)}$$ is  exponentially
small (in $m$). Therefore, each term
$$
 q^{-N^2 \max_i|\lambda_i-\mu_i|} \prod_{i=1}^N c_{\lambda_i-i-\mu_{\sigma(i)} + \sigma(i)}
$$
 tends to zero exponentially fast as $\max_i |\lambda_i-\mu_i|\to \infty$ and we are done.

Finally, if $g(x)=(1-\alpha x^{-1})^{-1}$ then $P_N(\mu\to\lambda ;g(x))=0$ unless $\mu_i\ge
\lambda_i$ for $1\le i\le N$. In the latter case,
$$\frac{s_\lambda(1,\dots,q^{1-N})}{s_\mu(1,\dots,q^{1-N})}<const.
$$
Expanding the determinant
$$
 \det_{i,j=1,\dots,N}\biggl[c_{\lambda_i-i-\mu_j^n+j}\biggr]= \sum_{\sigma} \prod_{i=1}^N c_{\lambda_i-i-\mu_{\sigma(i)} + \sigma(i)},
$$
by the same argument as above we see that each term in the sum tends to zero exponentially
fast as $\max_i |\lambda_i-\mu_i|\to \infty$. Therefore,
$$
\left(\prod_{i=1}^N \frac{1}{g(q^{1-i})}\right)
  \det_{i,j=1,\dots,N}\biggl[c_{\lambda_i-i-\mu_j+j}\biggr]
  \frac{s_\lambda(1,\dots,q^{1-N})}{s_\mu(1,\dots,q^{1-N})}
$$
tends to zero exponentially fast.
\end{proof}

\begin{proof}[Proof of Proposition \ref{proposition_Feller_kernel_N}]

Let $h\in C_0(\overline{\mathbb{GT}_N})$; we want to check that $P^*_{N}(g)(h)\in
C_0(\overline{\mathbb{GT}_N})$. By the definition of admissible functions, it suffices to check
this property for elementary admissible functions $g(x)=(1+\beta x^{\pm 1})$, $g(x)=\exp(\gamma
x^{\pm 1})$ and $g(x)=(1-\alpha x^{-1})^{-1}$. Moreover, since $C_0(\overline{\mathbb{GT}_N})$ is
closed and $P^*_{N}(g)$ is a contraction, it is enough to check this property on a set of
functions $h$ whose linear span is dense.

We choose the following system of functions ($\lambda\in\mathbb{GT}_N$, $k\ge 1$):
$$
 a_{\lambda}(\mu)=\begin{cases}1,\quad \mu=\lambda,\\ 0,\text{ otherwise}; \end{cases}
$$
$$
 b_{\lambda,k}(\mu)=\begin{cases} 1,\quad \mu_1\ge\lambda_1,\dots,\mu_k\ge \lambda_k,\,
\mu_{k+1}=\lambda_{k+1},\dots,\mu_N= \lambda_N,\\ 0,\quad\text{ otherwise}.\end{cases}
$$

Let us start from $h=a_{\lambda}(\mu)$. We have
$$
 \Bigl(\overline{P^*_{N}}(g)(h)\Bigr) (\mu) = \overline{P_N}(\mu\to\lambda;g).
$$

By definition, $\overline{P_N}(\mu\to\lambda;g)=0$ if $\mu\in\overline{\mathbb{GT}_N}\setminus
\mathbb{GT}_N$. Note that for a sequence $\mu^n$ of elements of $\mathbb{GT}_N$, $\mu^n \to
\infty$ (in topology of $\overline{\mathbb{GT}_N}$) means that $\mu_N^n\to-\infty$, while $\mu^n
\to \mu\in {\overline{\mathbb{GT}_N}}\setminus {\mathbb{GT}_N} $ implies $\mu_1^n \to +\infty$.
Thus, to show that $\overline{P^*_{N}}(g)(h)\in C_0(\overline{\mathbb{GT}_N})$ we should prove
that if $\mu^n$ is a sequence of elements of $\mathbb{GT}_N$ such that
 either $\mu_N^n\to-\infty$ or $\mu_1^n\to +\infty$ as $n\to\infty$, then
$\overline{P_N}(\mu^n\to\lambda;g)={P_N}(\mu^n\to\lambda;g)\to 0$ as $n\to\infty$. But if
$\mu_N^n\to-\infty$  then $|\mu_N^n-\lambda_N|\to\infty$ and we may use Lemma
\ref{lemma_exponential_estimate}. If $\mu_1^n\to +\infty$, then $|\mu_1^n-\lambda_1|\to\infty$ and
Lemma \ref{lemma_exponential_estimate} also provides the required estimate.

\smallskip

Now let $h=b_{\lambda,k}(\mu)$. The fact that $\Bigl(\overline{P^*_{N}}(g)(h)\Bigr) (\mu)\to 0$ as
$\mu\to\infty$ (in other words, as $\mu_N\to -\infty$) again follows from Lemma
\ref{lemma_exponential_estimate}. However, we need an additional argument to prove that the
function $\overline{P^*_{N}}(g)(h)$ is continuous on $\overline{\mathbb{GT}_N}$.

Let us prove that if  $\mu^n$ is a sequence of elements of $\overline{\mathbb{GT}_N}$ converging
to $\mu$, then $\Bigl(\overline{P^*_{N}}(g)(h)\Bigr) (\mu^n)\to \Bigl(\overline{P^*_{N}}(g)(h)\Bigr) (\mu)$.
One readily sees that there are two principal cases (the result for all other cases is
a simple corollary of the results in one of these two cases):
\begin{itemize}
\item $\mu^n\in \mathbb{GT}_N\subset\overline{\mathbb{GT}_N}$ and $\mu\in
\mathbb{GT}_N\subset\overline{\mathbb{GT}_N}$;
\item $\mu^n\in \mathbb{GT}_N\subset\overline{\mathbb{GT}_N}$ and $\mu\in
\overline{\mathbb{GT}_N}\setminus \mathbb{GT}_N$, i.e.\ $\mu_1=\mu_2=\dots=\mu_m=\infty$ while
$\mu_{m+1}<\infty$ for some $1\le m\le N$.
\end{itemize}
In the former case the sequence $\mu^n$ stabilizes, i.e.\ $\mu^n=\mu$ for large enough $n$,
therefore, $\Bigl(\overline{P^*_{N}}(g)(h)\Bigr) (\mu^n)= \Bigl(\overline{P^*_{N}}(g)(h)\Bigr) (\mu)$ for large enough
$n$. In the rest of the proof we concentrate on the latter case. From the definition of
convergence in $\overline{\mathbb{GT}_N}$ we conclude that $\mu^n_i\to +\infty$ for $i=1,\dots m$,
and for $n>n_0$ we have $\mu^n_i=\mu_i$ for $i=m+1,\dots,N$. Without loss of generality
we may assume $n_0=0$.

If $m>k$ then by Lemma \ref{lemma_exponential_estimate} $$\Bigl(\overline{P^*_{N}}(g)(h)\Bigr)
(\mu^n)\to 0 = \Bigl(\overline{P^*_{N}}(g)(h)\Bigr) (\mu).$$ Now suppose that $m=k$. We have:

\begin{equation}
\label{eq_x16}
 \Bigl(\overline{P^*_{N}}(g)(h)\Bigr) (\mu^n)=\sum_{\nu\in B_{\lambda,k}} P_N(\mu^n\to\nu;g),
\end{equation}
where $B_{\lambda,k}=\{u\in \mathbb{GT}_N: b_{\lambda,k}(u)=1\}$.

Note that in $\mu^n$ the first $k$ coordinates are large while the last $N-k$ coordinates are
fixed, and choose an integral sequence $r^n$ such that $r^n\to +\infty$ and $\mu^n_k-r^n\to\infty$.
Let
$$
 \mathcal A^n=\{\nu\in \mathbb{GT}_N \mid \nu_1 \ge r^n,\dots, \nu_k \ge r^n\}.
$$
 For every $n$ we divide the set $B_{\lambda,k}$ in the sum \eqref{eq_x16} into two disjoint
parts
$$
 \sum_{\nu\in B_{\lambda,k}} P_N(\mu^n\to\nu;g) =\sum_{\nu\in \mathcal B^n_{\lambda,k}} P_N(\mu^n\to\nu;g)+\sum_{\nu\in \mathbb B^n_{\lambda,k}} P_N(\mu^n\to\nu;g)
$$
where
$$
 \mathcal B^n_{\lambda,k} = B_{\lambda,k} \cap \mathcal A^n, \quad \mathbb B^n_{\lambda,k}=  B_{\lambda,k} \setminus \mathcal
 A^n.
$$

Observe that, as follows from Lemma \ref{lemma_exponential_estimate},
$$
\sum_{\nu\in \mathbb B_{\lambda,k}^n}
 P_N(\mu^n\to\nu;g) \to 0.
$$

In the remaining sum
\begin{equation} \label{eq_x9}
 \sum_{\nu\in \mathcal B_{\lambda,k}^n} P_N(\mu^n\to\nu;g),
\end{equation}
we use Lemma \ref{lemma_factorization} for every term, i.e.\ for
$$
P_N(\mu^n\to\nu ;g(x))=\left(\prod_{i=1}^N \frac{1}{g(q^{1-i})}\right)
  \det_{i,j=1,\dots,N}\biggl[c_{\nu_i-i-\mu^n_j+j}\biggr]
  \frac{s_{\nu}(1,\dots,q^{1-N})}{s_{\mu^n}(1,\dots,q^{1-N})}.
$$
As we have shown in the proof of Lemma \ref{lemma_exponential_estimate} for all
elementary admissible functions, the coefficients $c_k$ decay rapidly as $k$ grows. Therefore, the
determinant of the matrix $\biggl[c_{\mu^n_i-i-\nu_j+j}\biggr]$ factorizes. We conclude that
\begin{align}
\label{eq_x8}\nonumber& P_N(\mu^n\to\nu ;g(x))=(1+o(1))\\\nonumber& \times \left(\prod_{i=1}^{N-k}
\frac{1}{g(q^{1-i})}\right)
  \det_{i,j=k+1,\dots,N}\biggl[c_{\nu_i -i-\mu^n_j+j}\biggr]
  \frac{s_{(\nu_{k+1},\dots,\nu_N)}(1,\dots,q^{1-N+k})}{s_{(\mu^n_{k+1},\dots,\mu^n_N)}(1,\dots,q^{1-N+k})}
\\& \times \left(\prod_{i={N-k+1}}^{N} \frac{1}{g(q^{1-i})}\right)
  \det_{i,j=1,\dots,k}\biggl[c_{\nu_i -i-\mu^n_j+j}\biggr]
  \frac{s_{(\nu_{1}+N-k,\dots,\nu_k+N-k)}(q^{k-N},\dots,q^{1-N})}{s_{(\mu^n_1+N-k,\dots,\mu^n_k+N-k)}(q^{k-N},\dots,q^{1-N})},
\end{align}
where the term $o(1)$ uniformly tends to zero as $n\to\infty$.

Note that the second line in \eqref{eq_x8} is the transition probability {
$$
 P_N\Bigl(\mu\to(\infty,\dots,\infty,\lambda_{k+1},\dots,\lambda_N); g(x)\Bigr) = \bigl(\overline{P^*_{N}}(g)(h)\bigr) (\mu).
$$
 }

 As for the third line, observe that Lemma \ref{Lemma_decomposition_of_multiplied_s} implies
\begin{multline*}
  \sum_{(\nu_1,\dots,\nu_k)\in\mathbb{GT}_k} \left(\prod_{i={N-k+1}}^{N}
  \frac{1}{g(q^{1-i})}\right) \det_{i,j=1,\dots,k}\biggl[c_{\nu_i -i-\mu^n_j+j}\biggr] \\ \times
  \frac{s_{(\nu_{1}+N-k,\dots,\nu_N+N-k)}(q^{k-N},\dots,q^{1-N})}{s_{(\mu^n_1+N-k,\dots,\mu^n_k+N-k)}(q^{k-N},\dots,q^{1-N})}=1.
\end{multline*}

Arguing as in Lemma \ref{lemma_exponential_estimate}, we conclude that we may replace the
summation set by $\mathcal B_{\lambda,k}^n$, i.e.~as $n\to\infty$,
\begin{multline*}
  \sum_{\nu\in\mathcal B_{\lambda,k}^n} \left(\prod_{i={N-k+1}}^{N}
  \frac{1}{g(q^{1-i})}\right) \det_{i,j=1,\dots,k}\biggl[c_{\nu_i -i-\mu^n_j+j}\biggr] \\ \times
  \frac{s_{(\nu_{1}+N-k,\dots,\nu_N+N-k)}(q^{k-N},\dots,q^{1-N})}{s_{(\mu^n_1+N-k,\dots,\mu^n_k+N-k)}(q^{k-N},\dots,q^{1-N})}\to 1.
\end{multline*}

Summing up, we proved that
\begin{multline*}
 \bigl(\overline{P^*_{N}}(g)(h)\bigr) (\mu^n)= \sum_{\nu\in \mathcal B^n_{\lambda,k}} P_N(\mu^n\to\nu;g)+\sum_{\nu\in \mathbb B^n_{\lambda,k}} P_N(\mu^n\to\nu;g)
 \\ =  \bigl(\overline{P^*_{N}}(g)(h)\bigr) (\mu) (1+o(1)) +o(1) \to \bigl(\overline{P^*_{N}}(g)(h)\bigr) (\mu).
\end{multline*}

It remains to consider the case $m<k$. The statement in this case essentially follows from the
case $m=k$. Indeed, decompose $B_{\lambda,k}$ into a disjoint union
$$
 B_{\lambda,k}=\bigcup_{\theta \in \Theta}  B_{\theta,m},
$$
where the union is taken over the set $\Theta\subset \mathbb{GT}_N$ consisting of all $\theta$ such
that $\theta_i=\lambda_i$ for $i>k$, $\theta_i\ge\lambda_i$ for $m<i\le k$ and
$\theta_i=\max(\lambda_i,\theta_{m+1})$ for $i\le m$. Now choose large enough $s$ and denote
$$
 \Theta^s=\Theta \cap \{\nu\in\mathbb{GT}_N \mid \nu_1 <s\}.
$$
  We write
$$
 \bigl(\overline{P^*_{N}}(g)(b_{\lambda,k}\bigr)(\mu^n)=
 \sum_{\theta \in \Theta^s} \bigl(\overline{P^*_{N}}(g)(b_{\theta,m})\bigr) (\mu^n)  +
 \sum_{\theta \in \Theta \setminus \Theta^s}  \bigl(\overline{P^*_{N}}(g)(b_{\theta,m})\bigr) (\mu^n),
$$
Note that the set $\Theta^s$ is finite. Therefore, using the already proven case $m=k$ we conclude
that
$$
\sum_{\theta \in \Theta^s} \bigl(\overline{P^*_{N}}(g)(b_{\theta,m})\bigr) (\mu^n) \to \sum_{\theta \in
\Theta^s} \bigl(\overline{P^*_{N}}(g)(b_{\theta,m})\bigr) (\mu).
$$
To finish the proof it remains to note that as follows from Lemma
\ref{lemma_exponential_estimate},
$$\sum_{\theta \in \Theta \setminus \Theta^s}  \bigl(\overline{P^*_{N}}(g)(b_{\theta,m})\bigr) (\mu^n)$$
uniformly (in $n$) tends to zero as $s\to\infty$.
\end{proof}

Let $P^*_{\infty\to N}$ be a contraction operator from $\mathcal B(\overline{\mathbb{GT}_N})$ to
$\mathcal B(\overline{\mathcal N})$ given by:
$$
 (P^*_{\infty\to N} f) (\nu)=\sum_{v\in\overline{\mathbb{GT}_N}} \mathcal E^{\nu}_N(v) f(v),
$$
where $\mathcal E^{\nu}$ is the extreme coherent system corresponding to the measure $\delta^\nu$
on $\overline{\mathcal N}$.
\begin{lemma}
\label{lemma_link_is_Feller}
 $P^*_{\infty\to N}$ maps $C_0(\overline{\mathbb{GT}_N})$ to $C_0(\overline{\mathcal N})$.
\end{lemma}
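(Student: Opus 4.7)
The plan is to split the claim into continuity on $\overline{\mathcal N}$ and vanishing at infinity.

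Continuity of $\nu\mapsto (P^*_{\infty\to N}f)(\nu)=\int f\,d\mathcal E^\nu_N$ at every $\nu\in\overline{\mathcal N}$ follows immediately from Theorem~\ref{theorem_equivalence_of_topologies}: the map $\nu\mapsto\mathcal E^\nu$ is a homeomorphism into $\plim\mathcal M(\overline{\mathbb{GT}_N})$ equipped with the weak topology, so $\nu^n\to\nu$ forces $\int g\,d\mathcal E^{\nu^n}_N\to\int g\,d\mathcal E^\nu_N$ for every bounded continuous $g$ and in particular for $g=f\in C_0(\overline{\mathbb{GT}_N})$.

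For the vanishing, Lemma~\ref{lemma_bounded_first_coordinate} shows that compact subsets of $\overline{\mathcal N}$ are contained in $\{\nu:\nu_1\ge m\}$, so escape to infinity means $\nu_1\to-\infty$. Since $P^*_{\infty\to N}$ is a contraction and compactly supported continuous functions are dense in $C_0$, it suffices to verify $\mathcal E^\nu_N(K)\to 0$ for an arbitrary fixed compact $K\subset\overline{\mathbb{GT}_N}$. Any such $K$ sits inside $\{\lambda:\lambda_N\ge m\}$ for some $m$, and under $\nu_1\to-\infty$ the empty signature $\varnothing$ is excluded, so $\mathcal E^\nu_N$ places no mass on the all-$\infty$ point. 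The a.s.\ monotonicity $\lambda^{(1)}_1\ge\lambda^{(2)}_2\ge\dots\ge\lambda^{(N)}_N$ along a realization of the coherent chain (forced by the interlacing baked into the links $\overline P^\downarrow_j$) gives the sandwich
\[
\mathcal E^\nu_N(\{\lambda:\lambda_N\ge m\})\le\mathcal E^\nu_1(\{k:k\ge m\}),
\]
which reduces the problem to the one-particle level.

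The core estimate is that $\varepsilon_L:=\sup_{\nu'\in\overline{\mathcal N}_0}\mathcal E^{\nu'}_1(\{k\ge L\})\to 0$ as $L\to\infty$, where $\overline{\mathcal N}_0:=\{\nu\in\overline{\mathcal N}:\nu_1=0\}$. The space $\overline{\mathcal N}_0$ is a closed subspace of the compact product $(\mathbb Z_{\ge 0}\cup\{+\infty\})^{\mathbb N}$, hence compact. For each $L$, the indicator of $\{k\ge L\}\cup\{+\infty\}$ is bounded continuous on $\overline{\mathbb{GT}_1}$, so Theorem~\ref{theorem_equivalence_of_topologies} implies that $f_L(\nu'):=\mathcal E^{\nu'}_1(\{k\ge L\})$ is continuous on $\overline{\mathcal N}_0$; moreover $(f_L)$ is monotonically decreasing in $L$ and tends pointwise to $0$, because each $\mathcal E^{\nu'}_1$ is a probability measure on $\mathbb Z_{\ge 0}$. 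Dini's theorem on the compact $\overline{\mathcal N}_0$ then upgrades the pointwise convergence to uniform convergence. The shift equivariance $\mathcal E^\nu_1(\{k\ge m\})=\mathcal E^{\nu-\nu_1}_1(\{k\ge m-\nu_1\})$---immediate from the factorization $\mathcal S(x;\mathcal E^\nu_1)=x^{\nu_1}\mathcal S(x;\mathcal E^{\nu-\nu_1}_1)$ of the Schur generating functions (Theorem~\ref{theorem_finite_dim_boundary} for $\nu\in\mathbb{GT}_k$, and a limit of that formula for $\nu\in\mathcal N$)---then yields $\mathcal E^\nu_1(\{k\ge m\})\le\varepsilon_{m-\nu_1}\to 0$ as $\nu_1\to-\infty$. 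The main obstacle is exactly this uniform-in-$\nu$ tail bound; the idea that makes it tractable is the reduction to the compact slice $\overline{\mathcal N}_0$, which turns a supremum statement into a pointwise one via Dini.
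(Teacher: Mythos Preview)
Your proof is correct. Both your argument and the paper's rely on the same two pillars: Theorem~\ref{theorem_equivalence_of_topologies} for continuity, and the shift equivariance $\mathcal E^{\nu-e}_N(\lambda-e)=\mathcal E^\nu_N(\lambda)$ together with compactness of the slice $\{\nu_1=0\}\subset\overline{\mathcal N}$ for the vanishing at infinity. The execution differs, though. The paper works with the concrete test functions $a_\lambda$ and $b_{\lambda,k}$ and argues by contradiction: if $\mathcal E^{\nu^n}_N(\lambda)$ fails to vanish, shift $\nu^n$ to the compact slice, extract a convergent subsequence, and use weak convergence to reach a contradiction. You instead (i) reduce from level $N$ to level $1$ via the monotonicity $\lambda^{(N)}_N\le\dots\le\lambda^{(1)}_1$ forced by the links, and then (ii) prove a direct uniform tail bound $\sup_{\nu'\in\overline{\mathcal N}_0}\mathcal E^{\nu'}_1([L,\infty])\to 0$ by Dini's theorem. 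Your route avoids the contradiction/subsequence step and packages the compactness use cleanly into a single Dini application; the reduction to level $1$ is an additional simplification not present in the paper. The paper's approach, on the other hand, stays at level $N$ throughout and needs no monotone-convergence machinery.
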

\begin{proof}
 Again it suffices to check the lemma on a set of continuous functions whose linear span is dense. We choose the
familiar system of functions
$$
 a_{\lambda}(\mu)=\begin{cases}1,\quad \mu=\lambda,\\ 0,\text{ otherwise,} \end{cases}
$$
$$
 b_{\lambda,k}(\mu)=\begin{cases} 1,& \mu_1\ge\lambda_1,\dots,\mu_k\ge\lambda_k,\,
\mu_{k+1}=\lambda_{k+1},\dots,\mu_N= \lambda_N,\\ 0,&\text{ otherwise},\end{cases}
$$
and their finite linear combinations. Theorem
\ref{theorem_equivalence_of_topologies} implies that $P^*_{\infty\to N}(a_\lambda)$ and $P^*_{\infty\to
N}(b_{\lambda,k})$ are continuous functions on $\overline{\mathcal N}$. It remains to check that they vanish at the infinity.
If $\nu^n$ is a sequence of elements of $\overline{\mathcal N}$ tending to infinity
(i.e.\ escaping from every compact set), then $\nu^n_1\to -\infty$. Thus, we should check that if
$\nu^n_1\to-\infty$ then
$$
 \bigl(P^*_{\infty\to N}(a_{\lambda})\bigr)(\nu^n)=\mathcal E^{\nu^n}_N(\lambda)\to 0.
$$
Assume the opposite. Then there exists a subsequence $\{n_\ell\}$ such that $\mathcal
E^{\nu^n}_N(\lambda)>c>0$ for $n=n_\ell$. Let $\psi^\ell=\nu^{n_\ell}-\nu^{n_\ell}_1$.
Then $\{\psi^{\ell}\}$ is a sequence of elements of a compact set. Hence, $\{\psi^{\ell}\}$ has a converging
subsequence. Without loss of generality assume that already $\psi^\ell$ is converging,
$\psi^{\ell}\to \psi$. Since, $\mathcal E^{\psi^{\ell}}_N$ is a probability measure on
$\mathbb{GT}_N$ and $\mathcal E^{\psi^{\ell}}_N\to\mathcal E^{\psi}_N$, we must have $\mathcal
E^{\psi^{\ell}}_N(\lambda-\nu^{n_\ell}_1) \to 0$.

Now observe the following property of measures $\mathcal E^\nu$ which was proved in
\cite[Proposition 5.12]{Gor}. For $e\in\mathbb{Z}$ and $\nu\in\mathcal N$ let $\nu-e$ be a
sequence with coordinates $(\nu-e)_i=\nu_i-e_i$. In the same way for $\lambda\in\mathbb{GT}_N$ set
$(\lambda-e)_i=\lambda_i-e$. Then we have  $\mathcal E^{\nu-e}_N(\lambda-e)=\mathcal
E^\nu_N(\lambda)$.

 We conclude that $\mathcal E^{\nu^{n_{\ell}}}(\lambda)=\mathcal
E^{\psi^{\ell}}_N(\lambda-\nu^{n_\ell}_1)\to 0$. Contradiction.

The argument for the functions $b_{\lambda,k}$ is similar and we omit it.
\end{proof}

\begin{theorem}
 \label{theorem_Feller_kernel} For an admissible $g(x)$, $\overline{P_{\infty}}(g)$ is a Feller
kernel on $\overline{\mathcal N}$.
\end{theorem}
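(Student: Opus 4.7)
The plan is to deduce the theorem as a direct application of the general Proposition \ref{proposition_feller} with $\Gamma_N = \overline{\mathbb{GT}_N}$, $\Lambda^{N+1}_N = \overline{P}^\downarrow_{N+1}$, and boundary $\mathcal{Q} = \overline{\mathcal{N}}$. First I would record that this identification of the boundary is legitimate: Theorem \ref{Theorem_extended_boundary_1} verifies the first three items of Definition \ref{definition_boundary}, while Theorem \ref{theorem_equivalence_of_topologies} supplies the fourth. The last proposition of Section \ref{Section_Commutation_relations} establishes the commutation $\overline{P}_N(g)\, \overline{P}^\downarrow_N = \overline{P}^\downarrow_N\, \overline{P}_{N+1}(g)$ required to define $\overline{P_\infty}(g)$ via the general construction of Section \ref{subsection_commuting_op_constructions}, and a moment's inspection shows that the resulting kernel on $\overline{\mathcal{N}}$ coincides with the one defined directly in Section \ref{Section_Infinite-dimensional}.

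It then remains to verify the three hypotheses of Proposition \ref{proposition_feller}. Hypothesis (1) is straightforward: $\overline{\mathbb{GT}_N}=\bigsqcup_{k=0}^N \mathbb{GT}_N^{(k)}$ is a countable set carrying a natural locally compact, second countable topology (in which a sequence in $\mathbb{GT}_N$ converges to $\mu\in\mathbb{GT}_N^{(k)}$, $k<N$, exactly when its first $N-k$ coordinates escape to $+\infty$ while the remaining ones stabilize), and local compactness of $\overline{\mathcal{N}}$ was recorded earlier in Section \ref{Section_Infinite-dimensional}. Hypothesis (2), that the link kernels $\Lambda^\infty_N(\nu,\cdot) = \mathcal{E}^\nu_N$ are Feller, is exactly Lemma \ref{lemma_link_is_Feller}. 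Hypothesis (3), that each $\overline{P}_N(g)$ is Feller on $\overline{\mathbb{GT}_N}$, is exactly Proposition \ref{proposition_Feller_kernel_N}.

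Given these, Proposition \ref{proposition_feller} immediately yields that $\overline{P_\infty}(g)$ is a Feller kernel on $\overline{\mathcal{N}}$. At this stage the proof is essentially a bookkeeping exercise, since all the substantive analytic work — the exponential tail bounds of Lemma \ref{lemma_exponential_estimate}, the factorization of Lemma \ref{lemma_factorization}, the identification of the boundary topology in Theorem \ref{theorem_equivalence_of_topologies}, and the delicate continuity argument at points of $\overline{\mathbb{GT}_N}\setminus\mathbb{GT}_N$ inside Proposition \ref{proposition_Feller_kernel_N} — has already been performed. The only nontrivial point that remains is to double-check the coincidence of the two definitions of $\overline{P_\infty}(g)$ (the ``direct'' one in Section \ref{Section_Infinite-dimensional} and the ``abstract'' one coming from the boundary construction), which amounts to comparing the block decomposition of $\overline{P}_N(g)$ with respect to the stratification $\overline{\mathbb{GT}_N}=\bigsqcup_k \mathbb{GT}_N^{(k)}$ to the class decomposition of coherent systems in Proposition \ref{proposition_decomposition_of_ext_coherent_system}; here the class-$\infty$ component matches the $\mathcal{N}$-part via Theorem \ref{theorem_qGT_boundary} and Theorem 1.1 of \cite{Gor}, and each finite class $k$ matches the corresponding $\mathbb{GT}_k$-part via Theorem \ref{theorem_finite_dim_boundary}. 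This compatibility is the only conceptual check required, and once it is in place the theorem follows.
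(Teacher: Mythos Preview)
Your proposal is correct and follows exactly the paper's approach: apply Proposition \ref{proposition_feller} with $\Gamma_N=\overline{\mathbb{GT}_N}$ and $\mathcal Q=\overline{\mathcal N}$, verifying hypothesis (1) from the definitions, hypothesis (2) via Lemma \ref{lemma_link_is_Feller}, and hypothesis (3) via Proposition \ref{proposition_Feller_kernel_N}. Your additional bookkeeping (the boundary identification via Theorems \ref{Theorem_extended_boundary_1} and \ref{theorem_equivalence_of_topologies}, and the compatibility of the two definitions of $\overline{P_\infty}(g)$) is more explicit than the paper's terse proof, but the argument is the same.
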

\begin{proof}
This is an application of Proposition \ref{proposition_feller}. Indeed, $\overline
{\mathbb{GT}_N}$ and $\overline {\mathcal N}$ are locally compact with countable bases by
definition. The property $2.$ of Proposition \ref{proposition_feller} is Lemma
\ref{lemma_link_is_Feller} and the property $3.$ is Proposition \ref{proposition_Feller_kernel_N}.
\end{proof}

\begin{proposition}
\label{proposition_Feller_process_N}
 A Markov process on $\overline{\mathbb{GT}_N}$ with semigroup of transition probabilities
$\overline{P_N}(\lambda\to\mu; exp(t(\gamma_+x+\gamma_-/x)))$ and arbitrary initial distribution
is Feller.
\end{proposition}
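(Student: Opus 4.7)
The plan is to verify the two defining properties of a Feller process for the semigroup $\{\overline{P_N}(g_t)\}_{t\ge 0}$ with $g_t(x)=\exp(t(\gamma_+x+\gamma_-/x))$: each kernel must preserve $C_0(\overline{\mathbb{GT}_N})$, and $t\mapsto \overline{P_N}(g_t)^*f$ must be sup-norm continuous at $t=0$ for every $f\in C_0(\overline{\mathbb{GT}_N})$. The first property is immediate from Proposition \ref{proposition_Feller_kernel_N}, since $g_t$ is admissible for every $t\ge 0$, so only strong continuity at $t=0$ requires work.

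For that, I would exploit the block decomposition \eqref{eq:split}: the kernel $\overline{P_N}(g_t)$ preserves each stratum $\mathbb{GT}_N^{(k)}\cong\mathbb{GT}_k$ and acts there as $P_k(g_t)$. Hence $C_0(\overline{\mathbb{GT}_N})$ is naturally a direct sum of the $C_0(\mathbb{GT}_N^{(k)})$, and it suffices to prove strong continuity at $t=0$ of $\{P_k(g_t)\}_{t\ge 0}$ on $C_0(\mathbb{GT}_k)$ for each $0\le k\le N$; the case $k=0$ is trivial. Because $P_k(g_t)^*$ is a contraction and the finitely supported functions are dense in $C_0(\mathbb{GT}_k)$, it is enough to handle indicators $f=\mathbf 1_{\{\lambda\}}$ with $\lambda\in\mathbb{GT}_k$, that is, to show
\[
 \sup_{\mu\in\mathbb{GT}_k}\bigl|P_k(\mu\to\lambda;g_t)-\delta_{\mu,\lambda}\bigr|\longrightarrow 0\quad\text{as } t\to 0+.
\]

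I would split this supremum into $1-P_k(\lambda\to\lambda;g_t)$ and $\sup_{\mu\neq\lambda}P_k(\mu\to\lambda;g_t)$. Using the explicit formula of Proposition \ref{Proposition_transition_prob_explicit} together with the expansions $c_0^t=1+O(t^2)$ and $c_j^t=O(t^{|j|})$ for $j\neq 0$ of the Laurent coefficients of $g_t$, the first quantity tends to $0$ straightforwardly. For the off-diagonal supremum, every term in the permutation expansion of $\det[c^t_{\lambda_i-i-\mu_j+j}]$ with $\lambda\neq\mu$ must contain at least one factor $c_j^t$ with $j\neq 0$, yielding an overall $O(t)$ prefactor. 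The principal difficulty, and the main obstacle in the proof, is making this bound \emph{uniform} in $\mu$, since the Schur ratio $s_\lambda/s_\mu$ appearing in $P_k$ can grow exponentially in the coordinate displacements. The resolution is to split on $|\lambda-\mu|$: for $\mu$ in any fixed bounded neighborhood of $\lambda$ the Schur ratio is bounded, giving the desired $O(t)$ estimate directly; for $\mu$ outside such a neighborhood one invokes Lemma \ref{lemma_exponential_estimate} applied to $g_t$, whose constants, tracked through the $O(t^{|j|})$ decay of $c_j^t$, provide an exponentially small bound that dominates the geometric growth of $s_\lambda/s_\mu$ once $t$ is sufficiently small. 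Combining the two regimes gives $\sup_{\mu\neq\lambda}P_k(\mu\to\lambda;g_t)=O(t)$, which together with the diagonal estimate yields the required strong continuity and, upon reassembling the blocks, completes the verification of the Feller property.
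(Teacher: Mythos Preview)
Your reduction step contains a gap: the claimed direct-sum decomposition $C_0(\overline{\mathbb{GT}_N})\cong\bigoplus_k C_0(\mathbb{GT}_N^{(k)})$ is false. The strata $\mathbb{GT}_N^{(k)}$ with $k<N$ are not open in $\overline{\mathbb{GT}_N}$ (a point with some infinite coordinates is a limit of points in $\mathbb{GT}_N^{(N)}$ with those coordinates large but finite), so the restriction of an $f\in C_0(\overline{\mathbb{GT}_N})$ to a stratum need not lie in $C_0$ of that stratum. Concretely, the function $b_{\lambda,k}$ from the proof of Proposition~\ref{proposition_Feller_kernel_N} belongs to $C_0(\overline{\mathbb{GT}_N})$, yet its restriction to the top stratum $\mathbb{GT}_N$ equals $1$ on an infinite set and is therefore not in $C_0(\mathbb{GT}_N)$. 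Hence indicators of single points do not span a dense subspace of $C_0(\overline{\mathbb{GT}_N})$, and your density reduction to $f=\mathbf 1_{\{\lambda\}}$ does not close.

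The paper's argument is much shorter and sidesteps this issue. Once each kernel is known to be Feller (Proposition~\ref{proposition_Feller_kernel_N}), the uniform convergence $g_t\to 1$ on the torus gives, directly from the definition $P_k(g_t)=\mathcal L_k^{-1}\widetilde Q^{g_t}_k\mathcal L_k$, that $\overline{P_N}(\mu\to\cdot\,;g_t)\to\delta_\mu$ for every fixed $\mu$; for a semigroup of Feller kernels on a locally compact space this pointwise continuity at $t=0$ is well known to imply strong continuity on $C_0$. Your explicit off-diagonal estimates are essentially correct on each stratum taken separately and could be salvaged by enlarging the test functions to include the $b_{\lambda,k}$ and redoing the analysis of Proposition~\ref{proposition_Feller_kernel_N} with $t\to 0$, but as written the proof is incomplete. (A smaller caveat: your appeal to Lemma~\ref{lemma_exponential_estimate} for the $t$-dependent family $g_t$ requires the constants $a_1,a_2$ to behave well as $t\to 0$, which is true but not what the lemma states.)
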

\begin{proof}
 The first property from the definition of a Feller process is contained in Proposition
\ref{proposition_Feller_kernel_N}. As for the second property, it immediately follows from the
fact that $\exp(t(\gamma_+x+\gamma_-/x)))\to 1$ as $t\to 0$ and definitions.
\end{proof}

Now Proposition \ref{proposition_Feller_process} yields
\begin{theorem}
\label{theorem_Feller_process}
 A Markov process on $\overline{\mathcal N}$ with semigroup of transition probabilities
$\overline{P_\infty}(u,A; \exp(t(\gamma_+x+\gamma_-/x)))$ and arbitrary initial distribution is
Feller.
\end{theorem}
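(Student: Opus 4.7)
The plan is to apply the general Proposition \ref{proposition_Feller_process} with $\Gamma_N=\overline{\mathbb{GT}_N}$, $\Lambda^{N+1}_N=\overline P^\downarrow_{N+1}$, boundary $\mathcal Q=\overline{\mathcal N}$, and the semigroup $\mathcal P_{t,N}=\overline P_N(\exp(t(\gamma_+x+\gamma_-/x)))$. All four hypotheses needed for that proposition have in fact already been verified in the preceding subsections, so the proof is mainly a matter of assembling the ingredients.

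First I would check the ``static'' hypotheses. Both $\overline{\mathbb{GT}_N}$ and $\overline{\mathcal N}$ are locally compact with countable bases (by construction of their topologies in Section \ref{Section_Infinite-dimensional} and Section \ref{Section_feller}, and using Theorem \ref{theorem_equivalence_of_topologies} to identify $\overline{\mathcal N}$ with the boundary). The commutation of $\overline P_N(\exp(t(\gamma_+x+\gamma_-/x)))$ with the extended links $\overline P^\downarrow_N$ was proved right after the definition of these objects, and the corresponding semigroup $\overline P_{t,\infty}$ on $\overline{\mathcal N}$ is the one induced by the construction of Section \ref{subsection_commuting_op_constructions}. The Feller property of the link $\Lambda^\infty_N(u,A)=\mathcal E^u_N(A)$ is exactly Lemma \ref{lemma_link_is_Feller}, and the Feller property of each $N$-dimensional process $X_N(t)$ is Proposition \ref{proposition_Feller_process_N}.

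Next, I would verify that the two conditions defining a Feller process hold for the limiting process. The first condition---that each $\overline P_{t,\infty}$ maps $C_0(\overline{\mathcal N})$ to itself---is an immediate consequence of Theorem \ref{theorem_Feller_kernel} applied to the admissible function $g(x)=\exp(t(\gamma_+x+\gamma_-/x))$ for every fixed $t\ge 0$. For the strong continuity at $t=0$, since the operators $\overline P^*_{t,\infty}$ are contractions in $C_0(\overline{\mathcal N})$, it suffices to check $\overline P^*_{t,\infty} h\to h$ as $t\to 0^+$ on the dense set of functions of the form $h=(\Lambda^\infty_N)^* f$ with $f\in C_0(\overline{\mathbb{GT}_N})$, which is the same dense subset exploited in the proof of Proposition \ref{proposition_feller}. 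For such $h$ one has $\overline P^*_{t,\infty}h=(\Lambda^\infty_N)^*\overline P^*_{t,N}f$, so the claim reduces to the strong continuity of $t\mapsto \overline P^*_{t,N}$ on $C_0(\overline{\mathbb{GT}_N})$, which is exactly the $N$-dimensional Feller property supplied by Proposition \ref{proposition_Feller_process_N}, together with the boundedness of $(\Lambda^\infty_N)^*$.

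Putting these observations together, Proposition \ref{proposition_Feller_process} applies and delivers the Feller property of the continuous-time Markov process on $\overline{\mathcal N}$ with semigroup $\overline P_\infty(\exp(t(\gamma_+x+\gamma_-/x)))$ and any initial distribution. I do not expect a real obstacle in this step---the serious technical work has already been absorbed into Theorem \ref{theorem_Feller_kernel} and Lemma \ref{lemma_link_is_Feller} (which in turn rest on Lemma \ref{lemma_exponential_estimate} and the factorization Lemma \ref{lemma_factorization}); the only mildly delicate point is reducing strong continuity at $t=0$ to a dense subclass, and this is handled by the same contraction-and-dense-subset trick used in the proof of Proposition \ref{proposition_feller}.
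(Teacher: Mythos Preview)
Your proposal is correct and follows exactly the paper's approach: the paper's proof is literally one line, ``Now Proposition \ref{proposition_Feller_process} yields,'' and you have assembled precisely the ingredients that proposition requires (local compactness, commutation, Lemma \ref{lemma_link_is_Feller} for the Feller link, and Proposition \ref{proposition_Feller_process_N} for the $N$-dimensional Feller property). Your second paragraph re-proving strong continuity at $t=0$ is already contained in the proof of Proposition \ref{proposition_Feller_process} itself, so it is redundant but harmless.
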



\section{PushASEP with particle-dependent jump rates}

\label{Section_pushASEP}

Fix parameters $\zeta_1,\dots,\zeta_N>0$, $a, b\ge 0$, and assume that at least one the numbers
$a$ and $b$ does not vanish.  Consider $N$ particles in $\mathbb Z$ located at different sites and
enumerated from left to right. The particle number $n$, $1\le n\le N$, has two exponential clocks
- the ``right clock'' of rate $a\zeta_n$ and the ``left clock'' of rate $b/\zeta_n$. When the
right clock of particle number $n$ rings, it checks whether the position to the right of it is
empty. If yes, then the particle jumps to the right by 1, otherwise it stays put. When the left clock of particle number $n$ rings, it jumps
to the left by 1 and pushes the (maybe empty) block of particles sitting next to it.

Note that if $\zeta_1=q^{1-N}$, $\zeta_2=q^{2-N}$, \dots, $\zeta_{N-1}=q^{-1}$, $\zeta_N=1$, and the
process is started from the initial configuration $1-N,2-N,\dots,-1,0$ then this dynamics describes the
evolution of $N$ leftmost particles of the process $\Pi(\mathcal Y^S_{\gamma_+,\gamma_-}(t))$
introduced in Section \ref{Subsection_first_coordinate}.

Let $P_t(x_1,\dots,x_N\mid y_1\dots, y_N)$ denote the transition probabilities of the above
process. The probabilities depend on $\zeta_1,\dots,\zeta_N,a,b$, but we omit these
dependencies from the notation. In this section we study the asymptotic behavior of particles as
time $t$ goes to infinity. In other words, we are interested in the asymptotics of
$P_t(x_1,\dots,x_N\mid y_1\dots, y_N)$ as $t\to\infty$.

One easily checks that if for some $r\in\mathbb{R}_{>0}$ and $1\le k\le N$ we have
$\zeta_k<r$ and $\xi_{k+1}>r$, \dots, $\zeta_N>r$, then at large times the first $k$ and the last
$N-k$ particles behave independently. Thus, it is enough to study the case $\xi_N \le
\min(\zeta_1,\dots,\zeta_{N-1})$. Moreover, without loss of generality we may also assume that
$\zeta_N=1$. Thus, it suffices to consider the situation when we have
$h\le N$ indices $n_1<n_2<\dots<n_h=N$ such that
$$
 \zeta_{n_1}=\zeta_{n_2}=\dots=\zeta_{n_{h}}=1,
$$
and $\zeta_k>1$ for $k$ not belonging to $\{n_1,\dots,n_{h}\}$. Set
$D=\{n_i\}_{i=1,\dots,h}$.

To state the result on asymptotic behavior we need to introduce a certain distribution from the
random matrix theory first. Let $M_n$ be a random $n\times n$ Hermitian matrix from the Gaussian
Unitary Ensemble, see e.g. \cite{Meh}, \cite{F}, \cite{AGZ} for the definition. {We use the
normalization for which diagonal matrix elements are real Gaussian random variables with variance
$1$.  Denote the eigenvalues of $M_n$ by $\lambda_1^n\le \lambda_2^n\le \dots\le \lambda_n^n$. Let
$M_k$ be the top left $k\times k$ submatrix of $M_n$, denote the eigenvalues of $M_k$ by
$\lambda_1^k\le\dots\le\lambda_k^k$. Finally, denote by $GUE_1^n$ the joint distribution of the
smallest eigenvalues of matrices $M_k$. In other words, it is the joint distribution of the vector
$(\lambda_1^n\le\lambda_1^{n-1}\le\dots\le\lambda_1^1)$.

Denote by ${\mathfrak G}_n(z)$, $n\in\mathbb Z$, the integral
$$
 {\mathfrak G}_n(z)=\frac 1{2\pi i} \int_{\Co_2} u^n \exp( u^2/2 + uz) du,
$$
where the contour of integration $\Co_2$ is shown in Figure \ref{Figure_Contours_3}.
\begin{figure}[h]
\begin{center}
\noindent{\scalebox{0.7}{\includegraphics{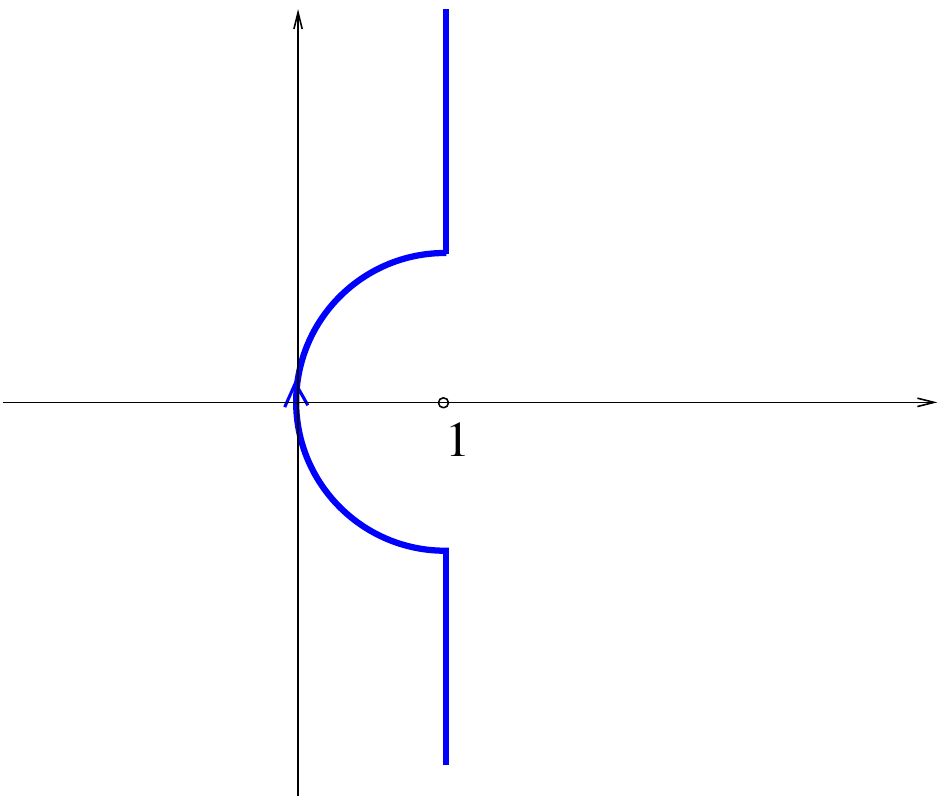}}} \caption{Contour of integration
$\Co_2$. \label{Figure_Contours_3} }
\end{center}
\end{figure}
Note that ${\mathfrak G}_0(z)$ is the density of the Gaussian distribution:
$$
 \frac 1{2\pi i} \int_{\Co_2} \exp( u^2/2 + uz) du = \frac{1}{\sqrt{2\pi}}\exp(-z^2/2).
$$
{
 More generally, for $n\ge 1$ we have:
$$
 {\mathfrak G}_n(z)=\left(\frac{\partial}{\partial z}\right)^n {\mathfrak G}_0(z) = (-1)^n
 \frac{1}{\sqrt{\pi}\, 2^{(n+1)/2}} H_n\left(\frac{z}{\sqrt 2}\right) \exp\left(-\frac{z^2}{2}\right),
$$
 where $H_n$ is the $n$th Hermite polynomial.
}

\begin{proposition}
\label{Proposition_GUE_distribution}
 The probability distribution $GUE_1^n$ of the smallest eigenvalues of the $GUE$ principal submatrices has density
$$
 \rho(y_1,\dots,y_n)=\det[{\mathfrak G}_{k-l}(y_l)]_{k,l=1,\dots,n},\quad y_1\le y_2\le\dots\le y_n.
$$
\end{proposition}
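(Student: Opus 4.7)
My plan is to derive $\rho$ directly from the classical formula for the joint density of the eigenvalues $\{\lambda_j^k: 1 \le j \le k \le n\}$ of all principal corners of $M_n$. With the normalization fixed in the statement (diagonal entries of $M_n$ having variance $1$), that joint density on the interlacing polytope $\lambda^1 \prec \lambda^2 \prec \cdots \prec \lambda^n$ is a constant times $\prod_{i=1}^n \mathfrak{G}_0(\lambda_i^n)$, with the constant absorbed into the overall normalization. The marginal $\rho(y_1,\dots,y_n)$ is then obtained by integrating out every $\lambda^k_j$ with $j \ge 2$, while keeping $\lambda_1^{n-l+1} = y_l$ fixed for $l = 1,\dots,n$.

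The engine of the computation is the identity $\partial_z \mathfrak{G}_m(z) = \mathfrak{G}_{m+1}(z)$, which is immediate from differentiating under the contour integral defining $\mathfrak{G}_m$. In antiderivative form it reads
$$\int_a^b \mathfrak{G}_m(x)\,dx = \mathfrak{G}_{m-1}(b) - \mathfrak{G}_{m-1}(a),$$
and for semi-infinite intervals the appropriate one-sided decay of $\mathfrak{G}_m$ at infinity (visible from its Hermite-function representation recorded just before the proposition) picks out a single endpoint. I will integrate out the non-smallest eigenvalues level by level starting from $k = n$. At each level $k$, interlacing with level $k-1$ constrains each $\lambda^k_j$ (for $j \ge 2$) to a bounded interval whose endpoints come from $\lambda^{k-1}$, except $\lambda^k_k$, which ranges over a semi-infinite interval; iterating the antiderivative identity then produces a polynomial expression in the $\mathfrak{G}_{-m}$'s evaluated at the retained coordinates $y_l$.

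The last step is to recognize the resulting expression as the Leibniz expansion of $\det[\mathfrak{G}_{k-l}(y_l)]_{k,l=1,\dots,n}$. I plan to organize this by induction on $n$, isolating the contribution of level $n$ and expanding the target determinant along its first row via the Desnanot--Jacobi identity so as to match the inductive hypothesis for $n-1$. An equivalent, more visual, way to streamline the same bookkeeping is to read off a system of non-intersecting paths whose single-step weights are the $\mathfrak{G}_{-m}$'s and to invoke the Lindström--Gessel--Viennot lemma. The main obstacle I anticipate is precisely this combinatorial identification: the $O(n^2)$ nested integrations produce a signed sum with many terms, and matching signs and indices against the Leibniz expansion of an $n\times n$ determinant requires either the inductive reorganization or the LGV interpretation above, rather than a direct term-by-term enumeration.
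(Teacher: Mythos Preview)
Your starting point is wrong: the joint density of the full corners array $\{\lambda_j^k\}$ on the interlacing polytope is \emph{not} a constant times $\prod_{i=1}^n \mathfrak{G}_0(\lambda_i^n)$. The GUE density of the top row is $c_n\,\Delta(\lambda^n)^2\prod_i e^{-(\lambda_i^n)^2/2}$, and since the conditional law of the lower rows given $\lambda^n$ is uniform on a polytope of volume proportional to $\Delta(\lambda^n)$, the joint density on the full array carries a single Vandermonde factor:
\[
c_n'\,\Delta(\lambda^n)\prod_{i=1}^n e^{-(\lambda_i^n)^2/2}\,\mathbf{1}[\lambda^1\prec\cdots\prec\lambda^n].
\]
Dropping $\Delta(\lambda^n)$ is not a harmless normalization issue: with a purely product weight your level-by-level integrations decouple and cannot produce the determinant $\det[\mathfrak{G}_{k-l}(y_l)]$. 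The Vandermonde is precisely what couples the rows and makes a determinantal answer possible. If you rewrite the correct density as (a constant times) $\det[\mathfrak{G}_{j-1}(\lambda_i^n)]_{i,j=1}^n$, which is legitimate because $\mathfrak{G}_m(z)=\partial_z^m\mathfrak{G}_0(z)$ is $e^{-z^2/2}$ times a degree-$m$ polynomial, then your antiderivative scheme together with row operations of Cauchy--Binet/LGV type can be pushed through; but you must also be careful with the boundary values $\mathfrak{G}_{-m}(+\infty)$ for small $m$, which are not all zero.

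For comparison, the paper takes a quite different route: it invokes the known determinantal correlation kernel $K(r,\xi;s,\eta)$ of the GUE-minors point process and observes that, by interlacing, the density of $GUE_1^n$ equals the $n$-point correlation function at $(1,y_n),(2,y_{n-1}),\dots,(n,y_1)$. After a change of variables in the integral representation of $K$, the kernel is seen to be a finite sum $\sum_{k=0}^{n-1}A(r,k)\,\mathfrak{G}_{s-k-1}(\eta)$, so the $n\times n$ correlation determinant factors as $\det A\cdot\det[\mathfrak{G}_{s-k}]$ with $A$ triangular and $\det A=(-1)^n$. This avoids any multivariate integration over the interlacing polytope. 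Your corrected direct-integration approach is more self-contained (it does not cite the minors kernel), while the paper's is shorter because the combinatorial work is already packaged into the kernel formula.
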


\begin{proof}
 We start from the formulas for the correlation functions of \emph{all} eigenvalues of GUE minors
found in {\cite{J_artic}, \cite{OR_birth},} \cite{JN}, \cite{JNe}.  Given a
$n\times n$ GUE matrix, one constructs a point process on $\mathbb N \times \mathbb R$ which has a
point $(m,y)$ if and only if $y$ is an eigenvalue of $m\times m$ top left submatrix. Let $\rho_n$
be the $n$th correlation function of this point process. Observe that the interlacing of the
eigenvalues of the nested submatrices guarantees that
$$
 \rho_n(1,y_n;2,y_{n-1};\dots ; n,y_1),\quad y_n\ge y_{n-1}\ge\dots\ge y_1,
$$
is precisely the density of the distribution $GUE_1^n$. Then {Theorem 2 in
\cite{OR_birth} and} Theorem 1.3 of \cite{JN} yield that the density of $GUE_1^n$ is
$$
 \det_{i,j=1\dots n}[ K(i,y_{n+1-i};j,y_{n+1-j})],
$$
where (we use a more convenient for us integral representation for the kernel, which can be found
after the formula $(6.17)$ in \cite{JN}; note that we use a different
normalization for GUE)
\begin{multline*}
 K(r,\xi; s,\eta)= \frac{\sqrt{2^{s-r}\exp(\frac{\eta^2-\xi^2}2)}}{2\sqrt2(\pi i)^2}\\ \times
\sum_{k=0}^{\infty} \int_{\Co_1} du \exp(\sqrt2\xi u-u^2)u^{k-r} \int_{\Co_2} dv
v^{s-k-1}\exp(v^2-\sqrt 2\eta v),
\end{multline*}
where $\Co_1$ is anticlockwise oriented circle around the origin  and $\Co_2$ is the line $\Re
v=2$ from $-i\infty$ to $+i\infty$. Changing variables $z=-\sqrt2 v$ and $w=-\sqrt 2 u$ we arrive
at
$$
(-1)^{s-r-1}\frac{\exp(\frac{\eta^2-\xi^2}2)}{(2\pi i)} \sum_{k=0}^{\infty} \int_{\Co_3} dw
\exp(-\xi w-w^2/2)w^{k-r} {\mathfrak G}_{s-k-1}(\eta),
$$
where $\Co_3$ is clockwise oriented circle around origin. Note that when $k\ge n\ge r$ the
integral $\int_{\Co_3} dw \exp(-\xi w-w^2/2)w^{k-r}$ vanishes. Thus, we may assume that the sum is
over $k=0,1,\dots,n-1$.

Hence, $K(r,\xi; s,\eta)$ is the matrix element of the product of two matrices and
$$
 \det_{i,j=1\dots n}[ K(i,y_{n+1-i};j,y_{n+1-j})]=(-1)^n\det_{r,k=1,\dots,n} [A(r,k)]
\det_{k,s=1,\dots,n} [{\mathfrak G}_{s-k}(y_{n+1-s})],
$$
where
$$
 A(r,k)=\frac{1}{2\pi i} \int_{\Co_3} dw \exp(y_{n+1-r} w-w^2/2)w^{k+1-r}.
$$
Again note that $A(r,k)=0$ unless $r>k$. Thus, the matrix $[A(r,k)]$ is triangular and
$$
 \det_{r,k=1,\dots,n} [A(r,k)]=\prod_{k=1}^n A(k,k)
$$
with
$$
A(k,k)=\frac{1}{2\pi i} \int_{\Co_3} \frac{dw}{w} \exp(y_{n+1-r} w-w^2/2)=-1.
$$
To finish the proof it remains to note that
$$
\det_{k,s=1,\dots,n}[{\mathfrak G}_{s-k}(y_{n+1-s})]=\det_{k,s=1,\dots,n} [{\mathfrak
G}_{k-s}(y_{s})].
$$
\end{proof}

{Now we are ready to state the main theorem of this section.}

\begin{theorem}\label{th:pushasep}
Fix parameters $\bigl(\{\zeta_i\}_{i=1}^n,a,b\bigr)$ as above and let  $y_1<\dots< y_N$ be arbitrary integers.
Denote by $(X_1(t),\dots,X_N(t))$ the random vector distributed as
$$
 P_t(x_1,\dots,x_N\mid y_1,\dots,y_N).
$$

The joint distribution of
\begin{align*}
 \biggl(&\frac{X_{n_1}(t)-(a-b)t}{\sqrt{(a+b)t}}, \frac{X_{n_2}(t)-(a-b)t}{\sqrt{(a+b)t}},\dots,
\frac{X_{n_h}(t)-(a-b)t}{\sqrt{(a+b)t}};\\
 &X_{n_1}(t)-X_{n_1-1}(t),\dots, X_{2}(t)-X_{1}(t); \\&X_{n_2}(t)-X_{n_2-1}(t),\dots,
 X_{n_1+2}(t)-X_{n_1+1}(t); \\&\dots
 \\&X_{n_h}(t)-X_{n_h-1}(t),\dots, X_{n_{h-1}+2}(t)-X_{n_{h-1}+1}(t) \biggr)
\end{align*}

converges to
\begin{align*}
 &GUE_1^h
 \\&\times {\rm Ge}(\zeta_{n_1-1}^{-1})\times\dots\times {\rm Ge}(\zeta_1^{-1})
 \\&\dots
 \\&
 \times {\rm Ge}(\zeta_{n_h-1}^{-1})\times\dots\times {\rm Ge}(\zeta_{n_{h-1}+1}^{-1}),
\end{align*}
 where ${\rm Ge}(p)$ is a geometric distribution on $\{1,2,\dots\}$ with parameter $p$.
\end{theorem}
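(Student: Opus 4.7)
My strategy is to combine an explicit determinantal formula for the joint transition probability of PushASEP with a saddle-point/residue analysis as $t\to\infty$. As a first step I would write $P_t(\vec x\mid \vec y)$ as an $N\times N$ determinant of single contour integrals, in the spirit of the Sch\"utz formula for TASEP extended to particle-dependent rates and pushing dynamics; such a formula is available from \cite{BF-Push}, and can alternatively be obtained via Bethe-ansatz diagonalization of the generator, or via the identification of the $N$-particle PushASEP with the projection $\Pi(\mathcal Y^S_{\gamma_+,\gamma_-}(t))$ described in Section \ref{Subsection_first_coordinate}. Each matrix entry has the essential form
$$
F_{ij}(t;x)=\frac{1}{2\pi i}\oint \exp\bigl(t(a\zeta_j w+b\zeta_j^{-1}/w)\bigr)\,w^{-x-1}\,dw,
$$
times correction factors depending on the $\zeta_k$'s, so that the probability of any event $\{X_k(t)=x_k\}$ becomes a multiple contour integral.

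The second step is the asymptotic analysis. The exponent $\Psi(w)=aw+b/w$ satisfies $\Psi'(1)=a-b$ and $\Psi''(1)=2b$, and combining these with the logarithmic term $w^{-x}$ one finds that the centering $(a-b)t$ and scaling $\sqrt{(a+b)t}$ of the statement are precisely those for which the saddle sits at $w=1$ for the slow particles ($\zeta_k=1$). Substituting $w=1+u/\sqrt{(a+b)t}$ and $x_{n_i}=(a-b)t+\xi_i\sqrt{(a+b)t}$ reduces the integrand to $u^n\exp(u^2/2+u\xi_i)$ in the limit, which is precisely the integrand of $\mathfrak G_n(\xi_i)$. The relevant powers of $u$ arise from row/column manipulations of the Karlin--McGregor determinant (subtracting neighboring rows pulls out a factor $(w-1)^{k-l}$), so that the block determinant over the $h$ slow indices becomes $\det[\mathfrak G_{k-l}(\xi_l)]_{k,l=1}^h$, which is exactly the density of $GUE_1^h$ by Proposition \ref{Proposition_GUE_distribution}.

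For each fast particle $k\notin D$ (i.e.\ $\zeta_k>1$), the corresponding integrand has a pole at $w=\zeta_k^{-1}<1$ inside the natural saddle contour. Collecting this residue produces a geometric factor $\zeta_k^{-1}(1-\zeta_k^{-1})^{m-1}$ for the gap $X_k(t)-X_{k-1}(t)=m$; iterating through a maximal block of consecutive fast particles sandwiched between two slow ones yields a product of independent geometric distributions with the stated parameters. Asymptotic independence of the GUE and geometric contributions is immediate from the fact that the former come from integration over a $1/\sqrt{t}$-neighborhood of $w=1$, while the latter come from isolated residues at points bounded away from $1$; all cross-terms are $O(t^{-1/2})$.

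The main obstacle is the joint $N$-fold contour deformation: one must arrange the contours so that, for each fast particle $k$, the residue at $w=\zeta_k^{-1}$ is collected correctly while the remaining integrals concentrate at $w=1$ uniformly in the rescaled Gaussian variables. Promoting pointwise convergence of densities to convergence in distribution (in this mixed discrete/continuous limit) further requires uniform tail estimates on the integrands outside the saddle neighborhood, with exponential decay controlled by $\Psi''(1)=2b$. The degenerate case $b=0$, where the saddle-point structure differs slightly (and the jumps are one-sided, corresponding to TASEP), can be treated either by continuity in $b$ or by referring to the direct computation in \cite{Bar}.
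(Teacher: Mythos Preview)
Your overall strategy coincides with the paper's: start from the determinantal transition formula of \cite{BF-Push}, run a saddle-point analysis at $z=1$ for the slow coordinates, pick up residues at $z=\zeta_m^{-1}$ for the fast ones, and identify the resulting $h\times h$ determinant with the density of $GUE_1^h$ via Proposition~\ref{Proposition_GUE_distribution}. But two of your technical descriptions are off in ways that matter.

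First, the matrix entries do not carry $\zeta_j$ in the exponential. The formula from \cite{BF-Push} reads
\[
P_t(\vec x\mid\vec y)=\Bigl(\prod_{i=1}^N \zeta_i^{x_i-y_i}e^{-t(a\zeta_i+b/\zeta_i)}\Bigr)\det\bigl[F_{k,l}(x_l-y_k)\bigr],
\qquad
F_{k,l}(x)=\frac1{2\pi i}\oint\frac{\prod_{i<k}(1-\zeta_iz)}{\prod_{j<l}(1-\zeta_jz)}\,z^{x-1}e^{t(bz+a/z)}\,dz,
\]
so the $\zeta$-dependence lives in the rational prefactor. The powers of $u$ feeding into $\mathfrak G_n$ come from the factors $(1-z)$ attached to slow indices in that prefactor after the substitution $z=1+t^{-1/2}u$, not from ``subtracting neighboring rows.''

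Second, the obstacle you name---a joint $N$-fold contour deformation---is not the actual difficulty. Each $F_{k,l}$ is a \emph{single} contour integral; deforming it past the poles $\zeta_m^{-1}$ produces one saddle term of order $e^{(a+b)t}t^{-1/2}$ plus residue terms of strictly larger exponential order. The real problem is that almost every entry of the $N\times N$ matrix then carries such residue terms, and naively the determinant is swamped by their cross-products. The paper's key step, absent from your sketch, is a sequence of elementary row and then column operations on $[F_{k,l}]$ that cancels all residue contributions except those sitting in the superdiagonal positions $(n{-}1,n)$ with $\zeta_{n-1}>1$. After this cancellation the determinant factorizes exactly: the product of those $N-h$ surviving residues gives the geometric factors $\prod(1-\zeta_i^{-1})\zeta_i^{1-\hat x_{i+1}+\hat x_i}$, and the complementary $h\times h$ minor consists purely of saddle terms and becomes $\det[\mathfrak G_{k-l}(\tilde x_l)]$. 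Without this algebraic cancellation step the Gaussian block cannot be separated from the geometric one.
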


Theorem \ref{th:pushasep} follows from Proposition
\ref{Proposition_GUE_distribution} and Proposition \ref{Proposition_Convergence_in_PushASEP}.

\begin{proposition}
\label{Proposition_Convergence_in_PushASEP} Set $v=a-b$ and denote
\begin{gather*}
x_1=vt+\sqrt{(a+b)t} \cdot\widetilde x_1 +\hat x_1,\dots,  x_{n_1}=vt+\sqrt{(a+b)t} \cdot\widetilde x_1 +\hat x_{n_1},\\
x_{n_1+1}=vt+\sqrt{(a+b)t} \cdot\widetilde x_2 +\hat x_{n_1+1},\dots, x_{n_2}=vt+\sqrt{(a+b)t} \cdot\widetilde x_2 +\hat x_{n_2},\\
\dots\\
x_{n_{h-1}+1}=vt+\sqrt{(a+b)t}\cdot \widetilde x_h +\hat x_{n_{h-1}+1},\dots, x_{n_h}=vt+\sqrt{(a+b)t}\cdot \widetilde x_{h} +\hat x_{n_h}.
\end{gather*}
Then
\begin{multline*}
\lim_{t\to\infty} ((a+b)t)^{h/2} P_t(x_1,\dots,x_N\mid y_1,\dots,y_N) \\=
\prod_{i\in\{1,\dots,N\}\setminus D} (1-\zeta^{-1}_i) \zeta_i^{1-\hat x_{i+1}+\hat x_i} \cdot
\det[{\mathfrak G}_{k-l}(\widetilde
 x_l)]_{k,l=1,\dots,h},
\end{multline*}
and the convergence is uniform for $\widetilde x_1,\dots,\widetilde x_h$ belonging to compact
sets.
\end{proposition}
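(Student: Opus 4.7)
The natural starting point is the exact determinantal formula for the $N$-particle PushASEP transition kernel with particle-dependent rates derived in \cite{BF-Push}. That formula represents
$$
P_t(x_1,\dots,x_N\mid y_1,\dots,y_N)=\det\bigl[F_{i,j}\bigr]_{i,j=1}^{N},
$$
where each entry is a single contour integral of the schematic form
$$
F_{i,j}=\frac{1}{2\pi i}\oint_{\Gamma}\Bigl(\prod_{k=j+1}^{i}\frac{\zeta_k}{\zeta_k-\zeta_j w}\Bigr)\frac{\exp\bigl(t[a\zeta_j w+b/(\zeta_j w)-a\zeta_j-b/\zeta_j]\bigr)}{w^{x_i-y_j+1}}\,dw,
$$
with $\Gamma$ a small positively oriented loop around $0$ (and the usual modifications when $i\le j$). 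My plan is to perform a steepest-descent analysis on each entry, deforming $\Gamma$ to a steep-descent contour through $w=1$, and then exploit the block structure imposed by the partition of indices into $D$ (slow) and its complement (fast).

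For a slow index $j\in D$, so $\zeta_j=1$, after substituting $x_i-y_j=vt+\sqrt{(a+b)t}\,\tilde x_{l(i)}+O(1)$ with $v=a-b$, the exponent in the integrand becomes $t[aw+b/w-a-b-v\log w]-\sqrt{(a+b)t}\,\tilde x_{l(i)}\log w+O(1)$. The bracketed function has a double zero at $w=1$ with second derivative $a+b>0$, so the rescaling $w=1+u/\sqrt{t(a+b)}$ converts the leading term to $u^{2}/2-\tilde x_{l(i)}u$; combined with the Jacobian, each slow column supplies the factor $((a+b)t)^{-1/2}$ and the integrand collapses to precisely the integrand of $\mathfrak{G}_n$. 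For a fast index $i\notin D$ or $j\notin D$, the product $\prod_{k=j+1}^{i}\zeta_k/(\zeta_k-\zeta_j w)$ has poles at $w=\zeta_j/\zeta_k<1$ (since $\zeta_k>1$ and $\zeta_j\in\{1,\ldots\}$), which lie strictly inside the unit disk. Deforming $\Gamma$ through $w=1$ picks up these residues, and a direct computation shows that the combination $e^{t(\cdots)}\,w^{-(x_i-y_j)-1}$ evaluated at $w=\zeta_j/\zeta_k$ cancels the $vt$-drift in the exponent exactly, leaving a $\zeta_k^{-(\text{gap})}$ factor in which the gap is a difference of $\hat x$'s.

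The reduction of the full determinant to the claimed product is then a systematic row/column elimination: working from the bottom up, one uses the identity $F_{i,j}-\zeta_i^{-1}F_{i-1,j}$ (appropriately normalized) to cancel the pole at $w=\zeta_j/\zeta_i$ whenever $i\notin D$, and a symmetric column identity handles $j\notin D$. Each such elimination pulls out a scalar factor $(1-\zeta_i^{-1})\zeta_i^{1-\hat x_{i+1}+\hat x_i}$, with the telescoping in the $\zeta_i$-exponent forced by the fact that consecutive rows differ by the shift $w^{-(\hat x_{i+1}-\hat x_i)}$; after all fast indices are eliminated, one is left with an $h\times h$ determinant supported on the slow-slow block $\{n_1,\dots,n_h\}$, and the saddle-point analysis of the previous paragraph identifies it with $\det[\mathfrak{G}_{k-l}(\tilde x_l)]_{k,l=1}^{h}$. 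Combined with the overall $((a+b)t)^{h/2}$ rescaling in the statement, this yields the claimed limit.

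The main obstacle is making this elimination procedure rigorous with uniform error control rather than merely formal: the residue subtractions must be performed \emph{inside} the contour integrals, on a common steep-descent contour that simultaneously avoids every fast pole $w=\zeta_j/\zeta_k$ and passes through the saddle at $w=1$, with uniform exponential estimates on the tails of the Gaussian contour showing that the post-subtraction integrals contribute only $o(t^{-h/2})$ and that the mixed slow-fast entries of the reduced determinant are genuinely subdominant. The only structural input that makes this work is the separation $\min_{k\notin D}\zeta_k^{-1}<1$ between the fast-pole locus and the saddle, which guarantees a positive gap on a logarithmic scale and hence genuine exponential decay of the discarded pieces. This gap is also what allows one to treat the problem without any further assumption on the $\zeta_k$ beyond $\zeta_{n_h}=1\le\zeta_k$.
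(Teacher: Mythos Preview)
Your overall strategy coincides with the paper's: start from the exact determinantal formula of \cite{BF-Push}, perform a steepest-descent analysis of each entry through the saddle at $1$, collect residues from the rational factor, and then use row/column operations to reduce to an $h\times h$ Gaussian block. Where you diverge from the paper is in the \emph{role} of the residues, and this is a genuine gap.

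You assert that evaluating $e^{t[\cdots]}w^{-(x_i-y_j)-1}$ at a fast pole ``cancels the $vt$-drift in the exponent exactly, leaving a $\zeta_k^{-(\text{gap})}$ factor.'' This is not correct: for any fixed $w=c\in(0,1)$ the contribution is $\exp\bigl(t[(b-a)\ln c+b/c+ac]\bigr)$ up to polynomial factors, and since $(b-a)\ln r+b/r+ar$ has its \emph{minimum} at $r=1$, the residue terms are exponentially \emph{larger} than the Gaussian saddle contribution $e^{(a+b)t}$, not $O(1)$. In the paper (see the Remark following Lemma~\ref{lemma_asymptotic_of_kernel}) this is stated explicitly. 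Consequently your picture of the elimination is inverted: the row/column operations are not there to discard subdominant pieces but to cancel these \emph{dominant} residue terms from all entries except the $N-h$ super-diagonal ones $\widetilde F_{n-1,n}$ with $n\notin D+1$. It is only after this cancellation, and after multiplying by the external prefactor $\prod_i \zeta_i^{x_i-y_i}e^{-at\zeta_i-bt/\zeta_i}$ (which you omit from your determinantal formula), that the surviving large entries collapse to the geometric factors $(1-\zeta_i^{-1})\zeta_i^{1-\hat x_{i+1}+\hat x_i}$. Your schematic identity $F_{i,j}-\zeta_i^{-1}F_{i-1,j}$ does not accomplish this; the paper instead adds row $m$ (for each $m\notin D$, working from the bottom up) to all higher rows with coefficients $\zeta_m^{y_n-y_m}/\prod_{j=n}^{m-1}(1-\zeta_j/\zeta_m)$ chosen precisely to kill the residue at $\zeta_m^{-1}$, and then performs analogous column operations.

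In short: right architecture, but the sign of the hierarchy (residues dominate the saddle, not the reverse) is wrong, and without the correct prefactor and the specific elimination coefficients the argument as written would not close.
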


\begin{proof}

Our starting point is an explicit formula for $P_t(x_1,\dots,x_N\mid y_1,\dots,y_N)$ from
\cite{BF-Push}; it is a generalization of a similar formula for TASEP from \cite{RS}. We have
\begin{multline}
\label{eq_PushASEP_transition} P_t(x_1,\dots,x_N\mid y_1,\dots,y_N)\\=\left(\prod_{i=1}^N
\nu_i^{x_i-y_i}\exp(-at\zeta_i-bt/\zeta_i)\right) \det[F_{k,l}(x_l-y_k)],
\end{multline}
where
$$
 F_{k,l}(x)=\frac1{2\pi i} \oint_{\Co_0} \dfrac{\prod_{i=1}^{k-1}(1-\zeta_i
 z)}{\prod_{j=1}^{l-1}(1-\zeta_j z)}\, z^{x-1} \exp(btz+atz^{-1}) dz,
$$
and the integration is over a positively oriented circle $\Co_0$ of a small radius centered at
$0$.

Let us study the asymptotic behavior of $F_{k,l}(x)$ when $t\to\infty$ and $x=x(t)=(a-b)t+
\sqrt{(a+b)t}\cdot  \tilde x + \hat x$.

\begin{lemma}
\label{lemma_asymptotic_of_kernel} For $t\to\infty$ with $x=x(t)=(a-b)t+ \sqrt{(a+b)t} \cdot
\tilde x + \hat x$, we have
\begin{align}
\label{eq_x4}
 \nonumber F_{k,l}(x)&= \exp \left((a+b)t\right) ((a+b)t)^{-1/2} t^{-n/2} (-1)^{n}\\ &\quad
\nonumber\times \dfrac{\prod_{i\in\{1,\dots, k-1\}\setminus D} (1-\zeta_i)}{\prod_{j\in\{1,\dots,
l-1\}\setminus D} (1-\zeta_j)} \left({\mathfrak G}_n(\tilde x) +o(1)\right)
 \\&+ \sum_{m\in\{k\dots l-1\}\setminus D } C_{k,l,m}(t,x) \zeta_m^{1-x} \exp(bt\zeta_m^{-1}+at\zeta_m),
\end{align}
 where {the summation is only over
indices $m$ corresponding to distinct $\zeta_m$},
$$
 C_{k,l,m}(t,x) = Res_{z=\zeta_m^{-1}}\left( \dfrac{\prod_{i=1}^{k-1}(1-\zeta_i
 z)}{\prod_{j=1}^{l-1}(1-\zeta_j z)} (\zeta_m z)^{x-1}
 \exp(bt(z-\zeta_m^{-1})+at(z^{-1}-\zeta_m))\right)
$$
{(in particular, $C_{k,l,m}(t,x)$ has polynomial growth (or decay) in $t$ as
$t\to\infty$)} and
$$n=|\{1,\dots, k-1\}\cap D|-|\{1,\dots, l-1\}\cap D|.$$
The remainder $o(1)$ above is uniformly small for $\tilde x$ belonging to compact sets.
\end{lemma}
{{\bf Remark. }
 Observe that in the limit regime of Lemma \ref{lemma_asymptotic_of_kernel} we have
 \begin{multline*}
  |\zeta_m^{-x} \exp(bt/\zeta_m+at\zeta_m)|=\exp\left(t\left(-\frac{x}{t}\ln(\zeta_m)  + b/\zeta_m+a\zeta_m\right)\right)
 \\ \approx \exp\biggl(t((b-a)\ln(\zeta_m)  + b/\zeta_m+a\zeta_m)\biggr).
 \end{multline*}
 Since the function  $(b-a)\ln(r)  + b/r+ar$ of $r\in \mathbb{R_+}$ has a minimum at $r=1$,
 for any $\zeta_m\ne 1$ we have (for $t\gg 1$) $$ |\zeta_m^{1-x} \exp(bt/\zeta_m+ar)| \gg
\exp((a+b)t).$$
 Using the fact that $C_{k,l,m}(t,x)$ have polynomial growth (or decay) in $t$ as $t\to\infty$ we conclude that
 in the asymptotic decomposition \eqref{eq_x4} the first term is small comparing to the other ones. }

\begin{proof}[Proof of Lemma \ref{lemma_asymptotic_of_kernel}]
{First, suppose that $b>0$.} Let us deform the integration contour so that it
passes near the point $1$. Since $\zeta_j\ge 1$, we need to add some residues:
\begin{multline}
\label{eq_x3}
 F_{k,l}(x)=\frac1{2\pi i} \oint_{\Co_1} \dfrac{\prod_{i=1}^{k-1}(1-\zeta_i
z)}{\prod_{j=1}^{l-1}(1-\zeta_j z)}\, z^{x-1} \exp(btz+atz^{-1}) dz \\+\sum_{m\in \{k,\dots, l-1\}\setminus
D} Res_{z=\zeta_m^{-1}}\left( \dfrac{\prod_{i=1}^{k-1}(1-\zeta_i
 z)}{\prod_{j=1}^{l-1}(1-\zeta_j z)}\, z^{x-1} \exp(btz+atz^{-1})\right),
\end{multline}
where the contour $\Co_1$ is shown in Figure \ref{Figure_Contours_2}. {Here the summation is only
over indices $m$ corresponding to distinct $\zeta_m$.}

\begin{figure}[h]
\begin{center}
\noindent{\scalebox{0.7}{\includegraphics{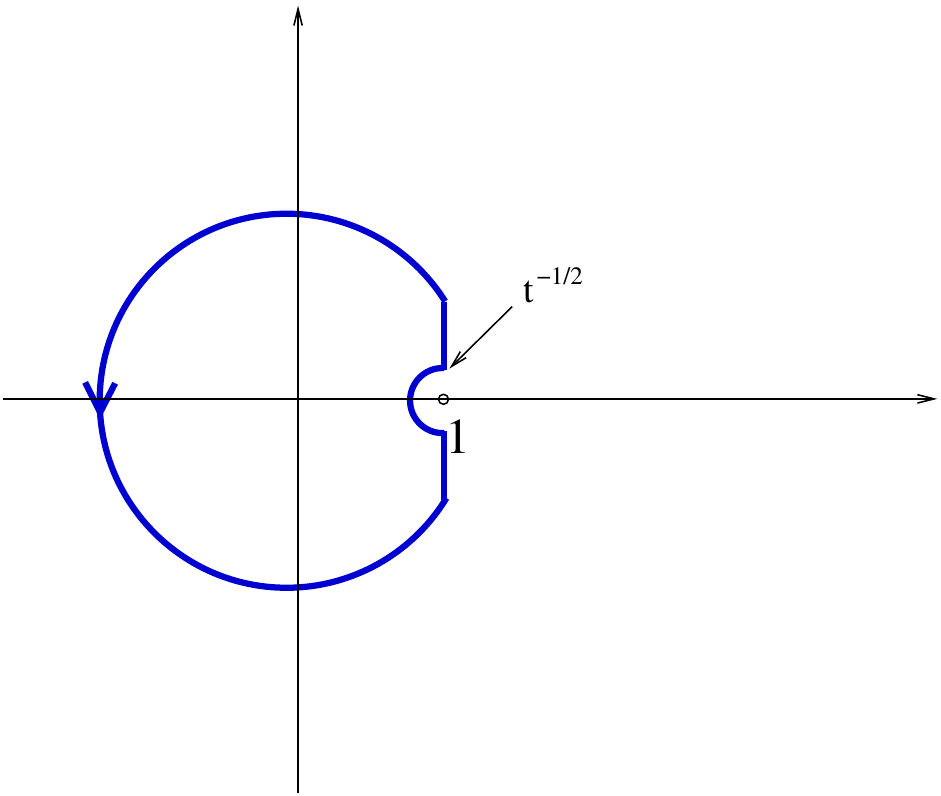}}} \caption{Contour of integration
$\Co_1$. {The radius of the small arc is $t^{-1/2}$ and the radius of larger arc is a parameter
$R$ that we choose later}. \label{Figure_Contours_2} }
\end{center}
\end{figure}

Observe that if $l\le m< k$ and $\zeta_m>1$, then
\begin{multline*}
Res_{z=\zeta_m^{-1}}\left( \dfrac{\prod_{i=1}^{k-1}(1-\zeta_i z)}{\prod_{j=1}^{l-1}(1-\zeta_j z)}\,
 z^{x-1} \exp(btz+atz^{-1})\right)\\ =C_{k,l,m}(t,x)  \zeta_m^{1-x} \exp(bt\zeta_m^{-1}+at\zeta_m).
\end{multline*}
In particular, if the pole at $\zeta_m^{-1}$ is simple (which is true, for instance, if all
$\zeta_m\ne 1$ are mutually distinct), then
\begin{multline*}
Res_{z=\zeta_m^{-1}}\left( \dfrac{\prod_{i=1}^{k-1}(1-\zeta_i z)}{\prod_{j=1}^{l-1}(1-\zeta_j z)}
\, z^{x-1} \exp(btz+at z^{-1})\right)\\=  \dfrac{-\zeta_m^{-1}}{\prod_{j=\{k,\dots, l-1\}\setminus
D}(1-\zeta_j/\zeta_m)}\, \zeta_m^{1-x} \exp(bt \zeta_m^{-1}+at\zeta_m).
\end{multline*}

Observe that
$$
 z^{x-1} \exp(btz+atz^{-1})= \exp(t (\ln z (a-b+ t^{-1/2}\tilde x \sqrt{a+b} +t^{-1}(\hat x -1)) + bz
+a/z))
$$
and
$$
 \Re ((a-b) \ln z  + bz +a/z)
$$
has a saddle point at $z=1$. { We claim that for large $t$, only a small neighborhood of
$z=1$ gives a non-negligible contribution the integral. Indeed, let $z=u+iv$. Then
\begin{equation}
 \Re ((a-b) \ln z  + bz +a/z) = (a-b) \ln\sqrt{u^2+v^2} +bu +\frac{au}{\sqrt{u^2+v^2}}.
\end{equation}
Setting $u=1$ and differentiating with respect to $v$ we see that for a small enough
$\varepsilon>0$ the function
$$
 (a-b) \ln\sqrt{1+v^2} +b +\frac{a}{\sqrt{1+v^2}}
$$
increases on $[-\varepsilon,0]$, decreases on $[0,\varepsilon]$ and its maximum
is $a+b$.

Now set the radius of the bigger arc $R$ to be equal to $\sqrt{1+\varepsilon^2}$ and note that
along this arc
\begin{multline*}
 (a-b) \ln\sqrt{u^2+v^2} +bu +\frac{au}{\sqrt{u^2+v^2}}=(a-b)\ln R+bu+\frac{au}{R}\\<(a-b)\ln
 R+b+\frac{a}R= (a-b) \ln\sqrt{1+\varepsilon^2} +b +\frac{a}{\sqrt{1+\varepsilon^2}}.
\end{multline*}
Summing up, if we choose any $\delta>0$, then everywhere outside the $\delta$-neighborhood of
$z=1$ the function  $f(z)=\Re ((a-b) \ln z  + bz +a/z)$ is smaller than $f(1+\delta)$ which, in
turn, is smaller than $f(1)=a+b$. Therefore, the integral along our contour outside the
$\delta$-neighborhood of $z=1$ can be bounded by $const\cdot \exp((a+b)t) \exp
(t(f(1)-f(1+\delta)))$ and, thus, as $t\to\infty$ it becomes exponentially smaller than
$\exp((a+b)t)$, which is the smallest term in our asymptotic expansion \eqref{eq_x4}.
{Consequently, this integral contributes only to $o(1)$ term in \eqref{eq_x4} and can be
neglected.}

To calculate the integral in the small $\delta$-neighborhood of $1$ we change the integration
variable $z=1+t^{-1/2}u$ and arrive at the integral
\begin{multline*}
 t^{-1/2}\frac1{2\pi i}\int_{\Co_2'}  \dfrac{\prod_{i=1}^{k-1}(1-\zeta_i
(1+t^{-1/2}u))}{\prod_{j=1}^{l-1}(1-\zeta_j (1+t^{-1/2}u))}\\
\times \exp\biggl(t \bigr(\ln(1+t^{-1/2}u) (a-b+ t^{-1/2}\tilde x \sqrt{a+b} +t^{-1}(\hat x -1))\\ +
b(1+t^{-1/2}u) +a/(1+t^{-1/2}u)\bigr)\biggr) du
\end{multline*}
with contour $\Co_2'$ that is a part of contour $\Co_2$ shown in Figure \ref{Figure_Contours_3}
 between points $u=\pm i \delta t^{1/2}$

}

Simplifying the integrand we arrive at
\begin{multline*}
 t^{-1/2}\exp \bigl(t(a+b)\bigr) \frac1{2\pi i}\int_{\Co_2'}
\dfrac{\prod_{i=1}^{k-1}(1-\zeta_i (1+t^{-1/2}u))}{\prod_{j=1}^{l-1}(1-\zeta_j (1+t^{-1/2}u))}
\\ \times \exp \bigl( u^2 (a+b)/2 + u\tilde x \sqrt{a+b}+ o(1)\bigr).
\end{multline*}
{Making the linear change of variables $v=u\sqrt{a+b}$ and sending $t$ to $\infty$
we obtain the required integral.}

{ As for the case $b=0$, the argument is similar and we omit it. The only
difference is that now contour of integration should have no vertical line part, i.e.\ it consists
of the arcs of unit circle and $t^{-1/2}$-circle.}
\end{proof}

Now we continue the proof of Proposition \ref{Proposition_Convergence_in_PushASEP}.

The next step is to do certain elementary transformations to the  matrix $[F_{k,l}(x_l-y_k)]$ in
order to simplify its determinant. First, suppose that all $\zeta_m$ (except for those equal to 1) are distinct.

We take the $(N-1)$st row of matrix $[F_{k,l}(x_l-y_k)]_{k,l=1}^N$ (i.e.~$k=N-1$), multiply it by
\begin{equation}
 \zeta_m^{y_n-y_{N-1}} \dfrac{1}{\prod_{j=n}^{N-2}(1-\zeta_j/\zeta_{N-1})}\,,
\end{equation} and add
to the $n$th row for $n=1,\dots, N-2$. As a result the term in \eqref{eq_x4} coming from the
residue at $\zeta_{N-1}$ remains only in the $(N-1,N)$ matrix element.

Next, we take the $(N-2)$nd row of the matrix and add this row (again with coefficients) to rows
$1,\dots,N-3$ so that the term in \eqref{eq_x4} coming from the residue at $\zeta_{N-2}$ remains
only in the $(N-2,N-1)$ and $(N-2,N)$ matrix elements.

Repeating this procedure for every row $h$ such that $\zeta_h\ne 1$ we get a transformed matrix
$[\widehat F_{k,l}(x_l-y_k)]_{k,l=1}^N$. In this new matrix, the term in \eqref{eq_x4} coming from the
residue at $\zeta_h$ remains only in the matrix elements $\widehat F_{h-1,h}(x_h-y_h-1)$, \dots,
$\widehat F_{h-1,N}(x_N-y_{h-1})$.

Let us now do some columns transformations. We take the second column ($l=2$) and add it
with coefficients to the columns $3,\dots,N$ so that the terms coming from the residue at
$\zeta_1$ vanish in columns $3,\dots,N$. Then we repeat this procedure for the third column and
so on.

Finally, we get a matrix $[\widetilde F_{k,l}(x_l-y_k)]_{k,l=1}^N$ whose determinant coincides with that of
$[F_{k,l}(x_l-y_k)]_{k,l=1}^N$, but this matrix has only $N-h$ elements with order of growth greater than
$\exp((a+b)t)$ (i.e.\ elements with terms coming from the residues). These elements are $\widetilde
F_{n-1,n}(x_{n}-y_{n-1})$ for $n\in\{2,\dots,N\}$, $n\notin D+1$,
$$
 \widetilde F_{n-1,n}(x_{n}-y_{n-1})=-\zeta_{n-1}^{-1} \zeta_{n-1}^{1-x_{n}+y_{n-1}}
\exp(bt \zeta_{n-1}^{-1}+at\zeta_{n-1}) (1+o(1)).
$$

It follows that asymptotically as $t\to\infty$ the determinant $\det[\widetilde F_{k,l}(x_l-y_k)]$
factorizes:
\begin{multline}
 \label{eq_x6} \det[\widetilde F_{k,l}(x_l-y_k)]_{k,l=1,\dots,N}\\=\prod_{n\in\{2,\dots,
N\}\setminus(D+1)} \zeta_{n-1}^{-x_{n}+y_{n-1}} \exp(bt \zeta_m^{-1}+at\zeta_m)
\det[G_{k,l}]_{k,l=1,\dots,h} (1+o(1)),
\end{multline}
where $[G_{k,l}]_{k,l=1,\dots,h}$ is the submatrix of $[\widetilde F_{k,l}(x_l-y_k)]$ with rows
$n_1,n_2,\dots,n_h$ and columns $1, n_1+2, n_2+1,\dots,n_{h-1}+1$. Looking back at row and column
operations that we made with $[F_{k,l}(x_l-y_k)]$ to get $[\widetilde F_{k,l}(x_l-y_k)]$, we
see that the determinant of $G_{k,l}$ coincides with the determinant of the Gaussian terms of the
decomposition \eqref{eq_x4} of matrix elements $F_{k,l}(x_l-y_k)$ in rows $n_1,n_2,\dots,n_h$ and
columns $1, n_1+1, n_2+1,\dots,n_{h-1}+1$. Therefore,
\begin{multline}
\label{eq_x7} \det[G_{k,l}]_{k,l=1,\dots,h}=\exp(ht(a+b)) (-1)^h((a+b)t)^{-h/2}\biggl(o(1)\\
+ \prod_{m=1}^{h} \prod_{i\in\{1,\dots, n_m-1\}\setminus D} (1-\zeta_i) \prod_{m=1}^{h-1}
\prod_{j\in\{1,\dots, n_m\}\setminus D}\dfrac{1}{(1-\zeta_i)} \,\det[{\mathfrak
G}_{k-l}(\widetilde x_l)]_{k,l=1,\dots,h}\biggr)
\\= \exp(ht(a+b)) (-1)^h((a+b)t)^{-h/2}  \prod_{i\in\{1,\dots, N-1\}\setminus D}
(1-\zeta_i)\\ \times \left(\det[{\mathfrak G}_{k-l}(\widetilde x_l)]_{k,l=1,\dots,h} +o(1)\right).
\end{multline}
Combining
\eqref{eq_PushASEP_transition} with asymptotic formulas \eqref{eq_x6} and \eqref{eq_x7} we
conclude the proof.

\smallskip
{
Now suppose that some of $\zeta_i$ coincide. In this case the computation of the residues in the
decomposition \eqref{eq_x4} becomes more complicated, since some of the poles are not simple.
However, the scheme of the proof remains the same: First we transform the matrix
$[F_{k,l}(x_l-y_k)]$ by means of elementary row transforms so that the terms in \eqref{eq_x4}
coming from the residue at $\zeta_h$ remain only in the matrix elements $\widehat
F_{h-1,h}(x_h-y_h-1)$, \dots, $\widehat F_{h-1,N}(x_N-y_{h-1})$. Then we do elementary column
transforms so that  only $N-h$ elements with order of growth greater than $\exp((a+b)t)$ remain in
the resulting matrix. After that the computation of the determinant repeats the case of distinct
$\zeta_m$.}

\end{proof}

\subsection*{Acknowledgements}
The authors would like to thank the anonymous referee for his valuable suggestions which improved
the text. A.B. was partially supported by NSF grant DMS-1056390.
 V.G.\ was partially supported by ``Dynasty'' foundation,  by RFBR --- CNRS grant 10-01-93114, by
the program ``Development of the scientific potential of the higher school'' and by Simons
Foundation - IUM scholarship.


\begin{thebibliography}{99}

\bibitem{AGZ} G. W. Anderson, A. Guionnet, and O. Zeitouni, An introduction to random matrices,
Cambridge University Press, 2010.

\bibitem{Bar} Yu. Baryshnikov, GUEs and queues, Probab. Theory Relat. Fields 119
(2001), 256--274

\bibitem{B-Schur} A.~Borodin, Schur dynamics of the schur processes, Adv. Math.
228 (2011), no,\ 4, 2268--2291. arXiv:1001.3442.

\bibitem{BF-Push} A.~Borodin, P.~Ferrari, Large time asymptotics of growth models on space-like paths I:
PushASEP, Electron. J. Probab. 13 (2008), 1380--1418. arXiv:0707.2813.

\bibitem{BF} A.~Borodin, P.~Ferrari, Anisotropic growth of random
surfaces in 2 + 1 dimensions. arXiv:0804.3035.

\bibitem{BGR} A.~Borodin, V.~Gorin, E.~M.~Rains, $q$-Distributions on boxed plane partitions, Selecta
Mathematica, New Series, 16:4 (2010), 731--789. arXiv:0905.0679.

\bibitem{BK} A.~Borodin, J.~Kuan, Asymptotics of Plancherel measures for the infinite-dimensional unitary group, Adv. Math.
219:3 (2008), 894--931. arXiv:0712.1848.

\bibitem{BO-RSK} A.~Borodin, G.~Olshanski, Z-Measures on partitions,
 Robinson-Schensted-Knuth correspondence, and $\beta=2$ random matrix ensembles, Random matrix models and their applications
  (P.M.Bleher and R.A.Its, eds), Math. Sci. Res. Inst. Publ., vol. 40, Cambridge Univ. Press, Cambridge, 2001,
  71--94. arXiv:math/9905189.

\bibitem{BO} A.~Borodin, G.~Olshanski, Markov processes on the path space of the Gelfand-Tsetlin graph and on its
boundary. arXiv:1009.2029.

\bibitem{Bo} R.~P.~Boyer, Infinite Traces of AF-algebras and characters of $U(\infty)$,
J. Operator Theory 9 (1983), 205--236.

\bibitem{CK} R. Cerf and R. Kenyon, The low
temperature expansion of the Wulff crystal in the 3D Ising model, Comm. Math. Phys. 222:1 (2001),
147--179.

\bibitem{DF} P.~Diaconis, D.~Freedman, Partial Exchangeability and Sufficiency.
 Proc. Indian Stat. Inst. Golden Jubilee Int'l Conf. Stat.: Applications and New Directions,
  J. K. Ghosh and J. Roy (eds.), Indian Statistical Institute, Calcutta, pp.\ 205-236.

\bibitem{Ed} A.~Edrei, On the generating function of a doubly--infinite, totally positive
sequence, Trans. Amer. Math. Soc. 74 (3) (1953), 367--383.

\bibitem{EK} S.~N.~Ethier, T.~G.~Kurtz, Markov processes --- Characteriztion and convergence.
Wiley-Interscience, New-York 1986.

\bibitem{F} P.~J.~Forrester, Log-gases and random matrices, Princeton University Press 2010.

\bibitem{Gor_Hahn} V.~Gorin, Non-intersecting paths and Hahn orthogonal polynomial ensemble,
Funct. Anal. Appl.,  42:3 (2008), 180--197. arXiv: 0708.2349.

\bibitem{Gor} V.~Gorin, The $q$--Gelfand--Tsetlin graph, Gibbs measures and $q$--Toeplitz
matrices.  Adv. Math., 229 (2012), no.\ 1, 201--266, arXiv:1011.1769


\bibitem{J_annals}  K. Johansson, Discrete orthogonal polynomial ensembles and the Plancherel
measure, Ann. Math.,(2), 153:1 (2001), 259--296. arXiv:math/9906120.

\bibitem{J_nonintersecting} K.~Johansson,  Non-intersecting Paths, Random Tilings and
Random Matrices.  Probab. Theory and Related Fields, 123:2 (2002), 225--280. arXiv:math/0011250.

\bibitem{J_Hahn} K.~Johansson, Non-intersecting, simple, symmetric random walks
and the extended Hahn kernel. Ann. Inst. Fourier (Grenoble) 55:6 (2005),  2129--2145.
  arXiv:math.PR/0409013.

\bibitem{J_artic}  K. Johansson, The arctic circle boundary and the Airy process, Ann. Probab. 33:1 (2005),
1--30. arXiv:math/030621.

\bibitem{JN} K.~Johansson, E.~Nordenstam, Eigenvalues of GUE Minors.  Electronic
Journal of Probability,  11 (2006), paper 50, 1342--1371. arXiv:math/0606760.

\bibitem{JNe} K.~Johansson, E.~Nordenstam, Erratum to Eigenvalues of GUE minors.  Electronic
Journal of Probability,  12 (2007), paper 37, 1048--1051.

\bibitem{Liza_Jones} Liza Anne Jones, Non-Colliding Diffusions and Infinite Particle Systems,
Thesis, University of Oxford, 2008.

\bibitem{KM} S.~P.~Karlin and G.~MacGregor, Coincidence probabilities, Pacif. J. Math. 9 (1959),
1141--1164.

\bibitem{KT09} M.~Katori and H.~Tanemura, Zeros of Airy function and relaxation process. J. Stat.
Phys. 136 (2009) 1177--1204. arXiv:0906.3666.

\bibitem{KT10} M.~Katori and H.~Tanemura, Non-equilibrium dynamics of Dyson's model with an infnite
number of particles . Commun. Math. Phys. 293 (2010), 469--497. arXiv:0812.4108.

\bibitem{Kerov_book} S.~Kerov, Asymptotic Representation Theory of the Symmetric Group and its Applications in Analysis
    AMS, Translations of Mathematical Monographs, v. 219 (2003).

\bibitem{KOR} W.~Konig, N.~O'Connell, S.~Roch, Non-colliding random walks,
tandem queues and discrete orthogonal polynomial ensembles. Electronic Journal of Probability,
vol.7 (2002), paper no. 1, 1--24.

\bibitem{L1} T.~Liggett, Interacting Particle Systems, Springer-Verlag, New York, 1985.

\bibitem{L2} T.~Liggett,  Stochastic Interacting Systems: Contact, Voter and Exclusion
Processes. Grundlehren der mathematischen Wissenschaften, volume 324, Springer, 1999.

 \bibitem{Mac} I.~Macdonald, Symmetric Functions and Hall Polinomials, Clarendon Press Oxford, 1979.

\bibitem{Meh} M.~L.~Mehta, Random matrices, 2nd ed. Academic Press, Boston, 1991.

\bibitem{Ok_wedge} A.~Okounkov, Inifinite wedge and random partitions.
 Selecta Mathematica, New Series Volume 7:1 (2001), 57--81. arXiv:math/9907127.

\bibitem{OR}  A.~Okounkov, N.~Reshetikhin, Correlation functions of Schur process
with application to local geometry of a random 3-dimensional Young diagram. J. Amer. Math. Soc. 16
(2003), 581--603. arXiv: math.CO/0107056

\bibitem{OkOlsh} A.~Okounkov, G.~Olshansky, Asymptotics of Jack Polynomials as the
Number of Variables Goes to Infinity, International Mathematics Research Notices 13 (1998),
641--682. arXiv:q-alg/9709011.

\bibitem{OR_birth} A. Yu. Okounkov, N. Yu. Reshetikhin, The birth
of a random matrix, Mosc. Math. J., 6:3 (2006), 553--566

\bibitem{Olsh} G.~Olshanski, The problem of harmonic analysis
 on the infinite-dimensional unitary group, Journal of Functional Analysis, 205 (2003), 464--524.
 arXiv:math/0109193.

\bibitem{Olsh2} G. Olshanski, Laguerre and Meixner symmetric functions, and infnite-dimensional diffusion processes.
Journal of Mathematical Sciences, 174:1, 41-57. Translated from Zapiski Nauchnykh Seminarov POMI,
378 (2010), 81--110. arXiv:1009.2037.

\bibitem{Os} H.~Osada, Interacting Brownian motions in infnite dimensions with logarithmic inter-
action potentials. arXiv:0902.3561.

\bibitem{RS} A.R\'{a}kos, G.Sch\"{u}tz, Bethe Ansatz and current distribution for the TASEP with particle-dependent
hopping rates, Markov Process. Related Fields 12 (2006), 323--334. arXiv:cond-mat/0506525.

\bibitem{Spitzer} F. Spitzer, Interaction of Markov processes. Adv. Math., 5 (1970), 246--290.

\bibitem{S} H.~Spohn, Interacting Brownian particles: a study of Dyson's model. In: Hydrodynamic
Behavior and Interacting Particle Systems, Papanicolaou, G. (ed), IMA Volumes in Mathematics and
its Applications, 9, Berlin: Springer-Verlag, 1987, 151--179.

\bibitem{Vo}
 D.~Voiculescu, Repr\'esentations factorielles de type \rm{II}${}_1$ de $U(\infty)$, J. Math.
 Pures et Appl. 55 (1976), 1--20.

\bibitem{V_Vienna} A. Vershik. The generating function $\prod_{k=1}^{\infty} (1-x^k)^{-k}$ --- MacMahon and Erd\"{o}s.
Talk at the 9-th International Conference on Formal Power Series and Algebraic Combinatorics,
Vienna, 1997.

\bibitem{V_ergodic} A.~M.~Vershik, Description of invariant  measures for the actions of some
infinite--dimensional groups, Sov. Math. Dokl. 15 (1974), 1396--1400.


\bibitem{Vk_unitary} A.~M.~Vershik, S.~,V.~Kerov, Characters and factor representations of the
inifinite unitary group, Sov. Math. Dokl. 26 (1982), 570--574.

\bibitem{Wey} H. Weyl, The classical groups. Their invariants and representations. Princeton Univ.
Press, 1939; 1997 (fifth edition).

\end{thebibliography}
\end{document}